\tikzset{
curvarr/.style={
  to path={ -- ([xshift=2ex]\tikztostart.east)
    |- (#1) [near end]\tikztonodes
    -| ([xshift=-2ex]\tikztotarget.west)
    -- (\tikztotarget)}
  }
}
\tikzset{
  curvedlink/.style={
    to path={
      let \p1=(\tikztostart.east), \p2=(\tikztotarget.west),
      \n1= {abs(\y2-\y1)/4} in
      (\p1) arc(90:-90:\n1) -- ([yshift=2*\n1]\p2) arc (90:270:\n1)
    },
  }
}
\newtheorem{theorem}{Theorem}[section]
\newtheorem{lemma}[theorem]{Lemma}
\newtheorem{proposition}[theorem]{Proposition}
\newtheorem{corollary}[theorem]{Corollary}
\theoremstyle{definition}
\newtheorem{define}[theorem]{Definition}
\newtheorem{example}[theorem]{Example}
\newtheorem{convention}[theorem]{Convention}
\theoremstyle{remark}
\newtheorem{remark}[theorem]{Remark}
\numberwithin{equation}{section}
\DeclareMathOperator{\Hom}{Hom}
\DeclareMathOperator{\Ext}{Ext}
\newcommand{\D}{\mathscr{D}}
\DeclareMathOperator{\Der}{Der}
\DeclareMathOperator{\ann}{ann}
\DeclareMathOperator{\gr}{gr}
\DeclareMathOperator{\Var}{V}
\DeclareMathOperator{\pdim}{pdim}
\DeclareMathOperator{\Spec}{Spec}
\DeclareMathOperator{\rel}{rel}
\DeclareMathOperator{\Ch}{Ch}
\DeclareMathOperator{\im}{im}
\DeclareMathOperator{\reg}{reg}
\DeclareMathOperator{\qi}{q.i.}
\DeclareMathOperator{\deRham}{DR}
\DeclareMathOperator{\maxdeg}{maxdeg}
\DeclareMathOperator{\red}{red}
\DeclareMathOperator{\id}{id}
\DeclareMathOperator{\UrComplex}{Ur}
\DeclareMathOperator{\Weyl}{\mathbb{A}}
\DeclareMathOperator{\Tot}{Tot}
\DeclareMathOperator{\an}{an}
\DeclareMathOperator{\wdeg}{deg_{\textbf{w}}}
\DeclareMathOperator{\Sing}{Sing}
\DeclareMathOperator{\ord}{ord}
\DeclareMathOperator{\CombRoots}{CombR}
\author{Daniel Bath}
\title[Bernstein--Sato polynomials: locally quasi-homogeneous divisors in $\mathbb{C}^3$]{Bernstein--Sato polynomials of locally quasi-homogeneous divisors in $\mathbb{C}^{3}$}
\address{Departement Wiskunde, KU Leuven, Celestijnenlaan 200B, 3001 Leuven, Belgium}
\email{dan.bath@kuleuven.be}
\thanks{The author is supported by FWO grant \#12E9623N}
\subjclass[2020]{Primary 32S20, 32S22 Secondary: 32C38, 14F10, 32S40, 14B15, 13D45.}
\keywords{Bernstein--Sato, Hyperplane Arrangements, Differential Operators, $D$-modules, Logarithmic forms, Logarithmic de Rham complex}
\begin{document}
\sloppy
\maketitle
\begin{abstract}
We consider the Bernstein--Sato polynomial of a locally quasi-homogeneous polynomial $f \in R = \mathbb{C}[x_{1}, x_{2}, x_{3}]$. We construct, in the analytic category, a complex of $\mathscr{D}_{X}[s]$-modules that can be used to compute the $\mathscr{D}_{X}[s]$-dual of $\mathscr{D}_{X}[s] f^{s-1}$ as the middle term of a short exact sequence where the outer terms are well understood. This extends a result by Narv\'{a}ez Macarro where a freeness assumption was required.

We derive many results about the zeroes of the Bernstein--Sato polynomial. First, we prove each nonvanishing degree of the zeroeth local cohomology of the Milnor algebra $H_{\mathfrak{m}}^{0} (R / (\partial f))$ contributes a root to the Bernstein--Sato polynomial, generalizing a result of M. Saito's (where the argument cannot weaken homogeneity to quasi-homogeneity). Second, we prove the zeroes of the Bernstein--Sato polynomial admit a partial symmetry about $-1$, extending a result of Narv\'{a}ez Macarro that again required freeness. We give applications to very small roots, the twisted Logarithmic Comparison Theorem, and more precise statements when $f$ is additionally assumed to be homogeneous.

Finally, when $f$ defines a hyperplane arrangement in $\mathbb{C}^{3}$ we give a complete formula for the zeroes of the Bernstein--Sato polynomial of $f$. We show all zeroes except the candidate root $-2 + (2 / \deg(f))$ are (easily) combinatorially given; we give many equivalent characterizations of when the only non-combinatorial candidate root $-2 + (2/ \deg(f))$ is in fact a zero of the Bernstein--Sato polynomial. One equivalent condition is the nonvanishing of $H_{\mathfrak{m}}^{0}( R / (\partial f))_{\deg(f) - 1}$. 
\end{abstract}

\tableofcontents

\section{Introduction}

Let $X = \mathbb{C}^n$ with its analytic structure sheaf $\mathscr{O}_X$, given by analytic functions on $U \subseteq X$. Also let $R = \mathbb{C}[x_1, \dots, x_n]$ be the polynomial ring of global algebraic functions on $X$. Consider a reduced $f \in R$ with its attached analytic divisor $D$. Our goal is to understand the roots of the \emph{Bernstein--Sato polynomial} $b_{f}(s)$ of $f$ when: $R = \mathbb{C}[x_{1}, x_{2}, x_{3}]$ and $f$ satisfies ``strong'' homogeneity assumptions. That is, when $D$ is analytically defined by a quasi-homogeneous polynomial\footnote{A polynomial $f \in R$ is quasi-homogeneous when there exist positive weights $w_i$ for each $x_i$ such that $f$ is homogeneous with respect to the grading of $R$ induced by these weights.} locally everywhere; in parlance, when $f$ and $D$ are \emph{locally quasi-homogeneous}. (When saying $f$/$D$ are locally quasi-homogeneous, we implicitly assume $f$ itself is a quasi-homogeneous polynomial.)

Our simplification bounds the singular structure of $f$ between two better understood classes of singularities: isolated quasi-homogeneous singularities; \emph{free} locally quasi-homogeneous singularities. Freeness means the module of logarithmic derivations is locally free; geometrically this equates to the singular subscheme of $f$ being Cohen--Macaulay of codimension two (\cite[Proposition 2.1]{MondNotesLogarithmic}. So when $n = 3$, $X = \mathbb{C}^{3}$, our polynomial $f$ is ``close'' to either class of singularities. In these extremal cases the Bernstein--Sato polynomial inherits very nice properties which we will extend to our polynomials. 

On one hand, if $f$ has an isolated singularity, the quasi-homogeneous assumption grants an explicit formula for the zeroes $Z(b_{f}(s))$ of the Bernstein--Sato polynomial. Grade $R$ using the quasi-homogeneity assumption so that $\wdeg(x_{i}) = w_{i}$ and let $\wdeg(f)$ be its weighted degree. Then by \cite{BrianconGrangerMaisonobe}, \cite{BrianconGrangerMaisonobeMiniconi}, \cite{MalgrongeIsolee}, \cite{YanoTheoryBFunctions}, \cite{KochmanBernsteinPolys}:
\begin{equation} \label{eqn - intro - BS poly, isolated singularity}
Z(b_{f}(s)) = \big\{ \frac{- (t + \sum w_{i})}{\wdeg(f)} \mid [R / (\partial f)]_{t} \neq 0 \big\} \cup \{-1\}
\end{equation}
where $(\partial f) = ( \partial_{1} \bullet f, \dots, \partial_{n} \bullet f)$ is the \emph{Jacobian ideal} and $[R / (\partial f)]_{t}$ are the homogeneous weighted degree $t$ elements of the \emph{Milnor algebra} $R / (\partial f)$. To try and realize \eqref{eqn - intro - BS poly, isolated singularity} outside the isolated singularity case, we can and will replace the Milnor algebra with $H_{\mathfrak{m}}^{0}(R / (\partial f))$, its $0^{\text{th}}$-local cohomology with support in the irrelevant ideal $\mathfrak{m}$. 

On the other hand, when $f$ is free and locally quasi-homogeneous, Narv\'{a}ez Macarro \cite[Theorem 4.1]{DualityApproachSymmetryBSpolys} proved the Bernstein--Sato polynomial is symmetric about $-1$. Hence,
\begin{equation} \label{eqn - intro - BS poly, free symmetry}
    \alpha \in Z(b_{f}(s)) \iff -2 - \alpha \in Z(b_{f}(s)).
\end{equation}
To explain his method, recall that the Bernstein--Sato polynomial $b_{f}(s) \in \mathbb{C}[s]$ of $f$ is the minimal, monic polynomial satisfying
\[
b_{f}(s) f^{s} \in \mathscr{D}_{X}[s] f^{s+1},
\]
where $\mathscr{D}_{X}$ is the sheaf of analytic differential operators, $\mathscr{D}_{X}[s]$ is a polynomial ring extension in a new variable $s$, and $\mathscr{D}_{X}[s] f^{s}$ is the cyclic $\mathscr{D}_{X}[s]$-module generated by $f^{s}$ where operators act on $f^{s}$ (or $f^{s+k}$ with $k \in \mathbb{Z}$) by formal application of the chain rule. Alternatively, $b_{f}(s)$ is the monic generator of the $\mathbb{C}[s]$-ideal
\[
\ann_{\mathbb{C}[s]} \frac{ \mathscr{D}_{X}[s] f^{s}}{ \mathscr{D}_{X}[s] f^{s+1}} \subseteq \mathbb{C}[s].
\]

Under the free and locally quasi-homogeneous assumption, Narv\'{a}ez Macarro \cite[Corollary 3.6]{DualityApproachSymmetryBSpolys} computed the $\mathscr{D}_{X}[s]$-dual of $\mathscr{D}_{X}[s] f^{s}$:
\begin{equation} \label{eqn - intro - free, duality fml}
\Ext_{\mathscr{D}_{X}[s]}^{n} (\mathscr{D}_{X}[s] f^{s}, \mathscr{D}_{X}[s])^{\ell} \simeq \mathscr{D}_{X}[s] f^{-s-1}.
\end{equation}
(Here $^{\ell}$ is the side-changing operation.) This lets him compute the $\mathscr{D}_{X}[s]$-dual of $\mathscr{D}_{X}[s] f^{s} / \mathscr{D}_{X}[s] f^{s+1}$ as $\mathscr{D}_{X}[s] f^{-s-2} / \mathscr{D}_{X}[s] f^{-s - 1}$ from whence symmetry of the Bernstein--Sato polynomial falls out. Freeness cannot be weakened in his proof: with it a sort of Spencer complex turns out to be a free $\mathscr{D}_{X}[s]$-resolution of $\mathscr{D}_{X}[s] f^{s}$; without freeness no candidate resolution previously existed.

We show that $f \in \mathbb{C}[x_{1}, x_{2}, x_{3}]$ locally quasi-homogeneous exhibit the above phenomena: as in the isolated case, the non-vanishing degrees of $H_{\mathfrak{m}}^{0}(R / (\partial f))$ \emph{always give} roots of the Bernstein--Sato polynomial; as in the free case, there is a \emph{partial symmetry} of the zeroes of the Bernstein--Sato polynomial. 

The strategy is to compute $\Ext_{\mathscr{D}_{X}[s]}^{3}(\mathscr{D}_{X}[s] f^{s}, \mathscr{D}_{X}[s])^{\ell}$. (Actually we first focus on $\mathscr{D}_{X}[s] f^{s-1}$ for natural reasons, but changing $s-1$ to $s$ is formal, cf. Remark \ref{rmk - formal substitution is ok}). This requires concocting a projective resolution of $\mathscr{D}_{X}[s] f^{s}$ without any freeness hypothesis. Our construction uses \emph{logarithmic data}. More specifically, it is inspired by the twisted logarithmic de Rham complex
\[
0 \to \Omega_{X}^{0}(\log f) \xrightarrow[]{d(-) + \lambda df/f \wedge } \Omega_{X}^{1}(\log f) \to \cdots \to \Omega_{X}^{n}(\log f) \to 0
\]
where the differential is the exterior derivative $d(-)$ twisted by $ \lambda df / f \wedge $ for $\lambda \in \mathbb{C}$. The objects are the \emph{logarithmic differential forms} of K. Saito \cite{SaitoLogarithmicForms}, i.e. forms in $\Omega_{X}^{p}(\star f)$ that both: have pole order at most one along $D$; have this pole order property preserved under exterior differentiation. By M. Saito's theory of differential morphisms \cite{SaitoModulesPolarisables}, \cite{InducedDModsDifferentialComplexes}, there is a $\mathscr{D}_{X}$-module interpretation of this complex as studied in \cite{BathSaitoTLCT}. By replacing the weight $\lambda$ with the indeterminate $s$ we obtain (Definition \ref{def - the Ur complex}) a new complex of \emph{right} $\mathscr{D}_{X}[s]$-modules
\begin{equation}
    U_{f}^{\bullet} = 0 \to \Omega_{X}^{0}(\log f) \otimes_{\mathscr{O}_{X}} \mathscr{D}_{X}[s] \to  \cdots \to \Omega_{X}^{n}(\log f) \otimes_{\mathscr{O}_{X}} \mathscr{D}_{X}[s].
\end{equation} 

Our first main result is Proposition \ref{prop - U complex resolves}:

\begin{proposition} \label{prop - intro - U complex resolves}
    Suppose that $f \in R = \mathbb{C}[x_{1}, \dots, x_{n}]$ defines a tame, Saito-holonomic divisor. Then $U_{f}^{\bullet}$ resolves $(\mathscr{D}_{X}[s] / \ann_{\mathscr{D}_{X}[s]}^{(1)} f^{s-1})^{r}$, that is, the augmented complex
    \[
    U_{f}^{\bullet} \to \left( \frac{\mathscr{D}_{X}[s]}{\ann_{\mathscr{D}_{X}[s]}^{(1)} f^{s-1}} \right)^{r}
    \]
    is acyclic. If in addition $f$ is locally quasi-homogeneous, then $U_{f}^{\bullet}$ resolves $\mathscr{D}_{X}[s]f^{s-1}$. In particular, $U_{f}^{\bullet}$ resolves $\mathscr{D}_{X}[s]f^{s-1}$ when $f \in \mathbb{C}[x_{1}, x_{2}, x_{3}]$ is locally quasi-homogeneous.
\end{proposition}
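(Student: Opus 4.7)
The plan is to split the argument in two. The principal task is the first assertion: proving acyclicity of $U_{f}^{\bullet}$ augmented by $(\mathscr{D}_{X}[s]/\ann^{(1)}_{\mathscr{D}_{X}[s]} f^{s-1})^{r}$ under the tame and Saito-holonomic hypotheses. Since each $\Omega_{X}^{p}(\log f) \otimes_{\mathscr{O}_{X}} \mathscr{D}_{X}[s]$ is an induced right $\mathscr{D}_{X}[s]$-module, and hence already $\mathscr{D}_{X}[s]$-flat, the entire content is acyclicity. I would equip $\mathscr{D}_{X}[s]$ with the order filtration, placing $s$ in order one so that first-order logarithmic operators of the form $\delta + s\,\delta(f)/f$ (with $\delta \in \Der(-\log f)$) have clean principal symbols, and extend this filtration to $U_{f}^{\bullet}$.

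Passing to the associated graded replaces $\mathscr{D}_{X}[s]$ by $\text{Sym}_{\mathscr{O}_{X}}(T_{X})[s]$, and the twisted differential $d(-) + s\, df/f \wedge (-)$ becomes wedging against the symbol of $s\, df/f$ on $\Omega_{X}^{\bullet}(\log f) \otimes_{\mathscr{O}_{X}} \text{Sym}_{\mathscr{O}_{X}}(T_{X})[s]$. Under the augmentation, the target's symbol is essentially the structure sheaf of the logarithmic conormal variety with its natural $s$-grading. The hard step --- and what I expect to be the main obstacle --- is exactness of this graded Koszul-type complex without any freeness assumption. Tameness supplies the bound $\pdim_{\mathscr{O}_{X}} \Omega_{X}^{p}(\log f) \leq p$, which is exactly the depth input needed to run a Koszul-regularity argument on the symbols of $\Der(-\log f)$, while Saito-holonomicity guarantees that the logarithmic conormal variety has the expected dimension, so that these depth estimates force exactness stratum by stratum. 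In the free setting treated by \cite{DualityApproachSymmetryBSpolys}, a basis of $\Der(-\log f)$ has symbols forming a regular sequence and the Koszul complex is exact immediately; here tameness and Saito-holonomicity stand in as substitutes for that regularity. Once exactness of the graded complex is established, the spectral sequence of the order filtration lifts it to acyclicity of $U_{f}^{\bullet}$ itself.

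For the second statement it suffices to verify the equality $\ann^{(1)}_{\mathscr{D}_{X}[s]} f^{s-1} = \ann_{\mathscr{D}_{X}[s]} f^{s-1}$ under local quasi-homogeneity, so that the target of the augmentation becomes $\mathscr{D}_{X}[s] f^{s-1}$. Locally an Euler derivation $\chi$ with $\chi(f) = w \cdot f$ exists, and this together with $\Der(-\log f)$ is well known to generate the full annihilator of $f^{s-1}$ from first-order operators; this was the analogue used by Narv\'{a}ez Macarro in the free setting, and the present quasi-homogeneous case is no harder once the Euler field is in hand. The final ``in particular'' clause is then immediate: a locally quasi-homogeneous divisor in $\mathbb{C}^{3}$ is automatically both tame (the projective dimension of $\Der(-\log f)$ is bounded by one in ambient dimension three under this hypothesis) and Saito-holonomic, so the previous two parts apply.
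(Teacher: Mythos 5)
Your proposal diverges from the paper's argument in two substantive places, and one of them is a genuine gap.

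First, the filtration. The paper filters $U_{f}^{\bullet}$ by $s$-degree, so that the $df/f \wedge (-)\otimes s(-)$ term in $\nabla_s$ drops to higher filtration level and the associated graded differential is simply $d(\eta)\otimes P + \sum_i dx_i \wedge \eta \otimes \partial_i P$. Acyclicity of this graded complex (away from top degree) is then proved by a Saito-holonomic induction: at a point on a positive-dimensional logarithmic stratum one has the local product $(X,D,\mathfrak{x})\simeq (\mathbb{C},\mathbb{C},0)\times(X',D',\mathfrak{x}')$, a K\"{u}nneth formula for logarithmic forms carries the inductive hypothesis, and at the remaining zero-dimensional strata the noncommutative Peskine--Szpiro acyclicity lemma of \cite{NoncommutativePeskineSzpiro} (which is precisely where tameness enters) finishes. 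Your proposal instead filters by a total order putting $s$ in order one, so the symbol of $s\,df/f$ survives and the graded differential is genuinely Koszul-like against the symbols of log derivations. That is a plausible alternative route, but your justification is only a slogan: "tameness supplies the depth input" and "Saito-holonomicity guarantees expected dimension, so depth estimates force exactness stratum by stratum." In the non-free case there is no basis of $\Der(-\log f)$ whose symbols form a regular sequence, so the Koszul-regularity analogy with \cite{DualityApproachSymmetryBSpolys} does not transfer as stated; some induction on strata together with a Peskine--Szpiro-type depth-chasing argument would still be needed, and you have not supplied it.

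Second, and this is the genuine gap: you claim that $\ann^{(1)}_{\mathscr{D}_X[s]} f^{s-1}=\ann_{\mathscr{D}_X[s]} f^{s-1}$ under local quasi-homogeneity is "no harder" than in Narv\'{a}ez Macarro's free setting "once the Euler field is in hand." This is incorrect. The equality in the free locally quasi-homogeneous case uses a parametric Spencer resolution that depends crucially on freeness. Dropping freeness, the equality is precisely Walther's \cite[Theorem 3.26]{uli}, whose proof requires tameness and Saito-holonomicity in an essential way and is a substantial piece of work, not a formal consequence of having an Euler derivation. The paper cites it as a black box; you cannot substitute a one-sentence claim that it is routine. (Your "in particular" paragraph is fine: the deduction that a locally quasi-homogeneous divisor in $\mathbb{C}^3$ is tame via reflexivity and Saito-holonomic via the local product structure matches the paper.)
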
 
Here: $\ann_{\mathscr{D}_{X}[s]}^{(1)} f^{s-1}$ is the $\mathscr{D}_{X}[s]$-ideal generated by the total order $\leq 1$ differential operators killing $f^{s-1}$; tameness is a weak assumption on the projective dimension of logarithmic forms that is automatic in $\mathbb{C}^{3}$, cf. Remark \ref{rmk - tame in three dim}; Saito-holonomic is a weak inductive hypothesis automatically satisfied by locally quasi-homogeneous divisors, cf. Remark \ref{rmk - log stratification}; and $(-)^{r}$ is the side-changing functor. We are crucially using Walther's \cite[Theorem 3.26]{uli} guaranteeing that, in our setting, $\ann_{\mathscr{D}_{X}[s]}^{(1)} f^{s-1} = \ann_{\mathscr{D}_{X}[s]} f^{s-1}$. 

Of course, the objects of $U_{f}^{\bullet}$ need not be projective right $\mathscr{D}_{X}[s]$-modules. But when $U_{f}^{\bullet}$ is a resolution, we can replace each object by its own projective resolution, take the consequence double complex, and acquire a projective right $\mathscr{D}_{X}[s]$-resolution of $(\mathscr{D}_{X}[s] f^{s-1})^{r}$. By dualizing this double complex and studying a resultant spectral sequence, we can generalize Narv\'{a}ez Macarro's duality formula \cite[Corollary 3.6]{DualityApproachSymmetryBSpolys}, cf. \eqref{eqn - intro - free, duality fml}. Here we see both symmetry properties and Milnor algebra local cohomology data appearing simultaneously:

\begin{theorem} \label{thm - intro - spectral sequence dual}
    Let $f \in R = \mathbb{C}[x_{1}, x_{2}, x_{3}]$ be reduced and locally quasi-homogeneous. Then we have a short exact sequence of left $\mathscr{D}_{X}[s]$-modules
    \begin{equation} \label{eqn - intro - thm, spectral sequence dual, statement, ses}
        0 \to \mathscr{D}_{X}[s]f^{-s} \to \Ext_{\mathscr{D}_{X}[s]}^{3} \left( \mathscr{D}_{X}[s] f^{s-1}, \mathscr{D}_{X}[s] \right)^{\ell} \to L_{f} \to 0,
    \end{equation}
    where $L_{f}$ is the left $\mathscr{D}_{X}[s]$-module supported at $0$, or is itself $0$, cf. Definition \ref{def - the cokernel, ext dual object}. 
    
    Moreover, $L_{f}$ has a $b$-function, namely
    \begin{equation} \label{eqn - intro - thm, spectral sequence dual, statement, funny b-function}
        B(L_{f}) = \prod_{t \in \wdeg(H_{\mathfrak{m}}^{0}(R / (\partial f)))} \left( s - \frac{ - t + 2 \wdeg(f) - \sum w_{i}}{\wdeg(f)} \right) 
    \end{equation}
    (If $\wdeg(H_{\mathfrak{m}}^{3}(R / (\partial f))) = \emptyset$ we take $B(L_{f}) = 1$ as this equates to $L_{f} = 0.)$ 

\end{theorem}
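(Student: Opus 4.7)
My plan is to upgrade Proposition~\ref{prop - intro - U complex resolves} into a projective resolution of $(\mathscr{D}_{X}[s]f^{s-1})^{r}$ via a Cartan--Eilenberg double complex, dualize into $\mathscr{D}_{X}[s]$, and read off the claimed short exact sequence from the resulting hyper-Ext spectral sequence. Concretely, resolve each $U_{f}^{p}=\Omega_{X}^{p}(\log f)\otimes_{\mathscr{O}_{X}}\mathscr{D}_{X}[s]$ by projective right $\mathscr{D}_{X}[s]$-modules to build a double complex $P^{\bullet,\bullet}$ whose total complex is a projective resolution of $(\mathscr{D}_{X}[s]f^{s-1})^{r}$. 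Applying $\Hom_{\mathscr{D}_{X}[s]}(-,\mathscr{D}_{X}[s])$ and side-changing, the induced spectral sequence has
\[
E_{1}^{p,q}=\Ext^{q}_{\mathscr{D}_{X}[s]}\bigl(\Omega_{X}^{p}(\log f)\otimes_{\mathscr{O}_{X}}\mathscr{D}_{X}[s],\,\mathscr{D}_{X}[s]\bigr)^{\ell},
\]
which reduces, by the $\mathscr{O}_{X}$-flatness of $\mathscr{D}_{X}[s]$ via its order filtration, to $\Ext^{q}_{\mathscr{O}_{X}}(\Omega_{X}^{p}(\log f),\mathscr{O}_{X})\otimes_{\mathscr{O}_{X}}\mathscr{D}_{X}[s]$ after the appropriate side-swap.

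I would then exploit the ambient dimension to bound the support of the $E_{1}$-page. In $\mathbb{C}^{3}$ the outer forms $\Omega_{X}^{0}(\log f)=\mathscr{O}_{X}$ and $\Omega_{X}^{3}(\log f)\cong \tfrac{1}{f}\Omega_{X}^{3}$ are $\mathscr{O}_{X}$-free, while $\Omega_{X}^{1}(\log f)$ and $\Omega_{X}^{2}(\log f)$ are reflexive (K.~Saito), so Auslander--Buchsbaum gives projective dimension $\leq 1$. Thus only $E_{1}^{p,0}$ and $E_{1}^{1,1},E_{1}^{2,1}$ can be nonzero, and the latter two have support of codimension $\geq 3$, i.e.\ isolated points, which under quasi-homogeneity concentrate at the origin. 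Next, I would identify the $q=0$ row with a shifted Ur-type complex for the exponent $-s$, using the logarithmic pairing $\Omega_{X}^{p}(\log f)\wedge \Omega_{X}^{n-p}(\log f)\to \Omega_{X}^{n}(\log f)$ in its reflexive form together with Walther's equality $\ann^{(1)}_{\mathscr{D}_{X}[s]}f^{-s}=\ann_{\mathscr{D}_{X}[s]}f^{-s}$. Applying Proposition~\ref{prop - intro - U complex resolves} to the exponent $-s$, the $q=0$ row has top cohomology $\mathscr{D}_{X}[s]f^{-s}$ and vanishes in other positions; the surviving $q=1$ entries, being supported at $\{0\}$, become the origin-supported cokernel $L_{f}$.

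For the $b$-function, the presentation of logarithmic forms via Jacobian syzygies identifies $\Ext^{1}_{\mathscr{O}_{X}}(\Omega_{X}^{p}(\log f),\mathscr{O}_{X})$ with suitably shifted graded pieces of $H^{0}_{\mathfrak{m}}(R/(\partial f))$. The Euler derivation $E=\sum w_{i}x_{i}\partial_{i}$ acts on $f^{s}$ by $s\wdeg(f)$; carrying this action through the duality and through the passage from $f^{s-1}$ to $f^{-s}$ shows each weight-$t$ piece of $H^{0}_{\mathfrak{m}}(R/(\partial f))$ contributes the eigenvalue $(-t+2\wdeg(f)-\sum w_{i})/\wdeg(f)$ to the $s$-action on $L_{f}$, and multiplying over the nonvanishing weights yields $B(L_{f})$. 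The main obstacle will be the spectral sequence step: verifying that the higher differentials $d_{r}$ ($r\geq 2$) vanish so the $q=1$ contributions genuinely survive as a quotient, and that the $q=0$ row's top cohomology is \emph{exactly} $\mathscr{D}_{X}[s]f^{-s}$ without corrections. In Narv\'{a}ez Macarro's free setting the $q=1$ row is absent; controlling it here without freeness rests delicately on the codimension-$3$ support at the origin and on the ambient dimension being exactly $3$.
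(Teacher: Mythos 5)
Your high-level strategy is the paper's strategy: replace each object of $U_{f}^{\bullet}$ by a projective resolution, form the Cartan--Eilenberg double complex, dualize into $\mathscr{D}_{X}[s]$, and run the resulting spectral sequence, using $\dim X = 3$ and reflexivity to confine the $E_{1}$-page to $q\leq 1$ and to concentrate the $q=1$ entries at the origin. So the framework matches, and your identification of the $q=0$ row's top cohomology with $\mathscr{D}_{X}[s]f^{-s}$ (via the log pairing and Walther's annihilator theorem) is also the paper's route.

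However, there is a genuine gap exactly where you flag uncertainty, and your diagnosis of the obstacle is slightly off. After dualizing, the $q=1$ row is a two-term complex
\[
\mathscr{D}_{X}[s]\otimes_{\mathscr{O}_{X}}\Ext^{1}_{\mathscr{O}_{X}}(\Omega_{X}^{2}(\log f),\mathscr{O}_{X})\;\longrightarrow\;\mathscr{D}_{X}[s]\otimes_{\mathscr{O}_{X}}\Ext^{1}_{\mathscr{O}_{X}}(\Omega_{X}^{1}(\log f),\mathscr{O}_{X}),
\]
and the only potentially nonzero higher differential is $d_{2}:E_{2}^{2,1}\to E_{2}^{0,0}$, where $E_{2}^{2,1}$ is the \emph{kernel} of the displayed map. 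So the issue is not that higher $d_{r}$ must be shown to vanish for general reasons; it is that you must prove this specific map is injective so that $E_{2}^{2,1}=0$ outright. Without that, the short exact sequence could acquire an unwanted correction term from $E_{2}^{2,1}$. The ``codimension-3 support at the origin'' observation, which you invoke, does not by itself give injectivity.

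The paper's resolution of this is the content of its Section 5: one constructs a specific $\mathscr{D}_{X}[s]$-linear lift $\theta_{1}$ of $\nabla_{s}$ along the chosen resolutions such that, on the free cover of the first syzygy module, $\theta_{1}(\mathbf{e}_{t}\otimes 1)=\mathbf{e}_{t}\otimes(\wdeg(f)s-b_{t}+E)$; the choice of lift is immaterial on cohomology but this particular one has a transparent associated graded. Dualizing, the induced map on the $q=1$ row becomes left multiplication by $(\wdeg(f)s-\wdeg(u)+E)$ on each homogeneous generator $u$. Filtering by $s$-degree, the associated graded map is multiplication by $\wdeg(f)s$, which is injective because neither side has $s$-torsion; hence $E_{2}^{2,1}=0$ and the sequence degenerates. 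Your proposal never constructs such a lift nor identifies any mechanism for the needed injectivity, so the key step is missing. Similarly, the $b$-function formula is not an immediate consequence of ``carrying the Euler action through the duality'': the paper needs a careful inductive filtration of $L_{f}$ by $\wdeg$-jumps (Claims~1--3 in Proposition~\ref{prop - funny b-function}) to show that each nonvanishing weight of $H_{\mathfrak{m}}^{0}(R/(\partial f))$ contributes \emph{exactly one} linear factor, and that no two weights collapse; the relative-holonomicity additivity of $b$-function zero loci on short exact sequences is essential there.
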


\noindent Here: $b$-function means (monic generator of the) $\mathbb{C}[s]$-annihilator (Definition \ref{def - b-function module}); the left $\mathscr{D}_{X}[s]$-module $L_{f}$ is explicitly constructed in Definition \ref{def - the cokernel, ext dual object}. Alternatively, $L_{f}$ is the cokernel of a $\mathscr{D}_{X}[s]$-endomorphism of $\mathscr{D}_{X}[s] \otimes_{\mathscr{O}_{X}} \Ext_{\mathscr{O}_{X}}^{3}(\mathscr{O}_{X} / (\partial f), \mathscr{O}_{X})$, cf. Remark \ref{rmk - cokernel object, endomorphism description}. The main difficulty of the paper is in proving Theorem \ref{thm - intro - spectral sequence dual}: it requires constructing nice lifts of maps along resolutions and a direct computation of the $\mathbb{C}[s]$-annihilator of $L_{f}$.

To realize local cohomology data of the Milnor algebra within the analytic duality computation of Theorem \ref{thm - intro - spectral sequence dual}, we must transfer algebraic data into the analytic context. Consequently, the manuscript sometimes switches between the algebraic and analytic, passing information from one to the other. Beginnings of relevant sections and subsections explain what settings we are working in and why, see also Remark \ref{rmk - why analytic, free outside 0}, Remark \ref{rmk - why analytic, graded Betti}, Remark \ref{rmk - why analtyic, U complex}, Remark \ref{rmk - why analytic, total complex}, and Remark \ref{rmk- why alg then anal lifts}. 

By Lemma \ref{lemma - b-function and b-function of dual}, the zeroes of the Bernstein--Sato polynomial are those of the $b$-function $(\mathbb{C}[s])$-annihilator) of the $\mathscr{D}_{X}[s]$-dual of $\mathscr{D}_{X}[s] f^{s} / \mathscr{D}_{X}[s] f^{s+1}$. Here we find degree data of $H_{\mathfrak{m}}^{0} ( R / (\partial f))$ contributing zeroes of the Bernstein--Sato polynomial, exactly as in the isolated singularity case. Concretely, in Theorem \ref{thm - new roots of BS-poly} we prove:

\begin{theorem} \label{thm - intro - new roots of BS-poly}
    Let $f \in R = \mathbb{C}[x_{1}, x_{2}, x_{3}]$ be reduced and locally quasi-homogeneous. Then we have a surjection of left $\mathscr{D}_{X}[s]$-modules
    \begin{equation} \label{eqn - intro - new roots of BS-poly, statement 1}
        \Ext_{\mathscr{D}_{X}[s]}^{4} \left( \frac{\mathscr{D}_{X}[s] f^{s}}{\mathscr{D}_{X}[s] f^{s+1}}, \mathscr{D}_{X}[s] \right) ^{\ell} \twoheadrightarrow L_{f, s+2}
    \end{equation}
    where $L_{f, s+2}$ is explicitly defined in Definition \ref{def - twisted cokernel object}/Definition \ref{def - the cokernel, ext dual object}. In particular, the zeroes of the $b$-function of $L_{f, s+2}$ are contained in the zeroes of the Bernstein--Sato polynomial of $f$:
    \begin{equation} \label{eqn - intro - new roots of BS-poly, statement 2}
        Z(b_{f}(s)) \supseteq Z(B(L_{f, s+2})) = \left\{ \frac{ - (t + \sum w_{i}) }{\wdeg(f)} \mid t \in \wdeg(H_{\mathfrak{m}}^{0} (R / (\partial f))) \right\}.
    \end{equation}

\end{theorem}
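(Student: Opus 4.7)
The plan is to leverage Theorem~\ref{thm - intro - spectral sequence dual} at shifted values of $s$ together with a long exact sequence argument applied to
\[
0 \to \mathscr{D}_X[s] f^{s+1} \to \mathscr{D}_X[s] f^s \to M_{01} \to 0,
\]
where I write $M_{01} := \mathscr{D}_X[s] f^s / \mathscr{D}_X[s] f^{s+1}$. First, invoking Remark~\ref{rmk - formal substitution is ok} with the substitution $s \mapsto s+2$ turns Theorem~\ref{thm - intro - spectral sequence dual} into the short exact sequence
\[
0 \to \mathscr{D}_X[s] f^{-s-2} \to \Ext^{3}_{\mathscr{D}_X[s]}\!\bigl(\mathscr{D}_X[s] f^{s+1}, \mathscr{D}_X[s]\bigr)^{\ell} \to L_{f,s+2} \to 0,
\]
whose $b$-function $B(L_{f,s+2}) = B(L_f)(s+2) = \prod_{t} \bigl(s + (t + \sum w_i)/\wdeg(f)\bigr)$ yields the second equality in~\eqref{eqn - intro - new roots of BS-poly, statement 2}. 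The analogous substitution $s \mapsto s+1$ produces the corresponding sequence for $\mathscr{D}_X[s] f^{s}$ involving $L_{f,s+1}$.

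Next, applying $\Ext^{\bullet}_{\mathscr{D}_X[s]}(-, \mathscr{D}_X[s])^{\ell}$ to the defining sequence of $M_{01}$ yields the long exact sequence
\[
\Ext^3(\mathscr{D}_X[s] f^s)^{\ell} \xrightarrow{\phi} \Ext^3(\mathscr{D}_X[s] f^{s+1})^{\ell} \xrightarrow{\delta} \Ext^4(M_{01})^{\ell} \to \Ext^4(\mathscr{D}_X[s] f^s)^{\ell} \to \cdots,
\]
and naturality of the construction in Theorem~\ref{thm - intro - spectral sequence dual} produces a commutative morphism of the two short exact sequences above whose leftmost column is the inclusion $\mathscr{D}_X[s] f^{-s-1} \hookrightarrow \mathscr{D}_X[s] f^{-s-2}$ and whose middle column is $\phi$. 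The central claim is that the induced map $\bar\phi : L_{f,s+1} \to L_{f,s+2}$ vanishes. I would verify this using Remark~\ref{rmk - cokernel object, endomorphism description}, which presents $L_{f,\ast}$ as the cokernel of a $\mathscr{D}_X[s]$-endomorphism of $\mathscr{D}_X[s] \otimes_{\mathscr{O}_X} \Ext^3_{\mathscr{O}_X}\!\bigl(\mathscr{O}_X/(\partial f), \mathscr{O}_X\bigr)$: the quasi-homogeneous Euler relation $\wdeg(f)\cdot f = \sum w_i x_i \partial_i f$ forces $f \in (\partial f)$, so multiplication by $f$ annihilates $\mathscr{O}_X/(\partial f)$ and hence the $\Ext^3$ above, collapsing the induced $\bar\phi$ to zero.

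Once $\bar\phi = 0$, commutativity forces the composite $\Ext^3(\mathscr{D}_X[s] f^s)^{\ell} \to L_{f,s+2}$ to vanish, so the projection $\Ext^3(\mathscr{D}_X[s] f^{s+1})^{\ell} \twoheadrightarrow L_{f,s+2}$ factors through $\operatorname{coker}(\phi) = \im(\delta) \subseteq \Ext^4(M_{01})^{\ell}$, producing a surjection from this submodule onto $L_{f,s+2}$. To upgrade to the full surjection required in~\eqref{eqn - intro - new roots of BS-poly, statement 1}, I would either check that $\Ext^4(\mathscr{D}_X[s] f^s)^{\ell} = 0$ (forcing $\im(\delta) = \Ext^4(M_{01})^{\ell}$) or construct the map directly from the mapping-cone resolution $\operatorname{Cone}(U^{\bullet}_{f, s+1} \to U^{\bullet}_{f,s})$ of $M_{01}$, which has total length $4$. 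The inclusion of zero sets in~\eqref{eqn - intro - new roots of BS-poly, statement 2} is then immediate from Lemma~\ref{lemma - b-function and b-function of dual}: $b$-functions of quotients divide those of the parent, so $B(L_{f,s+2})$ divides the $b$-function of the dual of $M_{01}$, whose zero set equals $Z(b_f(s))$. The main obstacle is confirming that $\bar\phi$ is genuinely multiplication by $f$ in the endomorphism model of $L_{f,\ast}$; once this identification is in place, the vanishing follows from the Euler relation and the rest is formal manipulation of long exact sequences.
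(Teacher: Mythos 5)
Your proposal is essentially the paper's argument for Theorem \ref{thm - new roots of BS-poly}, including the crucial insight that the Euler relation $f \in (\partial f)$ forces the multiplication-by-$f$ map to vanish after passing to the cokernel. The step you flag as the ``main obstacle'' --- supplying the commutative morphism of short exact sequences and identifying $\bar\phi$ as zero --- is exactly what the paper's Propositions \ref{prop - chain map, twisted U and double complexes} and \ref{prop - twisted U complex, image containment} do: they build a chain map $f \cdot : \Tot(K^{\bullet,\bullet}_{f,s+2}) \to \Tot(K^{\bullet,\bullet}_{f,s+1})$ on the double-complex level, verify it dualizes to the connecting map $\phi$ on $\Ext^3$, and deduce $\im(\phi) \subseteq \mathscr{D}_X[s]f^{-s-2}$ (which is equivalent to your $\bar\phi = 0$) by observing that $L_{f,s+2}$ is a quotient of $\mathscr{D}_X[s]\otimes_{\mathscr{O}_X}\Ext^3_{\mathscr{O}_X}(\mathscr{O}_X/(\partial f),\mathscr{O}_X)$, which is killed by $f$. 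Two small corrections. First, the leftmost vertical of your commutative square is not the inclusion $\mathscr{D}_X[s]f^{-s-1}\hookrightarrow\mathscr{D}_X[s]f^{-s-2}$ but rather right multiplication by $f$, which carries $\mathscr{D}_X[s]f^{-s-1}$ into $\mathscr{D}_X[s]f^{-s}\subseteq\mathscr{D}_X[s]f^{-s-2}$; the consequences are identical (the outer submodules are preserved, so $\bar\phi$ is well-defined) but the description should be adjusted. Second, you do not need the mapping-cone alternative: $\Ext^4_{\mathscr{D}_X[s]}(\mathscr{D}_X[s]f^s,\mathscr{D}_X[s]) = 0$ holds because $\mathscr{D}_X[s]f^s$ is $3$-Cohen--Macaulay, and the paper shortcuts directly by citing the $4$-Cohen--Macaulayness of $\mathscr{D}_X[s]f^s/\mathscr{D}_X[s]f^{s+1}$ (from \cite{ZeroLociBSIdeals}), so the long exact sequence already collapses to the three-term sequence with $\Ext^4$ as the cokernel term.
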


Experts will note that if $\Var(f)$ is the affine cone of $Z \subseteq \mathbb{P}^{2}$ where $Z$ only has isolated quasi-homogeneous singularities, then \eqref{eqn - intro - new roots of BS-poly, statement 2} recovers M. Saito's \cite[Corollary 3]{SaitoBSProjectiveWeightedHomogeneous}.  However, his methods depend on blow-up techniques and do not seem to speak to the generalization to locally quasi-homogeneous divisors, nor do they speak to our other results.

In the free case, we have symmetry of the Bernstein--Sato polynomial. Again we are able to derive a partial symmetry in our case. Interestingly, the obstruction to symmetry is determined exactly by the new roots of $b_{f}(s)$ found in Theorem \ref{thm - intro - new roots of BS-poly}. Theorem \ref{thm - localized symmetry of BS poly} gives our version of \eqref{eqn - intro - BS poly, free symmetry}:

\begin{theorem} \label{thm - intro - localized symmetry of BS poly}
    Let $f \in R = \mathbb{C}[x_{1}, x_{2}, x_{3}]$ be reduced and locally quasi-homogeneous. Define
    \begin{equation} \label{eqn - thm - intro - localized symmetry of BS poly, statement 1}
        \Xi_{f} = \bigcup_{t \in \wdeg(H_{\mathfrak{m}}^{0} (R / (\partial f)))} \bigg\{ \frac{-(t + \sum w_{i})}{\wdeg(f)} \bigg\} \cup \bigg\{ \frac{-(t + \sum w_{i}) + \wdeg(f)}{\wdeg(f)}  \bigg\}.
    \end{equation}
    Then we have partial symmetry of the Bernstein--Sato polynomial's roots about $-1$:
    \begin{equation} \label{eqn - thm - intro - localized symmetry of BS poly, statement 2}
    \alpha \in \bigg( Z(b_{f}(s)) \setminus \Xi_{f} \bigg) \iff - \alpha - 2 \in \bigg( Z(b_{f}(s)) \setminus \Xi_{f} \bigg).
    \end{equation}
\end{theorem}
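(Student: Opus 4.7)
The plan is to extract this partial symmetry from a duality analysis of $M := \mathscr{D}_X[s] f^s / \mathscr{D}_X[s] f^{s+1}$, leveraging Theorem \ref{thm - intro - spectral sequence dual}. Apply $\Ext_{\mathscr{D}_X[s]}^{\bullet}(-, \mathscr{D}_X[s])$ to the short exact sequence
\[
0 \to \mathscr{D}_X[s] f^{s+1} \to \mathscr{D}_X[s] f^s \to M \to 0.
\]
Using vanishing of $\Ext^{i}$ for $i > 3$ on the outer modules (from Proposition \ref{prop - intro - U complex resolves} together with the low projective dimension of $\Omega^{p}(\log f)$ in dimension three), the long exact sequence truncates to
\[
0 \to \Ext^{3}(M)^{\ell} \to \Ext^{3}(\mathscr{D}_X[s] f^s)^{\ell} \xrightarrow{\beta} \Ext^{3}(\mathscr{D}_X[s] f^{s+1})^{\ell} \to \Ext^{4}(M)^{\ell} \to 0.
\]

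By Theorem \ref{thm - intro - spectral sequence dual} under the shifts $s \mapsto s+1$ and $s \mapsto s+2$ (cf.\ Remark \ref{rmk - formal substitution is ok}), the two middle $\Ext^{3}$'s sit in short exact sequences with submodules $\mathscr{D}_X[s] f^{-s-1}$, $\mathscr{D}_X[s] f^{-s-2}$ and quotients $L_{f,s+1}$, $L_{f,s+2}$. I would then verify that $\beta$ is compatible with these presentations: on submodules it restricts to the natural ``multiplication by $f$'' inclusion $\mathscr{D}_X[s] f^{-s-1} \hookrightarrow \mathscr{D}_X[s] f^{-s-2}$, and it descends to a map $\phi \colon L_{f,s+1} \to L_{f,s+2}$ on quotients. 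The snake lemma then produces the six-term exact sequence
\[
0 \to \Ext^{3}(M)^{\ell} \to \ker \phi \to M' \to \Ext^{4}(M)^{\ell} \to \operatorname{coker} \phi \to 0,
\]
where $M' := \mathscr{D}_X[s] f^{-s-2}/\mathscr{D}_X[s] f^{-s-1}$.

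Observe that $M'$ is $M$ under the formal substitution $s \mapsto -s - 2$, so its $\mathbb{C}[s]$-annihilator is $(b_f(-s-2))$, with zero locus $\sigma(Z(b_f(s)))$ where $\sigma(\alpha) := -\alpha - 2$. By Lemma \ref{lemma - b-function and b-function of dual}, $Z(b_f(s))$ agrees with the zero locus of the $b$-function of $\Ext^{4}(M)^{\ell}$. Since $\ker \phi$ is a submodule of $L_{f,s+1}$ and $\operatorname{coker}\phi$ a quotient of $L_{f,s+2}$, the zero loci of their $b$-functions both lie inside $Z(B(L_{f,s+1})) \cup Z(B(L_{f,s+2})) = \Xi_{f}$ via direct comparison to \eqref{eqn - thm - intro - localized symmetry of BS poly, statement 1}. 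Splitting the six-term sequence into three short exact sequences and using that zero loci of $\mathbb{C}[s]$-annihilators are additive over such sequences, one obtains
\[
Z(b_f(s)) \subseteq Z(b_f(-s-2)) \cup \Xi_f \quad \text{and} \quad Z(b_f(-s-2)) \subseteq Z(b_f(s)) \cup \Xi_f.
\]
Together with the $\sigma$-invariance of $\Xi_f$ (which reduces to the Gorenstein-type symmetry $t \leftrightarrow 3\wdeg(f) - 2\sum w_i - t$ on $\wdeg(H_{\mathfrak{m}}^{0}(R/(\partial f)))$), these force the desired equivalence $\alpha \in Z(b_f(s)) \setminus \Xi_f \iff -\alpha - 2 \in Z(b_f(s)) \setminus \Xi_f$.

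The main obstacle will be the functoriality claim in the second paragraph: verifying that, under the identifications of Theorem \ref{thm - intro - spectral sequence dual}, the map $\beta$ really does restrict to ``multiplication by $f$'' on the submodule side. This requires unpacking Definition \ref{def - the cokernel, ext dual object} and tracing chain-level lifts of the $f$-multiplication map across the double complex used in the proof of Theorem \ref{thm - intro - spectral sequence dual}. A secondary obstacle is confirming the $\sigma$-invariance of $\Xi_f$, automatic in the isolated singularity case via Gorenstein-ness of the Milnor algebra but possibly requiring extra care in the general locally quasi-homogeneous setting.
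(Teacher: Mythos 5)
Your snake-lemma approach is a genuinely different route from the paper's. The paper localizes $\mathbb{C}[s]$ at the multiplicative set $S$ generated by the linear factors of $B(L_{f,s+1})\cdot B(L_{f,s+2})$, killing both $L_f$-modules outright. After this, the short exact sequence of Theorem \ref{thm - intro - spectral sequence dual} collapses to the clean identification $\Ext^3_{\mathscr{D}_T}(\mathscr{D}_T f^{\alpha s + \beta - 1}, \mathscr{D}_T)^\ell \simeq \mathscr{D}_T f^{-(\alpha s + \beta)}$, the localized long exact $\Ext$-sequence identifies the dual of $\mathscr{D}_T f^s / \mathscr{D}_T f^{s+1}$ with $\mathscr{D}_T f^{-s-2}/\mathscr{D}_T f^{-s-1}$ directly, and Lemma \ref{lemma - localization plays well with b-functions} transfers $b$-function zero loci back. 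No snake lemma, no decomposition into three short exact sequences, no separate bookkeeping of $\ker\phi$ and $\operatorname{coker}\phi$. By contrast your argument stays in $\mathscr{D}_X[s]$ throughout and uses the $L_f$-terms explicitly; this is more concrete and makes visible exactly where the obstruction to symmetry lives, but the payoff is a pair of inclusions $Z(b_f(s)) \subseteq \sigma(Z(b_f(s))) \cup Z(B(L_{f,s+2}))$ and its mirror, which is weaker until you feed in extra structure.

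Both obstacles you flag are real, and they are the two places your argument does not close. For the functoriality claim, the paper's Proposition (stating that $\im\beta \subseteq \mathscr{D}_X[s]f^{-s-2}$, proved via the chain map of Proposition on the double complexes) immediately forces your quotient map $\phi$ to be the zero map, so you do not actually need to analyze $\phi$. But you still need the restricted map $\gamma = \beta|_{\mathscr{D}_X[s]f^{-s-1}}$ to be the natural inclusion in order to know $\operatorname{coker}\gamma \simeq \mathscr{D}_X[s]f^{-s-2}/\mathscr{D}_X[s]f^{-s-1}$; this is not supplied by the image containment alone and requires genuinely tracing the lift $\cdot f$ of Proposition 7.1 through the duality. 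The localization argument sidesteps this because after localizing the $\Ext^3$'s become cyclic and the map is pinned by its effect on the generator. For the $\sigma$-invariance of $\Xi_f$, your caution is well placed: this reduces to the symmetry $t \leftrightarrow 3\wdeg(f) - 2\sum w_i - t$ on $\wdeg(H^0_{\mathfrak{m}}(R/(\partial f)))$, and the references the paper cites establish this only for honest homogeneity ($\mathbf{w} = \mathbf{1}$); supplying the weighted version is left implicit. Note also that the middle module is $4$-Cohen--Macaulay (as used in Theorem \ref{thm - intro - new roots of BS-poly}), so $\Ext^3(M)^\ell = 0$ and your six-term sequence is really five-term; this simplifies the additivity bookkeeping slightly.
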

Using Theorem \ref{thm - intro - new roots of BS-poly} and Theorem \ref{thm - intro - localized symmetry of BS poly} we compute $Z(b_{f}(s)) \cap (-3,-2]$ explicitly (Corollary \ref{cor - small roots BS poly}) and characterize exactly the weights $\lambda \in (-\infty, 0]$ for which a twisted Logarithmic Comparison Theorem holds (Corollary \ref{cor - twisted LCT}). Both Corollaries state their results in terms of Milnor algebra data.

If we restrict to $f$ homogeneous and locally quasi-homogeneous, then $H_{\mathfrak{m}}^{0}(R / (\partial f))$ enjoys nice degree properties: symmetry of its non-vanishing degrees about $(3 \deg(f) - 6)/2$ \cite[Theorem 4.7]{StratenDucoWarmtGorensteinDuality}, \cite[Theorem 3.4]{LocalCohomologyJacobianRing}; unimodality of its degree sequence \cite[Theorem 4.1]{LefschetzAlmostCompleteIntersections}, \cite[Section 2]{BrennerKaidSyzygyBundles}. Consequently we can prove the zeroes of the Bernstein--Sato polynomial can be reconstructed entirely from the minimal degree for which $H_{\mathfrak{m}}^{0}(R / (\partial f))$ does not vanish, $\deg(f)$, and the zeroes in $(-1,0)$. Theorem \ref{thm - homogeneous, BS poly description} says

\begin{theorem} \label{thm - intro - homogeneous, BS poly description}
    Let $f \in R = \mathbb{C}[x_{1}, x_{2}, x_{3}]$ be reduced, homogeneous, and locally quasi-homogeneous. With $\tau = \min \{t \mid t \in \deg H_{\mathfrak{m}}^{0} (R / (\partial f))\}$, set 
    \begin{equation*}
        \Upsilon_{f} = \frac{1}{\deg(f)} \cdot \left( \mathbb{Z} \cap [- 3 \deg(f) + \tau + 3, - (\tau + 3)] \right).
    \end{equation*}
    Moreover, let $\sigma : \mathbb{R} \to \mathbb{R}$ the involution $\alpha \mapsto - 2 - \alpha$ about $-1$. Then
    \begin{enumerate}[label=(\alph*)]
        \item $ \Upsilon_{f} \subseteq Z(b_{f}(s))$
        \item $\beta \in Z(b_{f}(s)) \setminus (\Upsilon_{f} \cup \sigma(\Upsilon_{f})) \iff \sigma (\beta) \in Z(b_{f}(s)) \setminus (\Upsilon_{f} \cup \sigma(\Upsilon_{f})) $
    \end{enumerate}
   And we have the taxonomy:
    \begin{align} \label{eqn - thm - intro - homogeneous, BS poly description, statement 2}
        \beta \in Z(b_{f}(s)) \cap (-3, -2] \iff &\beta \in \Upsilon_{f} \cap (-3,-2]; \\
        \beta \in Z(b_{f}(s)) \cap (-2, -1) \iff &\beta \in \Upsilon_{f} \cap (-2, -1) \nonumber \\
        \enspace &\text{ and/or } \sigma(\beta) \in Z(b_{f}(s)) \cap (-1, 0); \nonumber \\
        \beta \in Z(b_{f}(s)) \cap [-1, 0) \implies &\sigma(\beta) \in Z(b_{f}(s)) \cap (-2,-1]. \nonumber 
    \end{align}
     Hence $Z(b_{f}(s))$ is determined by
     \[
     \tau, \deg(f), \text{ and } \big[ Z(b_{f}(s)) \cap [-1,0) \big] \setminus \Upsilon_{f} \quad  (\Upsilon_{f} \text{ is determined by $\tau$ and $\deg(f)$}.)
     \]
\end{theorem}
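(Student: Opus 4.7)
The plan is to leverage three inputs: (a) Theorem \ref{thm - intro - new roots of BS-poly}, which supplies the ``combinatorial'' roots $\{-(t+3)/\deg(f) : t \in \deg H_{\mathfrak{m}}^{0}(R / (\partial f))\} \subseteq Z(b_{f}(s))$; (b) the partial symmetry of Theorem \ref{thm - intro - localized symmetry of BS poly}; and (c) the cited graded-algebra facts that the Hilbert function of $H_{\mathfrak{m}}^{0}(R / (\partial f))$ for homogeneous $f$ in three variables is unimodal and symmetric about $(3\deg(f) - 6)/2$.

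First I would extract the combinatorial backbone. Symmetry forces the maximal nonvanishing degree to be $3\deg(f) - 6 - \tau$, and a unimodal nonnegative sequence that is positive at both endpoints of an interval must be positive at every integer in between. Hence the nonvanishing degrees are exactly $[\tau, 3\deg(f) - 6 - \tau] \cap \mathbb{Z}$. Feeding this into Theorem \ref{thm - intro - new roots of BS-poly} (with $\sum w_{i} = 3$) reproduces $\Upsilon_{f}$ verbatim, proving (a). For (b), I compute the obstruction set $\Xi_{f}$ of Theorem \ref{thm - intro - localized symmetry of BS poly} on the same interval: its first piece is exactly $\Upsilon_{f}$, and its second piece $\{(-(t+3) + \deg(f))/\deg(f)\}$ becomes $\sigma(\Upsilon_{f})$ after the reindexing $t \leftrightarrow 3\deg(f) - 6 - t$, which is a bijection on the interval. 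Thus $\Xi_{f} = \Upsilon_{f} \cup \sigma(\Upsilon_{f})$ and (b) drops out of Theorem \ref{thm - intro - localized symmetry of BS poly}.

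The taxonomy then combines (a), (b), Kashiwara's bound $Z(b_{f}(s)) \subseteq (-3, 0)$, and short interval arithmetic on $\Upsilon_{f}$ and $\sigma(\Upsilon_{f})$. The containments $\Upsilon_{f} \subseteq (-3, 0)$ and $\sigma(\Upsilon_{f}) \subseteq (-2, 1)$ force the first taxonomic line, since any $\beta \in Z(b_{f}(s)) \cap (-3, -2] \setminus \Upsilon_{f}$ would need to partner via (b) to some element of $[0, 1) \cap Z(b_{f}(s)) = \emptyset$. For the second and third lines the only delicate case is when $\sigma(\gamma) \in \Upsilon_{f}$ with $\sigma(\gamma) = -(t+3)/\deg(f)$ and $0 \leq t \leq \deg(f) - 4$: here an elementary check that $t + \tau \leq 2t \leq 2(\deg(f) - 4) \leq 2\deg(f) - 6$ shows $\gamma \in \Upsilon_{f}$ itself, so $\gamma \in Z(b_{f}(s))$ by (a). The final ``data-determination'' assertion is immediate from the taxonomy: $\tau, \deg(f)$ determine $\Upsilon_{f}$ and thus $Z(b_{f}(s)) \cap (-3, -2]$, while the second line recovers $Z(b_{f}(s)) \cap (-2, -1)$ from the supplied $[Z(b_{f}(s)) \cap [-1, 0)] \setminus \Upsilon_{f}$.

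The main obstacle is the combinatorial lemma promoting symmetry plus unimodality of $H_{\mathfrak{m}}^{0}(R / (\partial f))$ to full occupancy of an integer interval --- without that identification neither part (a), nor the clean matching $\Xi_{f} = \Upsilon_{f} \cup \sigma(\Upsilon_{f})$, nor the crisp taxonomy survives. Once it is in hand the rest is elementary case analysis, sensitive mainly to the endpoint behavior at $-2$ and $-1$.
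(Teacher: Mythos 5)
Your proof is correct and follows essentially the same route as the paper: you use symmetry about $(3\deg(f)-6)/2$ plus unimodality of the Hilbert function of $H_{\mathfrak{m}}^{0}(R/(\partial f))$ to identify its nonvanishing degrees with the integer interval $[\tau, 3\deg(f)-6-\tau]$, then translate this through Theorem \ref{thm - intro - new roots of BS-poly} and the identification $\Xi_f = \Upsilon_f \cup \sigma(\Upsilon_f)$ (which is exactly the content of Lemma \ref{lemma - homogeneous, involution}) applied to Theorem \ref{thm - intro - localized symmetry of BS poly}. The paper invokes Corollary \ref{cor - small roots BS poly} for the $(-3,-2]$ line and phrases the delicate case via the interval/unimodal structure of $\Upsilon_f$ rather than your explicit inequality chain $t+\tau \leq 2t \leq 2\deg(f)-6$, but these are the same argument in different clothing.
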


We finish with a \emph{complete} formula for the zeroes of the Bernstein--Sato polynomial of hyperplane arrangements in $\mathbb{C}^{3}$. These zeroes are not combinatorial, i.e. they cannot be determined from the arrangement's intersection lattice alone. The only/first example comes from a well studied pathology: Ziegler's pair of arrangements \cite{ZieglerCombinatorialConstructionLogForms} are two arrangements $h$ and $g$ in $\mathbb{C}^{3}$ with the same combinatorics but where $H_{\mathfrak{m}}^{0}(R / \partial h)_{\deg(h) - 1} \neq 0$ and $H_{\mathfrak{m}}^{0}(R / (\partial g))_{\deg(g) - 1} = 0$. They arise from whether or not the triple points (viewed in $\mathbb{P}^{2}$) do or do not lie on a quadric. Walther \cite[Example 5.10]{uli} showed $(-2 \deg(h) + 2)/\deg(h)$ is in $Z(b_{h}(s))$ but not $Z(b_{g}(s))$.

Our formula shows this is the only type of non-combinatorial behavior possible. We prove every root but $(-2 \deg(f) + 2)/\deg(f)$ is (easily) combinatorially given; $(-2 \deg(f) + 2)/\deg(f)$ is not combinatorial, and whether or not $(-2 \deg(f) + 2)/\deg(f) \in Z(b_{f}(s))$ is determined by a list of equivalent properties including the non-vanishing of $H_{\mathfrak{m}}^{0}(R / (\partial f))_{\deg(f) - 1}$. Theorem \ref{thm - BS poly arrangement} is:

\begin{theorem} \label{thm - intro - BS poly arrangement}
    Let $f \in R = \mathbb{C}[x_{1}, x_{2}, x_{3}]$ define a reduced, central, essential, and indecomposable hyperplane arrangement; let $Z \subseteq \mathbb{P}^{2}$ be its projectivization. For every point $z \in \Sing(Z)$, set $m_{z}$ to be the number of lines in $Z$ containing $z$. Define
    \[
    \CombRoots = \left[ \bigcup_{3 \leq k \leq 2 \deg(f) - 3}  \frac{-k}{\deg(f)}  \right] \cup \left[ \bigcup_{z \in \Sing(Z)} \quad \bigcup_{2 \leq i \leq 2m_{z} - 2}  \frac{-i}{m_{z}}  \right].
    \]
    Then
    \begin{equation} \label{eqn - thm - intro - BS poly arrangement, statement 1}
        Z(b_{f}(s)) = \CombRoots \quad \text{OR} \quad Z(b_{f}(s)) = \CombRoots \enspace \cup \enspace \frac{-2\deg(f) + 2}{\deg(f)}.
    \end{equation}
    That is, $Z(b_{f}(s))$ always contains the combinatorially determined roots $\CombRoots$ and at most one non-combinatorial root $\frac{-2\deg(f) + 2}{\deg(f)}$. 
    
    The presence of the non-combinatorial root is characterized by the following equivalent properties, where $\widetilde{\frac{R}{(\partial f)}}$ is the graded sheafification of $R / (\partial f)$):
    \begin{enumerate}[label=(\alph*)]
        \item $Z(b_{f}(s)) \ni \frac{-2 \deg(f) + 2}{\deg(f)}$;
        \item $[H_{\mathfrak{m}}^{0}(R / (\partial f)]_{\deg(f) - 1} \neq 0$;
        \item $[H_{\mathfrak{m}}^{0}(R / (\partial f))]_{2 \deg(f) - 5} \neq 0$;
        \item $\reg R / (\partial f) = 2 \deg(f) - 5$;
        \item there is the bound on the number of global sections twisted by $2 \deg(f) - 5$:
        \[
        \dim_{\mathbb{C}} \bigg( \Gamma \big( \mathbb{P}^{2}, \widetilde{ \frac{R}{ (\partial f)}} (2 \deg(f) - 5) \big) \bigg) < \dim_{\mathbb{C}} \bigg( [ R / (\partial f)]_{2d - 5} \bigg);
        \]
        \item there is the bound on the number of global sections twisted by $\deg(f) - 1$:
        \[
        \dim_{\mathbb{C}} \bigg( \Gamma \big( \mathbb{P}^{2}, \widetilde{ \frac{R}{ (\partial f)}} (\deg(f) - 1) \big) \bigg) < \dim_{\mathbb{C}} \bigg( [\Der_{R}(-\log_{0} f)]_{\deg(f) -2} \bigg) + \bigg(\binom{\deg(f) + 1}{2} - 3 \bigg) 
        \]
        \item the hyperplane arrangement $\Var(f)$ is not formal. 
    \end{enumerate}
\end{theorem}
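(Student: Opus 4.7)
The plan is to apply Theorem \ref{thm - intro - homogeneous, BS poly description} as the organizing tool, since a reduced central essential arrangement in $\mathbb{C}^{3}$ is homogeneous and locally quasi-homogeneous (locally a product of linear forms). Writing $d = \deg(f)$, I will identify the block $\{-k/d : 3 \leq k \leq 2d-3\}$ of $\CombRoots$ with $\Upsilon_f$ and its $\sigma$-orbit; produce the singularity block $\{-i/m_z : 2 \leq i \leq 2m_z - 2\}$ from local data at each $z \in \Sing(Z)$; and show via the taxonomy of Theorem \ref{thm - intro - homogeneous, BS poly description} that no further roots can exist except possibly $(-2d+2)/d$.

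For the combinatorial inclusions, I attack the two blocks separately. At each $z \in \Sing(Z)$, an analytic neighborhood of a nonzero point of the fiber over $z$ decomposes as a central $m_z$-line arrangement in $\mathbb{C}^{2}$ times an affine line; the Bernstein--Sato polynomial of a central $m$-line arrangement in $\mathbb{C}^2$ has roots exactly $\{-i/m : 2 \leq i \leq 2m-2\}$ (computable via Proposition \ref{prop - intro - U complex resolves} in dimension two, or by Yano/Walther), and since local $b$-functions divide the global one, these roots persist in $b_f(s)$. For the $-k/d$ block, I combine Theorem \ref{thm - intro - new roots of BS-poly} with the unimodality and $(3d-6)/2$-symmetry of the degree sequence of $H_{\mathfrak{m}}^{0}(R/(\partial f))$ (valid for an indecomposable homogeneous locally quasi-homogeneous divisor): every integer between $\tau$ and $3d-6-\tau$ is a nonvanishing degree, and known combinatorial bounds on $\tau$ for an essential indecomposable arrangement place this window in the range producing $\{-k/d : 3 \leq k \leq 2d-3\} \setminus \{(-2d+2)/d\}$; closing under the $\sigma$-symmetry of Theorem \ref{thm - intro - localized symmetry of BS poly} supplies the remaining $-k/d$.

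For the converse inclusion, the taxonomy of Theorem \ref{thm - intro - homogeneous, BS poly description} reduces the problem to controlling $Z(b_f(s)) \cap [-1, 0) \setminus \Upsilon_f$. By standard restriction properties of $b$-functions for arrangements (M.\ Saito, Walther), every root of $b_f(s)$ in $(-1, 0)$ is local in origin, hence of the form $-i/m_z$, and the $\sigma$-symmetry of Theorem \ref{thm - intro - localized symmetry of BS poly} confines any remaining non-combinatorial mass to $\{(-2d+2)/d\}$. For the chain of equivalences, Theorem \ref{thm - intro - new roots of BS-poly} immediately gives (b) $\Rightarrow$ (a) by setting $t = 2d-5$, while the upper-bound step gives (a) $\Rightarrow$ (c) through the identification of $\Xi_f$ in Theorem \ref{thm - intro - localized symmetry of BS poly}; the Gorenstein symmetry $t \mapsto 3d-6-t$ on $\deg H_{\mathfrak{m}}^{0}(R/(\partial f))$ then yields (b) $\iff$ (c). Conditions (d), (e), (f) translate (b)/(c) through standard facts: (c) $\iff$ (d) is the identification of $\reg(R/(\partial f))$ with the top nonvanishing degree of $H_{\mathfrak{m}}^{0}(R/(\partial f))$; (c) $\iff$ (e) and (b) $\iff$ (f) follow from the four-term sequence $0 \to H_{\mathfrak{m}}^{0}(R/(\partial f)) \to R/(\partial f) \to \bigoplus_t \Gamma(\mathbb{P}^2, \widetilde{R/(\partial f)}(t)) \to H_{\mathfrak{m}}^{1}(R/(\partial f)) \to 0$ together with the vanishing of $H_{\mathfrak{m}}^{1}$ in degrees $2d-5$ and $d-1$ (a structural fact for Jacobian modules of arrangements). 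Finally, (b) $\iff$ (g) is a formality criterion for arrangements (Yuzvinsky; clarified by Schenck--Tohaneanu and Dimca--Sticlaru): non-formality is equivalent to the appearance of ``unexpected'' degree $d-1$ elements in the saturation of the Jacobian ideal, which is exactly $[H_{\mathfrak{m}}^{0}(R/(\partial f))]_{d-1} \neq 0$.

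The main obstacle is the control of $Z(b_f(s)) \cap (-1, 0)$: without it, Theorem \ref{thm - intro - homogeneous, BS poly description}'s taxonomy cannot be converted into a complete formula. Pinning this down using only the machinery developed in this paper (i.e., without external V-filtration or nearby-cycle input) is the most delicate point, and will likely proceed via analysis of $U_f^{\bullet}$ restricted along each singular stratum of the arrangement.
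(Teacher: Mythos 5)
Your organizing framework is the same as the paper's: both proofs hinge on Theorem~\ref{thm - intro - homogeneous, BS poly description} (the taxonomy of $Z(b_f(s))$ in terms of $\tau$, $\deg(f)$, and $Z(b_f(s)) \cap [-1,0)$), on the symmetry/unimodality of $\deg H_{\mathfrak{m}}^{0}(R/(\partial f))$, and on the regularity bound $\reg R/(\partial f) \leq 2\deg(f)-5$. Your treatment of the equivalences (b)--(g) is also essentially correct in outline: the four-term local cohomology sequence, Gorenstein duality of degrees about $(3\deg(f)-6)/2$, the identification of the top nonvanishing degree with the regularity, and the DiPasquale--Sidman--Traves formality criterion are all exactly what the paper uses (though you flip (b)/(c) at one point; this is harmless since Gorenstein symmetry exchanges them).

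However, you have correctly diagnosed the genuine gap in your own proposal, and it is fatal in the form presented. The paper's proof pivots on an external input: the author's earlier result \cite[Theorem~1.3]{BathCombinatoriallyDetermined}, which asserts precisely that $Z(b_f(s)) \cap [-1,0) = \CombRoots \cap [-1,0)$ for these arrangements. This single citation is what supplies both the forward inclusion $\CombRoots \subseteq Z(b_f(s))$ (after applying $\sigma$ via Theorem~\ref{thm - intro - homogeneous, BS poly description}(a)--(b) and observing $\CombRoots = (\CombRoots \cap [-1,0)) \sqcup \sigma(\CombRoots \cap [-1,0))$) and the reverse containment; the regularity bound then pins the only possible surplus down to $(-2\deg(f)+2)/\deg(f)$. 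Your proposed substitutes for this input do not work as stated: (i) the assertion that ``every root of $b_f(s)$ in $(-1,0)$ is local in origin, hence of the form $-i/m_z$'' is not a standard restriction fact and is not even correct as phrased, since roots $-k/\deg(f)$ with $3 \leq k < \deg(f)$ also lie in $(-1,0)$ and come from the origin stratum itself; (ii) ``known combinatorial bounds on $\tau$'' placing the nonvanishing degree window in the right range is not something that appears in the paper or is established elsewhere in it; and (iii) the closing sentence about analysing $U_f^\bullet$ along strata is an acknowledgment rather than an argument. Without a proof or a citation of the $[-1,0)$-determination, the derivation of the complete formula \eqref{eqn - thm - intro - BS poly arrangement, statement 1} does not go through.
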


\noindent Here: (d) refers to Castelnuovo--Mumford regularity; (f) refers to the logarithmic derivations (Definition \ref{def - log derivations}); the formality property (g) (Definition \ref{def - formality}) is well-studied in arrangement theory, cf. \cite{FalkRandellHomotopyTheoryArrangements}, \cite{BrandtTeraoFreeArrangementsRelationSpaces}, \cite{TohvaneanuTopologicalFormalArrangements}, \cite{DipasqualeSidmanTravesGeometricAspectsJacobian}. Note that Theorem \ref{thm - intro - BS poly arrangement} answers \cite[Question 8.8]{DipasqualeSidmanTravesGeometricAspectsJacobian}. 

\vspace{5mm}

We equip the reader with a roadmap. Section 2 is mostly review of logarithmic data in the case of $\mathbb{C}^{3}$-divisors as well as translating this data into Milnor Algebra data. Section 3 introduces the $\mathscr{D}_{X}[s]$-module constructions we need, discussing duality, side-changing, as well as the Bernstein--Sato polynomial. Preliminaries done, we begin our quest to compute the $\mathscr{D}_{X}[s]$-dual of $\mathscr{D}_{X}[s] f^{s}$ for our class of divisors. In Section 4 we define (Definition \ref{def - the Ur complex}) the complex $U_{f}^{\bullet}$, show (Proposition \ref{prop - U complex resolves}) it is a (not necessarily projective) resolution of $\mathscr{D}_{X}[s] f^{s}$, construct a free resolution (Proposition \ref{prop - total complex free resolution}), and begin the spectral sequence study of the resultant dual complex (Lemma \ref{lem - spectral sequence dual, page 2}). This free resolution involves choosing lifts of certain maps; Section 5 is devoted to constructing nice choices of lifts. In Section 6 we take one of these nice lifts, dualize, and compute a certain cokernel as well as its $b$-function. This is the aforementioned $L_{f}$ module (Definition \ref{def - the cokernel, ext dual object}, Proposition \ref{prop - funny b-function}.) In Section 7 we use all this set-up to compute the $\mathscr{D}_{X}[s]$-dual of $\mathscr{D}_{X}[s] f^{s}$ by our spectral sequence approach. Then we derive the Theorems advertised in the Introduction.

\vspace{5mm}

We would like to thank Guillem Blanco, Nero Budur, Yairon Cid Ruiz, Andreas Hohl, Luis Narv{\'a}ez Macarro, and Uli Walther for all the helpful conversations and insight they provided. We also thank the referee for their thorough comments, which greatly improved the quality of the text.

\section{Homogeneity Assumptions and Logarithmic Data}

In this preliminary section we introduce basic assumptions on our polynomials as well as remind the reader about logarithmic derivations and logarithmic forms. Special attention is paid to the case of divisors in $\mathbb{C}^{3}$: here logarithmic data is particularly simple. We eventually restrict to $\mathbb{C}^{3}$. In general, $f \in R = \mathbb{C}[x_{1}, \dots, x_{n}]$ is nonconstant, $X = \mathbb{C}^{n}$, and $\mathscr{O}_{X}$ is the analytic structure sheaf. When speaking of the divisor $D$ of $f$, we mean the analytic divisor $D \subseteq X$. We differentiate between $f$ or $D$ and the reduced version $f_{\red}$ or $D_{\red}$.

At times we switch between the analytic and algebraic setting, setting up tools for each side-by-side. The eventual philosophy is to translate data of minimal graded free resolutions into the analytic realm. Because locally quasi-homogeneous polynomials (Definition \ref{def - positively weighted homogeneous loc everywhere}) are really an analytic notion, by the end of the manuscript (for example Sections 4, 6, 7) we must work entirely analytically. In the beginning of subsection 2.2, 2.3, and 2.4 we explain whether we are passing algebraic data to analytic data, or the reverse. See also Remark \ref{rmk - why analytic, free outside 0} and Remark \ref{rmk - why analytic, graded Betti}.

\subsection{Homogeneity Assumptions}
We are interested in studying the singular structure of polynomials $f \in R = \mathbb{C}[x_{1}, \dots, x_{n}]$ with strong homogeneity assumptions. First, a generalization of homogeneity:

\begin{define} \label{def - positively weighted homogeneous}
    We say $f \in R = \mathbb{C}[x_{1}, \dots, x_{n}]$ is \emph{quasi-homogeneous}, with weights $\mathbf{w} = \{w_{i}\}_{1 \leq i \leq n}$ when $f$ is non-constant and
    \begin{enumerate}[label=(\roman*)]
        \item $\mathbf{w} \in \mathbb{R}_{>0}^{n}$, i.e. each weight satisfies $w_{i} \in \mathbb{R}_{>0}$;
        \item if $f = \sum_{\mathbf{u}} \alpha_{\textbf{u}} x^{\mathbf{u}}$, where we use multi-index notation, $\textbf{u}$ runs over the monomial support of $f$, and $\alpha_{\mathbf{u}} \in \mathbb{C}^{\star}$, then $\sum_{u_{i} \in \mathbf{u}} w_{i} u_{i} = \sum_{v_{i} \in \mathbf{v}} w_{i} v_{i}$ for vectors $\mathbf{u}, \mathbf{v}$ in the monomial support of $f$.
    \end{enumerate}
    In this case, we say $E = \sum w_{i} x_{i} \partial_{i}$ is the \emph{weighted-homogeneity} of $f$ with respect to the weights $\{w_{i}\}$; we also define the \emph{weighted degree} or $\mathbf{w}$-degree $f$ to be
    \[
    \wdeg(f) = \sum_{u_{k} \in \mathbf{u}} w_{k} u_{k} \text{, for any $\mathbf{u} \in \mathbb{R}_{\geq 0}^{n}$ in the monomial support of } f.
    \]
    An analytic divisor $D \subseteq X$ is quasi-homogeneous at $\mathfrak{x} \in X$ when we may pick a defining equation $f$ of $D$ at $\mathfrak{x}$ such that $f$ is quasi-homogeneous.
\end{define}

\begin{remark} \label{rmk - positively weighted homogeneous} \emph{(Basics about quasi-homogeneous polynomials)}

    \noindent \begin{enumerate}[label=(\alph*)]
    \item A polynomial being homogeneous is the same as it being quasi-homogeneous with respect to the weights $\textbf{w} = \textbf{1}$. In this case we call $E = \sum x_{i} \partial_{i}$ the \emph{Euler derivation} of $f$ and we use $\deg(f)$ instead of $\wdeg(f)$.
    \item If $f$ is quasi-homogeneous with weighted-homogeneity $E$, then $E \bullet f = \wdeg(f) f$.
    \item Quasi-homogeneity is the same as saying: the Newton polytope of $f$ is degenerate and is contained in a hyperplane whose normal vector can be chosen to have strictly positive entries.
    \end{enumerate}
\end{remark}

Because we will be working with quasi-homogeneous divisors/polynomials, we will also be working with non-standard graded polynomial rings $R$. We briefly touch on the key features of such a grading, following the treatment in \cite[Section 8]{CombinatorialCommutativeAlgebra}. 

\begin{define} \label{def - grading}
    Suppose that $f \in R = \mathbb{C}[x_{1}, \dots, x_{n}]$ is quasi-homogeneous with respect to the weights $\mathbf{w} = \{w_{i}\}_{1 \leq i \leq n} \in \mathbb{R}_{>0}^{n}$. We endow $R$ with a \emph{multigrading} by defining a monomial $\alpha_{\textbf{u}} x^{\textbf{u}} \neq 0$ to have \emph{weighted degree} or \emph{$\mathbf{w}$-degree}
\begin{equation} \label{eqn - multigrading, weighted degree}
\wdeg(\alpha_{\textbf{u}} x^{\textbf{u}}) = \sum_{u_{i} \in \textbf{u}} w_{i} u_{i} \in \mathbb{R}_{> 0}.
\end{equation}
    We give elements of $\mathbb{C}$ weighted degree $0$. Such a grading of $R$ is a \emph{weighted grading} or $\mathbf{w}$-grading of $R$, with respect to the weights $\mathbf{w} = \{w_{i}\}$. \emph{Whenever} working with a quasi-homogeneous polynomial/divisor, the graded structure on $R$ is the one induced by $\mathbf{w}$; similarly for any graded $R$-module $M$.

    For a $M$ a $\mathbf{w}$-graded $R$-module: $M_{a} = \{m \in M \mid m \text{ homogeneous,} \wdeg(m) = a \}$; $M(b)$ is the $\mathbf{w}$-graded $R$-module whose underlying module is $M$ and whose grading is defined by $M(b)_{a} = M_{b + a}$. Additionally, we define
\begin{equation} \label{eqn - weighted nonzero degrees of module}
    \wdeg(M) = \{a \in \mathbb{R} \mid M_{a} \setminus 0 \neq \emptyset \}.
\end{equation}
\end{define}

By the positivity assumption on the weights, the only elements of $R$ of weighted degree $0$ are constants. So by \cite[Theorem 8.6, Definition 8.7]{CombinatorialCommutativeAlgebra}, this weighted grading is a \emph{positive multigrading} of $R$. Because graded Nakayama's Lemma still holds in the positive multigraded case (\cite[page 155]{CombinatorialCommutativeAlgebra}), if $M$ is a finitely generated graded $R$ module (with respect to our weighted grading) then $M$ has a minimal graded free resolution with respect to this weighted grading (\cite[Proposition 8.18, Section 8.3]{CombinatorialCommutativeAlgebra}). Later we will make use of these minimal graded free resolutions (e.g. Proposition \ref{prop - graded free resolution of log 1, 2 forms}, Definition \ref{def - fixing minimal graded free res, log 1, 2 forms}) and the consequent fact that the multigraded Betti-numbers are well-defined \cite[Definition 8.22]{CombinatorialCommutativeAlgebra} (e.g. Proposition \ref{prop - computing analytic lifts, dualizing}). We also will use graded local duality in this setting, cf. subsection 2.4.

Finally, we consider an additional homogeneity condition on polynomials stronger than quasi-homogeneity.

\begin{define} \label{def - positively weighted homogeneous loc everywhere}
    Let $f = R = \mathbb{C}[x_{1}, \dots, x_{n}]$ and $D \subseteq X$ the attached analytic divisor. We say $f$ or $D$ is \emph{quasi-homogeneous at} $\mathfrak{x} \in D$ if there exist local coordinates $\{x_{i}\}$ on some analytic open $\mathfrak{x} \in U \subseteq X$ and a quasi-homogeneous polynomial $g$ with respect to these coordinates, such that the germ of $\Var(f)$ at $\mathfrak{x}$ is the same as the germ of $\Var(g)$ at $\mathfrak{x}$. We say $f$ or $D$ is \emph{locally quasi-homogeneous} if it is quasi-homogeneous at \emph{all} $\mathfrak{x} \in \Var(f)$. 

    \emph{Whenever} we say $f \in R = \mathbb{C}[x_{1}, \dots, x_{n}]$ is locally quasi-homogeneous we mean $f$ is quasi-homogeneous \emph{and} $f$/$D$ are locally quasi-homogeneous, i.e. we are implicitly assuming $f \in R$ is itself a quasi-homogeneous representative of the divisor germ at $0$.
\end{define}

\begin{remark} \label{rmk - basics pos weighted hom, loc everywhere} \emph{(Basics on locally quasi-homogeneous)}

    \noindent \begin{enumerate}[label=(\alph*)]
        \item Any divisor $D$ is quasi-homogeneous at any $\mathfrak{x} \in D_{\reg}$.
        \item Hyperplane arrangements are locally quasi-homogeneous.
        \item Suppose $Z \subseteq \mathbb{P}^{n-1}$ has isolated, quasi-homogeneous singularities. Then the affine cone $C(Z) \subseteq \mathbb{C}^{n}$ is locally quasi-homogeneous. Away from $0 \in \mathbb{C}^{n}$, de-homogeneization demonstrates this.
        \item Locally quasi-homogeneous divisors are Saito-holonomic, e.g. \cite[Lemma 1.6]{BathSaitoTLCT}.
        \item A quasi-homogeneous polynomial need not be locally quasi-homogeneous. For example, the homogeneous $f = x^5 z + x^3 y^3 + y^5 z \in \mathbb{C}[x,y,z]$ is not quasi-homogeneous on the punctured $z$-axis $\{(0, 0, z) \in \mathbb{C}^3 \mid z \neq 0\}$, as showing immediately following \cite[Definition 1.3]{CohomologyComplementFreeDivisor}. Here is an alternative argument. Along the punctured $z$-axis, and after a change of coordinates (Remark \ref{rmk - basics pos weighted hom, loc everywhere}.(c)), the germ admits a defining equation $a^5 + a^3 b^3 + b^5 \in \mathbb{C}[a, b, c]$. The hypersurface $\Var(a^5 + a^3 b^3 + b^5) \subseteq \mathbb{C}^2$ is not quasi-homogeneous at $0$, since its Milnor and Tjuriana numbers differ, cf. \cite{KSaitoQuasihomogene}. So \cite[Proposition 2.4]{CohomologyComplementFreeDivisor} implies that $f$ itself is not quasi-homogeneous along the punctured $z$-axis.
        \item The literature sometimes refers to ``locally quasi-homogeneous'' by the names ``strongly quasi-homogeneous'' (\cite{CohomologyComplementFreeDivisor}) or ``positively weighted homogeneous locally everywhere'' (\cite{BathSaitoTLCT}). Definition \ref{def - positively weighted homogeneous loc everywhere} is a stronger form of strongly Euler-homogeneous (locally everywhere), cf. \cite{DualityApproachSymmetryBSpolys}, \cite{uli}.
    \end{enumerate}
\end{remark}

\begin{convention} \label{convention - main hypotheses}
    Whenever we refer to our ``main hypotheses'' on a polynomial $f$, we are assuming $f \in R = \mathbb{C}[x_{1}, x_{2}, x_{3}]$ is reduced and locally quasi-homogeneous. Also, whenever $f$ is honestly homogeneous ($\mathbf{w} = \mathbf{1}$) we substitute $\deg$ for $\wdeg$.
\end{convention}

\subsection{Logarithmic Derivations}

Our analysis of the singularities of polynomials depends on: logarithmic data; transferring between the algebraic and analytic setting. We first introduce \emph{logarithmic derivations}. These are the sygyzies of $\{\partial_{1} \bullet f, \dots, \partial_{n} \bullet f, f \}$. They induce a \emph{logarithmic stratification} we need later, cf. Remark \ref{rmk - log stratification}. We end with Proposition \ref{prop - weighted homogeneous, pdim at most 1, new}: this describes, under the used assumptions, the highest nonvanishing $\Ext$ modules of the logarithmic derivations in the algebraic category.

Note that locally quasi-homogeneous divisors are defined by the existence of certain ``nice'' analytic local coordinates locally everywhere. Hence, there is subtlety in translating ``obvious'' analytic results about locally quasi-homogeneous polynomials into the algebraic category. Proposition \ref{prop - weighted homogeneous, pdim at most 1, new}, one of the main goals of this subsection, exemplifies this by establishing an algebraic result that while immediate in the analytic set-up, demands a strategically different proof.

\begin{define} \label{def - log derivations}
    Fix $f \in R = \mathbb{C}[x_{1}, \dots, x_{n}]$. The $R$-module of \emph{(global algebraic) logarithmic derivations} along $f$ is 
    \[
    \Der_{R}(-\log f) = \{ \delta \in \Der_{R} \mid \delta \bullet f \in R \cdot (f) \}.
    \]
    We say $f$ is \emph{free} when $\Der_{R}(-\log f)$ is a free $R$-module. Also define
    \[
    \Der_{R}(-\log_{0} f) = \{ \delta \in \Der_{R}(-\log f) \mid \delta \bullet f = 0\}.
    \]
    The $\mathscr{O}_{X}$-module of \emph{(analytic) logarithmic derivations} along $f$ is
    \[
    \Der_{\mathscr{O}_{X}}(-\log f) = \{ \delta \in \Der_{X} \mid \delta \bullet f \in \mathscr{O}_{X} \cdot (f) \}.
    \]
    In the analytic setting we say $f$ is \emph{free} when $\Der_{\mathscr{O}_{X, \mathfrak{x}}}(-\log f)$ is a free $\mathscr{O}_{X,\mathfrak{x}}$-module for all $\mathfrak{x} \in X$. We define the analytic $\Der_{\mathscr{O}_{X,\mathfrak{x}}}(\log_{0} f)$ symmetrically.
\end{define}

\begin{remark} \label{rmk - grading log der} \emph{(Grading $\Der_{R}(\log f)$)} If $f \in R$ is quasi-homogeneous with weights $\mathbf{w}$, then $\Der_{R}(-\log f)$ and $\Der_{R}(-\log_{0} f)$ are also $\mathbf{w}$-graded modules. Indeed, $\partial_{i} \bullet f$ is zero or homogeneous of $\textbf{w}$-degree $\wdeg(f) - w_{i}$. Our convention is to grade $\Der_{R}(-\log f) \subseteq \Der_{R}$ so that $\partial_{x_{i}}$ has weight $-w_{i}$, i.e. $\Der_{R} = R(w_{1}) \oplus \cdots \oplus R(w_{n})$. For example, the weighted-homogeneity $E = \sum_{i} w_{i} x_{i} \partial_{i}$ is homogeneous of degree $0$. Both $\Der_{R}(-\log f)$ and $\Der_{R}(-\log_{0} f)$ inherit this convention.
\end{remark}

\begin{remark} \label{rmk - basics algebraic log derivations} \emph{(Basics on Logarithmic Derivations)}
\noindent
    \begin{enumerate}[label=(\alph*)]
        \item As $f$ and all its partial derivatives are polynomials, one obtains $\Der_{\mathscr{O}_{X}}(-\log f)$ by: starting with $\Der_{R}(-\log f)$; algebraically sheafifying to $\widetilde{\Der_{R}(-\log f)}$; applying the analytification functor.
        \item By the product rule, both $\Der_{R}(-\log f)$ and $\Der_{\mathscr{O}_{X}}(-\log f)$ are independent of choice of defining equation of the divisor of $f$.
        \item Suppose that $f$ is quasi-homogeneous with $E$ its weighted-homogeneity. Then $\Der_{R}(-\log_{0} f) \oplus R \cdot E.$ This direct sum decomposition and $\Der_{R}(-\log_{0} f)$ may depend on choice of defining equation of the divisor $D$ of $f$, cf. \cite[Remark 2.10]{uli}.
        \item Suppose that $f \in R = \mathbb{C}[x_{1}, x_{2}, x_{3}]$. We know that $\Der_{R}(-\log f)$ is a reflexive $R$-module, cf. \cite[pg 268]{SaitoLogarithmicForms}. This implies its $R$-depth is $\geq 2$ which by Auslander-Buchsbaum means $\pdim \Der_{R}(-\log f) \leq 1$. The same is true for $\Der_{\mathscr{O}_{X}}(-\log f)$: by loc. cit. for any $\mathfrak{x} \in X$, the stalk is a reflexive $\mathscr{O}_{X, \mathfrak{x}}$-module and hence $\pdim \Der_{\mathscr{O}_{X,\mathfrak{x}}}(-\log f) \leq 1$. 
        \item The preceding item implies that if $g \in A = \mathbb{C}[x_{1}, x_{2}]$, then $\Der_{A}(-\log g)$ is a free module and $g$ is free. The same is true when considering analytic logarithmic derivations along $g$ where $g$ a global function on a two-dimensional complex manifold. 
        \item By the product rule, logarithmic derivations in both the analytic and algebraic setting, depend only on the reduced $f_{\red}$ or $D_{\red}$, e.g. $\Der_{R}(-\log f) = \Der_{R}(-\log f_{\red})$.
    \end{enumerate}
\end{remark}

Below we introduce the \emph{logarithmic stratification} of $\Var(f)$. When this stratification is \emph{locally finite} (we call such divisors \emph{Saito-holonomic}) induction is viable. We use this in Proposition \ref{prop - U complex resolves}. This often requires the analytic setting, though note Remark \ref{rmk - basics pos weighted hom, loc everywhere}.(c).

\begin{remark} \label{rmk - log stratification} \emph{(Basics on Logarithmic Stratifications)}
\begin{enumerate}[label=(\alph*)]
    \item The \emph{logarithmic stratification of $X$ along $D$ (or $f$)} is induced by the equivalence relation: $\mathfrak{x} \sim \mathfrak{y}$ if there is an integral curve of a logarithmic derivation passing through $\mathfrak{x}$ and $\mathfrak{y}$. The resulting equivalence classes are \emph{logarithmic strata}; we say $D$ is \emph{Saito-holonomic} if the logarithmic stratification is locally finite. See \cite[Section 3]{SaitoLogarithmicForms} and \cite[Remark 2.6]{uli} for details. By Remark \ref{rmk - basics algebraic log derivations}.(g), the logarithmic stratification of $D$ is the same as of the reduced $D_{\red}$.
    \item $D_{\red, \reg}$ comprises the $n-1$ dimensional logarithmic stratum; $X \setminus D$ the $n$-dimensional logarithmic stratum. Consequently, if $X^{\prime}$ is a two dimensional complex manifold any divisor $D^{\prime} \subseteq X^{\prime}$ is Saito-holonomic.
    \item Suppose that $f$ is a quasi-homogeneous polynomial and $X = \mathbb{C}^{3}$. Let us show $f$ is Saito-holonomic. Let $E = \sum w_{i} x_{i} \partial_{i}$ be the weighted-homogeneity. As $E$ vanishes only at the origin, the only zero dimensional logarithmic stratum will be the origin. It follows that for all $\mathfrak{x} \neq 0$, there is a coordinate system inducing a local (at $\mathfrak{x})$) analytic isomorphism 
    \begin{equation} \label{eqn - pos weighted homogeneous, away from origin, local product}
        (X, D, \mathfrak{x}) \simeq (\mathbb{C}, \mathbb{C}, 0) \times (X^{\prime}, D^{\prime}, \mathfrak{x}^{\prime}).
    \end{equation}
    Here: $X^{\prime}$ is a two dimensional complex manifold and $D^{\prime} \subseteq X^{\prime}$ is a divisor germ at $\mathfrak{x}^{\prime}$. As Saito-holonomicity is preserved along such products, it follows that $f$ is Saito-holonomic. (See \cite[(3.4)-(3.6)]{SaitoLogarithmicForms} or \cite[Remark 1.6(a)]{BathSaitoTLCT} for details, especially about the analytic isomorphism.)
    \item Stay in the setting of the preceding item. The analytic isomorphism \eqref{eqn - pos weighted homogeneous, away from origin, local product} shows that at $\mathfrak{x} \neq 0$ we may realize $\Der_{\mathscr{O}_{X,\mathfrak{x}}}(\log f)$ as a tensor product of $\Der_{\mathscr{O}_{\mathbb{C}, 0}}$ and $\Der_{\mathscr{O}_{X^{\prime}, x^{\prime}}}(- \log f^{\prime})$, where $f^{\prime}$ is a defining equation for the germ $(X^{\prime}, D^{\prime}, \mathfrak{x}^{\prime})$. We deduce: if $f$ is a quasi-homogeneous polynomial and $X = \mathbb{C}^{3}$, then $\Der_{\mathscr{O}_{X}}(-\log f)$ is free away from the origin. 
\end{enumerate} 
\end{remark}

We conclude with a simple observation about $\Ext_{R}^{1}(\Der_{R}(-\log f), R)$ provided that $f \in R = \mathbb{C}[x_{1}, x_{2}, x_{3}]$ is quasi-homogeneous. Note that the $R$-modules $\Ext_R^{\geq 2}(\Der_R(-\log f), R)$ vanish by Remark \ref{rmk - basics algebraic log derivations}.(d).

\begin{proposition} \label{prop - weighted homogeneous, pdim at most 1, new}
    Suppose that $f \in R = \mathbb{C}[x_{1}, x_{2}, x_{3}]$ is quasi-homogeneous. Then one of the following occurs:
    \begin{enumerate}[label=(\alph*)]
        \item $f$ is free, that is, $\Der_{R}(-\log f)$ is a free $R$-module;
        \item $\Ext_{R}^{1}(\Der_{R}(-\log f), R)$ is nonzero and supported at $0$.
    \end{enumerate}
\end{proposition}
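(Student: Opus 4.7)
The starting point is Remark \ref{rmk - basics algebraic log derivations}.(d), which gives $\pdim_{R} \Der_{R}(-\log f) \leq 1$. Consequently $\Ext_{R}^{i}(\Der_{R}(-\log f), R) = 0$ for all $i \geq 2$, and the dichotomy of the proposition is governed entirely by whether $\Ext_{R}^{1}(\Der_{R}(-\log f), R)$ vanishes.

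If $\Ext_{R}^{1}(\Der_{R}(-\log f), R) = 0$, then $\pdim_{R} \Der_{R}(-\log f) = 0$, i.e.\ $\Der_{R}(-\log f)$ is a finitely generated projective $R$-module. Since it is $\mathbf{w}$-graded (Remark \ref{rmk - grading log der}), a minimal graded free presentation together with graded Nakayama forces it to be free; alternatively one invokes Quillen--Suslin. This is case (a).

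Suppose instead that $\Ext_{R}^{1}(\Der_{R}(-\log f), R) \neq 0$. We must show its support equals $\{0\}$, equivalently that it vanishes after localizing at every prime $\mathfrak{p} \neq \mathfrak{m} = (x_{1}, x_{2}, x_{3})$. Since any such $\mathfrak{p}$ lies inside some maximal ideal $\mathfrak{m}_{\mathfrak{x}}$ with $\mathfrak{x} \in X \setminus \{0\}$ and further localization preserves vanishing, it suffices to prove the vanishing at every such $\mathfrak{m}_{\mathfrak{x}}$. By Remark \ref{rmk - log stratification}.(d), the quasi-homogeneity of $f$ on $\mathbb{C}^{3}$ yields that $\Der_{\mathscr{O}_{X,\mathfrak{x}}}(-\log f)$ is a free $\mathscr{O}_{X,\mathfrak{x}}$-module, and in particular $\Ext_{\mathscr{O}_{X,\mathfrak{x}}}^{1}(\Der_{\mathscr{O}_{X,\mathfrak{x}}}(-\log f), \mathscr{O}_{X,\mathfrak{x}}) = 0$. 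Since $\Der_{R}(-\log f)$ is finitely presented and $\mathscr{O}_{X, \mathfrak{x}}$ is faithfully flat over $R_{\mathfrak{m}_{\mathfrak{x}}}$, flat base change commutes with $\Ext$, so
\[
\Ext_{R}^{1}(\Der_{R}(-\log f), R)_{\mathfrak{m}_{\mathfrak{x}}} \otimes_{R_{\mathfrak{m}_{\mathfrak{x}}}} \mathscr{O}_{X, \mathfrak{x}} \;=\; \Ext_{\mathscr{O}_{X,\mathfrak{x}}}^{1}(\Der_{\mathscr{O}_{X,\mathfrak{x}}}(-\log f), \mathscr{O}_{X,\mathfrak{x}}) \;=\; 0,
\]
and faithful flatness forces $\Ext_{R}^{1}(\Der_{R}(-\log f), R)_{\mathfrak{m}_{\mathfrak{x}}} = 0$, as desired.

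The only delicate point is the transfer of analytic freeness to the corresponding vanishing of $\Ext^{1}$ at algebraic localizations via faithful flatness of $\mathscr{O}_{X,\mathfrak{x}}$ over $R_{\mathfrak{m}_{\mathfrak{x}}}$; this is standard but worth flagging since it is the conduit through which the quasi-homogeneity hypothesis (via Remark \ref{rmk - log stratification}) enters the algebraic conclusion.
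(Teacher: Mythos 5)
Your proof is correct, but it takes a genuinely different route from the paper's — and in fact the route the paper explicitly \emph{declines}. Your argument transfers the freeness of $\Der_{\mathscr{O}_{X,\mathfrak{x}}}(-\log f)$ away from the origin (which comes from the analytic local product structure \eqref{eqn - pos weighted homogeneous, away from origin, local product} via Remark \ref{rmk - log stratification}.(d)) back to the algebraic $\Ext^{1}$ via faithful flatness of $\mathscr{O}_{X,\mathfrak{x}}$ over $R_{\mathfrak{m}_{\mathfrak{x}}}$ and flat base change for $\Ext$ of finitely presented modules; all steps are sound, including the reduction from arbitrary primes $\mathfrak{p}\neq\mathfrak{m}$ to maximal ideals $\mathfrak{m}_{\mathfrak{x}}$ with $\mathfrak{x}\neq 0$ (valid since $R$ is Jacobson). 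The paper's own proof is purely algebraic and shorter: from the short exact sequence
\begin{equation*}
0 \to \Der_R(-\log f) \to \Der_R \oplus R \to (\partial_1 \bullet f,\, \partial_2 \bullet f,\, \partial_3 \bullet f,\, f) \to 0
\end{equation*}
one gets $\Ext_R^{1}(\Der_R(-\log f), R) \simeq \Ext_R^{3}(R/(\partial_1\bullet f,\ldots,\partial_3\bullet f, f), R)$, and over the 3-dimensional $R$ this $\Ext^{3}$ is automatically zero or zero-dimensionally supported; the positive grading then forces that support into the irrelevant maximal ideal. Indeed, Remark \ref{rmk - why analytic, free outside 0} says exactly that the analytic variant of the statement is immediate from freeness away from $0$, but that the paper prefers the homological argument so as not to invoke the analytic product structure, which "need not have an algebraic counterpart." Your proof, by contrast, does invoke the analytic theory and then pulls the conclusion back algebraically; the paper's approach avoids the analytic category entirely and also gives the useful identification with $\Ext^{3}$ of a Jacobian-type quotient, which feeds into Proposition \ref{prop - weighted data dictionary, ext log 1 forms, local cohomology milnor algebra}.
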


\begin{proof}
   Consider the short exact sequence of $R$-modules 
   \begin{equation*}
       0 \to \Der_R(-\log f) \to \Der_R \oplus R \to (\partial_1 \bullet f, \partial_2 \bullet f, \partial_3 \bullet f, f) \to 0
   \end{equation*}
   whose last nontrivial map is given by $(\partial_i \oplus 0) \mapsto \partial_i \bullet f$ and $(0 \oplus 1) \mapsto f$. Thus,
   \begin{equation} \label{eqn - ext identification, for referee}
       \Ext_R^1(\Der_R(-\log f), R) \simeq \Ext_R^3(R / (\partial_1 \bullet f, \partial_2 \bullet f, \partial_3 \bullet f, f), R)
   \end{equation}
   Since $\dim R = 3$, the rightmost $\Ext$-module of \eqref{eqn - ext identification, for referee} is either zero or has zero dimensional support. By Remark \ref{rmk - basics algebraic log derivations}.(d), the vanishing of the modules in \eqref{eqn - ext identification, for referee} is equivalent to condition (a). 

   So if (a) does not hold, $\Ext_R^1(\Der_R(-\log f), R)$ must be nonzero with zero dimensional support. Since this module is positively graded and the only positively graded maximal ideal is the irrelevant ideal, \cite[Lemma 1.5.6]{BrunsHerzogCohenMacaulayRings} implies (b). 
\end{proof}

\begin{remark} \label{rmk - why analytic, free outside 0} (Algebraic vs analytic category)
The analytic version of Proposition \ref{prop - weighted homogeneous, pdim at most 1, new} is immediate by Remark \ref{rmk - basics algebraic log derivations}.(d). This utilizes the local analytic product structure \eqref{eqn - pos weighted homogeneous, away from origin, local product}, which need not have an algebraic counterpart. The homological argument of Proposition \ref{prop - weighted homogeneous, pdim at most 1, new} avoids invoking the analytic.
\end{remark}


\subsection{Logarithmic Forms and Free Resolutions}

Now we turn to logarithmic differential forms, a generalization of logarithmic derivations introduced in \cite{SaitoLogarithmicForms}. Familiarity in these objects is central to our approach: they are the building box of the linchpin complex $U_{f}^{\bullet}$ of Definition \ref{def - the Ur complex}. Note that $U_f^\bullet$ is defined analytically.

We end with a detailed discussion of case $f \in R = \mathbb{C}[x_{1}, x_{2}, x_{3}]$ quasi-homogeneous. Here log forms are naturally $\mathbf{w}$-graded and we construct explicit free resolutions of logarithmic $1$,$2$-forms in both the algebraic (where its graded) and the analytic setting (Proposition \ref{prop - graded free resolution of log 1, 2 forms}, Definition \ref{def - fixing minimal graded free res, log 1, 2 forms} and Proposition \ref{prop - analytic free res, log 1, 2 forms}, Definition \ref{def - fixing analytic free res, log 1, 2 forms}). These are used throughout Section 5. In Proposition \ref{prop - finite hom gen set of Ext modules}, we use the algebraic graded data to describe a finite generating set of $\Ext_{\mathscr{O}_{X,0}}^{1}(\Omega_{X,0}^{j}(\log f), \mathscr{O}_{X,0})$, for $j = 1,2$, as a $\mathbb{C}$-vector space and attach $\textbf{w}$-degree data to each generator. These numerics will also be used in Section 5 and are key to our understanding of the module $L_{f}$ introduced in Definition \ref{def - the cokernel, ext dual object}.

In the previous subsection, we compared analytic facts to algebraic ones. We concluded with Proposition \ref{prop - weighted homogeneous, pdim at most 1, new}, establishing a fact in the algebraic category whose analytic variant is ``obvious''. Here we reverse course: the main prize is Proposition \ref{prop - finite hom gen set of Ext modules} which describes certain analytic $\mathscr{O}_X$-modules entirely in terms of certain $R$-modules, the latter's structure being relatively straightforward.

First, the objects at hand:

\begin{define} \label{def - log forms}
    For $f \in R = \mathbb{C}[x_{1}, \dots, x_{n}]$ we define the \emph{(global algebraic) logarithmic $p$-forms along $f$} to be
    \[
    \Omega_{R}^{p}(\log f) = \{ \eta \in \frac{1}{f} \Omega_{R}^{p} \mid d(\eta) \in \frac{1}{f} \Omega_{R}^{p+1} \}.
    \]
    Additionally we define
    \[
    \Omega_{R}^{p}(\log_{0} f) = \{ \eta \in \Omega_{R}^{p}(\log f) \mid df/f \wedge \eta = 0\}. 
    \]
    We have similar constructions in the analytic setting. We define the \emph{(analytic) logarithmic $p$-forms along $f$} to be 
    \[
    \Omega_{X}^{p}(\log f) = \{ \eta \in \Omega_{R}^{p}(f) \mid d(\eta) \in \Omega_{R}^{p+1}(f) \}
    \]
    where $\Omega_{R}^{p}(f) = \Omega_{R}^{p}(D) = \Omega_{R}^{p} \otimes_{\mathscr{O}_{X}} \mathscr{O}_{X}(D)$ where $D$ is the divisor defined by $f$. We also define
    \[
    \Omega_{X,\mathfrak{x}}^{p}(\log_{0} f) = \{\eta \in \Omega_{X,\mathfrak{x}}^{p}(\log f) \mid df/f \wedge \eta = 0\}.
    \]
\end{define}

\begin{remark} \label{rmk - basics alg log forms} (Basics on Algebraic Log Forms) Throughout $f \in R$ is quasi-homogeneous with weighted-homogeneity $E$.
    \begin{enumerate}[label=(\alph*)]
        \item For any logarithmic derivation $\delta$ and any log $p$-form $\eta$ the contraction $\iota_{\delta} (\eta)$ of $\eta$ along $\delta$ is a log $p-1$ form, cf. \cite[pg 268]{SaitoLogarithmicForms}. (This does not require $f$ to be quasi-homogeneous.) In particular
        \[
        \iota_{E} \Omega_{R}^{p+1}(\log_{0} f) \subseteq \Omega_{R}^{p}(\log f).
        \]
        \item By arguing as in \cite[Lemma 3.11]{uli}
        \[
        \iota_{E} \Omega_{R}^{p+1}(\log_{0} f) \simeq \Omega_{R}^{p+1}(\log_{0} f),
        \]
        that is, $\iota_{E}$ restricted to $\Omega_{R}^{p+1}(\log_{0} f)$ is an $R$-module isomorphism onto its image. Moreover
        \[
        \Omega_{R}^{p}(\log f) = \Omega_{R}^{p}(\log_{0} f) \oplus \iota_{E} \Omega_{R}^{p+1}(\log_{0} f).
        \]
        \item It is easy to check that $\Omega_{R}^{0}(\log f) = \Omega_{R}^{0}$ and $\Omega_{R}^{n}(\log f) = (1/f) \Omega_{R}^{n}$.
        \item We consider log $1$-forms. By a preceding item
        \[
        \Omega_{R}^{0}(\log f) = \Omega_{R}^{0}(\log_{0} f) \oplus \iota_{E} \Omega_{R}^{1}(\log_{0} f) \simeq \Omega_{R}^{1}(\log_{0} f).
        \]
        This implies two things: there exists a log $1$-form $\eta$ such that $\Omega_{R}^{1}(\log_{0} f) = R \cdot \eta$; contracting the aforementioned $\eta$ along $E$ produces a unit in $R$. As $df/f \wedge df/f = 0$, we may find $g \in R$ so that $df/f = g \eta$. Then $1 = \iota_{E} (df/f) = \iota_{E} (g \eta) = g \iota_{E} \eta$ which forces $g$ to be a unit. That is, $\eta$ equals $df/f$ up to constant and 
        \[
        \Omega_{R}^{1}(\log_{0} f) = R \cdot df/f.
        \]
        \item Here we consider log $n-1$ forms. First of all, there is an isomorphism $\Omega_{R}^{n-1}(\log f) \simeq \Der_{R}(-\log f)$ given by
        \[
        \sum_{i} \frac{a_{i} \widehat{dx_{i}}}{f} \mapsto \sum_{i} (-1)^{i-1} a_{i} \partial_{i}. 
        \]
        We also have the following direct sums and isomorphisms:
        \[
        \Omega_{R}^{n-1}(\log f) = \Omega_{R}^{n-1}(\log f_{0}) \oplus R \cdot \iota_{E} (dx/f) \simeq \Der_{R}(-\log_{0} f) \oplus R \cdot E. 
        \]
        \item If $f$ is reduced, then contraction gives a perfect pairing \cite{SaitoLogarithmicForms}:
        \[
        \Der_{R}(-\log f) \times \Omega_{R}^{1}(\log f) \to \Omega_{R}^{0}(\log f) \simeq R.
        \]
        This holds both algebraically and analytically.
        \item We have not assumed reducedness. See \cite[Appendix A]{BathANote} for non-reduced references as needed. The above perfect pairing exists in modified form without reducedness.
    \end{enumerate}
\end{remark}

\begin{remark} \label{rmk - grading log forms} \emph{(Grading $\Omega_{R}^{p}(\log f)$)} Suppose $f \in R$ is quasi-homogeneous with weights $\mathbf{w}$ and weighted-homogeneity $E = \sum w_{i} x_{i} \partial_{i}$. We may grade $(1/f)\Omega_{R}^{p}$ by declaring $dx_{I} / f$ to have $\textbf{w}$-degree $(- \wdeg(f) + \sum w_{i})$. Since the submodule $\Omega_{R}^{p}(\log f) \subseteq (1/f) \Omega_{R}^{p}$ is characterized by syzygy relations amongst $\{f , \partial_{1} \bullet f, \dots, \partial_{n} \bullet f\}$, all of which are $\textbf{w}$-weighted homogeneous, $\Omega_{R}^{p}(\log f)$ is a graded submodule of $(1/f)\Omega_{R}^{p}.$ Similarly for $\Omega_{R}^{p}(\log_{0} f)$. The grading is compatible with contraction along $E$: if $\eta \in \Omega_{R}^{p}(\log f)$ is $\textbf{w}$-homogeneous, then $\iota_{E}(\eta)$ will be $\textbf{w}$-homogeneous and $\wdeg (\iota_{E}(\eta)) = \wdeg( \eta )$ provided that $\iota_{E}(\eta) \neq 0$.
    
\end{remark}

Now let us start tracking degree data attached to logarithmic forms of quasi-homogeneous divisors in $R = \mathbb{C}[x_{1}, x_{2}, x_{3}]$. This class of polynomials is our focus for the rest of this subsection and the next. The following is a straightforward construction of graded free resolutions; recall they exist for our $\mathbf{w}$-grading as discussed in the first subsection.

\begin{proposition} \label{prop - graded free resolution of log 1, 2 forms}
    Let $f \in R \in \mathbb{C}[x_{1}, x_{2}, x_{3}]$ be quasi-homogeneous with $E$ its weighted-homogeneity. Let 
    \[
    0 \to F_{t} \xrightarrow[]{\beta_{t}} F_{t-1} \to \cdots \to F_{1} \xrightarrow[]{\beta_{1}} F_{0} \xrightarrow[]{\beta_{0}} \Omega_{R}^{2}(\log_{0} f) \to 0
    \]
    be a minimal graded free resolution of $\Omega_{R}^{2}(\log_{0} f)$. Then $t \leq 1$. Moreover:
    \begin{align} \label{eqn - free res log 2-form, homogeneous, dim 3}
    0 \to F_{t} \xrightarrow[]{\beta_{t}} F_{t-1} \to \cdots \to F_{1} &\xrightarrow[]{(\beta_{1}, 0)} F_{0} \oplus R(n-d) \\
    &\xrightarrow[]{(\beta_{0}, \cdot \iota_{E} dx/f)} \Omega_{R}^{2}(\log_{0} f) \oplus R \cdot \iota_{E} dx/f \to 0 \nonumber
    \end{align}
    is a graded free resolution of $\Omega_{R}^{2}(\log f)$;
    \begin{equation} \label{eqn - free res log 1-form, homogeneous dim 3}
    0 \to F_{t} \xrightarrow[]{\beta_{t}} F_{t-1} \cdots \to F_{1} \xrightarrow[]{(\beta_{1}, 0)} F_{0} \oplus R \xrightarrow[]{(\alpha_{0}, \cdot df/f)} \iota_{E}\Omega_{R}^{2}(\log_{0} f) \oplus R \cdot df/f \to 0
    \end{equation}
    is a graded free resolution of $\Omega_{R}^{1}(\log f)$ where $\alpha_{0}(m) = \iota_{E} \beta_{0}(m)$. 
\end{proposition}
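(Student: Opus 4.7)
The plan is essentially an assembly from Remarks 2.6--2.9: split $\Omega_R^p(\log f)$ into $\Omega_R^p(\log_0 f)$ and an explicit free cyclic summand, then glue the hypothesized resolution of $\Omega_R^2(\log_0 f)$ onto the trivial resolution of that free summand. The projective-dimension bound is obtained for free: Remark 2.8(e) (in $n=3$) supplies an $R$-isomorphism $\Omega_R^2(\log f) \simeq \Der_R(-\log f)$, Remark 2.6(d) gives $\pdim_R \Der_R(-\log f) \leq 1$, and Remark 2.8(b) displays $\Omega_R^2(\log_0 f)$ as a direct summand of $\Omega_R^2(\log f)$. Projective dimension being inherited by summands then forces $\pdim_R \Omega_R^2(\log_0 f) \leq 1$, so a minimal graded free resolution terminates in length at most one, i.e.\ $t \leq 1$.

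For \eqref{eqn - free res log 2-form, homogeneous, dim 3}, apply the Remark 2.8(b) decomposition $\Omega_R^2(\log f) = \Omega_R^2(\log_0 f) \oplus \iota_E \Omega_R^3(\log_0 f)$. In $\mathbb{C}^3$ every top-degree form trivially satisfies $df/f \wedge \eta = 0$, so $\Omega_R^3(\log_0 f) = \Omega_R^3(\log f) = R \cdot (dx/f)$ by Remark 2.8(c). Thus the second summand is the free cyclic module $R \cdot \iota_E(dx/f)$, whose generator has $\textbf{w}$-degree $\sum w_i - \wdeg(f) = n - d$ by Remark 2.7 and the convention of Remark 2.9. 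Direct-summing the trivial resolution of this cyclic summand with the given resolution of $\Omega_R^2(\log_0 f)$ yields the desired complex.

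For \eqref{eqn - free res log 1-form, homogeneous dim 3}, combine Remarks 2.8(b) and 2.8(d) to get $\Omega_R^1(\log f) = R \cdot df/f \oplus \iota_E \Omega_R^2(\log_0 f)$, where $R \cdot df/f$ is a free rank-one summand (with generator in $\textbf{w}$-degree $0$). Remark 2.8(b) also supplies the key fact that $\iota_E$ restricts to a graded $R$-module isomorphism $\Omega_R^2(\log_0 f) \xrightarrow{\sim} \iota_E \Omega_R^2(\log_0 f)$ (degree-preservation from Remark 2.7). Consequently the given resolution of $\Omega_R^2(\log_0 f)$ transports verbatim onto $\iota_E \Omega_R^2(\log_0 f)$: all higher differentials $\beta_i$ remain unchanged, and only the augmentation is post-composed with $\iota_E$ to produce $\alpha_0 = \iota_E \circ \beta_0$. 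Adjoining the trivial resolution of $R \cdot df/f$ then produces the claimed complex. No step here is a real obstacle; the only subtlety is bookkeeping the grading shift of the free summand, which is handled entirely by Remarks 2.7 and 2.9.
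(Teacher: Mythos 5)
Your proof is correct and follows essentially the same route as the paper: decompose $\Omega_R^p(\log f)$ into $\Omega_R^p(\log_0 f)$ plus a free cyclic $\iota_E$-summand, adjoin the trivial resolution of the free factor, and (for $p=1$) transport the hypothesized resolution via the degree-preserving $R$-isomorphism $\iota_E \colon \Omega_R^2(\log_0 f) \xrightarrow{\sim} \iota_E\Omega_R^2(\log_0 f)$. For $t\leq 1$ the paper invokes $\Omega_R^2(\log_0 f) \simeq \Der_R(-\log_0 f)$, a summand of the reflexive $\Der_R(-\log f)$, while you use $\Omega_R^2(\log f)\simeq \Der_R(-\log f)$ and pass to the summand $\Omega_R^2(\log_0 f)$ afterward---a cosmetic rearrangement of the same two observations.
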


\begin{proof}
    By the hypothesis, \eqref{eqn - free res log 2-form, homogeneous, dim 3} is acyclic; by the fact contraction along $E$ presrves degree, it is graded acyclic. Remark \ref{rmk - basics alg log forms}.(b) shows \eqref{eqn - free res log 2-form, homogeneous, dim 3} resolves $\Omega_{R}^{2}(\log f)$. As for \eqref{eqn - free res log 1-form, homogeneous dim 3}, the fact that $\iota_{E}: \Omega_{R}^{k+1}(\log_{0} f) \to \iota_{E}\Omega_{R}^{k+1}(\log_{0} f)$ is a degree preserving $R$-isomorphism (Remark \ref{rmk - grading log forms}) along with the construction shows \eqref{eqn - free res log 1-form, homogeneous dim 3} is graded acyclic; Remark \ref{rmk - basics alg log forms}.(b),(d) shows it resolves $\Omega_{R}^{1}(\log f)$. The resolutions are minimal by nature of the maps. As for the claim $t \leq 1$, this is just Remark \ref{rmk - basics algebraic log derivations}.(d) combined with the isomorphism $\Omega_{R}^{2}(\log_{0} f) \simeq \Der_{R}(\log_{0} f)$ of Remark \ref{rmk - basics alg log forms}.(e).
\end{proof}

We name the resolutions just constructed:

\begin{define} \label{def - fixing minimal graded free res, log 1, 2 forms}
    Let $f \in R = \mathbb{C}[x_{1}, x_{2}, x_{3}]$ be quasi-homogeneous with $E$ its weighted-homogeneity. We set a minimal graded free resolution of $\Omega_{R}^{2}(\log_{0} f)$ to be
    \[
    F_{\bullet} = 0 \to F_{1} = \bigoplus_{i} R(b_{i}) \xrightarrow[]{\beta_{1}} F_{0} = \bigoplus_{j} R(a_{j}) \xrightarrow[]{\beta_{0}} \Omega_{R}^{2}(\log_{0} f) \to 0
    \]
    (When $f$ is free we allow $F_{1} = 0$, otherwise we do not.) We set the minimal graded free resolutions of Proposition \ref{prop - graded free resolution of log 1, 2 forms} to be:
    \begin{equation} \label{eqn - fixed min graded free res, log 2 forms}
        P_{\bullet}^{2} = 0 \to P_{1}^{2} = F_{1} \xrightarrow[]{(\beta_{1}, 0)} \to P_{0}^{2} = F_{0} \oplus R( - d + \sum_{i} w_{i}) \xrightarrow[]{(\beta_{0}, \iota_{E} dx/f)} \Omega_{R}^{2}(\log f) \to 0;
    \end{equation}
    \begin{equation}  \label{eqn - fixed min graded free res, log 1 forms}
        P_{\bullet}^{1} = 0 \to P_{1}^{1} = F_{1} \xrightarrow[]{(\beta_{1}, 0)} P_{0}^{0} = F_{0} \oplus R \xrightarrow[]{(\iota_{E} \beta_{0}, \cdot df/f)} \Omega_{R}^{1}(\log f) \to 0.
    \end{equation}
\end{define}

We convert Definition \ref{def - fixing minimal graded free res, log 1, 2 forms} into the analytic setting:

\begin{proposition} \label{prop - analytic free res, log 1, 2 forms}
    Let $f \in R = \mathbb{C}[x_{1}, x_{2}, x_{3}]$ be quasi-homogeneous with $E$ its weighted-homogeneity. Consider the graded free $R$-resolutions of \eqref{eqn - fixed min graded free res, log 2 forms} and \eqref{eqn - fixed min graded free res, log 1 forms}. Sheafifying (in the algebraic category) and then analytifying produces locally free $\mathscr{O}_{X}$-module resolutions of $\Omega_{X}^{2}(\log f)$ and $\Omega_{X}^{1}(\log f)$ respectively. 
\end{proposition}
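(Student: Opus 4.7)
My plan is to invoke exactness of the sheafification-then-analytification composition and then identify the endpoint with $\Omega_X^p(\log f)$. Algebraic sheafification $\widetilde{(-)}$ is exact because it is localization at each prime; analytification is exact because the analytic structure sheaf $\mathscr{O}_X$ is faithfully flat over $\mathscr{O}_X^{\mathrm{alg}}$. Applying both functors term by term to the sequences \eqref{eqn - fixed min graded free res, log 2 forms} and \eqref{eqn - fixed min graded free res, log 1 forms} therefore preserves acyclicity. Since each term $P_i^p$ is a finitely generated free $R$-module, its image in $\mathscr{O}_X\text{-mod}$ is a free $\mathscr{O}_X$-module of the same rank, and in particular locally free.

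It remains to match the resulting rightmost object with $\Omega_X^p(\log f)$. In both the algebraic and analytic settings, logarithmic $p$-forms can be described as the kernel of the same pair of syzygy-type maps out of $(1/f)\Omega^p$: the condition $\eta \in (1/f)\Omega^p$ with $d\eta \in (1/f)\Omega^{p+1}$ is equivalent to $(f \cdot d)(\eta)$ lying in the polynomial/analytic (as appropriate) sub-$\Omega^{p+1}$. The defining maps are given by matrices whose entries are polynomials in $x_i$ and $\partial_i \bullet f$; these matrices are the \emph{same} after tensoring with $\mathscr{O}_X^{\mathrm{alg}}$ and then with $\mathscr{O}_X$. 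Since flat base change commutes with kernels, $\bigl( \widetilde{\Omega_R^p(\log f)} \bigr)^{\an} = \Omega_X^p(\log f)$. The augmentation maps $\cdot\, df/f$ and $\iota_E(-)$ in \eqref{eqn - fixed min graded free res, log 2 forms} and \eqref{eqn - fixed min graded free res, log 1 forms} are $R$-linear and given by polynomial formulas, so they transfer without fuss.

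There is essentially no hard step here; the only thing to watch is bookkeeping, namely that the decomposition $\Omega_R^p(\log f) = \Omega_R^p(\log_0 f) \oplus \iota_E \Omega_R^{p+1}(\log_0 f)$ of Remark \ref{rmk - basics alg log forms}.(b), used implicitly when writing the augmentation as a sum, also survives analytification; this follows because the same formulas defining the direct sum decomposition (contraction along $E$ and wedging with $df/f$) work equally well over $R$, $\mathscr{O}_X^{\mathrm{alg}}$, and $\mathscr{O}_X$, and $E$ is a genuine global algebraic derivation.
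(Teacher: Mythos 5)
Your proof is correct and follows the same strategy as the paper: exactness of algebraic sheafification over the affine $\Spec R$, followed by exactness of analytification, preserves acyclicity of the complexes. The extra care you take in verifying that $\bigl(\widetilde{\Omega_R^p(\log f)}\bigr)^{\an} = \Omega_X^p(\log f)$ via flat base change on the defining syzygy conditions is a detail the paper leaves implicit (it states the analogous fact only for logarithmic derivations, in Remark \ref{rmk - basics algebraic log derivations}.(a)), so filling it in is sound though not strictly required by the paper's conventions.
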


\begin{proof}
    Since $\Spec R$ is affine, sheafifying (in the algebraic category) is exact. The analytification functor is also exact.
\end{proof}

We name these analytic free resolutions for later use, e.g. Definition \ref{def - the double complex}.

\begin{define} \label{def - fixing analytic free res, log 1, 2 forms}
    Let $f \in R = \mathbb{C}[x_{1}, x_{2}, x_{3}]$ be quasi-homogeneous with $E$ its weighted-homogeneity. We set the free $\mathscr{O}_{X}$-module resolutions produced by Proposition \ref{prop - analytic free res, log 1, 2 forms} to be
    \begin{equation} \label{eqn - fixed analytic free res, log 2 forms}
        Q_{\bullet}^{2} = 0 \to Q_{1}^{2} \to Q_{0}^{2} \to \Omega_{X}^{2}(\log f) \to 0,
    \end{equation}
    \begin{equation} \label{eqn - fixed analytic free res, log 1 forms}
        Q_{\bullet}^{1} = 0 \to Q_{1}^{1} \to Q_{0}^{1} \to \Omega_{X}^{1}(\log f) \to 0.
    \end{equation}
    In particular, $Q_{i}^{j} = (\widetilde{F_{i}^{j}})^{\an}$ and the maps, which are as described in Definition \ref{def - fixing minimal graded free res, log 1, 2 forms}, are given by matrices with $\textbf{w}$-homogeneous polynomial entries. Additionally, $Q_{1}^{2} = Q_{1}^{1}.$
\end{define}

Finally we give out promised analysis of $\Ext_{\mathscr{O}_{X}}^{1}(\Omega_{X}^{j}(\log f), \mathscr{O}_{X})$ for $j = 1,2$. The takeaways here are: the module with $j=1$ is naturally identified with the module $j=2$; both arise from the algebraic analogues in natural ways; the stalks of each at $0$ have naturally induced finite $\mathbb{C}$-generating sets of ``homogeneous'' elements, whose degree data is entirely determined by the algebraic analogue. 

\begin{proposition} \label{prop - finite hom gen set of Ext modules}
    Let $f \in R = \mathbb{C}[x_{1}, x_{2}, x_{3}]$ be quasi-homogeneous. Then $\Hom_{\mathscr{O}_{X}}(Q_{\bullet}^{j}, \mathscr{O}_{X})$, for $j = 1, 2$ is obtained by sheafifying (in the algebraic category) and then analytifying the complex $\Hom_{R}(P_{\bullet}^{j}, R)$. We also have isomorphisms:
    \begin{equation} \label{eqn - diagram, four ext modules}
    \begin{tikzcd} 
        \Ext_{\mathscr{O}_{X}}^{1}(\Omega_{X}^{2}(\log f), \mathscr{O}_{X})  \dar{\simeq}
            & (\widetilde{\Ext_{R}^{1}(\Omega_{R}^{2}(\log f), R)})^{\an}  \dar{\simeq} \lar{\simeq} \\
        \Ext_{\mathscr{O}_{X}}^{1}(\Omega_{X}^{1}(\log f), \mathscr{O}_{X}) 
            & (\widetilde{\Ext_{R}^{1}(\Omega_{R}^{1}(\log f), R)})^{\an} \lar{\simeq}.
    \end{tikzcd}
    \end{equation}
    Moreover, $\Ext_{R}^{1}(\Omega_{R}^{j}(\log f), R)$ is supported at $\mathfrak{m}$ and is a finite $\mathbb{C}$-vector space with basis a finite set of homogeneous elements $\Delta_{f}$. Finally the only potentially nonzero stalk of $\Ext_{\mathscr{O}_{X}}^{1}(\Omega_{\mathscr{O}_{X}}^{j}(\log f), \mathscr{O}_{X})$ is at the origin, and there it is a finite $\mathbb{C}$-vector space generated by $\Delta_{f}$. $\Delta_{f}$ can be chosen the same for each choice of $j$. 
\end{proposition}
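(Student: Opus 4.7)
The plan is to reduce the entire statement to an explicit computation of the cokernel $\operatorname{coker}(\beta_1^* : F_0^* \to F_1^*)$ at the algebraic level, then transport along exact functors. For the opening sentence, I would observe that $Q_\bullet^j = (\widetilde{P_\bullet^j})^{\an}$ by Definition \ref{def - fixing analytic free res, log 1, 2 forms}, and since each differential is represented by a matrix with polynomial entries, the identification $\Hom_{\mathscr{O}_X}(\mathscr{O}_X^r, \mathscr{O}_X) = \mathscr{O}_X^r$ is realized by literally the transposed matrix. Because sheafification on the affine scheme $\Spec R$ and analytification are both exact and commute with finite direct sums, $\Hom_{\mathscr{O}_X}(Q_\bullet^j, \mathscr{O}_X)$ agrees termwise and differentialwise with $(\widetilde{\Hom_R(P_\bullet^j, R)})^{\an}$; taking cohomology then yields the two horizontal isomorphisms in \eqref{eqn - diagram, four ext modules}.

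For the vertical isomorphisms, I would exploit the explicit form of the resolutions in Definition \ref{def - fixing minimal graded free res, log 1, 2 forms}. There, $P_1^1 = P_1^2 = F_1$ and the differential is $(\beta_1, 0)$ for both $j \in \{1,2\}$, with the second component annihilating the rank-one summand ($R$ or $R(-d + \sum w_i)$) of $P_0^j$. Dualizing, this rank-one summand of $(P_0^j)^*$ is sent to zero in $F_1^* = (P_1^j)^*$, so $\Ext_R^1(\Omega_R^j(\log f), R) \cong \operatorname{coker}(\beta_1^*)$ for each $j$. This is the right vertical isomorphism of \eqref{eqn - diagram, four ext modules}; running the same formal argument on $Q_\bullet^j$ produces the left vertical isomorphism.

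To handle support and select $\Delta_f$, I would use Remark \ref{rmk - basics alg log forms}.(e) to identify $\Omega_R^2(\log f) \cong \Der_R(-\log f)$ (since $n = 3$), so that $\Ext_R^1(\Omega_R^2(\log f), R) \cong \Ext_R^1(\Der_R(-\log f), R)$. By Proposition \ref{prop - weighted homogeneous, pdim at most 1, new} this latter module is either zero or supported at the irrelevant ideal $\mathfrak{m}$; in either case it is a finitely generated positively graded $R$-module annihilated by some power of $\mathfrak{m}$, hence finite dimensional over $\mathbb{C}$ with a homogeneous $\mathbb{C}$-basis $\Delta_f$. By the right vertical iso just established, the same $\Delta_f$ serves for $j = 1$. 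For the stalk claims: for any finitely generated $R$-module $M$ with $\mathfrak{m}^N M = 0$, one has $M \otimes_R \mathscr{O}_{X, \mathfrak{x}} = 0$ for $\mathfrak{x} \neq 0$ (every element outside $\mathfrak{m}$ acts invertibly on $M$) and $M \otimes_R \mathscr{O}_{X, 0} = M$, so the analytified $\Ext$ has its only possibly nonzero stalk at the origin and retains $\Delta_f$ as a generating set.

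The step I expect to require the most care is the second one: recognizing that the discrepancy between $P_\bullet^1$ and $P_\bullet^2$ lives entirely in a rank-one summand of $P_0^j$ which is killed by the differential, so that upon dualizing this summand contributes only to $\Ext^0$ and leaves $\Ext^1$ untouched. Once this is noted, both Ext groups are forced to agree and a single canonical $\Delta_f$ works for $j = 1, 2$; the remaining work is bookkeeping between the algebraic and analytic categories, which is formally clean by exactness of sheafification on $\Spec R$ and of analytification.
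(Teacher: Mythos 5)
Your proposal is correct and follows essentially the same route as the paper: identify $\Hom_{\mathscr{O}_X}(Q_\bullet^j, \mathscr{O}_X)$ with $(\widetilde{\Hom_R(P_\bullet^j, R)})^{\an}$ via transposed polynomial matrices and exactness of sheafification/analytification, observe that $P_\bullet^1$ and $P_\bullet^2$ share the degree-one term $F_1$ and differential $(\beta_1, 0)$ so both $\Ext^1$ groups are $\operatorname{coker}(\beta_1^*)$ (this is precisely the paper's display \eqref{eqn - algebraic ext 1, 2 forms graded isomorphic}), and then invoke Remark \ref{rmk - basics alg log forms}.(e) together with Proposition \ref{prop - weighted homogeneous, pdim at most 1, new} to get support at $\mathfrak{m}$ and the homogeneous basis $\Delta_f$. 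Your concluding stalk argument (a module killed by $\mathfrak{m}^N$ vanishes after tensoring with $\mathscr{O}_{X,\mathfrak{x}}$ for $\mathfrak{x}\neq 0$) is a slightly more self-contained version of the paper's appeal to Remark \ref{rmk - basics algebraic log derivations}.(d), but it is an equally valid way to close the same argument.
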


\begin{proof}
    The statement $\Hom_{\mathscr{O}_{X}}(Q_{\bullet}^{j}, \mathscr{O}_{X}) = (\widetilde{\Hom_{R}(P_{\bullet}^{j}, R)})^{\an}$ follows by construction. Indeed we have equality of objects since $\mathscr{O}_{X}$ and $R$ are both self-dual. We also have equality of maps: in all of these $\Hom$ complexes the maps are induced by the transposes of the matrices giving the maps of $P_{\bullet}^{j}$ of Definition \ref{def - fixing minimal graded free res, log 1, 2 forms}, which have polynomial entries.  

    To justify the horizontal isomorphism of \eqref{eqn - diagram, four ext modules}, we observe:
    \begin{align*}
    \Ext_{\mathscr{O}_{X}}^{1}(\Omega_{X}^{j}(\log f), \mathscr{O}_{X})
    & \simeq H^{1} \bigg( \Hom_{\mathscr{O}_{X}}(Q_{\bullet}^{j}, \mathscr{O}_{X}) \bigg) \\
    & \simeq H^{1} \bigg( \big( \widetilde{\Hom_{R}(P_{\bullet}^{j}, R)} \big)^{\an} \bigg) \\
    &\simeq \bigg( \widetilde{ H^{1} \big( \Hom_{R}(P_{\bullet}^{j}, R) \big) } \bigg)^{\an} \\
    &\simeq \big( \widetilde{\Ext_{R}^{1}(\Omega_{R}^{j}(\log f), R)} \big)^{\an}.
    \end{align*}
    Here: the first ``$\simeq$'' is Proposition \ref{prop - analytic free res, log 1, 2 forms}; the second ``$\simeq$'' is what we showed in the first paragraph; the third ``$\simeq$'' is the fact (algebraic) sheafification and analytification are exact, and hence commute with taking cohomology; the fourth ``$\simeq$'' is Proposition \ref{prop - graded free resolution of log 1, 2 forms}. As for the vertical isomorphisms of \eqref{eqn - diagram, four ext modules}, it suffices to check the rightmost isomorphism. By the definition of $P_{\bullet}^{j}$ and Proposition \ref{prop - graded free resolution of log 1, 2 forms},
    \begin{align} \label{eqn - algebraic ext 1, 2 forms graded isomorphic}
    \Ext_{R}^{1}(\Omega_{R}^{1}(\log f), R) 
    &\simeq \frac{\Hom_{R}(F_{1}, R)}{ \im [\beta_{1}^{T}:  \Hom_{R}(F_{0}, R) \to \Hom_{R}(F_{1}, R)]} \\
    &\simeq \Ext_{R}^{1}(\Omega_{R}^{2}(\log f), R). \nonumber
    \end{align}

    As for the final statements, \eqref{eqn - algebraic ext 1, 2 forms graded isomorphic} shows $\Ext_{R}^{1}(\Omega_{R}^{1}(\log f), R)$ and $\Ext_{R}^{1}(\Omega_{R}^{2}(\log f), R)$ are graded isomorphic. Recall that $\Omega_{R}^{2}(\log f) \simeq \Der_{R}(\log f)$ by Remark \ref{rmk - basics alg log forms}.(e). Using Proposition \ref{prop - weighted homogeneous, pdim at most 1, new}, there is a single finite set of weighted homogeneous elements $\Delta_f$ generating both $\Ext_R^1(\Omega_R^1(\log f), R)$ and $\Ext_R^1(\Omega_R^2(\log f), R)$ as finite dimensional $\mathbb{C}$-vector spaces. By Remark \ref{rmk - basics algebraic log derivations}.(d) and an analytic version of Remark \ref{rmk - basics alg log forms}.(e), the only potentially nonzero stalk of $\Ext_{\mathscr{O}_{X}}^{1}(\Omega_{\mathscr{O}_{X}}^{j}(\log f), \mathscr{O}_{X})$ is at the origin; by \eqref{eqn - diagram, four ext modules} this stalk is generated as a finite dimensional $\mathbb{C}$-vector space by $\Delta_{f}$ as well.
\end{proof}

\begin{remark} \label{rmk - why analytic, graded Betti} (Algebraic versus analytic category)
    Eventually (Section 4, 6, 7) we will have to deal with certain $\Ext$ module calculations in the analytic setting. Proposition \ref{prop - finite hom gen set of Ext modules} will simplify this: it will help characterize these analytic $\Ext$-modules in terms of algebraic graded Betti data.
\end{remark}

\subsection{Degree Data: Milnor Algebra and Local Cohomology}

Under the assumption $f \in R = \mathbb{C}[x_{1}, x_{2}, x_{3}]$ quasi-homogeneous, we have emphasized the degree data of $\Ext_{R}^{1}(\Omega_{R}^{1}(\log f), R) \simeq \Ext_{R}^{1}(\Omega_{R}^{2}(\log f), R)$. Here we translate this degree data into a more familiar form for the non logarithmically-inclined reader. Throughout the subsection we stay in the algebraic setting, though later we can and do use Proposition \ref{prop - finite hom gen set of Ext modules} to convert this degree data to the analytic world.

Let $R = \mathbb{C}[x_{1}, \dots, x_{n}]$ with irrelevant ideal $\mathfrak{m}$ and let $M$ be a finite $R$-module. Define
\[
\Gamma_{\mathfrak{m}}^{0} M = \{ m \in M \mid \mathfrak{m}^{\ell} m = 0 \text{ for some } \ell \in \mathbb{Z}_{\geq 0} \}. 
\]
Then $H_{\mathfrak{m}}^{t}$ is the \emph{local cohomology of $M$ with respect to $\mathfrak{m}$}; it is the $t^{\text{th}}$ derived functor of $\Gamma_{\mathfrak{m}}^{0}$. If $R$ is positively graded and $M$ is also $\textbf{w}$-graded, then $H_{\mathfrak{m}}^{t} M$ is also $\textbf{w}$-graded thanks to the Cech complex interpretation of local cohomology. 

Take $f \in R$. The \emph{Jacobian ideal} $(\partial f) \subseteq R$ of $f$ is $(\partial f) = (\partial_{1} \bullet f, \dots, \partial_{n} \bullet f)$, i.e. the ideal generated by the partial derivatives. When $f$ is quasi-homogeneous with weighted-homogeneity $E = \sum w_{i} x_{i} \partial_{i}$, then $f \in (\partial f)$ since $E \bullet f = \wdeg(f) f$. We call $R / (\partial f)$ the \emph{Milnor algebra} of $f$. 

When $R$ is given a positive $\textbf{w}$-grading, $R$ is a $^{\star}$complete $^{\star}$local ring in the sense of Bruns and Herzog \cite[Section 1.5,  pg 142]{BrunsHerzogCohenMacaulayRings} and so it admits a version of graded local duality \cite[Theorem 3.6.19]{BrunsHerzogCohenMacaulayRings}. In particular, for any finite graded module $M$ over $R$, we have, for all $t$, isomorphism of $\mathbb{C}$-vector spaces
\begin{equation} \label{eqn - positive graded local duality}
    [\Ext_{R}^{j}(M, R(- \sum w_{i}))]_{t} \simeq [\big(H_{\mathfrak{m}}^{n-j} M \big)^{\star}]_{t}
\end{equation}
where $w_{i} = \deg(x_{i})$ and $(-)^{\star}$ denotes the graded $\mathbb{C}$-dual. 

There is a dictionary between degree data of $\Ext$ modules of logarithmic forms and local cohomology of the Milnor algebra:

\begin{proposition} \label{prop - weighted data dictionary, ext log 1 forms, local cohomology milnor algebra}
    Let $f \in R = \mathbb{C}[x_{1}, x_{2}, x_{3}]$ be quasi-homogeneous with weighted-homogeneity $E = \sum w_{i} x_{i} \partial_{i}$. Then
    \begin{align*}
    \wdeg( \Ext_{R}^{1}(\Omega_{R}^{1}(\log f), R) 
    &= \wdeg ( \Ext_{R}^{1}(\Omega_{R}^{2}(\log f), R) ) \\
    &= \wdeg ( \Ext_{R}^{1}(\Der_{R}(-\log f) (\wdeg(f) - \sum w_{i}), R) ) \\
    &= \big\{-t + 2 \wdeg(f) - 2 \sum w_{i} \mid t \in \wdeg (H_{\mathfrak{m}}^{3} (R/(\partial f)) ) \big\}
    \end{align*}
\end{proposition}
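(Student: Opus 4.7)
The plan is to establish the three equalities in sequence: the first via the common cokernel structure from \eqref{eqn - algebraic ext 1, 2 forms graded isomorphic}; the second via a graded upgrade of the log-form/derivation identification; the third via graded local duality applied to the short exact sequence in Proposition \ref{prop - weighted homogeneous, pdim at most 1, new}.

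For the first equality, I would cite \eqref{eqn - algebraic ext 1, 2 forms graded isomorphic} directly: both $\Ext_R^1(\Omega_R^1(\log f), R)$ and $\Ext_R^1(\Omega_R^2(\log f), R)$ are identified with the common cokernel $\Hom_R(F_1, R)/\im(\beta_1^T)$ from the shared part of the resolutions $P_\bullet^1$, $P_\bullet^2$ in Definition \ref{def - fixing minimal graded free res, log 1, 2 forms}. Since those resolutions are graded with degree-zero differentials, this is a graded isomorphism, giving equality of weighted-degree supports. For the second equality, I would upgrade the isomorphism $\Omega_R^2(\log f) \simeq \Der_R(-\log f)$ of Remark \ref{rmk - basics alg log forms}.(e) to a graded one. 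Using $\wdeg(\partial_j) = -w_j$ (Remark \ref{rmk - grading log der}) and $\wdeg(\widehat{dx_j}/f) = \sum w_i - w_j - \wdeg(f)$ (Remark \ref{rmk - grading log forms}), a direct check shows the map $a_j \widehat{dx_j}/f \mapsto (-1)^{j-1} a_j \partial_j$ raises $\textbf{w}$-degree uniformly by $\wdeg(f) - \sum w_i$. Thus $\Omega_R^2(\log f) \simeq \Der_R(-\log f)(\wdeg(f) - \sum w_i)$ as graded modules; applying $\Ext_R^1(-, R)$ yields the second equality.

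For the third equality, I would first upgrade the isomorphism $\Ext_R^1(\Der_R(-\log f), R) \simeq \Ext_R^3(R/(\partial_1 \bullet f, \ldots, \partial_n \bullet f, f), R)$ from the proof of Proposition \ref{prop - weighted homogeneous, pdim at most 1, new} to a graded identification. The map $\Der_R \oplus R \to R$ of that argument has $\textbf{w}$-degree $\wdeg(f)$, so the short exact sequence becomes degree-zero as $0 \to \Der_R(-\log f) \to \Der_R \oplus R \to (\partial f)(\wdeg(f)) \to 0$, using $(\partial f, f) = (\partial f)$ (valid by quasi-homogeneity since $E \bullet f = \wdeg(f) \cdot f$). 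Splicing with $0 \to (\partial f)(\wdeg(f)) \to R(\wdeg(f)) \to (R/(\partial f))(\wdeg(f)) \to 0$ and chasing two long exact sequences in $\Ext(-, R)$ (with $\Der_R \oplus R$ and $R(\wdeg(f))$ free) produces $\Ext_R^1(\Der_R(-\log f), R) \simeq \Ext_R^3(R/(\partial f), R)(-\wdeg(f))$ as graded modules.

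Finally, graded local duality \eqref{eqn - positive graded local duality} with $n = j = 3$ and $M = R/(\partial f)$ gives $\wdeg(\Ext_R^3(R/(\partial f), R)) = \{-t - \sum w_i : t \in \wdeg(H_\mathfrak{m}^0(R/(\partial f)))\}$; accumulating the two shifts $+\wdeg(f)$ (from the splicing step) and $+(\wdeg(f) - \sum w_i)$ (from the $\Omega^2 \leftrightarrow \Der$ identification, via $\Ext^j(M(a), R) = \Ext^j(M, R)(-a)$ and $\wdeg(M(a)) = \wdeg(M) - a$) produces the stated set $\{-t + 2\wdeg(f) - 2\sum w_i\}$. (The superscript ``$3$'' on $H_\mathfrak{m}$ in the statement appears to be a typo for ``$0$'', matching the convention used in Theorem \ref{thm - intro - spectral sequence dual}; any graded $R$-module of Krull dimension less than $3$ has vanishing $H_\mathfrak{m}^3$.) The main obstacle is purely combinatorial: three graded shifts accumulate and must be combined without direction or sign errors.
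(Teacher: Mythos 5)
Your proof is correct and takes essentially the same route as the paper's: graded freeness/shared resolution for the first equality, the degree-$(\wdeg(f) - \sum w_i)$ shift in the log-form/derivation identification for the second, and splicing the Koszul-type short exact sequences plus graded local duality \eqref{eqn - positive graded local duality} for the third (and you are right that $H_{\mathfrak{m}}^{3}$ in the statement is a typo for $H_{\mathfrak{m}}^{0}$). The only cosmetic difference is that the paper routes through $\Der_R(-\log_0 f)$ via the shorter sequence \eqref{eqn - s.e.s. syzygies of jacobian} together with $\Der_R(-\log f) \simeq \Der_R(-\log_0 f) \oplus R \cdot E$, whereas you use the $\Der_R(-\log f) \hookrightarrow \Der_R \oplus R$ sequence directly; the shift accounting is the same either way.
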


\begin{proof}
    First note that $\wdeg( \Ext_{R}^{1}(\Omega_{R}^{j}(\log f), R))$ is finite, for either $j = 1, 2$, by Proposition \ref{prop - finite hom gen set of Ext modules}. By Proposition \ref{prop - graded free resolution of log 1, 2 forms}, there is a graded $R$-isomorphism
    \begin{equation} \label{eqn - ext 1 graded iso, log 1 log 2 forms}
        \Ext_{R}^{1}(\Omega_{R}^{1}(\log f), R) \simeq \Ext_{R}^{1}(\Omega_{R}^{2}(\log f), R)
   \end{equation}
   which gives the first equality promised.
    
    Recall our grading convention $\Der_{R} = \oplus R \cdot \partial_{i} = \oplus R(w_{i}).$ We have a s.e.s. of graded $R$-modules
    \begin{equation} \label{eqn - s.e.s. syzygies of jacobian}
        0 \to \Der_{R}(-\log_{0} f)(- \wdeg(f)) \to \Der_{R}(- \wdeg(f)) \to (\{ \partial_{i} \bullet f \}_{i} ) \to 0.
    \end{equation}
    Here: the last nontrivial map is given by $\partial_{i} \mapsto \partial_{i} \bullet f$; the first nontrivial map is induced by the graded inclusion $\Der_{R}(-\log_{0} f) \hookrightarrow \Der_{R}$. We also have a graded $R$-isomorphism
    \begin{align} \label{eqn - graded iso log 2 forms, log der}
    \Omega_{R}^{2}(\log f)(- \wdeg(f) + \sum w_{i}) \simeq \Der_{R}(-\log f)
    \end{align}
    as described in Remark \ref{rmk - basics algebraic log derivations}.(e). 

    Using all this we find graded isomorphisms:
    \begin{align} \label{eqn - graded iso, ext 1 log 2 forms, ext 2 jacobian}
        \Ext_{R}^{1}(\Omega_{R}^{2}(\log f), R) 
        &\simeq \Ext_{R}^{1} \bigg( \Der_{R}(-\log f)(\deg_{w}(f) - \sum w_{i}), R \bigg) \\
        &\simeq \Ext_{R}^{2} \bigg( (\partial f)(2 \wdeg(f) - \sum w_{i}, R) \bigg) \nonumber \\
        &\simeq \Ext_{R}^{2} ((\partial f), R) (- 2 \wdeg(f) + \sum w_{i}). \nonumber \\
        &\simeq \Ext_{R}^{3}(R / (\partial f), R)(-2 \wdeg(f) + \sum w_{i}). \nonumber \\
        &\simeq \Ext_{R}^{3}(R / (\partial f), R(-\sum w_{i}))(-2\wdeg(f) + 2 \sum w_{i}) \nonumber
    \end{align}
    Here: the first ``$\simeq$'' is \eqref{eqn - graded iso log 2 forms, log der}; the second ``$\simeq$'' is \eqref{eqn - s.e.s. syzygies of jacobian} along with the graded isomorphism $\Der_{R}(-\log f) \simeq \Der_{R}(-\log_{0} f) \oplus R \cdot E$; the penultimate ``$\simeq$'' is the graded s.e.s. $0 \to (\partial f) \to R \to R/(\partial f) \to 0$. The proposition follows by \eqref{eqn - ext 1 graded iso, log 1 log 2 forms} and applying graded local duality, cf. \eqref{eqn - positive graded local duality}, to \eqref{eqn - graded iso, ext 1 log 2 forms, ext 2 jacobian}.
\end{proof}

When $f$ is homogeneous, Proposition \ref{prop - weighted data dictionary, ext log 1 forms, local cohomology milnor algebra} can be reformulated slightly. Indeed, here $R$ has the standard $\textbf{1}$-grading and \cite[Theorem 4.7]{StratenDucoWarmtGorensteinDuality}, \cite[Theorem 3.4]{LocalCohomologyJacobianRing} there is symmetry in the degree data of $H_{\mathfrak{m}}^{0}(R / (\partial f))$ about $(3 \deg(f) - 6)/2$:
\begin{equation} \label{eqn - local cohomology degree duality}
H_{\mathfrak{m}}^{0} (R / (\partial f))_{k} \simeq H_{\mathfrak{m}}^{0} (R / (\partial f))_{3 \deg(f) - 6 - k} \quad (\text{as $\mathbb{C}$-vector spaces}).
\end{equation}

\begin{proposition} \label{prop - homogeneous data dictionary, milnor alg, local cohomology}
    Let $f = R = \mathbb{C}[x_{1}, x_{2}, x_{3}]$ be homogeneous. Then
    \begin{align*}
    \deg( \Ext_{R}^{1}(\Omega_{R}^{1}(\log f), R) 
    &= \deg ( \Ext_{R}^{1}(\Omega_{R}^{2}(\log f), R) \\
    &= \deg \big( H_{\mathfrak{m}}^{0} (R / (\partial f)) \big) - \deg(f).
    \end{align*}
\end{proposition}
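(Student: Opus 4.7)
The plan is to apply Proposition \ref{prop - weighted data dictionary, ext log 1 forms, local cohomology milnor algebra} specialized to the standard grading ($\mathbf{w} = \mathbf{1}$, so $\sum w_i = 3$ and $\wdeg = \deg$), and then invoke the degree symmetry \eqref{eqn - local cohomology degree duality} of $H_{\mathfrak{m}}^{0}(R/(\partial f))$ about $(3\deg(f)-6)/2$ to repackage the resulting set of degrees.

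First I would note that the graded isomorphism
\[
\Ext_R^1(\Omega_R^1(\log f), R) \simeq \Ext_R^1(\Omega_R^2(\log f), R)
\]
is already established in \eqref{eqn - ext 1 graded iso, log 1 log 2 forms}, so the first equality in the statement is free. For the second equality, Proposition \ref{prop - weighted data dictionary, ext log 1 forms, local cohomology milnor algebra} gives
\[
\deg \bigl( \Ext_R^1(\Omega_R^2(\log f), R) \bigr) = \{ -t + 2\deg(f) - 6 \mid t \in \deg(H_{\mathfrak{m}}^{0}(R/(\partial f))) \}.
\]

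Next I would apply the symmetry \eqref{eqn - local cohomology degree duality}: $t \in \deg(H_{\mathfrak{m}}^{0}(R/(\partial f)))$ if and only if $s := 3\deg(f) - 6 - t$ also lies in $\deg(H_{\mathfrak{m}}^{0}(R/(\partial f)))$. Substituting $t = 3\deg(f) - 6 - s$ into the displayed set yields
\[
-t + 2\deg(f) - 6 = -(3\deg(f) - 6 - s) + 2\deg(f) - 6 = s - \deg(f),
\]
so the set of degrees rewrites as $\{ s - \deg(f) \mid s \in \deg(H_{\mathfrak{m}}^{0}(R/(\partial f))) \} = \deg(H_{\mathfrak{m}}^{0}(R/(\partial f))) - \deg(f)$, which is the desired conclusion.

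There is essentially no serious obstacle here: the content sits entirely in the previous proposition (for the $\Ext$-to-local-cohomology dictionary) and in the cited Gorenstein-type duality of van Straten--Warmt and Dimca--Sticlaru (for the symmetry). The only thing to be careful about is that the symmetry holds only because the standard grading has $\sum w_i = 3$, which forces $R$ (with $\mathbf{w} = \mathbf{1}$) to fall into the setup of \cite{StratenDucoWarmtGorensteinDuality} and \cite{LocalCohomologyJacobianRing}; the argument does not extend verbatim to general $\mathbf{w}$-gradings, which explains why the statement is restricted to the honestly homogeneous case.
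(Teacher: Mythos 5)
Your proof is correct and follows exactly the same route as the paper, whose proof consists of the same two citations: Proposition \ref{prop - weighted data dictionary, ext log 1 forms, local cohomology milnor algebra} specialized to $\mathbf{w}=\mathbf{1}$, followed by the degree symmetry \eqref{eqn - local cohomology degree duality} to repackage $\{-t + 2\deg(f)-6\}$ as $\deg(H_{\mathfrak{m}}^{0}(R/(\partial f))) - \deg(f)$. Your explicit arithmetic $-t+2\deg(f)-6 = s-\deg(f)$ under the substitution $t = 3\deg(f)-6-s$ is the right calculation, and your closing remark correctly identifies that the restriction to honest homogeneity is what makes the van Straten--Warmt / Dimca--Sticlaru symmetry available.
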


\begin{proof}
    Proposition \ref{prop - weighted data dictionary, ext log 1 forms, local cohomology milnor algebra} and \eqref{eqn - local cohomology degree duality}.
\end{proof}

\section{$\mathscr{D}_{X}[s]$-modules and Bernstein--Sato Polynomials}

Recall $\mathscr{O}_{X}$ is the analytic structure sheaf of $X = \mathbb{C}^{n}$. We denote the sheaf of differential operators on $X$ by $\mathscr{D}_{X}$. We denote $\mathscr{D}_{X}[s] = \mathscr{D}_{X} \otimes_{\mathbb{C}} \mathbb{C}[s]$ the polynomial ring extension of $\mathscr{D}_{X}$ by a new variable $s$. In this tiny section we recall some basic notions about $\mathscr{D}_{X}[s]$-modules, define Bernstein--Sato polynomials and related $\mathscr{D}_{X}[s]$-modules, and recall a ``localization'' technique we use sparingly.

In this section we work analytically, but all results hold in the algebraic category with appropriate modifications.

\subsection{Bernstein--Sato Polynomials}

Let us construct the \emph{Bernstein--Sato polynomial}. For $g \in \mathscr{O}_{X}$ global we can define a left $\mathscr{D}_{X}[s]$-module structure on $\mathscr{O}_{X}(\star g)[s] g^{s}$, where $g^{s}$ is a formal symbol. First, extend the $\mathscr{D}_{X}$-action on $\mathscr{O}_{X}(\star g)$ to a $\mathscr{D}_{X}[s]$-action on $\mathscr{O}_{X}(\star g)[s]$ by letting $s \in \mathscr{D}_{X}[s]$ act as multiplication and commute with other operators. Then extend these actions to a $\mathscr{D}_{X}[s]$-module structure on $\mathscr{O}_{X}(\star g)[s] g^{s}$ by the Liebnitz rule and letting a derivation act on $g^{s}$ via a naive application of the chain rule. For example, if $h(s) \in \mathscr{O}_{X}[s]$, 
\[
\partial_{i} \bullet h(s)g^{s} = \bigg( (\partial_{i} \bullet h(s) ) g^{s} \bigg) + \bigg( h(s)s(\partial_{i} \bullet g) g^{s-1} \bigg) = \left( (\partial_{i} \bullet h(s)) + \frac{ s( \partial_{i} \bullet g)}{g} \right) g^{s}.
\]

For $k \in \mathbb{Z}$, we identify $g^{s + k} \in \mathscr{O}_{X}(\star f)[s] g^{s}$ with $g^{k} g^{s} \in \mathscr{O}_{X}(\star f)[s] g^{s}$. 

\begin{define} \label{def - bs poly} With $g \in \mathscr{O}_{X}$ global, define
    \begin{equation} \label{def - submod generated by g to s+k}
    \mathscr{D}_{X}[s] g^{s+k} = \text{left } \mathscr{D}_{X}[s] \text{ submodule of } \mathscr{O}_{X}(\star g)[s] g^{s} \text{ generated by } g^{s+k}.
    \end{equation}
Now consider the left $\mathscr{D}_{X}[s]$-module 
    \begin{equation} \label{eqn - bs poly, defining quotient}
    \frac{\mathscr{D}_{X}[s] g^{s}}{\mathscr{D}_{X}[s] g^{s+1}}.
    \end{equation}
The $\mathbb{C}[s]$-annihilator of \eqref{eqn - bs poly, defining quotient} is an ideal in $\mathbb{C}[s]$; we call its monic generator $b_{g}(s) \in \mathbb{C}[s]$ the \emph{Bernstein--Sato polynomial of $g$.} We denote the zereos of the Bernstein--Sato polynomial by $Z(b_{g}(s)) \subseteq \mathbb{C}$. If we work with $g \in \mathscr{O}_{X,\mathfrak{x}}$, we can repeat the construction, obtaining $b_{g, \mathfrak{x}}(s)$, the \emph{local Bernstein--Sato polynomial of $g$ at $\mathfrak{x}$}.
\end{define}

\begin{remark} \label{rmk - BS poly classical facts}
    That the Bernstein--Sato polynomial is not $0$ is nontrivial and was first established (in the algebraic setting) by Bernstein \cite{BernsteinBSExistence}. Many classic singularity invariants lurk within the Bernstein--Sato polynomial: eigenvalues of algebraic monodromy (Kashiwara \cite{KashiwaraVanishingCycleSheaves}, Malgrange \cite{MalgrongeIsolee}); jumping numbers; log canonical thresholds; minimal exponents; etc. We recommend the survey \cite{WaltherSurvey}.
\end{remark}

More generally,

\begin{define} \label{def - b-function module}
    Let $M$ be a coherent (left or right) $\mathscr{D}_{X}[s]$-module. We define the $\mathbb{C}[s]$-ideal
    \begin{equation}
        B(M) = \ann_{\mathbb{C}[s]} M \subseteq \mathbb{C}[s].
    \end{equation}
    If $B(M) \neq 0$, then its monic generator is the \emph{b-function of $M$}. We denote the zeroes of $B(M)$ by $Z(B(M)) \subseteq \mathbb{C}$. We have a similar notion locally at $\mathfrak{x}$ using $M_{\mathfrak{x}}$. 
\end{define}

\subsection{Annihilators}

Since $\mathscr{D}_{X}[s] g^{s+k}$ is a cyclic module, we have the canonical isomorphism
\[
\frac{\mathscr{D}_{X}[s]}{\ann_{\mathscr{D}_{X}[s]}g^{s+k}} \xrightarrow[]{\simeq} \mathscr{D}_{X}[s] g^{s+k}
\]
where, of course, $\ann_{\mathscr{D}_{X}[s]} g^{s+k} = \{P(s) \in \mathscr{D}_{X}[s] \mid P(s) \bullet g^{s+k} = 0\}$. 

In certain situations, include those of Convention \ref{convention - main hypotheses}, $\ann_{\mathscr{D}_{X}[s]}g^{s+k}$ has as nice as possible description. To set this up, note that if $\delta \in \Der_{\mathscr{O}_{X}}(-\log g)$ is a logarithmic derivation, then $\delta - (s+k)(\delta \bullet g)/g  \in \mathscr{D}_{X}[s]$ and in fact kills $g^{s+k}$. We define

\begin{define}
    We define the left $\mathscr{D}_{X}[s]$-ideal 
    \begin{equation} \label{eqn - ann g s order at most one}
        \ann_{\mathscr{D}_{X}[s]}^{(1)} g^{s+k} = \mathscr{D}_{X}[s] \cdot \{ \delta - s\frac{ \delta \bullet g}{g} \mid \delta \in \Der_{\mathscr{O}_{X}}(\log g) \} \subseteq \ann_{\mathscr{D}_{X}[s]} g^{s+k}.
    \end{equation}
\end{define}

There has been a lot of work studying when the inclusion of \eqref{eqn - ann g s order at most one} is actually an equality, cf. \cite{YanoTheoryBFunctions}, \cite{ModulefsLocallyQuasiHomogeneousFree}, \cite{LCTforIntegrableLogarithmicConnections}, \cite{DualityApproachSymmetryBSpolys}, \cite{uli}. The most general assumptions are provided in \cite[Theorem 3.26]{uli}: $g$ is Saito-holonomic (Remark \ref{rmk - log stratification}); $g$ is \emph{tame} (Definition \ref{def - tame}); $g$ is \emph{strongly Euler-homogeneous locally everywhere} (this is a weaker form of locally quasi-homogenous, cf. \cite[Defintion 2.7]{uli}). For our purposes, if $f \in R = \mathbb{C}[x_{1}, x_{2}, x_{3}]$ is locally quasi-homogeneous, then $f$ satisfies these assumptions (cf. Remark \ref{rmk - log stratification}.(d), Remark \ref{rmk - tame in three dim}).

\subsection{Filtrations and Relative Holonomicity}

There are three filtrations that we sometimes consider: the canonical \emph{order filtration} $F_{\bullet}^{\ord} \mathscr{D}_{X}$ on $\mathscr{D}_{X}$; the \emph{relative order filtration} $F_{\bullet}^{\rel} \mathscr{D}_{X}[s] = F_{\bullet}^{\ord} \mathscr{D}_{X} \otimes_{\mathbb{C}} \mathbb{C}[s]$ on $\mathscr{D}_{X}[s]$; the \emph{total order filtration} $F_{\bullet}^{\sharp} \mathscr{D}_{X}[s] = \oplus_{0 \leq k \leq \bullet} F_{\bullet}^{\ord} \mathscr{D}_{X} \otimes \mathbb{C}[s]_{\bullet - k}$ on $\mathscr{D}_{X}[s]$. One can define characteristic varieties with respect to all of these filtrations. For example, we denote $\Ch^{\rel}(-)$ the characteristic variety of the input with respect to the relative order filtration. 

\begin{remark} \label{rmk - log annihilator fs, total order def}
    One can check that $\ann_{\mathscr{D}_{X}[s]}^{(1)} g^{s} = \mathscr{D}_{X}[s] \cdot (F_{1}^{\sharp} \mathscr{D}_{X}[s] \cap \ann_{\mathscr{D}_{X}[s]} g^{s})$.
\end{remark}

Maisonobe introduced \cite{MaisonobeFiltrationRelative} a useful generalization of holonomic $\mathscr{D}_{X}$-modules to the case of $\mathscr{D}_{X}[s]$-modules. While he uses the phrase ``marjor{\'e} par une lagrangienne'' we use the verbiage from the extension of his theory developed in \cite{ZeroLociBSIdeals}, \cite{ZeroLociBSIdealsII}. Some, but not all, of the cited results in \cite{ZeroLociBSIdeals} can be found in \cite{MaisonobeFiltrationRelative}.

\begin{define} \label{def - relative holonomic}
    We say a coherent (left or right) $\mathscr{D}_{X}[s]$-module $M$ is \emph{relative holonomic} at $\mathfrak{x} \in X$ when there exists a small ball $B_{\mathfrak{x}} \subseteq X$ such that the relative characteristic variety $\Ch^{\rel}(M)$ satisfies
    \[
    \Ch^{\rel}(M) \cap \pi^{-1}(B_{\mathfrak{x}} \times \Spec \mathbb{C}[s])= \cup_{\alpha} \Lambda_{\alpha} \times S_{\alpha} \subseteq T^{\star} X \times \Spec \mathbb{C}[s]
    \]
    where: $\Lambda_{\alpha} \subseteq T^{\star}X$ is a Lagrangian; $S_{\alpha} \subseteq \Spec \mathbb{C}[s]$ is an algebraic variety; $\pi : T^{\star} X \times \Spec \mathbb{C}[s] \to X \times \Spec \mathbb{C}[s]$ is the projection. $M$ is \emph{relative holonomic} if it is so at all $\mathfrak{x} \in X$.
\end{define}

We conclude with an elementary Lemma which is contained in both \cite{MaisonobeFiltrationRelative} and \cite{ZeroLociBSIdeals}. 

\begin{lemma} \label{lemma - rel hol basics}
    Throughout, $M$ is a coherent (left or right) $\mathscr{D}_{X}[s]$-module. 
    \begin{enumerate}[label=(\alph*)]
        \item If $M$ is supported at $0$, then $M$ is relative holonomic.
        \item If $M$ is relative holonomic, then any subquotient of $M$ is relative holonomic. 
        \item If $M$ is relative holonomic and $0 \to M_{1} \to M \to M_{2} \to 0$ is a s.e.s. of $\mathscr{D}_{X}[s]$-modules, then 
        \[
        Z(B(M)) = Z(B(M_{1})) \cup Z(B(M_{2})).
        \]
    \end{enumerate}
\end{lemma}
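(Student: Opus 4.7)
The plan is to handle the three parts in sequence, each reducing either to a standard fact about relative characteristic varieties or to an elementary $\mathbb{C}[s]$-module computation.

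For (a), I would choose a good filtration on $M$ compatible with the relative order filtration on $\mathscr{D}_X[s]$ (which exists by coherence). Since $M$ is supported at $\{0\}$, a power of the maximal ideal $\mathfrak{m}_0 \subseteq \mathscr{O}_{X,0}$ annihilates each generator of $\gr^{F^{\rel}} M$, so the coordinates $x_i$ restrict to nilpotent operators on $\gr^{F^{\rel}} M$. Thus $\Ch^{\rel}(M)$ is contained in the fibre $T_0^{\star} X \times \Spec \mathbb{C}[s]$ of the cotangent bundle. Since $T_0^{\star} X$ is itself a conic Lagrangian in $T^{\star}X$, the single decomposition $\Lambda = T_0^{\star} X$, $S = \Spec \mathbb{C}[s]$ (or its refinement along irreducible components of $\Ch^{\rel}(M)$) witnesses Definition \ref{def - relative holonomic}.

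For (b), the route is the classical observation that for any subquotient $N$ of a coherent $M$, one has $\Ch^{\rel}(N) \subseteq \Ch^{\rel}(M)$ (transfer a good filtration from $M$ to $N$, or use exactness of $\gr$ on strict morphisms). Hence every irreducible component of $\Ch^{\rel}(N)$ sits inside some $\Lambda_{\alpha} \times S_{\alpha}$ appearing in the decomposition of $\Ch^{\rel}(M)$, and by restricting the $S_{\alpha}$-factor we obtain a decomposition of the required form for $N$.

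For (c), by (b) both $M_1$ and $M_2$ are relative holonomic. Here I would invoke the (nontrivial) theorem of Maisonobe \cite{MaisonobeFiltrationRelative} that any relative holonomic $\mathscr{D}_X[s]$-module has a nonzero $\mathbb{C}[s]$-annihilator. Granting this, the rest is formal: working with a s.e.s. of $\mathbb{C}[s]$-modules one obtains $B(M_1) \cdot B(M_2) \subseteq B(M) \subseteq B(M_1) \cap B(M_2)$, the right containment because an annihilator of $M$ annihilates both sub and quotient, the left by first applying an element of $B(M_2)$ to land in $M_1$ and then an element of $B(M_1)$. Passing to zero loci, the right containment yields $Z(B(M)) \supseteq Z(B(M_i))$ for each $i$, while the left yields $Z(B(M)) \subseteq Z(B(M_1)) \cup Z(B(M_2))$, giving equality. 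The only obstacle beyond routine unpacking is the nonvanishing of $B(-)$ for relative holonomic modules needed in (c); everything else is a direct consequence of Definition \ref{def - relative holonomic} and basic filtration arguments, which is why the lemma is simply attributed to the cited references.
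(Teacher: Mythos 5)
Your reasoning for (a) and (b) correctly places the relative characteristic variety inside a product, but the step from containment to the decomposition required by Definition~\ref{def - relative holonomic} is not automatic, and it is exactly what the cited references supply. A conic, irreducible subvariety of $T_0^\star X \times \Spec\mathbb{C}[s]$ need not have the form $\Lambda \times S$ with $\Lambda$ Lagrangian: writing $T_0^\star X \simeq \mathbb{C}^n$ with fibre coordinates $\xi_1,\dots,\xi_n$, the hypersurface $\{\xi_1 = s\,\xi_2\}$ is conic and irreducible, yet it is not a finite union of such products (any Lagrangian contained in $T_0^\star X$ equals $T_0^\star X$, so such a union would be $T_0^\star X \times S$ for some $S$, which this hypersurface is not). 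Ruling out such ``slanted'' components for $\Ch^{\rel}(M)$ requires a relative involutivity-type theorem; that is the content of \cite[Proposition 3.2.5]{ZeroLociBSIdeals} cited for (a). The same objection applies to your ``restrict the $S_\alpha$-factor'' step in (b): an irreducible component of $\Ch^{\rel}(N)$ can lie inside some $\Lambda_\alpha \times S_\alpha$ without itself being a product, and the paper again outsources this to \cite[Lemma 3.2.4(1)]{ZeroLociBSIdeals}. So neither (a) nor (b) is routine unpacking of Definition~\ref{def - relative holonomic}.

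For (c), your sandwich $B(M_1)\cdot B(M_2) \subseteq B(M) \subseteq B(M_1) \cap B(M_2)$ followed by passage to radicals is correct and, pleasantly, uses no hypothesis on $M$ at all, which makes it a genuinely different and more elementary route than the paper's (which goes through \cite[Lemmas 3.2.2(2) and 3.4.1]{ZeroLociBSIdeals}, i.e.\ additivity of $\Ch^{\rel}$ on short exact sequences plus the description of $Z(B(\cdot))$ via the $S_\alpha$-components). However, the ``theorem of Maisonobe'' you invoke is false: it is not true that every relative holonomic $\mathscr{D}_X[s]$-module has nonzero $\mathbb{C}[s]$-annihilator. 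For example $\mathscr{O}_X[s] = \mathscr{D}_X[s]/\mathscr{D}_X[s]\cdot(\partial_1,\dots,\partial_n)$ has $\Ch^{\rel} = T_X^\star X \times \Spec\mathbb{C}[s]$, hence is relative holonomic, yet $\mathbb{C}[s]$ acts freely on it and $B(\mathscr{O}_X[s]) = 0$. The correct statement (used in Lemma~\ref{lemma - b-function and b-function of dual}) is that a $b$-function exists once $\dim \Ch^{\rel}(M) = n$, equivalently once the grade is $n+1$. Fortunately this invocation is a red herring: your radical argument already establishes the equality of zero loci without it.
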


\begin{proof}
    For (a): if $M$ is supported at $0$, then because of the conical structure of $\Ch^{\rel}(M)$, we have that $\Ch^{\rel}(M) \subseteq T_{0}^{\star}X \times \Spec \mathbb{C}[s]$. Now use \cite[Proposition 3.2.5]{ZeroLociBSIdeals}. Claim (b) is \cite[Lemma 3.2.4(1)]{ZeroLociBSIdeals}. As for Claim (c), use \cite[Lemma 3.4.1, Lemma 3.2.2(2)]{ZeroLociBSIdeals}.
\end{proof}

\subsection{Side Changing Operations}

As in the case of $\mathscr{D}_{X}$-modules, there is an equivalence of categories between left $\mathscr{D}_{X}[s]$-modules and right $\mathscr{D}_{X}[s]$-modules; this equivalence is given by extending the $\mathscr{D}_{X}$-module equivalence in the natural way. Given a left $\mathscr{D}_{X}[s]$-module $M$ we denote the corresponding right $\mathscr{D}_{X}[s]$-module by $M^{r}$; similarly, given a right $\mathscr{D}_{X}[s]$-module $N$ we denote the corresponding left $\mathscr{D}_{X}[s]$-module by $N^{\ell}$. 

In local coordinates the transformation from left to right $\mathscr{D}_{X}[s]$-modules is given by the transposition operator 
\[
\tau (\sum x^{I} \partial^{J} s^{a}) = (-\partial^{J}) x^{I} s^{a}.
\]
Given a set $L \subseteq \mathscr{D}_{X}[s]$ we denote $\tau(L) = \{\tau(\ell) \mid \ell \in L\}$. In particular, if $I \subseteq \mathscr{D}_{X}[s]$ is a left $\mathscr{D}_{X}[s]$-ideal then
\[
\left( \frac{\mathscr{D}_{X}[s]}{I} \right)^{r} = \frac{\mathscr{D}_{X}[s]}{\tau(I)}
\]
Of course $\tau$ also gives the transformation from right to left $\mathscr{D}_{X}[s]$-modules and we have a similar statement relating right cyclic $\mathscr{D}_{X}[s]$-modules to left ones.

We will use the following easy Lemmas about side-changing operators:

\begin{lemma} \label{lem - side changing b-functions}
    If $M$ is a coherent left $\mathscr{D}_{X}[s]$-module, then $B(M) = B(M^{r})$. Similarly, if $N$ is a coherent right $\mathscr{D}_{X}[s]$-module, then $B(N) = B(N^{\ell})$.
\end{lemma}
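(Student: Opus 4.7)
The plan is to exploit the observation that $s$ lies in the center of $\mathscr{D}_X[s]$, so the $\mathbb{C}[s]$-module structure of any $\mathscr{D}_X[s]$-module is preserved under the side-changing operation. At the level of the local transposition $\tau$, this is visible from the defining formula $\tau(\sum x^I \partial^J s^a) = (-\partial)^J x^I s^a$: only the $\mathscr{D}_X$-part gets twisted, and the $s$-variable is inert, so $\tau(s) = s$ and more generally $\tau(p(s)) = p(s)$ for any $p(s) \in \mathbb{C}[s]$. Intrinsically, if we realize the side-changing functor via $M \mapsto M^r = \omega_X \otimes_{\mathscr{O}_X} M$, centrality of $s$ means the right action of $s$ on $\omega \otimes m$ is simply $\omega \otimes (sm)$, since $s$ commutes with $\mathscr{O}_X$ and with every differential operator.

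Given this, the proof of $B(M) = B(M^r)$ is a one-step unpacking of definitions. A polynomial $p(s) \in \mathbb{C}[s]$ kills $M$ on the left iff $p(s) m = 0$ for every local section $m$ of $M$. By the preceding paragraph this is equivalent to $(\omega \otimes m) \cdot p(s) = \omega \otimes (p(s) m)$ vanishing for all local sections $\omega$ of $\omega_X$ and $m$ of $M$. Because $\omega_X$ is an invertible $\mathscr{O}_X$-module, the latter vanishing is equivalent to $p(s) m = 0$ for every $m$. Hence $\ann_{\mathbb{C}[s]} M = \ann_{\mathbb{C}[s]} M^r$, and in particular the monic generators coincide. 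The right-module statement follows symmetrically by applying $(-)^{\ell}$ and using that side-changing is an involutive equivalence, i.e.\ $(N^{\ell})^r \simeq N$.

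I do not foresee a serious obstacle: the entire content of the lemma is the centrality of $s$ in $\mathscr{D}_X[s]$, together with invertibility of $\omega_X$. The only small point worth double-checking is that the extension of the classical $\mathscr{D}_X$ side-changing functor to $\mathscr{D}_X[s]$ really does treat $s$ trivially, but both the concrete formula for $\tau$ recorded in the text and the tensor-product description above make this immediate. For readers who prefer to avoid $\omega_X$, the same argument can be presented cyclically: for $M \simeq \mathscr{D}_X[s]/I$ with $I$ a left ideal, centrality of $s$ gives $B(M) = I \cap \mathbb{C}[s]$, and similarly $B(M^r) = \tau(I) \cap \mathbb{C}[s]$; but $\tau$ fixes $\mathbb{C}[s]$ pointwise, so these intersections agree, and the general coherent case reduces to this by writing $M$ as a quotient of a finite free $\mathscr{D}_X[s]$-module and tracking annihilators.
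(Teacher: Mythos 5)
Your proof is correct and takes essentially the same route as the paper's one-line argument, namely that the $\mathbb{C}[s]$-action is unchanged under side-changing; you have simply unpacked why (centrality of $s$, $\tau$ fixing $\mathbb{C}[s]$, and invertibility of $\omega_X$). The cyclic/free-presentation remark at the end is also fine but is not needed once you observe that $p(s)$ annihilates $M$ if and only if it annihilates $\omega_X \otimes_{\mathscr{O}_X} M$.
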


\begin{proof}
    By coherence and that the $\mathbb{C}[s]$-action is unchanged when side changing.
\end{proof}

\begin{lemma} \label{lem - side changing, order one annihilator} In local coordinates $(x, \partial)$ if $\delta = \sum a_{i} \partial_{i} \in \Der_{\mathscr{O}_{X}}(-\log g) \subseteq \mathscr{D}_{X}$,  then 
\[
\tau(\delta - s \frac{\delta \bullet g}{g}) = - \delta - \big( \sum_{i} \partial_{i} \bullet a_{i} \big) - s \frac{\delta \bullet g}{g}.
\]
\end{lemma}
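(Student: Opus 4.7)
The plan is a direct computation using the defining formula $\tau(\sum x^I \partial^J s^a) = (-\partial)^J x^I s^a$ together with the fact that $\tau$ is an anti-homomorphism (equivalently: the unique $\mathbb{C}$-linear anti-automorphism of $\mathscr{D}_X[s]$ fixing $\mathscr{O}_X$ and $s$, and sending $\partial_i \mapsto -\partial_i$). Everything reduces to moving a derivation past a function via the Leibniz rule in $\mathscr{D}_X$.

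First I would compute $\tau(\delta)$. Writing $\delta = \sum_i a_i \partial_i$ and applying $\tau$ term by term, the anti-multiplicativity gives
\[
\tau(\delta) = \sum_i \tau(a_i \partial_i) = \sum_i \tau(\partial_i)\tau(a_i) = \sum_i (-\partial_i)\, a_i = -\sum_i \partial_i a_i.
\]
Then invoke the commutation relation $\partial_i a_i = a_i \partial_i + (\partial_i \bullet a_i)$ in $\mathscr{D}_X$ to obtain
\[
\tau(\delta) = -\sum_i a_i \partial_i - \sum_i (\partial_i \bullet a_i) = -\delta - \sum_i \partial_i \bullet a_i.
\]

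Next I would handle the term $s \cdot \tfrac{\delta \bullet g}{g}$. Since $\delta$ is logarithmic, $\tfrac{\delta \bullet g}{g} \in \mathscr{O}_X$ is genuinely a function and hence fixed by $\tau$; likewise $\tau(s) = s$ because $s$ is central and is treated as a commuting scalar by the formula for $\tau$. As $s$ and $\tfrac{\delta \bullet g}{g}$ commute in $\mathscr{D}_X[s]$, anti-multiplicativity yields
\[
\tau\!\left(s \tfrac{\delta \bullet g}{g}\right) = \tau\!\left(\tfrac{\delta \bullet g}{g}\right) \tau(s) = \tfrac{\delta \bullet g}{g} \cdot s = s \tfrac{\delta \bullet g}{g}.
\]

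Combining the two computations by $\mathbb{C}$-linearity of $\tau$ gives
\[
\tau\!\left(\delta - s \tfrac{\delta \bullet g}{g}\right) = -\delta - \sum_i (\partial_i \bullet a_i) - s \tfrac{\delta \bullet g}{g},
\]
which is exactly the claimed identity. There is no real obstacle here; the only subtlety worth flagging is the bookkeeping on the order in which $\tau$ reverses products, and the observation that logarithmicity of $\delta$ is what ensures $\tfrac{\delta \bullet g}{g}$ is a genuine element of $\mathscr{O}_X$ (and thus fixed by $\tau$) rather than a mere element of $\mathscr{O}_X(\star g)$.
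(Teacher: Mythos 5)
Your computation is correct and is exactly what the paper's one-word proof (``Compute.'') leaves to the reader: expand $\tau(\delta)$ via anti-multiplicativity and the sign on $\partial_i$, normalize with $\partial_i a_i = a_i\partial_i + (\partial_i \bullet a_i)$, and note that the logarithmic coefficient $\tfrac{\delta\bullet g}{g}\in\mathscr{O}_X$ and the central $s$ are fixed by $\tau$. No gap; same approach, just written out.
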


\begin{proof}
    Compute.
\end{proof}

\begin{remark} \label{rmk - formal substitution is ok}
    Let us note that everything in the preceding subsections holds when we replace $s + k$ with $\alpha s + \beta$ provided $\alpha \in \mathbb{C}^{\star}$ and $\beta \in \mathbb{C}$. Indeed, $\mathscr{D}_{X}[s] f^{\alpha s + \beta}$ is defined as expected by naive chain rule application and by hand computation shows that $\ann_{\mathscr{D}_{X}[s]}^{(1)} f^{\alpha s + \beta}$ and $\ann_{\mathscr{D}_{X}[s]} f^{\alpha s + \beta}$ are recovered from their $f^{s}$ counterparts by the same formal substitution. 
\end{remark}

\subsection{Ext Modules and Localizations}

Let $S \subseteq \mathbb{C}[s]$ be a multiplicatively closed subset; let $T = S^{-1} \mathbb{C}[s]$. As in the prequel, we may consider left or right modules over $\mathscr{D}_{T} = \mathscr{D}_{X} \otimes_{\mathbb{C}} T$. Furthermore, we may define the \emph{relative order filtration} on $\mathscr{D}_{T}$ and the relative characteristic variety in analogous ways. Again we say a coherent (left or right) $M$ $\mathscr{D}_{T}$-module is \emph{relative holonomic} when $\Ch^{\rel}(M)$ has a decomposition into products of Lagrangians of $T^{\star}X$ and algebraic varieties in $\Spec T$ as in Definition \ref{def - relative holonomic}. Finally, we define the $b$-function of a left/right $\mathscr{D}_{T}$-module $M$ as before: it is the $T$-annihilator of $M$.

When $M$ is a left (resp. right) $\mathscr{D}_{T}$-module, $\Ext_{\mathscr{D}_{T}}^{\bullet}(M, \mathscr{D}_{T})$ has a natural right (resp. left) $\mathscr{D}_T$-module structure. We \emph{always} consider such $\Ext$ modules to have this right/left $\mathscr{D}_{T}$-module structures. So for us,
\[
\{\text{left $\mathscr{D}_{T}$-modules}\} \ni M \mapsto \Ext_{\mathscr{D}_{T}}^{\bullet}(M, \mathscr{D}_{T}) \in \{\text{right $\mathscr{D}_{T}$-modules}\},
\]
and symmetrically as well. 

We record three more Lemmas that will be utilized at the very end of the paper. All but the last are scavenged from \cite{ZeroLociBSIdeals}. First we need a consequence of the flatness of localization:

\begin{lemma} \label{lemma - localization and Ext}
    Let $M$ be a relative holonomic left or right $\mathscr{D}_{X}[s]$-module. Let $S \subseteq \mathbb{C}[s]$ be a multiplicatively closed set and $T = S^{-1} \mathbb{C}[s]$. Then at any point $\mathfrak{x} \in X$ we have an isomorphism of $\mathscr{D}_{T, \mathfrak{x}} = S^{-1} \mathscr{D}_{X,\mathfrak{x}}$-modules
    \[
    S^{-1} \Ext_{\mathscr{D}_{X, \mathfrak{x}}[s]}^{v}(M_{\mathfrak{x}}, \mathscr{D}_{X, \mathfrak{x}}) \simeq \Ext_{\mathscr{D}_{T, \mathfrak{x}}}^{v}(S^{-1} M_{\mathfrak{x}}, \mathscr{D}_{T, \mathfrak{x}})
    \]
\end{lemma}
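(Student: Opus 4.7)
The plan is to reduce the statement to the standard fact that flat localization commutes with $\Hom$ out of a finitely presented module, combined with exactness of $S^{-1}(-)$ as a functor on $\mathbb{C}[s]$-modules. The relative holonomicity assumption is not essential for the commutation itself; coherence of $M$ is what is really used, and relative holonomicity is presumably recorded here only because this is the context in which the lemma will later be applied.

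First I would produce a resolution $P_{\bullet} \to M_{\mathfrak{x}}$ by finitely generated free $\mathscr{D}_{X,\mathfrak{x}}[s]$-modules. This is possible because $\mathscr{D}_{X,\mathfrak{x}}$ is a coherent ring and hence so is the polynomial extension $\mathscr{D}_{X,\mathfrak{x}}[s] = \mathscr{D}_{X,\mathfrak{x}} \otimes_{\mathbb{C}} \mathbb{C}[s]$, and $M_{\mathfrak{x}}$ is a coherent module over it by hypothesis. Second, since $S \subseteq \mathbb{C}[s]$ lies in the center of $\mathscr{D}_{X,\mathfrak{x}}[s]$, the localization morphism $\mathscr{D}_{X,\mathfrak{x}}[s] \to \mathscr{D}_{T,\mathfrak{x}} = S^{-1}\mathscr{D}_{X,\mathfrak{x}}[s]$ is flat, so $S^{-1}P_{\bullet} \to S^{-1}M_{\mathfrak{x}}$ is a resolution of $S^{-1}M_{\mathfrak{x}}$ by finitely generated free $\mathscr{D}_{T,\mathfrak{x}}$-modules.

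The key technical step is the identification, for each $i$, of the two $\Hom$-complexes. Writing $P_i \simeq \mathscr{D}_{X,\mathfrak{x}}[s]^{n_i}$, both $\Hom_{\mathscr{D}_{X,\mathfrak{x}}[s]}(P_i, \mathscr{D}_{X,\mathfrak{x}}[s])$ and $\Hom_{\mathscr{D}_{T,\mathfrak{x}}}(S^{-1}P_i, \mathscr{D}_{T,\mathfrak{x}})$ reduce, via finite direct sums, to $\mathscr{D}_{X,\mathfrak{x}}[s]^{n_i}$ and $\mathscr{D}_{T,\mathfrak{x}}^{n_i}$ respectively. Since localization commutes with finite direct sums, the natural map
\[
S^{-1}\Hom_{\mathscr{D}_{X,\mathfrak{x}}[s]}(P_i, \mathscr{D}_{X,\mathfrak{x}}[s]) \longrightarrow \Hom_{\mathscr{D}_{T,\mathfrak{x}}}(S^{-1}P_i, \mathscr{D}_{T,\mathfrak{x}})
\]
is an isomorphism, and this assembles into an isomorphism of complexes. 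Applying $S^{-1}(-)$ to the complex $\Hom_{\mathscr{D}_{X,\mathfrak{x}}[s]}(P_{\bullet}, \mathscr{D}_{X,\mathfrak{x}}[s])$ commutes with taking cohomology in degree $v$ by exactness of localization, yielding
\[
S^{-1}\Ext^{v}_{\mathscr{D}_{X,\mathfrak{x}}[s]}(M_{\mathfrak{x}}, \mathscr{D}_{X,\mathfrak{x}}[s]) \simeq H^{v}\bigl(\Hom_{\mathscr{D}_{T,\mathfrak{x}}}(S^{-1}P_{\bullet}, \mathscr{D}_{T,\mathfrak{x}})\bigr) = \Ext^{v}_{\mathscr{D}_{T,\mathfrak{x}}}(S^{-1}M_{\mathfrak{x}}, \mathscr{D}_{T,\mathfrak{x}}).
\]

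There is no real obstacle here. The only step requiring any care is justifying the existence of a resolution by finitely generated free modules in the analytic setting, which is handled by the coherence of $\mathscr{D}_{X,\mathfrak{x}}[s]$. Everything else is a standard application of flatness and the compatibility of $\Hom$ with localization on finitely presented sources.
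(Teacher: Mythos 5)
Your proof is correct and supplies the direct argument that the paper elides: the paper's own ``proof'' of this lemma is simply a citation to \cite[Lemma 3.5.2]{ZeroLociBSIdeals}, so you are effectively reconstructing the argument behind that reference rather than following an alternative route in the text. The key ingredients you use are exactly the ones needed: $\mathscr{D}_{X,\mathfrak{x}}[s]$ is in fact Noetherian (not merely coherent, since its associated graded with respect to the relative order filtration is a polynomial ring over the Noetherian local ring $\mathscr{O}_{X,\mathfrak{x}}$), so $M_{\mathfrak{x}}$ admits a resolution by finitely generated free modules; localization at the central multiplicative set $S \subseteq \mathbb{C}[s]$ is flat; $\Hom$ out of a finite free module commutes with this localization because both sides reduce to finite direct sums of the ring; and exactness of $S^{-1}(-)$ lets it pass through cohomology. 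You are also right that relative holonomicity plays no role in this particular statement beyond guaranteeing the coherence that makes the free resolution available; it is recorded in the hypothesis only because that is the class of modules to which the lemma will subsequently be applied (for instance in Lemma \ref{lemma - localization plays well with b-functions} and Lemma \ref{lemma - b-function and b-function of dual}).
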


\begin{proof}
    This is demonstrated at the end of proof of \cite[Lemma 3.5.2]{ZeroLociBSIdeals}.
\end{proof}

For the last two lemmas, we introduce some notation. Let $M$ be a left or right $\mathscr{D}_{T}$-module. We say the \emph{grade} of $M$ at $\mathfrak{x} \in X$ is $v$ when $\Ext_{\mathscr{D}_{T, \mathfrak{x}}}^{j}( M_{\mathfrak{x}}, \mathscr{D}_{T, \mathfrak{x}}) = 0$ for all $j < v$ and is nonzero when $j = v$.  We say $M$ is \emph{$v$-Cohen--Macaulay} at $\mathfrak{x} \in X$ when $\Ext_{\mathscr{D}_{T, \mathfrak{x}}}^{j}( M_{\mathfrak{x}}, \mathscr{D}_{T, \mathfrak{x}}) = 0 \iff  j \neq v$.

\begin{lemma} \label{lemma - localization plays well with b-functions}
    Let $M$ be a relative holonomic left or right $\mathscr{D}_{X}[s]$-module that is $(n+1)$-Cohen--Macaulay. Let $p(s) \in \mathbb{C}[s]$ and $S = \{1, p(s), p(s)^{2}, \dots \}$ so that $T = S^{-1}\mathbb{C}[s] = \mathbb{C}[s, p(s)^{-1}]$. Then for any $\mathfrak{x} \in X$, 
    \[
    Z(B(M_{\mathfrak{x}})) \cap \Spec \mathbb{C}[s, p(s)^{-1}] = Z(B(S^{-1} M_{\mathfrak{x}})),
    \]
    where we regard $S^{-1} M_{\mathfrak{x}}$ as a $\mathscr{D}_{T, \mathfrak{x}}$-module. 
\end{lemma}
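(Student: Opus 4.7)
The plan is to establish the two containments $Z(B(S^{-1}M_\mathfrak{x})) \subseteq Z(B(M_\mathfrak{x})) \cap \Spec \mathbb{C}[s, p(s)^{-1}]$ and its reverse separately, with the forward direction being essentially formal and the reverse direction using coherence of $M_\mathfrak{x}$ over $\mathscr{D}_{X,\mathfrak{x}}[s]$.

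First, for the forward containment, I would observe that every $b(s) \in B(M_\mathfrak{x}) = \ann_{\mathbb{C}[s]} M_\mathfrak{x}$ descends to $b(s)/1 \in T$, which continues to annihilate $S^{-1}M_\mathfrak{x}$ since the $\mathbb{C}[s]$-action commutes with the localization. Hence $B(M_\mathfrak{x}) \cdot T \subseteq B(S^{-1}M_\mathfrak{x})$ as ideals of $T$, and taking zero loci yields $Z(B(S^{-1}M_\mathfrak{x})) \subseteq Z(B(M_\mathfrak{x}) \cdot T) = Z(B(M_\mathfrak{x})) \cap \Spec T$.

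For the reverse containment, the strategy is proof by contradiction. Suppose $\alpha \in Z(B(M_\mathfrak{x}))$ with $p(\alpha) \neq 0$ and that $\alpha \notin Z(B(S^{-1}M_\mathfrak{x}))$. Since $p(s)$ is a unit in $T$, the ideal $B(S^{-1}M_\mathfrak{x})$ is generated by a polynomial $b'(s) \in \mathbb{C}[s]$; by hypothesis I can arrange $b'(\alpha) \neq 0$. Now I invoke coherence: relative holonomicity of $M$ implies that $M_\mathfrak{x}$ is finitely generated over $\mathscr{D}_{X,\mathfrak{x}}[s]$, so pick generators $m_1,\ldots,m_k$. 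For each $i$, vanishing of $b'(s)\, m_i/1$ in $S^{-1}M_\mathfrak{x}$ yields an exponent $N_i \in \mathbb{Z}_{\geq 0}$ with $p(s)^{N_i} b'(s) m_i = 0$ in $M_\mathfrak{x}$. Setting $N = \max_i N_i$ and using that $p(s)^N b'(s) \in \mathbb{C}[s]$ is central in $\mathscr{D}_{X,\mathfrak{x}}[s]$, any element $\sum P_i m_i \in M_\mathfrak{x}$ is annihilated by $p(s)^N b'(s)$. Thus $p(s)^N b'(s) \in B(M_\mathfrak{x})$, and evaluating at $\alpha$ forces $p(\alpha)^N b'(\alpha) = 0$, contradicting $p(\alpha) \neq 0$ and $b'(\alpha) \neq 0$.

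The role of the $(n+1)$-Cohen--Macaulay hypothesis, which I do not explicitly invoke in the argument above, is to guarantee that $B(M_\mathfrak{x})$ and $B(S^{-1}M_\mathfrak{x})$ are genuinely nontrivial (so both zero loci are proper finite subsets of the respective spectra, rendering the statement substantive rather than vacuous). By Lemma \ref{lemma - localization and Ext}, the CM property transfers to $S^{-1}M_\mathfrak{x}$ over $\mathscr{D}_{T,\mathfrak{x}}$, ensuring symmetry in the statement. The main obstacle I anticipate is the uniform-bound step producing $N$: this is precisely where finite generation (from coherence, hence from relative holonomicity) is indispensable, since for an arbitrary $\mathscr{D}_{X,\mathfrak{x}}[s]$-module the powers $N_i$ needed for individual elements need not be globally bounded.
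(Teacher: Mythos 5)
Your argument is correct, but it takes a genuinely different route from the paper's. The paper proves the equality by reducing it to a fiber criterion: for each $\alpha \in \Spec T$ it uses the isomorphism $M_{\mathfrak{x}} \otimes_{\mathbb{C}[s]} \mathbb{C}_{\alpha} \simeq S^{-1}M_{\mathfrak{x}} \otimes_{\mathbb{C}[s]} \mathbb{C}_{\alpha}$ and invokes \cite[Proposition 3.4.3]{ZeroLociBSIdeals}, which characterizes membership in $Z(B(-))$ by nonvanishing of that specialization; that cited result is exactly where the $(n+1)$-Cohen--Macaulay hypothesis and Lemma \ref{lemma - localization and Ext} get used. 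Your proof instead goes through the two containments directly: the forward one is a formal consequence of $B(M_{\mathfrak{x}}) T \subseteq B(S^{-1}M_{\mathfrak{x}})$, and the reverse one uses only that $M_{\mathfrak{x}}$ is finitely generated over $\mathscr{D}_{X,\mathfrak{x}}[s]$ (from coherence, hence from relative holonomicity) together with centrality of $\mathbb{C}[s]$, yielding a uniform power $N$ with $p(s)^{N} b'(s) \in B(M_{\mathfrak{x}})$. This is more elementary, and you correctly observe that it bypasses the Cohen--Macaulay hypothesis entirely. Two small refinements: first, you should dispatch the degenerate case $B(S^{-1}M_{\mathfrak{x}}) = 0$ explicitly at the start of the reverse containment (it makes the containment vacuous), rather than tacitly picking a generator $b'(s)$; second, your remark that CM is needed to make the zero loci ``genuinely nontrivial'' is not quite the right framing --- your argument is valid whether or not the $b$-functions are zero --- it is more accurate to say CM is what the \emph{paper's} route requires in order to invoke the specialization criterion, whereas your route does not need it at all. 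In fact, your containment argument shows the conclusion holds under weaker hypotheses than stated, which is a mild bonus of the approach; the trade-off is that the paper's route fits naturally alongside the other uses of \cite[Proposition 3.4.3]{ZeroLociBSIdeals} and Lemma \ref{lemma - localization and Ext} in this section and makes uniform the treatment of $b$-functions for relative holonomic Cohen--Macaulay modules.
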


\begin{proof}
    Let $\alpha \in \Spec \mathbb{C}[s, p(s)^{-1}]$ and $\mathbb{C}_{\alpha}$ the corresponding residue field. By the commutativity of localization we have an isomorphism of $\mathscr{D}_{\mathbb{C}_{\alpha}, \mathfrak{x}}$ modules
    \[
    M_{\mathfrak{x}} \otimes_{\mathbb{C}[s]} \mathbb{C}_{\alpha} \simeq S^{-1} M_{\mathfrak{x}} \otimes_{\mathbb{C}[s]} \mathbb{C}_{\alpha}.
    \]
    Now use \cite[Proposition 3.4.3]{ZeroLociBSIdeals} and the fact $n+1$-Cohen--Macaulay-ness persists under localizations (Lemma \ref{lemma - localization and Ext}).
\end{proof}

\begin{lemma} \label{lemma - b-function and b-function of dual}
    Suppose that $M \neq 0$ is a left or right $\mathscr{D}_{T}$-module that is relative holonomic. If $M$ is $(n+1)$-Cohen--Macaulay at $\mathfrak{x}$ then
    \begin{enumerate}[label=(\alph*)]
        \item $M_{\mathfrak{x}}$ and $\Ext_{\mathscr{D}_{T, \mathfrak{x}}}^{n+1}(M_{\mathfrak{x}}, \mathscr{D}_{T, \mathfrak{x}})$ have $b$-functions;
        \item The zeroes of their $b$-functions agree: 
        \[
        Z(B(M_{\mathfrak{x}}) = Z(B(\Ext_{\mathscr{D}_{T, \mathfrak{x}}}^{n+1}(M_{\mathfrak{x}}, \mathscr{D}_{T, \mathfrak{x}})).
        \]
    \end{enumerate}
\end{lemma}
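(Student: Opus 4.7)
The plan is to establish the stronger equality $\ann_{\mathbb{C}[s]}M_{\mathfrak{x}} = \ann_{\mathbb{C}[s]}N$, where $N := \Ext^{n+1}_{\mathscr{D}_{T,\mathfrak{x}}}(M_{\mathfrak{x}},\mathscr{D}_{T,\mathfrak{x}})$; combined with showing that at least one side is nonzero, this yields both (a) and (b) at once. The essential ingredient is that $\mathbb{C}[s]\subseteq Z(\mathscr{D}_{T,\mathfrak{x}})$ is central, which forces the two natural $\mathbb{C}[s]$-actions on $\Ext^{*}$ (from the source and from the target) to agree.

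First I would produce a $b$-function for $M_{\mathfrak{x}}$. By Definition \ref{def - relative holonomic}, near $\mathfrak{x}$ one has $\Ch^{\rel}(M) = \bigcup_{\alpha}\Lambda_{\alpha}\times S_{\alpha}$ with each $\Lambda_{\alpha}$ Lagrangian, so $\dim\Lambda_{\alpha} = n$. The $(n+1)$-Cohen--Macaulay hypothesis says the grade of $M_{\mathfrak{x}}$ over $\mathscr{D}_{T,\mathfrak{x}}$ is $n+1$, which via the grade/codimension dictionary for Auslander-regular rings gives $\codim \Ch^{\rel}(M) = n+1$ in $T^{\star}X\times\Spec\mathbb{C}[s]$ (of total dimension $2n+1$), so $\dim\Ch^{\rel}(M) \leq n$. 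Each component then satisfies $n + \dim S_{\alpha}\leq n$, forcing each $S_{\alpha}$ to be finite. Any polynomial vanishing on the finite set $\bigcup_{\alpha} S_{\alpha}$ annihilates $M_{\mathfrak{x}}$, producing a nonzero $B(M_{\mathfrak{x}})$.

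Next I would prove $\ann_{\mathbb{C}[s]}M_{\mathfrak{x}}\subseteq\ann_{\mathbb{C}[s]}N$ by the central-element trick. Given $p(s)\in\ann_{\mathbb{C}[s]}M_{\mathfrak{x}}$, pick a projective $\mathscr{D}_{T,\mathfrak{x}}$-resolution $P_{\bullet}\to M_{\mathfrak{x}}$. Because $p(s)$ is central, multiplication by $p(s)$ on $P_{\bullet}$ is a chain endomorphism lifting $p(s)\cdot \mathrm{id}_{M_{\mathfrak{x}}} = 0$, hence is null-homotopic. Applying $\Hom_{\mathscr{D}_{T,\mathfrak{x}}}(-,\mathscr{D}_{T,\mathfrak{x}})$ and passing to the $(n+1)$st cohomology, the induced endomorphism of $N$ is on the one hand zero, and on the other hand — because the chain lift is $p(s)\cdot \mathrm{id}_{P_{i}}$ and $\Hom$ turns this into multiplication by $p(s)$ on the $\mathscr{D}_{T,\mathfrak{x}}$-target — precisely right multiplication by $p(s)$ in the right $\mathscr{D}_{T,\mathfrak{x}}$-structure of $N$. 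Therefore $p(s)N = 0$; in particular $N$ also has a $b$-function, finishing (a).

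For the reverse inclusion, I would invoke Björk--Kashiwara biduality: since $M_{\mathfrak{x}}$ is $(n+1)$-Cohen--Macaulay over the Auslander-regular ring $\mathscr{D}_{T,\mathfrak{x}}$ of global dimension $n+1$, the dual $N$ is again $(n+1)$-CM and $\Ext^{n+1}_{\mathscr{D}_{T,\mathfrak{x}}}(N,\mathscr{D}_{T,\mathfrak{x}}) \simeq M_{\mathfrak{x}}$. Rerunning the central-element argument with $N$ in place of $M_{\mathfrak{x}}$ gives $\ann_{\mathbb{C}[s]}N\subseteq\ann_{\mathbb{C}[s]}M_{\mathfrak{x}}$, yielding the asserted equality of annihilators, hence (b) (indeed an equality of $b$-functions, not merely of their zero sets). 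The main obstacle will be twofold: carefully reconciling side and sign conventions so that the source-action and target-action on $\Ext^{n+1}$ as a right $\mathscr{D}_{T,\mathfrak{x}}$-module really are identified (this is classical homological algebra but is sensitive to the left/right bookkeeping the lemma allows); and confirming that $\mathscr{D}_{T,\mathfrak{x}}$ inherits Auslander regularity of global dimension $n+1$ from $\mathscr{D}_{X,\mathfrak{x}}[s]$ via the flat localization $\mathbb{C}[s]\to T$, so that both the grade/codimension equality and biduality apply in this localized setting.
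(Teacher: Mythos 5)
Your proof is correct and takes a genuinely different route from the paper. The paper reduces (b) to comparing relative characteristic varieties: invoking \cite[Lemma 3.4.1]{ZeroLociBSIdeals}, which identifies $Z(B(-))$ with the projection of $\Ch^{\rel}(-)$ to $\Spec\mathbb{C}[s]$, it shows $\Ch^{\rel}(M_{\mathfrak{x}}) = \Ch^{\rel}(\Ext^{n+1})$ by sandwiching the double dual between two applications of the inclusion $\Ch^{\rel}(\Ext^{j}(M,\mathscr{D}))\subseteq\Ch^{\rel}(M)$ from \cite[Lemma 3.2.4(2)]{ZeroLociBSIdeals}, then using purity and Bj{\"o}rk's bidualizing filtration to collapse the chain of inclusions to equalities. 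Your argument replaces the characteristic-variety comparison with the elementary central-element observation: for $p(s)\in\mathbb{C}[s]$ killing $M_{\mathfrak{x}}$, left (or right) multiplication by $p(s)$ is a null-homotopic chain endomorphism of any projective resolution, so the induced endomorphism of $\Ext^{n+1}$ is zero; centrality of $\mathbb{C}[s]$ identifies this induced endomorphism with the module-structure action of $p(s)$ on $\Ext^{n+1}$. This gives $\ann_{\mathbb{C}[s]}M_{\mathfrak{x}}\subseteq\ann_{\mathbb{C}[s]}N$, and biduality (the same ingredient the paper uses) plus the identical argument applied to $N$ gives the reverse. Your route buys a strictly stronger conclusion — equality of the $b$-functions themselves, not merely of their zero sets — and avoids the $\Ch^{\rel}$-comparison machinery entirely in step (b). The two concerns you flag are real but resolved: the left/right bookkeeping does work out exactly as you sketch (the source and target $\mathbb{C}[s]$-actions on $\Ext$ coincide because $\mathbb{C}[s]$ is central on both sides), and Auslander regularity of $\mathscr{D}_{T,\mathfrak{x}}$ is exactly what the paper cites from \cite[IV.Theorem 4.15]{BjorkAnalyticDmodsApplications}, so no new work is needed there. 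One minor imprecision in your existence step: a polynomial vanishing on $\bigcup_{\alpha}S_{\alpha}$ need not literally annihilate $M_{\mathfrak{x}}$ (one may need to pass to a power, or cite the statement of \cite[Lemma 3.4.1]{ZeroLociBSIdeals} directly), but this does not affect the argument since you only need existence of a nonzero $b$-function there.
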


\begin{proof}
    To show $M_{\mathfrak{x}}$ has a $b$-function, it suffices by \cite[Lemma 3.4.1]{ZeroLociBSIdeals} to show that $\dim \Ch^{\rel} M_{\mathfrak{x}} = n$. By the complementary numerics of grade and dimension of the relative characteristic variety \cite[Remark 3.3.2]{ZeroLociBSIdeals}, our assumptions give $2n+1 = (n+1) + \dim \Ch^{\rel} M_{\mathfrak{x}}$. So $M_{\mathfrak{x}}$ has a $b$-function.

    As for $\Ext_{\mathscr{D}_{T, \mathfrak{x}}}^{n+1}(M_{\mathfrak{x}}, \mathscr{D}_{T, \mathfrak{x}})$, as $\mathscr{D}_{T, \mathfrak{x}}$ is an Auslander ring \cite[IV.Theorem 4.15]{BjorkAnalyticDmodsApplications}, by the Auslander condition its grade is at least $n+1$; by \cite[Lemma 3.2.4(2), Proposition 3.2.5]{ZeroLociBSIdeals} it is relative holonomic. Thus it has a $b$-function by arguing as before. 

    As for (b), by \cite[Lemma 3.4.1]{ZeroLociBSIdeals} it suffices to show $\Ch^{\rel} (M_{\mathfrak{x}}) = \Ch^{\rel} (\Ext_{\mathscr{D}_{T, \mathfrak{x}}}^{n+1}(M_{\mathfrak{x}}, \mathscr{D}_{T, \mathfrak{x}}).)$  Using \cite[Lemma 3.2.4(2)]{ZeroLociBSIdeals} twice, we have
    \begin{align} \label{eqn - lemma - b-functions double duals 1}
        \Ch^{\rel} \bigg( \Ext_{\mathscr{D}_{T, \mathfrak{x}}}^{n+1}(\Ext_{\mathscr{D}_{T, \mathfrak{x}}}^{n+1}(M_{\mathfrak{x}}, \mathscr{D}_{T, \mathfrak{x}}), \mathscr{D}_{T, \mathfrak{x}}) \bigg) 
        &\subseteq \Ch^{\rel} \bigg( \Ext_{\mathscr{D}_{T, \mathfrak{x}}}^{n+1}(M_{\mathfrak{x}}, \mathscr{D}_{T, \mathfrak{x}}) \bigg) \\
        &\subseteq \Ch^{\rel} (M_{\mathfrak{x}}). \nonumber
    \end{align} 
    So if we show the leftmost and rightmost modules of \eqref{eqn - lemma - b-functions double duals 1} have the same relative characteristic variety, we will have established (b). 
    
    Note that $M_{\mathfrak{x}}$ is $(n+1)$-pure by \cite[Remark 3.3.2]{ZeroLociBSIdeals}, i.e. every (nonzero) submodule of $M_{\mathfrak{x}}$ has grade $n+1$. Consider the bidualizing filtration 
    \[
    M_{\mathfrak{x}} = B_{0} \supseteq B_{-1} \supseteq \cdots
    \]
    Bj{\"o}rk constructs in \cite[IV.1.6]{BjorkAnalyticDmodsApplications}. By \cite[IV.Proposition 2.8]{BjorkAnalyticDmodsApplications}, this filtration is explicitly given by: $M_{\mathfrak{x}} = B_{0} = B_{-1} = \cdots = B_{-(n+1)}$; and $0 = B_{-(n+1) - 1} = B_{-(n+1) - 2} = \cdots $. Then Bj{\"o}rk's short exact sequence \cite[IV.Proposition 2.1]{BjorkAnalyticDmodsApplications} simplifies to 
    \begin{equation} \label{eqn - lemma - b-functions double duals, 2}
    M_{\mathfrak{x}} \simeq \Ext_{\mathscr{D}_{T, \mathfrak{x}}}^{n+1}(\Ext_{\mathscr{D}_{T, \mathfrak{x}}}^{n+1}(M_{\mathfrak{x}}, \mathscr{D}_{T, \mathfrak{x}}), \mathscr{D}_{T, \mathfrak{x}})
    \end{equation}
    In particular, the leftmost and rightmost relative characteristic varieties of \eqref{eqn - lemma - b-functions double duals 1} agree and (b) is verified.
\end{proof}

\begin{remark} \label{rmk - explaining dual}
    Whenever we discuss the dual of a $\mathscr{D}_{X}[s]$-module $M$, we mean $\Ext_{\mathscr{D}_{X}[s]}^{j}(M, \mathscr{D}_{X}[s])$ for $j$ the grade of $M$. Similarly for $\mathscr{D}_{T}$-modules.
\end{remark}

\section{Constructing Useful Complexes}

In this section we introduce a complex $U_{f}^{\bullet}$ (Definition \ref{def - the Ur complex}) of not necessarily projective right $\mathscr{D}_{X}[s]$-modules that, in our case of interest, resolves $\mathscr{D}_{X}[s] f^{s-1}.$ By standard arguments, by replacing every object in this complex with its own free resolution, the resultant double complex $K_{f}^{\bullet, \bullet}$ (Definition \ref{def - the double complex}) will be a free resolution of $\mathscr{D}_{X}[s] f^{s-1}$. We will eventually use this double complex to accomplish our first goal: to, under our main assumptions, ``compute'' $\Ext_{\mathscr{D}_{X}[s]}^{3}(\mathscr{D}_{X}[s] f^{s-1}, \mathscr{D}_{X}[s])$. More accurately, this $\Ext$ module will be the middle object of a short exact sequence where we know the outer objects explicitly. 

This is all viable because in the case of locally quasi-homogeneous divisors $f \in R = \mathbb{C}[x_{1}, x_{2}, x_{3}]$, Walther proved \cite[Theorem 3.26]{uli} that the $\mathscr{D}_{X}[s]$-annihilator of $f^{s}$ is generated by total order at most one operators, i.e. by $\ann_{\mathscr{D}_{X}[s]}^{(1)} f^{s}$. (Actually his result is more general, cf. loc. cit. and our Proposition \ref{prop - U complex resolves}.) Thus the following canonical surjection is actually an isomorphism:
\begin{equation} \label{eqn - uli result, ann fs}
\frac{\mathscr{D}_{X}[s]}{\ann_{\mathscr{D}_{X}[s]}^{(1)} f^{s}} \simeq \mathscr{D}_{X}[s] f^{s} \quad \text{ for ``nice'' $f$.}
\end{equation}
Our whole construction depends on this isomorphism: if there are more exotic elements of $\mathscr{D}_{X}[s]$ killing $f^{s}$ a totally different approach would be required. On the other hand, if one is interested in $\Ext_{\mathscr{D}_{X}[s]}^{\bullet}(\mathscr{D}_{X}[s] / \ann_{\mathscr{D}_{X}[s]}^{(1)}, \mathscr{D}_{X}[s])$ in its own right, many of our assumptions could presumably be weakened.

Finally, let us discuss our setting. We work in the analytic category since it enables the proof of Proposition \ref{prop - U complex resolves}, see Remark \ref{rmk - why analtyic, U complex}. The usage of analyticity in Proposition \ref{prop - U complex resolves} is the \emph{primary} reason the main results of the paper occur in the analytic category. Certainly $U_f^\bullet$ can be defined in the algebraic category and an algebraic version of Proposition \ref{prop - U complex resolves} would extend the results of this section to such an algebraic $U_f^\bullet$ with appropriate modifications. 

Also: note that a \emph{reducedness} assumption appears in Proposition \ref{prop - dual of Ur complex, terminal cohomology} and its usage here is why reducedness is imposed in the paper's main results. After modifying the statement, reducedness can be removed in Proposition \ref{prop - dual of Ur complex, terminal cohomology}, but the required modifications are technical and lead to less potent results, see Remark \ref{rmk - why reduced}.

\subsection{The Complex}

The first step is to construct a new complex of right $\mathscr{D}_{X}[s]$-modules that is constructed out of logarithmic data of a divisor. This can be thought of as a parametric version of the complex of right $\mathscr{D}_{X}$-modules $\deRham_{X}^{-1} ( M_{X,\log}(L), \nabla^{\alpha})$ considered throughout \cite{BathSaitoTLCT}. To be slightly more explicit: it is a parametric version of M. Saito's $\deRham^{-1}$ functor \cite[Definition 1.3]{InducedDModsDifferentialComplexes} applied to the the twisted logarithmic de Rham complex $(\Omega_{X}(\log f), \nabla^{\lambda})$. (The differential $\nabla^{\lambda}$ is $d(-) + \wedge \lambda df/f \wedge (-)$.) By ``parametric'' we mean by replacing the weight $\lambda$ with a new indeterminate $s$. 

We define the complex for general polynomials $f$ even though we will eventually restrict to the case of $f \in R = \mathbb{C}[x_{1}, x_{2},x_{3}]$ locally quasi-homogeneous. 

\begin{define} \label{def - the Ur complex}
    Let $f \in R = \mathbb{C}[x_{1}, \dots, x_{n}]$ and $X = \mathbb{C}^{n}$. We define the complex $U_{f}^{\bullet}$ of right $\mathscr{D}_{X}[s]$-modules
    \begin{align} \label{equn - def of Ur complex}
    U_{f}^{\bullet} 
    &= 0 \xrightarrow[]{\nabla_{s}} \Omega_{X}^{0}(\log f) \otimes_{\mathscr{O}_{X}} \mathscr{D}_{X}[s] \xrightarrow[]{\nabla_{s}} \Omega_{X}^{1}(\log f) \otimes_{\mathscr{O}_{X}} \mathscr{D}_{X}[s] \xrightarrow[]{\nabla_{s}} \\
    &\cdots \xrightarrow[]{\nabla_{s}} \Omega_{X}^{n}(\log f) \otimes_{\mathscr{O}_{X}} \mathscr{D}_{X}[s]  \xrightarrow[]{\nabla_{s}} 0 \nonumber
    \end{align}
    where the right $\mathscr{D}_{X}[s]$-module structure of
    \begin{equation} \label{eqn - def of Ur complex object}
        U_{f}^{p} = \Omega_{X}^{p}(\log f) \otimes_{\mathscr{O}_{X}} \mathscr{D}_{X}[s]
    \end{equation}
    is given trivially by multiplication on the right and the differential is locally given by 
    \begin{equation} \label{eqn - def of Ur complex differential}
        \nabla_{s}(\eta \otimes P(s)) = \big( d(\eta) \otimes P(s) \big) + \big( \frac{df}{f} \wedge \eta \otimes s P(s) \big) + \big( \sum_{1 \leq i \leq n} dx_{i} \wedge \eta \otimes \partial_{i} P(s) \big).
    \end{equation}
\end{define}

\begin{remark} \label{rmk - coordinate change U complex}
    One can check that Definition \ref{def - the Ur complex} is compatible with coordinate changes. Note that such a change causes $dx_{i}$ and $\partial_{i}$ to change in a dual fashion. On the other hand, a careful reading of future arguments shows we never actually conduct coordinate changes on $U_{f}^{\bullet}$, rather on various associated graded complexes. While we eventually use $U_{f}^{\bullet}$ to compute $\Ext_{\mathscr{D}_{X}[s]}^{n}(\mathscr{D}_{X}[s] f^{s}, \mathscr{D}_{X}[s])$ under our standard assumptions (Theorem \ref{thm - spectral sequence dual}), note that this $\Ext$ module is coordinate independent by construction.
\end{remark}

The reason the complex $U_{f}^{\bullet}$ is useful because under our standard assumptions it resolves its terminal cohomology module. Without any assumptions we always can describe its terminal cohomology module: 

\begin{proposition} \label{prop - terminal cohomology Ur complex}
    We have an isomorphism of right $\mathscr{D}_{X}[s]$-modules
    \[
    \frac{U_{f}^{n}}{\im [ \nabla_{s}: U_{f}^{n-1} \to U_{f}^{n}]} \simeq \left( \frac{\mathscr{D}_{X}[s]}{\ann_{\mathscr{D}_{X}[s]}^{(1)} f^{s-1}} \right)^{r}.
    \]
\end{proposition}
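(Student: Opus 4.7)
The plan is to carry out an explicit computation at the top end of the complex, leveraging that logarithmic forms in top and codimension-one degree have very concrete descriptions (Remark \ref{rmk - basics alg log forms}.(c),(e)).

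First I would identify $U_{f}^{n}$ as a free cyclic right $\mathscr{D}_{X}[s]$-module. Since $\Omega_{X}^{n}(\log f) = (1/f) \Omega_{X}^{n}$ is free of rank one over $\mathscr{O}_{X}$ with generator $dx/f$ (where $dx = dx_{1} \wedge \cdots \wedge dx_{n}$), the map $(dx/f) \otimes P(s) \mapsto P(s)$ gives an isomorphism of right $\mathscr{D}_{X}[s]$-modules $U_{f}^{n} \xrightarrow{\sim} \mathscr{D}_{X}[s]$. Under this identification, the proposition reduces to showing that the image of $\nabla_{s}$ in $\mathscr{D}_{X}[s]$ equals $\tau(\ann_{\mathscr{D}_{X}[s]}^{(1)} f^{s-1})$, the side-change of the order-one annihilator ideal (as a right ideal).

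Next I would parametrize $U_{f}^{n-1}$ using the $\mathscr{O}_{X}$-isomorphism $\Omega_{X}^{n-1}(\log f) \simeq \Der_{\mathscr{O}_{X}}(-\log f)$ of Remark \ref{rmk - basics alg log forms}.(e): for each logarithmic derivation $\delta = \sum b_{i} \partial_{i} \in \Der_{\mathscr{O}_{X}}(-\log f)$, set
\[
\eta_{\delta} = \sum_{i} (-1)^{i-1} \frac{b_{i} \widehat{dx_{i}}}{f} \in \Omega_{X}^{n-1}(\log f),
\]
so every element of $U_{f}^{n-1}$ is an $\mathscr{O}_{X}$-combination of $\eta_{\delta} \otimes P(s)$'s. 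The main calculation then is to apply the formula \eqref{eqn - def of Ur complex differential} term-by-term. Using $dx_{j} \wedge \widehat{dx_{i}} = \delta_{ij} (-1)^{i-1} dx$ and writing $\delta \bullet f = h f$ with $h \in \mathscr{O}_{X}$, a direct (if mildly tedious) computation gives
\[
d(\eta_{\delta}) = (\diverge(\delta) - h)\tfrac{dx}{f}, \qquad \tfrac{df}{f} \wedge \eta_{\delta} = h \tfrac{dx}{f}, \qquad dx_{j} \wedge \eta_{\delta} = b_{j} \tfrac{dx}{f}.
\]
Summing these contributions per \eqref{eqn - def of Ur complex differential} and passing through the identification $U_{f}^{n} \simeq \mathscr{D}_{X}[s]$ shows that
\[
\nabla_{s}(\eta_{\delta} \otimes P(s)) \longmapsto \Bigl( \delta + \diverge(\delta) + (s-1) \tfrac{\delta \bullet f}{f} \Bigr) \cdot P(s).
\]

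Finally, Lemma \ref{lem - side changing, order one annihilator} (applied with $s$ replaced by $s-1$ per Remark \ref{rmk - formal substitution is ok}) reads $\tau\bigl(\delta - (s-1) \tfrac{\delta \bullet f}{f}\bigr) = -\delta - \diverge(\delta) - (s-1) \tfrac{\delta \bullet f}{f}$, so the operators produced in the previous paragraph are, up to sign, exactly the generators of $\tau(\ann_{\mathscr{D}_{X}[s]}^{(1)} f^{s-1})$. As $\delta$ ranges over $\Der_{\mathscr{O}_{X}}(-\log f)$ and $P(s)$ ranges over $\mathscr{D}_{X}[s]$, the image of $\nabla_{s}$ in $\mathscr{D}_{X}[s]$ is therefore precisely the right ideal $\tau(\ann_{\mathscr{D}_{X}[s]}^{(1)} f^{s-1})$, yielding the desired isomorphism.

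There is no real obstacle here—the result is genuinely a local, coordinate-based computation. The only point demanding care is sign bookkeeping in the wedge and contraction identities (equivalently, in the chosen isomorphism $\Omega_{X}^{n-1}(\log f) \simeq \Der_{\mathscr{O}_{X}}(-\log f)$), together with matching conventions for $s+k$ versus $s$ in the definition of $\ann_{\mathscr{D}_{X}[s]}^{(1)}$.
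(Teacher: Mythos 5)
Your proposal is correct and follows essentially the same route as the paper: both identify $U_f^n \simeq \mathscr{D}_X[s]$ via the generator $dx/f$, parametrize $\Omega_X^{n-1}(\log f)$ by logarithmic derivations through the standard isomorphism, compute $\nabla_s$ on a generator, and match the result against Lemma \ref{lem - side changing, order one annihilator} (with $s \mapsto s-1$). Your presentation is slightly more explicit — you split the computation of $\nabla_s(\eta_\delta \otimes P)$ into the three summands $d(\eta_\delta)$, $\tfrac{df}{f}\wedge\eta_\delta$, and $dx_j\wedge\eta_\delta$ before recombining, whereas the paper carries out the same calculation in one aggregated display — but the steps and their justifications are the same.
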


\begin{proof}
    Let $ \eta = (1/f) \sum_{i} a_{i} \widehat{dx_{i}} \in \Omega_{X}^{n-1}(\log f)$. Recall, see Remark \ref{rmk - basics alg log forms}.(e), the $\mathscr{O}_{X}$-isomorphism $\phi: \Omega_{X}^{n-1}(\log f) \to \Der_{X}(- \log f)$ given by
    \begin{equation} \label{eqn - def of iso log der n-1 forms}
    \phi ( \frac{\sum_{i} a_{i} \widehat{dx_{i}}}{f} ) = \sum_{1 \leq i \leq n} (-1)^{i-1} a_{i} \partial_{i}.
    \end{equation}

    We compute $\im [\nabla_{s}: U_{f}^{n-1} \to U_{f}^{n} ]$ recalling that $U_{f}^{n} = \omega_{X}(D) \otimes_{\mathscr{O}_{X}} \mathscr{D}_{X}[s].$ It is enough to consider the effect of $\nabla_{s}$ on $\eta \otimes 1$. Observe:
    \begin{align} \label{eqn - long n-1 nabla computation}
    \nabla_{s}(\eta \otimes 1 )
    &= \big( \sum_{1 \leq i \leq n} (-1)^{i-1}(\partial_{i} \bullet a_{i}) \frac{dx}{f} \otimes 1 \big) + \big( \sum_{1 \leq i \leq n}  (-1)^{i} a_{i} (\partial_{i} \bullet f) \frac{dx}{f^{2}} \otimes 1 \big) \\
    &+ \big( \sum_{1 \leq i \leq n}  (-1)^{i-1} a_{i} (\partial_{i} \bullet f) \frac{dx}{f^{2}} \otimes s \big) +  \big(\sum_{1 \leq i \leq n} (-1)^{i-1} a_{i} \frac{dx}{f} \otimes \partial_{i}  \big) \nonumber \\
    &= \bigg( \frac{dx}{f} \otimes \big( \phi(\eta) + (s - 1) \frac{\phi(\eta) \bullet f}{f} \big) \bigg) + \bigg( \frac{dx}{f} \otimes \big(\sum_{1 \leq i \leq n} (-1)^{i-1} (\partial_{i} \bullet a_{i}) \big) \bigg) \nonumber \\
    &= \frac{dx}{f} \otimes  - \bigg( \tau \big( \phi(\eta) - (s-1) (\frac{\phi(\eta) \bullet f}{f}) \big) \bigg) \nonumber
    \end{align}
    Here: the second ``$=$'' follows from the definition of $\phi$ in \eqref{eqn - def of iso log der n-1 forms}; the third ``$=$'' is the transposition identity Lemma \ref{lem - side changing, order one annihilator}.

    Since $\ann_{\mathscr{D}_{X}[s]}^{(1)} f^{s-1}$ is generated by $\delta - (s-1) (\delta \bullet f) / f $ as $\delta$ varies over all elements of $\Der_{X}(-\log f)$, under the natural right $\mathscr{D}_{X}[s]$-isomorphisms $U_{f}^{n} = \omega_{X}(D) \otimes_{\mathscr{O}_{X}} \mathscr{D}_{X}[s] \simeq \mathscr{D}_{X}[s]$ we can use \eqref{eqn - long n-1 nabla computation} to deduce right $\mathscr{D}_{X}[s]$-isomorphisms
    \[
    \frac{U_{f}^{n}}{\im [ \nabla_{s}: U_{f}^{n-1} \to U_{f}^{n} ] } \simeq \frac{\mathscr{D}_{X}[s]}{\tau(\ann_{\mathscr{D}_{X}[s]}^{(1)} f^{s-1} )} \simeq \left( \frac{\mathscr{D}_{X}[s]}{\ann_{\mathscr{D}_{X}[s]}^{(1)} f^{s-1}} \right)^{r}.
    \]
\end{proof}

Our eventual aim is to compute certain $\Ext_{\mathscr{D}_{X}[s]}^{\bullet}(- , \mathscr{D}_{X}[s])$ modules. Hence the behavior of $U_{f}^{\bullet}$ under duality is salient. We record the following observation here, since it does not require any assumptions on $f$ except \emph{reducedness}:

\begin{proposition} \label{prop - dual of Ur complex, terminal cohomology} Let $f$ be reduced. Then applying $\Hom_{\mathscr{D}_{X}[s]}(-, \mathscr{D}_{X}[s])$ to $U_{f}^{\bullet}$ gives the following complex of left $\mathscr{D}_{X}[s]$-modules:
    \begin{align} \label{eqn - dual of Ur complex}
    \Hom_{\mathscr{D}_{X}[s]}(U_{f}^{\bullet}, \mathscr{D}_{X}[s]) \simeq 0 
    &\xleftarrow[]{} \mathscr{D}_{X}[s] \otimes_{\mathscr{O}_{X}} \Hom_{\mathscr{O}_{X}}(\Omega_{X}^{0}(\log f), \mathscr{O}_{X})  \xleftarrow[]{} \cdots \\ &\xleftarrow[]{}\mathscr{D}_{X}[s] \otimes_{\mathscr{O}_{X}}  \Hom_{\mathscr{O}_{X}}(\Omega_{X}^{p}(\log f), \mathscr{O}_{X})  \cdots \nonumber \\
    &\xleftarrow[]{} \mathscr{D}_{X}[s] \otimes_{\mathscr{O}_{X}} \Hom_{\mathscr{O}_{X}}(\Omega_{X}^{n}(\log f), \mathscr{O}_{X}). \nonumber
    \end{align}
Moreover, the leftmost non-trivial cohomology of the above is given by:
\begin{equation} \label{eqn - dual Ur complex, zeroeth cohomology}
    H^{0}(\Hom_{\mathscr{D}_{X}[s]}(U_{f}^{\bullet}, \mathscr{D}_{X}[s])) \simeq \frac{\mathscr{D}_{X}[s]}{\ann_{\mathscr{D}_{X}[s]}^{(1)}f^{-s}}.
\end{equation}
\end{proposition}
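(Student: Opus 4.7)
The proof splits into two parts. For the first---the termwise rewriting of the dual complex---I invoke the standard tensor--hom adjunction to get $\Hom_{\mathscr{D}_{X}[s]}(\Omega_{X}^{p}(\log f) \otimes_{\mathscr{O}_{X}} \mathscr{D}_{X}[s], \mathscr{D}_{X}[s]) \simeq \Hom_{\mathscr{O}_{X}}(\Omega_{X}^{p}(\log f), \mathscr{D}_{X}[s])$, viewing $\mathscr{D}_{X}[s]$ on the right via its right $\mathscr{O}_{X}$-structure. Since $\Omega_{X}^{p}(\log f)$ is coherent (hence locally finitely presented) and $\mathscr{D}_{X}[s]$ is $\mathscr{O}_{X}$-flat, applying $\mathscr{D}_{X}[s] \otimes_{\mathscr{O}_{X}} -$ to the defining left exact sequence for $\Hom_{\mathscr{O}_{X}}(\Omega_{X}^{p}(\log f), \mathscr{O}_{X})$ yields the further natural isomorphism $\Hom_{\mathscr{O}_{X}}(\Omega_{X}^{p}(\log f), \mathscr{D}_{X}[s]) \simeq \mathscr{D}_{X}[s] \otimes_{\mathscr{O}_{X}} \Hom_{\mathscr{O}_{X}}(\Omega_{X}^{p}(\log f), \mathscr{O}_{X})$, with matching left $\mathscr{D}_{X}[s]$-actions on the first tensorand. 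Transferring $\nabla_{s}$ through this identification produces the differentials of \eqref{eqn - dual of Ur complex}.

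For the $H^{0}$ calculation, the leftmost cohomology is the cokernel of the first dual differential $\nabla_{s}^{\ast}$; the degree-zero term $\Hom_{\mathscr{D}_{X}[s]}(U_{f}^{0}, \mathscr{D}_{X}[s])$ is simply $\mathscr{D}_{X}[s]$ via evaluation at $1 \otimes 1$. Reducedness enters in exactly one place: by Remark \ref{rmk - basics alg log forms}(f), contraction gives an $\mathscr{O}_{X}$-isomorphism $\Der_{\mathscr{O}_{X}}(-\log f) \xrightarrow{\sim} \Hom_{\mathscr{O}_{X}}(\Omega_{X}^{1}(\log f), \mathscr{O}_{X})$, $\delta \mapsto \iota_{\delta}(-)$. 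Thus a general element of the degree-one term has the form $P(s) \otimes \delta$ and corresponds under the adjunction to $\psi: \omega \otimes Q(s) \mapsto P(s)\, \iota_{\delta}(\omega)\, Q(s)$. Using the explicit formula \eqref{eqn - def of Ur complex differential} we compute $\nabla_{s}(1 \otimes 1) = (df/f) \otimes s + \sum_{i} dx_{i} \otimes \partial_{i}$. Writing $\delta = \sum_{i} a_{i} \partial_{i}$ so that $\iota_{\delta}(df/f) = (\delta \bullet f)/f$ and $\iota_{\delta}(dx_{i}) = a_{i}$, and using that $s$ is central in $\mathscr{D}_{X}[s]$:
\[
\nabla_{s}^{\ast}(P(s) \otimes \delta)(1 \otimes 1) = \psi(\nabla_{s}(1 \otimes 1)) = P(s) \, \Bigl( \delta + s\,\tfrac{\delta \bullet f}{f} \Bigr).
\]

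Therefore $\im \nabla_{s}^{\ast}$ is the left $\mathscr{D}_{X}[s]$-ideal generated by $\{ \delta + s(\delta \bullet f)/f : \delta \in \Der_{\mathscr{O}_{X}}(-\log f) \}$. By Remark \ref{rmk - formal substitution is ok}, substituting $-s$ for $s$ in the generators $\delta - s(\delta \bullet f)/f$ of $\ann_{\mathscr{D}_{X}[s]}^{(1)} f^{s}$ recovers exactly these elements, so the ideal is $\ann_{\mathscr{D}_{X}[s]}^{(1)} f^{-s}$, proving \eqref{eqn - dual Ur complex, zeroeth cohomology}. The one genuinely substantial hypothesis is reducedness, which is what makes the contraction pairing identify $\Hom_{\mathscr{O}_{X}}(\Omega_{X}^{1}(\log f), \mathscr{O}_{X})$ cleanly with $\Der_{\mathscr{O}_{X}}(-\log f)$ (compare the modified pairing invoked in Remark \ref{rmk - basics alg log forms}(g) for the non-reduced case). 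Everything else is careful bookkeeping through left/right $\mathscr{D}_{X}[s]$-structures in the adjunction, followed by a single direct computation of $\nabla_{s}$ on $1 \otimes 1$.
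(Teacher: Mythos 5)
Your proposal is correct and follows essentially the same route as the paper's proof: tensor--hom adjunction plus $\mathscr{O}_X$-flatness of $\mathscr{D}_X[s]$ (together with coherence of $\Omega_X^p(\log f)$) for the termwise identification, then the reducedness-dependent perfect pairing $\Der_{\mathscr{O}_X}(-\log f) \simeq \Hom_{\mathscr{O}_X}(\Omega_X^1(\log f), \mathscr{O}_X)$ and the direct computation of $\nabla_s(1\otimes 1)$ to identify $\im\nabla_s^\ast$ with $\ann_{\mathscr{D}_X[s]}^{(1)} f^{-s}$. The only cosmetic difference is that you spell out the finite-presentation hypothesis needed to commute $\Hom$ with the flat tensor, which the paper leaves implicit.
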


\begin{proof}
    That $\Hom_{\mathscr{D}_{X}[s]}(U_{f}^{\bullet}, \mathscr{D}_{X}[s])$ is a complex of left $\mathscr{D}_{X}[s]$-modules is standard. As for computing each element of the complex, we have
    \begin{align*}
        \Hom_{\mathscr{D}_{X}[s]}(\Omega_{X}^{p}(\log f) \otimes_{\mathscr{O}_{X}} \mathscr{D}_{X}[s], \mathscr{D}_{X}[s]) 
        &\simeq \Hom_{\mathscr{O}_{X}}(\Omega_{X}^{p}(\log f), \Hom_{\mathscr{D}_{X}[s]}(\mathscr{D}_{X}[s], \mathscr{D}_{X}[s])) \\
        &\simeq \Hom_{\mathscr{O}_{X}}(\Omega_{X}^{p}(\log f), \mathscr{D}_{X}[s]) \\
        &\simeq \Hom_{\mathscr{O}_{X}}(\Omega_{X}^{p}(\log f), \mathscr{D}_{X}[s] \otimes_{\mathscr{O}_{X}} \mathscr{O}_{X}) \\
        &\simeq \mathscr{D}_{X}[s] \otimes_{\mathscr{O}_{X}} \Hom_{\mathscr{O}_{X}}(\Omega_{X}^{p}(\log f), \mathscr{O}_{X}), \nonumber
    \end{align*}
    where the left $\mathscr{D}_{X}[s]$-module structure is the obvious one. (Note that: the first ``$\simeq$`` is tensor-hom adjunction; the fourth ``$\simeq$'' is a consequence of the flatness of $\mathscr{D}_{X}[s]$ over $\mathscr{O}_{X}$.)

    As for \eqref{eqn - dual Ur complex, zeroeth cohomology} and computing $H^{0}$, we must compute the cokernel of
    \begin{equation} \label{eqn - dual of Ur, zeroeth cohomology, pf pt 1}
    \Hom_{\mathscr{D}_{X}[s]}(\Omega_{X}^{0}(\log f) \otimes_{\mathscr{O}_{X}} \mathscr{D}_{X}[s]) \xleftarrow[]{} \Hom_{\mathscr{D}_{X}[s]}(\Omega_{X}^{1}(\log f) \otimes_{\mathscr{O}_{X}} \mathscr{D}_{X}[s]).
    \end{equation}
    In the original complex $U_{f}^{\bullet}$ we know that
    \begin{align} \label{eqn - dual of Ur, zeroeth cohomology, pf pt 2}
    \Omega_{X}^{0}(\log f) \otimes_{\mathscr{O}_{X}} \mathscr{D}_{X}[s] 
    &\ni 1 \otimes 1 \\
    &\mapsto ( \frac{df}{f} \otimes s) + (\sum_{i} dx_{i} \otimes \partial_{i}) \nonumber \\
    &\in \Omega_{X}^{1}(\log f) \otimes_{\mathscr{O}_{X}} \mathscr{D}_{X}[s]. \nonumber
    \end{align}
    Using the perfect pairing between logarithmic derivations and logarithmic one forms, see Remark \ref{rmk - basics alg log forms}.(f), 
    \begin{align*}
    P \otimes \delta \in  \mathscr{D}_{X}[s] \otimes_{\mathscr{O}_{X}} \Der_{X}(-\log f) 
    &\simeq \mathscr{D}_{X}[s] \otimes_{\mathscr{O}_{X}} \Hom_{\mathscr{O}_{X}}(\Omega_{X}^{1}(\log f), \mathscr{O}_{X}) \\
    &\simeq \Hom_{\mathscr{D}_{X}[s]}(\Omega_{X}^{1}(\log f) \otimes_{\mathscr{O}_{X}} \mathscr{D}_{X}[s], \mathscr{D}_{X}[s])
    \end{align*}
    acts on the image of the element $1$ inside $\Omega_{X}^{1}(\log f) \otimes_{\mathscr{O}_{X}} \mathscr{D}_{X}[s]$ as computed in \eqref{eqn - dual of Ur, zeroeth cohomology, pf pt 2} via
    \begin{align*}
    (P \otimes \delta ) \left( ( \frac{df}{f} \otimes s) + (\sum_{i} dx_{i} \otimes \partial_{i}) \right) 
    &= P \left( ( \iota_{\delta}(\frac{df}{f}) s) + (\sum_{i} \iota_{\delta}(dx_{i}) \partial_{i} \right) \\
    &= P ( s \frac{\delta \bullet f}{f} + \delta) \\
    &= P \big( \delta - (-s \frac{\delta \bullet f}{f}) \big),
    \end{align*}
    where we use $\delta$ to denote the same element in $\Der_{X} \subseteq \mathscr{D}_{X}[s]$. It follows that the cokernel of \eqref{eqn - dual of Ur, zeroeth cohomology, pf pt 1} is exactly the left $\mathscr{D}_{X}[s]$-module $\mathscr{D}_{X}[s] / \ann_{\mathscr{D}_{X}[s]}^{(1)} f^{-s}.$
    \end{proof}

\begin{remark} \label{rmk - why reduced}
    \emph{(Reducedness)} Proposition \ref{prop - dual of Ur complex, terminal cohomology} has a obvious non-reduced version using \cite[Proposition A.8]{BathANote} instead of Remark \ref{rmk - basics alg log forms}.(f). But, this is not compatible with a nice end formula like $\mathscr{D}_{X}[s] f^{-s}$. The careful reader will note that \emph{all} of our results can be reproduced for $f$ non-reduced with suitable changes, but these changes are less aesthetic: the leftmost object of Theorem \ref{thm - spectral sequence dual} changes; as does the partial symmetry within Theorem \ref{thm - localized symmetry of BS poly}.
\end{remark}

\subsection{Conditions for a Resolution}

    We now require additional assumptions on $f$. One consequence of our eventual assumption of $f \in R = \mathbb{C}[x_{1}, x_{2}, x_{3}]$ reduced and locally quasi-homogeneous, is that it will force all but the terminal cohomology $U_{f}^{\bullet}$ to vanish, i.e. it will force $U_{f}^{\bullet}$ to be a resolution of $(\mathscr{D}_{X}[s] / \ann_{\mathscr{D}_{X}[s]}^{(1)} f^{s-1})^{r}$, cf. Proposition \ref{prop - terminal cohomology Ur complex}. However, we can get the same result with weaker assumptions. Hence:

\begin{define} \label{def - tame}
    We say that a divisor $D \subseteq X$ is \emph{locally tame at $\mathfrak{x} \in X$} when, for any defining equation germ $f$ at $\mathfrak{x}$, we have
    \[
    \pdim_{\mathscr{O}_{X,\mathfrak{x}}} \Omega_{X, \mathfrak{x}}^{p} (\log f) \leq p.
    \]
    We say $D$ is \emph{tame} when it is tame locally everywhere. 
\end{define}

\begin{remark} \label{rmk - tame in three dim}
    The tameness condition is immediate for $p = 0, n$ since there the logarithmic forms are free $\mathscr{O}_{X}$-modules. When $X = \mathbb{C}^{3}$, any divisor $D \subseteq X$ is tame: since $\Omega_{X,\mathfrak{x}}^{p}(\log f)$ is reflexive (\cite[Proposition 1.5]{HollandMondLCTIsolated}, \cite[Lemma 2.1]{ComplexDualityChernClassesLogForms}), its projective dimension is at most $3 - 2 = 1$.
\end{remark}

Here is the promised resolution result for $U_{f}^{\bullet}$:

\begin{proposition} \label{prop - U complex resolves}
    Suppose that $f \in R = \mathbb{C}[x_{1}, \dots, x_{n}]$ defines a tame, Saito-holonomic divisor. Then $U_{f}^{\bullet}$ resolves $(\mathscr{D}_{X}[s] / \ann_{\mathscr{D}_{X}[s]}^{(1)} f^{s-1})^{r}$, that is, the augmented complex
    \[
    U_{f}^{\bullet} \to \left( \frac{\mathscr{D}_{X}[s]}{\ann_{\mathscr{D}_{X}[s]}^{(1)} f^{s-1}} \right)^{r}
    \]
    is acyclic. If in addition $f$ is locally quasi-homogeneous, then $U_{f}^{\bullet}$ resolves $\mathscr{D}_{X}[s]f^{s-1}$. In particular, $U_{f}^{\bullet}$ resolves $\mathscr{D}_{X}[s]f^{s-1}$ when $f \in \mathbb{C}[x_{1}, x_{2}, x_{3}]$ is locally quasi-homogeneous.
\end{proposition}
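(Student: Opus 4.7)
The plan is to establish the three claims in the order given. The third (for $f \in \mathbb{C}[x_1,x_2,x_3]$ locally quasi-homogeneous) follows immediately from the second by combining Remark~\ref{rmk - tame in three dim} (tameness is automatic in $\mathbb{C}^3$) with Remark~\ref{rmk - basics pos weighted hom, loc everywhere}.(d) (locally quasi-homogeneous divisors are Saito-holonomic). The second assertion reduces to the first via Walther's theorem \cite[Theorem 3.26]{uli}, applied with the substitution $s \leftrightarrow s-1$ of Remark~\ref{rmk - formal substitution is ok}: under local quasi-homogeneity it yields $\ann_{\mathscr{D}_X[s]}^{(1)} f^{s-1} = \ann_{\mathscr{D}_X[s]} f^{s-1}$, so the terminal object identified in Proposition~\ref{prop - terminal cohomology Ur complex} is precisely $(\mathscr{D}_X[s] f^{s-1})^r$, which is $\mathscr{D}_X[s]f^{s-1}$ up to the standard side-change.

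The substantive content is the first assertion: since Proposition~\ref{prop - terminal cohomology Ur complex} identifies $H^n$, it suffices to prove $H^i(U_f^\bullet) = 0$ for $i < n$. My plan is a filtration argument. Equip $\mathscr{D}_X[s]$ with the total order filtration $F^{\sharp}_\bullet$ and lift it to $U_f^\bullet$ via $F^{\sharp}_k U_f^p = \Omega_X^p(\log f) \otimes_{\mathscr{O}_X} F^{\sharp}_k \mathscr{D}_X[s]$. Reading~\eqref{eqn - def of Ur complex differential}, the first summand $d\eta \otimes P$ strictly drops filtration degree while the other two preserve it (after the standard index shift), so the associated graded complex is the $\mathscr{O}_X[\xi_1,\dots,\xi_n,\sigma]$-linear complex
\[
\gr^{\sharp} U_f^\bullet \;\simeq\; \Omega_X^\bullet(\log f) \otimes_{\mathscr{O}_X} \mathscr{O}_X[\xi,\sigma], \qquad d = \omega \wedge (-), \qquad \omega = \sigma\,\tfrac{df}{f} + \sum_i \xi_i\, dx_i.
\]
Since $F^{\sharp}$ is exhaustive and bounded below on each term, acyclicity of $\gr^{\sharp} U_f^\bullet$ in degrees $< n$ propagates to $U_f^\bullet$ by the standard spectral sequence of a filtered complex.

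The main obstacle is therefore showing that $(\Omega_X^\bullet(\log f) \otimes_{\mathscr{O}_X} \mathscr{O}_X[\xi,\sigma],\, \omega \wedge (-))$ has cohomology concentrated in degree $n$, and this is where both hypotheses enter. I would induct on the dimension of logarithmic strata, whose local finiteness is exactly Saito-holonomicity. At a smooth point of $D$, choose coordinates so that $f = x_1$; then $\Omega^\bullet(\log f)$ is freely generated by $dx_1/x_1, dx_2, \dots, dx_n$, and after the polynomial change of variables $\sigma \mapsto \sigma - \xi_1 x_1$ the form $\omega$ becomes $\sigma\,(dx_1/x_1) + \sum_{i \geq 2} \xi_i\, dx_i$, exhibiting the complex as the Koszul complex on the regular sequence $(\sigma, \xi_2,\dots,\xi_n)$ in $\mathscr{O}_X[\xi,\sigma]$; this has cohomology only in degree $n$. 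At a point $\mathfrak{x}$ of a deeper stratum, the local analytic product structure supplied by logarithmic vector fields (Remark~\ref{rmk - log stratification}) yields a decomposition $(X,D,\mathfrak{x}) \simeq (\mathbb{C},\mathbb{C},0) \times (X',D',\mathfrak{x}')$; a K{\"u}nneth splitting of $\Omega^\bullet(\log f)$ and the matching decomposition of $\omega$ reduce the claim to the inductive hypothesis on the lower-dimensional $(X',D')$. The tameness bound $\pdim \Omega_X^p(\log f) \leq p$ is precisely what keeps the K{\"u}nneth analysis flat at each stalk and rules out hidden $\mathrm{Tor}$ contributions when recombining the two factors; closing the induction over the (locally finite) Saito stratification then produces the required acyclicity.
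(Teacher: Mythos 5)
Your reductions (third to second to first) are fine, and your choice of the total order filtration $F^{\sharp}$ is a genuine alternative to the paper's $s$-degree filtration: your associated graded is a Koszul-type complex over $\mathscr{O}_X[\xi,\sigma]$ with differential $\omega \wedge (-)$, whereas the paper's associated graded complex (obtained by filtering only by $s$-degree) retains the exterior derivative and stays $\mathscr{D}_X$-linear. Either can in principle carry the argument to this point.

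However, the inductive step has a genuine gap, and your stated role for tameness signals it. The K{\"u}nneth/product-decomposition argument only handles points on a \emph{positive-dimensional} logarithmic stratum, where a non-vanishing logarithmic vector field furnishes the splitting $(X,D,\mathfrak{x}) \simeq (\mathbb{C},\mathbb{C},0) \times (X',D',\mathfrak{x}')$ of Remark \ref{rmk - log stratification}. At a \emph{zero-dimensional} stratum there is no such splitting, the induction does not advance, and what you have actually shown is only that $H^k(\gr^{\sharp}U_f^\bullet)$ has zero-dimensional support for $k < n$. Killing those residual cohomology modules requires a separate argument -- a Peskine-Szpiro-type acyclicity lemma -- and this is precisely where tameness enters: the hypothesis $\pdim \Omega_X^p(\log f) \le p$ is fed into \cite[Lemma 3.6, Remark 3.7]{NoncommutativePeskineSzpiro} to force vanishing of a complex whose terms have controlled projective dimension and whose cohomology is supported at a point. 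Your explanation -- that tameness "keeps the K{\"u}nneth analysis flat" and "rules out hidden $\mathrm{Tor}$ contributions" -- is not where the hypothesis is used: $\mathscr{O}_X[\xi,\sigma]$ is flat over $\mathscr{O}_X$ unconditionally, and there is no $\mathrm{Tor}$ obstruction in the product step. As written, your proof never actually invokes tameness in a load-bearing way, which is the tell that a step is missing. To close the gap you would need to supply the acyclicity-lemma step at the zero-dimensional strata and verify that your $\gr^{\sharp}$-complex (a complex of $\mathscr{O}_X[\xi,\sigma]$-modules, rather than the $\mathscr{D}_X$-module complex that \cite{NoncommutativePeskineSzpiro} was built for) satisfies its hypotheses in the relevant ring.
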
 

\begin{proof}
    First of all, the ``in particular claim'' is well-known: such a divisor is tame by Remark \ref{rmk - tame in three dim}; such a divisor also has a finite logarithmic stratification, see Remark \ref{rmk - log stratification}.(c). The claim of the penultimate sentence follows by \cite[Theorem 3.26]{uli} which gives, in this setting, the natural isomorphism 
    \[
    \frac{\mathscr{D}_{X}[s]}{\ann_{\mathscr{D}_{X}[s]}^{(1)} f^{s-1}} \xrightarrow[]{\simeq} \mathscr{D}_{X}[s] f^{s-1}.
    \]

    We are left to justify the second sentence. Each $U_{f}^{p}$ can be given a decreasing filtration by $s$-degree in such a way to make $U_{f}^{\bullet}$ a filtered complex. Moreover, the resultant associated graded complex $\gr U_{f}^{\bullet}$ has (locally) objects and maps
    \begin{align*}
        \Omega_{X}^{p}(\log f) \otimes_{\mathscr{O}_{X}} \mathscr{D}_{X}[s]
        &\ni \eta \otimes P(s) \\
        &\mapsto \bigg(d(\eta) \otimes P(s) \bigg)  + \bigg( \sum_{i} dx_{i} \wedge \eta \otimes \partial_{i} P(s) \bigg) \\
        &\in \Omega_{X}^{p+1}(\log f) \otimes_{\mathscr{O}_{X}} \mathscr{D}_{X}[s].
    \end{align*}
    (To be explicit, first define a decreasing filtration on $U_f^p$ by $s$-degree: $G^{t,p} (U_f^p) = \Omega_X^p(\log f) \otimes_{\mathscr{O}_X} \mathscr{D}_{X} \otimes_{\mathbb{C}} \mathbb{C}[s]_{\geq t}$. Note that $G^{t+1, p} (U_f^p) \subseteq G^{t, p}(U_f^p)$. Now set
    \begin{equation*}
        G^t(U_f^\bullet) =  \left[ \cdots \xrightarrow[]{} G^{t, p-1}(U_f^p) \to G^{t, p}(U_f^p) \to G^{t,p+1}(U_f^{p+1}) \to \cdots \right] 
    \end{equation*}
    with differentials induced by those of $U_f^\bullet$. The aforementioned decreasing filtration on $U_f^\bullet$ is $\cdots \subseteq G^{t+1}(U_f^\bullet) \subseteq G^t(U_f^\bullet) \subseteq \cdots $.) By standard spectral sequence considerations, it suffices to show that $H^{k} (\gr U_{f}^{\bullet}) = 0$ for all $k \neq n$.

    We sketch a proof of this fact exploiting the local finiteness of the logarithmic strata, i.e. Saito-holonomicity. We refer the reader to \cite[Theorem 3.9]{uli} or \cite[Proposition 3.11]{NoncommutativePeskineSzpiro} for more details about Saito-holonomic induction; our proof here has similarities to the cited arguments. 

    First assume $\dim X = 1$. That $H^k(\gr U_f^\bullet) = 0$ for all $k \neq n$ (i.e. for $k=0)$ can be checked by hand since $\gr U_f^0$ is cyclically generated by $1 \otimes 1$ (recall that $\Omega_X^0(\log f) = \Omega_X^0$). Now assume that $\dim X = n$ and the claim holds on smaller dimensional spaces. Fix $\mathfrak{x} \in X$. If $\mathfrak{x}$ belongs to a positive dimensional logarithmic stratum, the Saito-holonomic assumption guarantees a local analytic isomorphism $(X, D, \mathfrak{x}) \simeq (\mathbb{C}, \mathbb{C}, 0) \times (X^{\prime}, D^{\prime}, \mathfrak{x}^{\prime})$ where: $D \subseteq X$ is the divisor germ of $f$ at $\mathfrak{x}$; $\dim X^{\prime} < X$; $D^{\prime} \subseteq X^{\prime}$ is the divisor germ of some $f^{\prime}$ at $\mathfrak{x}^{\prime} \in X^{\prime}$, This local analytic isomorphism arises exactly as in \ref{rmk - log stratification}.(c), since we have assumed there is a non-vanishing logarithmic vector field at $\mathfrak{x}$. Remark \ref{rmk - log stratification}.(c) says that Saito-holonomicity is preserved along such products. And there is a K{\"u}nneth formula for logarithmic forms in such situations (see, for example, \cite[Lemma 3.10]{CriticalPointsAndResonanceHyperplaneArrangements}), meaning tameness is also preserved under such products. So $(X, D^\prime, \mathfrak{x}^\prime)$ falls into our inductive set-up. Since this K{\"u}nneth formula is compatible with our complex $\gr U_f^\bullet$, the induction hypotheses implies the desired cohomological vanishing at $\mathfrak{x}$. 

    We deduce that, for all $k \neq n$, any non-vanishing $H^k(\gr U_f^\bullet)$ has zero-dimensional support. We conclude $H^k(\gr U_f^\bullet)$ vanishes for all $k \neq 0$ by invoking \cite[Lemma 3.6]{NoncommutativePeskineSzpiro}, see also \cite[Remark 3.7]{NoncommutativePeskineSzpiro}, which is sort of a non-commutative variant of the Peskine-Szpiro acyclicity lemma.
    \end{proof}

\begin{remark} \label{rmk - why analtyic, U complex} (Analytic versus algebraic category)
    The complex $U_f^\bullet$ can also be defined in the algebraic category by replacing the analytic $\mathscr{D}_X[s]$ with a (a polynomial ring extension) of the algebraic sheaf of $\mathbb{C}$-linear differential operators (or the Weyl algebra) and replacing $\mathscr{O}_X$ with the algebraic structure sheaf (or $R$). However the proof of Proposition \ref{prop - U complex resolves} requires analyticity: the locally quasi-homogeneous assumptions guarantees an analytic isomorphism $(X, D , \mathfrak{x}) \simeq (\mathbb{C}, \mathbb{C}, 0) \times (X^\prime, D^\prime, \mathfrak{x}^\prime)$ at suitable $\mathfrak{x}$, but the map need not be algebraic. The isomorphism is explicitly constructed in \cite[(3.5), (3.6)]{SaitoLogarithmicForms} where its analyticity is evident in equation $(\star \star)$ of $(3.5)$. Note that when $D$ is a hyperplane arrangement, Proposition \ref{prop - U complex resolves} holds algebraically, since the aforementioned isomorphisms are linear and algebraic. 
\end{remark}

\begin{remark} \label{rmk - on U complex resolution prop}
    \noindent \begin{enumerate}[label=(\alph*)]
        \item While \cite[Theorem 3.26]{uli} says $\ann_{\mathscr{D}_{X}[s]} f^{s} = \ann_{\mathscr{D}_{X}[s]}^{(1)} f^{s}$, the proof actually shows $\ann_{\mathscr{D}_{X}} f^{s}$ is generated $\Der_{X}(-\log_{0} f)$. Using the weighted-homogeneity, it follows that $\ann_{\mathscr{D}_{X}[s]} f^{\alpha s + \beta} = \ann_{\mathscr{D}_{X}[s]}^{(1)} f^{\alpha s + \beta}$, for any $\alpha \in \mathbb{C}^{\star}$, $\beta \in \mathbb{C}$, under the assumptions of loc. cit.
        \item The complex $U_{f}^{\bullet}$ can be thought of as a generalization of the (parametric) Spencer complex for free divisors in \cite[Section 5]{ModulefsLocallyQuasiHomogeneousFree}, \cite[Appendix]{DualityApproachSymmetryBSpolys}. However, their constructions intrinsically utilize freeness.
        \item There is a filtration of $U_{f}^{\bullet}$ so that the attached associated graded complex recovers the Liouville complex of \cite{uli} up to: replacing $\Omega_{X}^{p}(\log_{0} f)$ with $\Omega_{X}^{p}(\log f)$; a (cosmetic) faithfully flat tensor with $\mathbb{C}[s]$.
        \item One can also define a multivariate version of $U_{f}$ where $s df/f$ is replaced with $\sum s_{k} df_{k}/f_{k}$ where $f = f_{1} \cdots f_{r}$ is some factorization of $f$, $s_{1}, \dots, s_{r}$ new variables. A two step filtration process recovers, as the eventual associated graded complex, the Logarithmic complex of \cite{CriticalPointsAndResonanceHyperplaneArrangements}.
    \end{enumerate}
\end{remark}

\subsection{The Double Complex}

In this subsection we finally restrict to $f \in R = \mathbb{C}[x_{1}, x_{2}, x_{3}]$ reduced and locally quasi-homogeneous. We will construct a double complex $K_{f}^{\bullet, \bullet}$ out of $U_{f}$; we will use this double complex to start a computation of $\Ext_{\mathscr{D}_{X}[s]}^{3}((\mathscr{D}_{X}[s] f^{s})^{r}, \mathscr{D}_{X}[s])$. Actually, using a standard spectral sequence approach we will uncover what map of $\mathscr{D}_{X}[s]$-modules we need to understand to perform this computation (see Lemma \ref{lem - spectral sequence dual, page 2}). So the $\Ext$ module computation will have to wait until the next section, wherein we analyze this map. 

To set-up the double complex's foundation, recall (Proposition \ref{prop - analytic free res, log 1, 2 forms}, Definition \ref{def - fixing analytic free res, log 1, 2 forms}) our fixed free $\mathscr{O}_{X}$-resolutions
\[
Q_{\bullet}^{2} = 0 \to Q_{1}^{2} \to Q_{0}^{2} \to \Omega_{X}^{2}(\log f) \to 0,
\]
\[
Q_{\bullet}^{1} = 0 \to Q_{1}^{1} \to Q_{0}^{2} \to \Omega_{X}^{1}(\log f) \to 0.
\]
As $\mathscr{D}_{X}[s]$ is a flat $\mathscr{O}_{X}$-module, $Q_{\bullet}^{2} \otimes_{\mathscr{O}_{X}} \mathscr{D}_{X}[s]$ and $Q_{\bullet}^{1} \otimes_{\mathscr{O}_{X}} \mathscr{D}_{X}[s]$ and free, right $\mathscr{D}_{X}[s]$-resolutions of $\Omega_{X}^{2}(\log f) \otimes_{\mathscr{O}_{X}} \mathscr{D}_{X}[s]$ and $\Omega_{X}^{1}(\log f) \otimes_{\mathscr{O}_{X}} \mathscr{D}_{X}[s]$ respectively.

By general facts about projective modules, we can lift the differentials of the all but terminally acyclic $U_{f}^{\bullet}$ (Proposition \ref{prop - U complex resolves}) along the data of our preferred $\D_{X}[s]$-resolutions obtaining a double complex. Thus we are entitled to define:

\begin{define} \label{def - the double complex}
    Let $f \in R = \mathbb{C}[x_{1}, x_{2}, x_{3}]$ be locally quasi-homogeneous. Consider a double complex $K_{f}^{\bullet, \bullet}$ of free right $\mathscr{D}_{X}[s]$-modules and maps
    \begin{equation} \label{eqn - def, the double complex}
K_{f}^{\bullet, \bullet} = 
\begin{tikzcd}
    \enspace
        & 0 \dar{} \rar{}
            & 0 \dar{}
                & \enspace \\
    0 \dar \rar{}
        & Q_{1}^{1} \otimes_{\mathscr{O}_{X}} \mathscr{D}_{X}[s] \dar{\nu_{1}^{1}} \rar{\sigma_{1}^{1}}
            & Q_{1}^{2} \otimes_{\mathscr{O}_{X}} \mathscr{D}_{X}[s] \dar{\nu_{1}^{2}} \rar{}
                & 0 \dar \\
    Q_{0}^{0} \otimes_{\mathscr{O}_{X}} \mathscr{D}_{X}[s] \arrow[d, dotted] \rar{\sigma_{0}^{0}}
        & Q_{0}^{1} \otimes_{\mathscr{O}_{X}} \mathscr{D}_{X}[s] \arrow[d, dotted] \rar{\sigma_{0}^{1}}
            & Q_{0}^{2} \otimes_{\mathscr{O}_{X}} \mathscr{D}_{X}[s] \arrow[d, dotted] \rar{\sigma_{0}^{2}}
                & Q_{0}^{3} \otimes_{\mathscr{O}_{X}} \mathscr{D}_{X}[s] \arrow[d, dotted] \\
    U_{f}^{0} \rar{\nabla_{s}}
        & U_{f}^{1} \rar{\nabla_{s}}
            & U_{f}^{2} \rar{\nabla_{s}}
                & U_{f}^{3}
\end{tikzcd}
\end{equation}
such that
\begin{enumerate}[label=(\roman*)]
    \item $K^{p,q} = 0$ for all $p, q \neq 0, 1$;
    \item $K^{p, q} = Q_{q}^{p} \otimes_{\mathscr{O}_X} \mathscr{D}_X[s]$; 
    \item the vertical differentials are $\nu_{q}^{p}: K^{p, q} \to K^{p, q-1}$;
    \item the horizontal differentials are $\sigma_{q}^{p} : K^{p,q} \to K^{p+1, q}$;
    \item the augmentation of $K^{1,\bullet}$ is/induces the acyclic complex
    \[
    0 \to Q_{1}^{1} \otimes_{\mathscr{O}_{X}} \mathscr{D}_{X}[s] \xrightarrow[]{\nu_{1}^{1}} Q_{0}^{1} \otimes_{\mathscr{O}_{X}} \mathscr{D}_{X}[s] \xrightarrow[]{} \Omega_{X}^{1}(\log f) \otimes_{\mathscr{O}_{X}} \mathscr{D}_{X}[s] \xrightarrow[]{} 0
    \]
    \item the augmentation of $K^{2, \bullet}$ is/induces the acyclic complex
    \[
    0 \to Q_{1}^{2} \otimes_{\mathscr{O}_{X}} \mathscr{D}_{X}[s] \xrightarrow[]{\nu_{1}^{2}} Q_{0}^{2} \otimes_{\mathscr{O}_{X}} \mathscr{D}_{X}[s] \xrightarrow[]{} \Omega_{X}^{2}(\log f) \otimes_{\mathscr{O}_{X}} \mathscr{D}_{X}[s] \xrightarrow[]{} 0
    \]
    \item all the maps of \eqref{eqn - def, the double complex} are right $\mathscr{D}_{X}[s]$-linear and all the squares of \eqref{eqn - def, the double complex} commute.
\end{enumerate}
Finally, use $\Tot(K_{f}^{\bullet, \bullet})$ to denote the total complex of $K^{\bullet, \bullet}$.
\end{define}

\begin{remark} \label{rmk - choice of lift not unique}
    Note that $K_{f}^{\bullet, \bullet}$ depends on the \emph{choice} of the lifts $\sigma_{0}^{1}$ and $\sigma_{1}^{1}$ in \eqref{eqn - def, the double complex}. Since the columns of $K_{f}^{\bullet, \bullet}$ are free resolutions, such lifts do exist. When we refer to $K_{f}^{\bullet, \bullet}$ we implicitly make a choice of lifts; the point is that such a choice is possible.
\end{remark}

Definition \ref{def - the double complex} is useful to us because:

\begin{proposition} \label{prop - total complex free resolution}
    Let $f \in R = \mathbb{C}[x_{1}, x_{2}, x_{3}]$ be locally quasi-homogeneous. Then $\Tot(K_{f}^{\bullet, \bullet})$ is a free (right) resolution of $(\mathscr{D}_{X}[s] f^{s-1})^{r}$.
\end{proposition}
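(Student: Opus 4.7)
The plan is to prove the proposition by a standard spectral sequence argument applied to $K_f^{\bullet,\bullet}$, combined with the two previous results of this subsection.

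First I would verify that $\Tot(K_f^{\bullet,\bullet})$ is a complex of free right $\mathscr{D}_X[s]$-modules. This is immediate from property (ii) of Definition \ref{def - the double complex}: each $K^{p,q} = Q_q^p \otimes_{\mathscr{O}_X} \mathscr{D}_X[s]$ is free over $\mathscr{D}_X[s]$ because each $Q_q^p$ is a free $\mathscr{O}_X$-module. It therefore suffices to identify the cohomology of $\Tot(K_f^{\bullet,\bullet})$ and the augmentation map to $(\mathscr{D}_X[s] f^{s-1})^r$.

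Next I would run the spectral sequence of the double complex obtained by filtering by columns, so that the $E_0$-differential is the vertical one. By properties (v), (vi) of Definition \ref{def - the double complex} (together with the trivial $p=0,3$ analogues, since $\Omega_X^0(\log f) = \mathscr{O}_X$ and $\Omega_X^3(\log f)$ is a free $\mathscr{O}_X$-module of rank one, making $Q_\bullet^0$ and $Q_\bullet^3$ single-term resolutions), each column $K^{p,\bullet}$ is a free resolution of $U_f^p$. Hence the $E_1$-page is concentrated in the row $q=0$, where it equals $U_f^p$, and the induced $d_1$-differential on that row agrees with $\nabla_s$ by the commutativity condition (vii). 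Therefore
\[
E_2^{p,q} \;=\; \begin{cases} H^p(U_f^\bullet) & q=0, \\ 0 & q \neq 0. \end{cases}
\]

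Finally I would invoke Proposition \ref{prop - U complex resolves}: under our hypotheses the augmented complex $U_f^\bullet \to (\mathscr{D}_X[s] f^{s-1})^r$ is acyclic, so $H^p(U_f^\bullet) = 0$ for $p \neq 3$ and $H^3(U_f^\bullet) \simeq (\mathscr{D}_X[s] f^{s-1})^r$ (using Proposition \ref{prop - terminal cohomology Ur complex} together with Walther's identification $\ann^{(1)}_{\mathscr{D}_X[s]} f^{s-1} = \ann_{\mathscr{D}_X[s]} f^{s-1}$). The spectral sequence thus collapses at $E_2$ and $\Tot(K_f^{\bullet,\bullet})$ has cohomology concentrated in a single total degree, equal to $(\mathscr{D}_X[s] f^{s-1})^r$. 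The natural augmentation is induced by the composite $K^{3,0} = U_f^3 \twoheadrightarrow (\mathscr{D}_X[s] f^{s-1})^r$ from Proposition \ref{prop - terminal cohomology Ur complex}, which is compatible with the spectral sequence edge map, giving the required free resolution. The argument is essentially bookkeeping once Propositions \ref{prop - terminal cohomology Ur complex} and \ref{prop - U complex resolves} are in hand; the only mild subtlety is checking that the induced augmentation map coincides with the canonical one, which follows from the compatibility between the bottom row of $K_f^{\bullet,\bullet}$ and $U_f^\bullet$ built into Definition \ref{def - the double complex}.
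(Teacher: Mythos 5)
Your proof is correct and follows essentially the same approach as the paper: both run the column-filtered spectral sequence of $K_f^{\bullet,\bullet}$ so that the $E_1$-page collapses to the row $U_f^\bullet$, then invoke Proposition \ref{prop - U complex resolves} to kill all cohomology except $H^3(U_f^\bullet) \simeq (\mathscr{D}_X[s] f^{s-1})^r$. Your version just makes the bookkeeping about edge maps and the compatibility of the augmentation a bit more explicit than the paper does.
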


\begin{proof}
    Certainly $\Tot(K_{f}^{\bullet, \bullet})$ is a complex of free (right) $\mathscr{D}_{X}[s]$-modules. We compute the cohomology of $\Tot(K_{f}^{\bullet, \bullet})$ by using the spectral seqeuence that first computes cohomology vertically. By the construction, the first page of the spectral sequence has only non-zero entries on the $x$-axis and this row is exactly $U_{f}^{\bullet}$. The second page of the spectral sequence is $H^{\bullet}(U_{f}^{\bullet})$, as positioned on the $x$-axis. By Proposition \ref{prop - U complex resolves} all these cohomologies vanish except for \[
    H^{n}(U_{f}^{\bullet}) \simeq \frac{\mathscr{D}_{X}[s]}{\ann_{\mathscr{D}_{X}[s]}^{(1)} f^{s-1}} \xrightarrow[]{\simeq} \mathscr{D}_{X}[s] f^{s-1}.
    \]
\end{proof}

\begin{remark} \label{rmk - why analytic, total complex} (Algebraic versus analytic category)
    We only know that Proposition \ref{prop - total complex free resolution} holds in the analytic category since the proof utilizes Proposition \ref{prop - U complex resolves}, see Remark \ref{rmk - why analtyic, U complex}
\end{remark}

For our computation of $\Ext_{\mathscr{D}_{X}[s]}^{3}((\mathscr{D}_{X}[s])^{r}, \mathscr{D}_{X}[s])$ we will $\Hom$ the total complex $\Tot(K_{f}^{\bullet, \bullet})$ into $\mathscr{D}_{X}[s]$. This gives a double complex vulnerable to spectral sequence tactics. The $\infty$-page of our spectral sequence will describe the associated graded pieces of a filtration on $\Ext_{\mathscr{D}_{X}[s]}^{\bullet}((\mathscr{D}_{X}[s])^{r}, \mathscr{D}_{X}[s])$ and it is from this our first main result Theorem \ref{thm - spectral sequence dual} follows.

Understanding this spectral sequence will require a fair amount of work. The opening salvo, which makes explicit our required labors, requires \emph{reducedness} and is as follows:

\begin{lemma} \label{lem - spectral sequence dual, page 2}
    Let $f \in R = \mathbb{C}[x_{1}, x_{2}, x_{3}]$ be reduced and locally quasi-homogeneous. Let $E_{v}^{\bullet, \bullet}$ denote the $v^{\text{th}}$ page of the spectral sequence of the double complex of left $\mathscr{D}_{X}[s]$-modules $\Hom_{\mathscr{D}_{X}[s]}(\Tot(K_{f}^{\bullet, \bullet}), \mathscr{D}_{X}[s])$ where one first computes cohomology vertically and then horizontally. Then the second page of the spectral sequence is
    \begin{equation} \label{eqn - lemma, spectral sequence dual, page 2}
        E_{2}^{\bullet, \bullet} = \begin{tikzcd}
            0 
                & E_{2}^{1, 1} = \text{\normalfont cokernel} 
                    & E_{2}^{2, 1} = \text{\normalfont kernel} \arrow[dll] \
                        & 0 \\
            E_{2}^{0,0} = \frac{\mathscr{D}_{X}[s]}{\ann_{\mathscr{D}_{X}[s]}^{(1)} f^{-s}} 
                & ?
                    & ? 
                        & ?\\
        \end{tikzcd}
    \end{equation}
    where: all entries outside of \eqref{eqn - lemma, spectral sequence dual, page 2} are zero; the $E_{2}^{1,1}$ and $E_{2}^{2,1}$ entries refer to the cokernel and kernel, respectively, of the natural map
    \begin{equation} \label{eqn - lemma, spectral sequence dual, map on page 2}
        \mathscr{D}_{X}[s] \otimes_{\mathscr{O}_{X}} \Ext_{\mathscr{O}_{X}}^{1}(\Omega_{X}^{2}(\log f), \mathscr{O}_{X}) \xrightarrow[]{\Hom_{\mathscr{D}_{X}[s]}(\sigma_{1}^{1}, \mathscr{D}_{X}[s])} \mathscr{D}_{X}[s] \otimes_{\mathscr{O}_{X}} \Ext_{\mathscr{O}_{X}}^{1}(\Omega_{X}^{1}(\log f), \mathscr{O}_{X}).
    \end{equation}
\end{lemma}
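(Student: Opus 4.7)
The plan is to identify the first page $E_1^{\bullet,\bullet}$ explicitly, note that only the rows $q = 0, 1$ can be nonzero, and then extract $E_2^{1,1}$, $E_2^{2,1}$, $E_2^{0,0}$ by taking horizontal cohomology. By Definition \ref{def - the double complex} each column $K_f^{p,\bullet}$ is an augmented free right $\mathscr{D}_X[s]$-resolution of $U_f^p = \Omega_X^p(\log f) \otimes_{\mathscr{O}_X} \mathscr{D}_X[s]$, so the first vertical differentials compute $\Ext^q_{\mathscr{D}_X[s]}(U_f^p, \mathscr{D}_X[s])$. Because $\mathscr{D}_X[s]$ is flat over $\mathscr{O}_X$ and $U_f^p$ is induced from $\Omega_X^p(\log f)$, tensor-hom adjunction rewrites this as
\[
E_1^{p,q} \;\simeq\; \mathscr{D}_X[s] \otimes_{\mathscr{O}_X} \Ext^q_{\mathscr{O}_X}(\Omega_X^p(\log f), \mathscr{O}_X).
\]

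To restrict the support of $E_1$: for $q \geq 2$, Remark \ref{rmk - tame in three dim} (reflexivity of logarithmic forms on a threefold) forces $\pdim_{\mathscr{O}_X} \Omega_X^p(\log f) \leq 1$, so such entries vanish. For $q = 1$, the extremes $\Omega_X^0(\log f) = \mathscr{O}_X$ and $\Omega_X^3(\log f) \simeq \mathscr{O}_X \cdot (dx/f)$ are free, so $E_1^{0,1} = E_1^{3,1} = 0$. The only surviving $q = 1$ pieces are at $p = 1, 2$, and the horizontal differential on $E_1$ between them is the map induced by $\sigma_1^1 : Q_1^1 \otimes \mathscr{D}_X[s] \to Q_1^2 \otimes \mathscr{D}_X[s]$; identifying $E_1^{p,1}$ with the cokernel of $\Hom_{\mathscr{O}_X}(Q_0^p, \mathscr{O}_X) \to \Hom_{\mathscr{O}_X}(Q_1^p, \mathscr{O}_X)$ tensored up by $\mathscr{D}_X[s]$, this induced map is precisely \eqref{eqn - lemma, spectral sequence dual, map on page 2}. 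Its cokernel is $E_2^{1,1}$ and its kernel is $E_2^{2,1}$, as asserted.

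For the $q = 0$ row, the same tensor-hom calculation yields $E_1^{p,0} \simeq \Hom_{\mathscr{D}_X[s]}(U_f^p, \mathscr{D}_X[s])$, and the induced horizontal differentials are literally those of $\Hom_{\mathscr{D}_X[s]}(U_f^\bullet, \mathscr{D}_X[s])$ (one checks this by tracing through the commuting squares of $K_f^{\bullet,\bullet}$, whose bottom augmentation is exactly $\nabla_s$). Proposition \ref{prop - dual of Ur complex, terminal cohomology}, where the reducedness hypothesis on $f$ enters, then identifies $E_2^{0,0}$ with $\mathscr{D}_X[s]/\ann_{\mathscr{D}_X[s]}^{(1)} f^{-s}$. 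The remaining entries $E_2^{p,0}$ for $p = 1, 2, 3$ are left unspecified since they play no role in this lemma. The only bookkeeping subtlety is matching dualization conventions so that the map induced on $E_1^{\bullet,1}$ is literally $\Hom_{\mathscr{D}_X[s]}(\sigma_1^1, \mathscr{D}_X[s])$ and not a dualized variant; this is mechanical once the minimal resolutions $Q_\bullet^p$ of Definition \ref{def - fixing analytic free res, log 1, 2 forms} are fixed.
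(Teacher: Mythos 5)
Your proposal is correct and follows essentially the same route as the paper: identify $E_1^{p,q} \simeq \mathscr{D}_X[s] \otimes_{\mathscr{O}_X} \Ext^q_{\mathscr{O}_X}(\Omega_X^p(\log f), \mathscr{O}_X)$ via tensor-hom adjunction and flatness, use tameness/freeness of $\Omega_X^p(\log f)$ to confine the nonzero entries to $q = 0$ and to $q = 1, p \in \{1,2\}$, extract the $q=1$ kernel/cokernel from the induced map $\Hom_{\mathscr{D}_X[s]}(\sigma_1^1, \mathscr{D}_X[s])$, and compute $E_2^{0,0}$ from Proposition \ref{prop - dual of Ur complex, terminal cohomology}. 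The one place you work slightly harder than necessary is the vanishing for $q \geq 2$ and for $p \in \{0,3\}, q = 1$: these are built directly into Definition \ref{def - the double complex} (the resolutions $Q_\bullet^p$ have length at most one and columns $p = 0, 3$ have no $Q_1^p$ term at all), so the vanishing is immediate from the shape of $K_f^{\bullet,\bullet}$ without appealing again to reflexivity, though your reasoning is the correct explanation of why that shape holds.
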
 

\begin{proof}
     First consider the complex $\Hom_{\mathscr{D}_{X}[s]}(\Tot(K_{f}), \mathscr{D}_{X}[s])$, which can be viewed as the total complex of a double complex. The $p^{\text{th}}$-column is of the form $\Hom_{\mathscr{D}_{X}[s]}(Q_{\bullet}^{p} \otimes_{\mathscr{O}_X} \mathscr{D}_{X}[s], \mathscr{D}_{X}[s])$. By arguing as in the first display equation of \eqref{eqn - dual Ur complex, zeroeth cohomology}, we may rewrite this complex of left $\mathscr{D}_{X}[s]$-modules as $\mathscr{D}_{X}[s] \otimes_{\mathscr{O}_{X}} \Hom_{\mathscr{O}_{X}}(Q_{\bullet}^{p}, \mathscr{O}_{X})$. Since $\mathscr{D}_{X}[s] \otimes_{\mathscr{O}_{X}} -$ is flat and hence exact, applying this functor commutes with taking cohomology. By the construction of $Q_{\bullet}^{p}$ we deduce the only nonzero entries on the first page $E_{1}^{\bullet, \bullet}$ of the spectral sequence are:
    \begin{align} \label{eqn - computing ext, first page spectral, pt 1}
    E_{1}^{p, 1} &= \begin{cases} 
    \mathscr{D}_{X}[s] \otimes_{\mathscr{O}_{X}} \Ext_{\mathscr{O}_{X}}^{1}(\Omega_{X}^{p}(\log f), \mathscr{O}_{X}) \text{ for } p = 1,2
    \\ 0 \text{ otherwise}; 
    \end{cases} \\
    E_{1}^{p,0} &= \begin{cases}
        \mathscr{D}_{X}[s] \otimes_{\mathscr{O}_{X}} \Hom_{\mathscr{O}_{X}}(\Omega_{X}^{p}(\log f), \mathscr{O}_{X}).
    \end{cases} \nonumber
    \end{align}

    To compute $E_{2}^{\bullet, \bullet}$ we compute cohomology of the rows of the first page. Proposition \ref{prop - dual of Ur complex, terminal cohomology} computes the terminal cohomology of row $E_{1}^{0, \bullet}$: this gives $E_{2}^{0,0}$ as described in \eqref{eqn - lemma, spectral sequence dual, page 2}. As for row $E_{1}^{1, \bullet}$ we are tasked with computing cohomology of the complex
    \begin{align*} \label{eqn - computing ext, first page spectral, part 2}
        0 \xleftarrow[]{} \mathscr{D}_{X}[s] &\otimes_{\mathscr{O}_{X}} \Ext_{\mathscr{O}_{X}}^{1}(\Omega_{X}^{1}(\log f), \mathscr{O}_{X}) \\
        &\xleftarrow[]{\Hom_{\mathscr{D}_{X}[s]}(\sigma_{1}^{1}, \mathscr{D}_{X}[s])} \mathscr{D}_{X}[s] \otimes_{\mathscr{O}_{X}} \Ext_{\mathscr{O}_{X}}^{1}(\Omega_{X}^{2}(\log f), \mathscr{O}_{X}) \xleftarrow[]{} 0, \nonumber
    \end{align*}
   which agrees with the description of $E_{2}^{1,1}$ and $E_{2}^{2,1}$ in \eqref{eqn - lemma, spectral sequence dual, page 2}, \eqref{eqn - lemma, spectral sequence dual, map on page 2}.
\end{proof}

Lemma \ref{lem - spectral sequence dual, page 2} makes it clear we must understand the aforementioned map \eqref{eqn - lemma, spectral sequence dual, map on page 2}. It will turn out this map is injective (see the proof of Theorem \ref{thm - spectral sequence dual}). Therefore $E_{2}^{1,1}$ and $E_{2}^{0,0}$ of \eqref{eqn - lemma, spectral sequence dual, page 2} will be the associated graded pieces of a filtration on $\Ext_{\mathscr{D}_{X}[s]}^{3}((\mathscr{D}_{X}[s] f^{s-1})^{r}, \mathscr{D}_{X}[s])$, which in turn describe this $\Ext$ module in terms of a short exact sequence.

\section{Lifting Maps}

In this section we perform the ground work for our computation/short exact sequence description of $(\Ext_{\mathscr{D}_{X}[s]}^{3}(\mathscr{D}_{X}[s] f^{s-1}, \mathscr{D}_{X}[s]))^{\ell}$, under our main assumptions on $f$. This is Theorem \ref{thm - spectral sequence dual}. 

The strategy, as developed in the previous section, has been to use the free right $\mathscr{D}_{X}[s]$ resolution $\Tot(K_{f}^{\bullet, \bullet})$ of $(\mathscr{D}_{X}[s] f^{s-1})^{r}$ and understand the dual complex via one of the standard spectral sequences of a double complex. Lemma \ref{eqn - lemma, spectral sequence dual, page 2} makes it clear that we must wrestle with the map 
\begin{equation} \label{eqn - sec 3 intro, sigma dual map}
    \mathscr{D}_{X}[s] \otimes_{\mathscr{O}_{X}} \Ext_{\mathscr{O}_{X}}^{1}(\Omega_{X}^{2}(\log f), \mathscr{O}_{X}) \xrightarrow[]{\Hom_{\mathscr{D}_{X}[s]}(\sigma_{1}^{1}, \mathscr{D}_{X}[s])} \mathscr{D}_{X}[s] \otimes_{\mathscr{O}_{X}} \Ext_{\mathscr{O}_{X}}^{1}(\Omega_{X}^{1}(\log f), \mathscr{O}_{X}).
\end{equation}
The map \eqref{eqn - sec 3 intro, sigma dual map} depends on $\sigma_{1}^{1}$, a particular part of a lift of $\nabla_{s}: U_{f}^{1} \to U_{f}^{2}$ along specific free resolutions, cf. Definition \ref{def - the double complex}, Remark \ref{rmk - choice of lift not unique}. It turns out (\cite[Lemma 00LT]{stacks-project}) that if we replace $\sigma_{1}^{1}$ with another choice of lift along these same specific free resolutions, this does not change the naturally induced map \eqref{eqn - sec 3 intro, sigma dual map}. 

The next two subsections are devoted to constructing a nice choice of lift to replace $\sigma_{1}^{1}$. First algebraically, then analytically. In latter sections we \emph{only} use the analytic version of the lift since we only know that $\Tot(K_{f}^{\bullet, \bullet})$ is a resolution analytically, c.f. Remark \ref{rmk - why analtyic, U complex}, Remark \ref{rmk - why analytic, total complex}. However it is simpler to first construct the lift algebraically and then analytify to obtain Proposition \ref{prop - computing analytic lifts}. (Remark \ref{rmk- why alg then anal lifts} discusses the virtues of this approach.) In the last subsection we begin to study duals of this map in pursuit of our strategy. Throughout, the restriction $R = \mathbb{C}[x_{1}, x_{2}, x_{3}]$ and $X = \mathbb{C}^{3}$ is very important; without it, these lifts are more subtle.

\subsection{Lift of an Algebraic Map}

Throughout this subsection, $f \in R = \mathbb{C}[x_{1}, x_{2}, x_{3}]$ is quasi-homogeneous with weighted-homogeneity $E$. 

We start our construction of a suitable replacement for the lift $\sigma_{1}^{1}$ with an algebraic analogue. We will use the algebraic version to build the analytic version we eventually require. 

While we have defined $U_{f}^{\bullet}$ in the analytic category, we can also define it in the algebraic category. In particular, this will give a map after taking global sections. As prophesized, in this subsection we focus on lifting one such map along free resolutions of the domain and codomain. 

To set this up, let $\mathbb{A}_{3}$ be the Weyl algebra in three variables, i.e. the global sections of the algebraic sheaf of differential operators over $\mathbb{A}_{\mathbb{C}}^{3}$; let $\mathbb{A}_{3}[s]$ be a polynomial ring extension. We will compute explicit lifts of the right $\mathbb{A}_{3}[s]$-linear map 
\begin{equation} \label{eqn - global alg map to lift}
\nabla_{s}: \Omega_{R}^{1}(\log f) \otimes_{R} \mathbb{A}_{3}[s] \to \Omega_{R}^{2}(\log f) \otimes_{R} \mathbb{A}_{3}[s].
\end{equation}
along explicit free resolutions of the domain and codomain.

Recall our fixed graded minimal free resolution of $\Omega_{R}^{2}(\log_{0} f)$
\[
F_{\bullet} = 0 \to F_{1} \xrightarrow[]{\beta_{1}} F_{0} \xrightarrow[]{\beta_{0}} \Omega_{R}^{2}(\log_{0} f) \to 0
\]
and the induced minimal graded free resolutions (cf. Proposition \ref{prop - graded free resolution of log 1, 2 forms}, Definition \ref{def - fixing minimal graded free res, log 1, 2 forms})
\[
P_{\bullet}^{2} = 0 \to P_{1}^{2} = F_{1} \xrightarrow[]{(\beta_{1},0)} P_{0}^{2} = F_{0} \oplus R(-d + \sum w_{i}) \xrightarrow[]{(\beta_{0}, \iota_{E} dx/f)} \Omega_{R}^{2}(\log f) \to 0,
\]
\[
P_{\bullet}^{1} = 0 \to P_{1}^{1} = F_{1} \xrightarrow[]{(\beta_{1}, 0)} P_{0}^{0} = F_{0} \oplus R \xrightarrow[]{(\iota_{E} \beta_{0}, \cdot df/f)} \Omega_{R}^{1}(\log f) \to 0.
\]

Since $\mathbb{A}_{3}[s]$ is flat over $R$ we see that $P_{\bullet}^{2} \otimes \otimes_{R} \Weyl_{3}[s]$ and $P_{\bullet}^{1} \otimes_{R} \Weyl_{3}[s]$ are free resolutions of $\Omega_{R}^{2}(\log f) \otimes_{R} \Weyl_{3}[s]$ and $\Omega_{R}^{1}(\log f) \otimes_{R} \Weyl_{3}[s]$ respectively. We want to construct right $\mathbb{A}_{3}[s]$-maps $\epsilon_{0}$ and $\epsilon_{1}$ so that the following diagram commutes:

    \begin{equation} \label{eqn - comm diagram, lifting nabla with epsilons}
    \begin{tikzcd}
        F_{1} \otimes_{R} \Weyl_{3}[s] \dar{(\beta_{1} \otimes \id, 0)} \rar[dotted]{\epsilon_{1}} 
            & F_{1} \otimes_{R} \Weyl_{3}[s] \dar{(\beta_{1} \otimes \id, 0)} \\
        (F_{0} \otimes_{R} \Weyl_{3}[s]) \oplus (R \otimes \Weyl_{3}[s]) \dar{(\iota_{E}\beta_{0} \otimes \id) \oplus (df/f \cdot \otimes \id)} \rar[dotted]{\epsilon_{0}}
            & (F_{0} \otimes_{R} \Weyl_{3}[s]) \oplus R(n-d) \otimes_{R} \Weyl_{3}[s] \dar{(\beta_{0} \otimes \id, \iota_{E}dx/f \cdot \otimes \id)} \\
        \Omega_{R}^{1}(\log f) \otimes_{R} \mathbb{A}_{3}[s] \rar{\nabla_{s}}
            & \Omega_{R}^{2}(\log f) \otimes_{R} \mathbb{A}_{3}[s].
    \end{tikzcd}
    \end{equation}
(The left column is $P_{\bullet}^{1} \otimes_{R} \mathbb{A}_{3}[s]$ written explicitly, the right is similar.) While the promised lifts of \eqref{eqn - global alg map to lift} exist by general facts about projective resolutions, we will construct lifts so that $\epsilon_{1}$ is essentially given by left multiplication with $\wdeg(f)s + q + E$ where $q$ is degree data depending on the input of $\epsilon_{1}$. The rest of this subsection is devoted to setting up and proving the following:

\begin{proposition} \label{prop - nice lift of nabla s constructed}
    There exist right $\mathbb{A}_{3}[s]$-linear lifts $\epsilon_{0}, \epsilon_{1}$ of $\nabla_{s}: \Omega_{R}^{1}(\log f) \otimes_{R} \mathbb{A}_{3}[s] \to \Omega_{R}^{2}(\log f) \otimes_{R} \mathbb{A}_{3}[s]$ along the (vertical) free resolutions of \eqref{eqn - comm diagram, lifting nabla with epsilons} so that
    \begin{enumerate}[label=(\alph*)]
        \item the squares of \eqref{eqn - comm diagram, lifting nabla with epsilons} commute;
        \item if $b \in F_{1}$ is homogeneous, then 
        \[
        \epsilon_{1}(b \otimes B) = b \otimes (\wdeg(f)s + \wdeg(b) + E) B.
        \]
    \end{enumerate}
\end{proposition}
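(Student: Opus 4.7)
The plan is to build $\epsilon_0$ by prescribing its values on the generators of $P_0^1 = F_0 \oplus R$ so that (b) is forced, then verify chain-map compatibility. Everything hinges on a single explicit computation: the action of $\nabla_s$ on $\iota_E \omega \otimes 1$ for $\omega \in \Omega_R^2(\log_0 f)$ homogeneous. Concretely, I will show
\begin{equation*}
\nabla_s(\iota_E \omega \otimes 1) = \omega \otimes \bigl(\wdeg(f) s + \wdeg(\omega) + E\bigr) + \iota_E(dx/f) \otimes Q_\omega
\end{equation*}
for some $Q_\omega \in \Weyl_{3}[s]$ of total order $\leq 1$. The three summands of $\nabla_s(\iota_E\omega \otimes 1)$ simplify as follows. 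First, the Cartan homotopy formula $d\iota_E = \mathcal{L}_E - \iota_E d$ together with $\mathcal{L}_E \omega = \wdeg(\omega)\omega$ gives $d(\iota_E\omega) = \wdeg(\omega)\omega - \iota_E(d\omega)$. Second, applying $\iota_E$ to the relation $df/f \wedge \omega = 0$ via the Leibniz rule produces $df/f \wedge \iota_E \omega = \wdeg(f)\omega$. Third, the identity $dx_i \wedge \iota_E \omega = w_i x_i \omega - \iota_E(dx_i \wedge \omega)$ combined with the $R$-linearity relation $w_i x_i \omega \otimes \partial_i = \omega \otimes w_i x_i \partial_i$ across the tensor yields the $\omega \otimes E$ summand from $\sum_i dx_i \wedge \iota_E \omega \otimes \partial_i$. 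The residual terms $\iota_E(d\omega)$ and $\iota_E(dx_i \wedge \omega)$ all lie in $\iota_E \Omega_R^3(\log f) = R \cdot \iota_E(dx/f)$ (as $\Omega_R^3(\log f) = R \cdot dx/f$ in dimension three) and collect into the single summand $\iota_E(dx/f) \otimes Q_\omega$.

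Writing $F_0 = \bigoplus_j R(a_j)$ with generators $g_j$ and $\omega_j := \beta_0(g_j)$, define $\epsilon_0$ on generators by
\begin{equation*}
\epsilon_0(g_j \otimes 1,\, 0) := \bigl(g_j \otimes (\wdeg(f) s + \wdeg(\omega_j) + E),\ 1 \otimes Q_{\omega_j}\bigr),
\end{equation*}
let $\epsilon_0(0,\, 1 \otimes 1)$ be any lift of $\nabla_s(df/f \otimes 1)$ to $P_0^2 \otimes \Weyl_{3}[s]$, and extend $\Weyl_{3}[s]$-linearly; Step 1 ensures the lower square of \eqref{eqn - comm diagram, lifting nabla with epsilons} commutes. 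Set $\epsilon_1(b \otimes B) := b \otimes (\wdeg(f) s + \wdeg(b) + E) B$ on homogeneous $b \in F_1$ and extend $\Weyl_{3}[s]$-linearly. For the upper square, fix homogeneous $b \in F_1$ with $\beta_1(b) = \sum_j r_j g_j$, so each $r_j$ is homogeneous of weighted degree $\wdeg(r_j) = \wdeg(b) - \wdeg(\omega_j)$. Computing $\epsilon_0(\beta_1(b) \otimes 1, 0)$ via right $\Weyl_{3}[s]$-linearity, the commutation $E r_j = r_j E + \wdeg(r_j) r_j$ in $\Weyl_{3}$, and the tensor relation $r_j g_j \otimes P = g_j \otimes r_j P$, the first coordinate simplifies directly to $\beta_1(b) \otimes (\wdeg(f) s + \wdeg(b) + E)$, agreeing with $(\beta_1 \otimes \id) \circ \epsilon_1(b \otimes 1)$. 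The second coordinate must vanish automatically: $\epsilon_0(\beta_1(b) \otimes 1, 0)$ projects under $(\beta_0, \iota_E(dx/f)\cdot)$ to $\nabla_s(\iota_E \beta_0 \beta_1(b) \otimes 1) = 0$; because the decomposition $\Omega_R^2(\log f) = \Omega_R^2(\log_0 f) \oplus \iota_E \Omega_R^3(\log f)$ persists after tensoring with the $R$-flat $\Weyl_{3}[s]$, the second-coordinate image must be zero in the rank-one free right $\Weyl_{3}[s]$-module $\iota_E \Omega_R^3(\log f) \otimes_R \Weyl_{3}[s]$ (generated by $\iota_E(dx/f) \otimes 1$), forcing the coordinate itself to be zero.

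The main obstacle is Step 1: carefully orchestrating Cartan, Leibniz, and the shuffling of polynomial scalars across the $R$-tensor to isolate the clean expression $\wdeg(f) s + \wdeg(\omega) + E$, particularly the recognition that all residual 2-form contributions land in the rank-one submodule $R \cdot \iota_E(dx/f)$ thanks to $\Omega_R^3(\log f) = R \cdot dx/f$ in dimension three. After Step 1, the rest is essentially formal; uniqueness of the resulting $\epsilon_1$ (and hence well-definedness on all of $F_1$) follows from the injectivity of $\beta_1$, which holds because the minimal graded free resolution $F_\bullet$ has length at most one in our setting.
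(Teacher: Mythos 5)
Your proof is correct and follows the same core strategy as the paper: the Cartan homotopy identity $d\iota_E + \iota_E d = \mathcal{L}_E$, combined with $\iota_E(df/f) = \wdeg(f)$ and $\iota_E(dx_i) = w_i x_i$, isolates the diagonal operator $\wdeg(f)s + \wdeg(\cdot) + E$ exactly as in the paper's equation \eqref{eqn - lift Ur complex, Lie derivative computation}, and the maps you construct agree with the paper's $\epsilon_0 = \mu_0 \oplus \nu_0$ and $\epsilon_1 = \mu_1$. The only divergence is presentational: the paper writes $\mu_0$ via an explicit global formula whose residual summand factors through $\nabla_s(\beta_0(-) \otimes \mathrm{id})$ and therefore visibly vanishes on $\im \beta_1 = \ker\beta_0$, whereas you define $\epsilon_0$ only on a basis and deduce vanishing of the residual coordinate of $\epsilon_0(\beta_1(b) \otimes 1, 0)$ from the direct-sum decomposition $\Omega_R^2(\log f) = \Omega_R^2(\log_0 f) \oplus \iota_E\Omega_R^3(\log f)$ preserved under the flat $\otimes_R \Weyl_3[s]$, together with injectivity of $\iota_E(dx/f)\cdot \otimes \mathrm{id}$. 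Both arguments are equivalent; your closing remark attributing well-definedness of $\epsilon_1$ to uniqueness of lifts is unnecessary, since defining $\epsilon_1$ on a basis of the free right $\Weyl_3[s]$-module $F_1 \otimes_R \Weyl_3[s]$ already gives a well-defined map, and the stated formula on arbitrary homogeneous $b$ then follows from the commutator identity $Er = rE + \wdeg(r)r$.
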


We lay the foundation for Proposition \ref{prop - nice lift of nabla s constructed}'s proof. Return to the original map $\nabla_{s}$ and recall the direct sum decomposition of $\Omega_{R}^{p}(\log f) = \Omega_{R}^{p}(\log_{0} f) \oplus \iota_{E} \Omega_{R}^{p+1}(\log_{0} f)$. Our strategy depends on the behavior of $\nabla_{s}$ on members of each direct summand, which in turn hinges on a sort of Lie derivative trick. We denote the Lie derivative with respect to the Euler derivation $E$ as $L_{E}$. It is easy to check that our grading conventions on $\Omega_{R}^{p}(\log f)$ guarantee that $L_{E}(\eta) = \wdeg(\eta) \eta$, provided $\eta$ is homogeneous. If $\eta \in \Omega_{R}^{p}(\log f)$ is homogeneous, by basic properties of Lie derivatives and contractions,
    \begin{align} \label{eqn - lift Ur complex, Lie derivative computation}
        \nabla_{s}(\iota_{E}\eta \otimes P) 
        &= d(\iota_{E} \eta) \otimes P + df/f \wedge \iota_{E} \eta \otimes sP + \sum_{i} \big( dx_{i} \wedge \iota_{E} \eta \otimes \partial_{i} P \big) \\
        &= \bigg( L_{E}(\eta) - \iota_{E} d (\eta) \bigg) \otimes P + \bigg( \iota_{E}(df/f) \wedge \eta - \iota_{E}(df/f \wedge \eta) \bigg) \otimes sP \nonumber \\
        &+ \sum_{i} \bigg( \big( \iota_{E}(dx_{i}) \wedge \eta - \iota_{E}(dx_{i} \wedge \eta) \big) \otimes \partial_{i} P \bigg) \nonumber \\
        & = \bigg( L_E(\eta) \otimes P + (\iota_E(df/f) \wedge \eta) \otimes sP + \sum_i \big( (\iota_E(dx_i) \wedge \eta) \otimes \partial_i P \big) \bigg) \nonumber \\
        & - \bigg( \iota_E d(\eta) \otimes P + \iota_E( df/f \wedge \eta) \otimes sP + \sum_i \big( \iota_E(dx_i \wedge \eta) \otimes \partial_i P \big) \bigg) \nonumber \\
        & = \bigg( \eta \otimes ( \wdeg(f)s + \wdeg(\eta) + E) P \bigg) 
        - \bigg( \iota_E (\nabla_s(\eta \otimes P)) \bigg). \nonumber 
    \end{align}
(In the expression $\iota_E (\nabla_s(\eta \otimes P))$ of the final line, we are implicitly letting the interior derivative act on all terms of the complex $\Omega_R^\bullet(\log f) \otimes_R \Weyl_3[s]$, by linearly extending its action across $\Weyl_3[s]$.) Note that if $\eta \in \Omega_R^2(\log_0 f)$, then in the final line of \eqref{eqn - lift Ur complex, Lie derivative computation} the first summand lives in the direct summand $\Omega_{R}^{2}(\log_{0} f) \otimes_R \Weyl_3[s]$ whereas the second summand in the direct summand $\iota_{E}\Omega_{R}^{3}(\log f) \otimes_R \Weyl_3[s]$. (See Remark \ref{rmk - basics alg log forms}.(b).)

Now we return to the lifting problem where we compute $\epsilon_{1}$ in two steps. We first restrict the domain of $\Omega_{R}^{2}(\log f) \otimes_{R} \Weyl_{3}[s] \xrightarrow[]{\nabla_{s}} \Omega_{R}^{3}(\log f) \otimes_{R} \Weyl_{3}[s]$ by considering the induced map $\iota_{E}\Omega_{R}^{2}(\log_{0} f) \otimes_{R} \mathbb{A}_{3}[s] \xrightarrow[]{\nabla_{s}} \Omega_{R}^{2}(\log f) \otimes_{R} \mathbb{A}_{3}$, cf. Remark \ref{rmk - basics alg log forms}.(b). We compute certain lifts $\mu_{0}, \mu_{1}$ of the induced map $\nabla_{s}$ as depicted below:
     \begin{equation} \label{eqn - comm diag, lifting nabla along restricted domain}
    \begin{tikzcd}
        F_{1} \otimes_{R} \Weyl_{3}[s] \dar{(\beta_{1} \otimes \id)} \rar[dotted]{\mu_{1}} 
            & F_{1} \otimes_{R} \Weyl_{3}[s] \dar{(\beta_{1} \otimes \id, 0)} \\
        F_{0} \otimes_{R} \Weyl_{3}[s] \dar{(\iota_{E}\beta_{0} \otimes \id)} \rar[dotted]{\mu_{0}}
            & (F_{0} \otimes_{R} \Weyl_{3}[s]) \oplus R(n-d) \otimes_{R} \Weyl_{3}[s] \dar{(\beta_{0} \otimes \id, \iota_{E}dx/f \cdot \otimes \id)} \\
        \iota_{E}\Omega_{R}^{2}(\log_{0} f) \otimes_{R} \mathbb{A}_{3}[s] \rar{\nabla_{s}}
            & \Omega_{R}^{2}(\log f) \otimes_{R} \mathbb{A}_{3}[s]
    \end{tikzcd}
    \end{equation}

    \begin{proposition} \label{prop - construction of mu 0}
        Consider the map $\mu_{0} : F_{0} \otimes_{R} \Weyl_{3}[s] \to (F_{0} \otimes_{R} \Weyl_{3}[s]) \oplus R(n-d) \otimes_{R} \Weyl_{3}[s]$ which, for $a \in F_{0}$ homogeneous, is given by
        \[
        \mu_{0}(a \otimes A) = \bigg( a \otimes (\wdeg(f)s + \wdeg(a) + E)A \bigg) - \bigg( (\iota_{E} dx/f \otimes \id)^{-1} \big( (\iota_{E} \circ \nabla_{s})(\beta_{0}(a) \otimes A) \big) \bigg)
        \]
        is well-defined, right $\mathbb{A}_{3}[s]$-linear, and makes the lower square of \eqref{eqn - comm diag, lifting nabla along restricted domain} commute.
    \end{proposition}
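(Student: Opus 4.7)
The plan is to reduce all three claims (well-definedness, right $\mathbb{A}_{3}[s]$-linearity, commutativity) to the Lie-derivative identity \eqref{eqn - lift Ur complex, Lie derivative computation} that was already established. The formula for $\mu_{0}$ is essentially engineered so that commutativity of the lower square becomes a direct translation of that identity.

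For well-definedness, I would first observe that $F_{0}$ is a graded free $R$-module and pick a homogeneous basis. Then $F_{0} \otimes_{R} \mathbb{A}_{3}[s]$ is a free right $\mathbb{A}_{3}[s]$-module on that basis, so defining $\mu_{0}$ on the homogeneous generators and extending right $\mathbb{A}_{3}[s]$-linearly produces an unambiguously defined map. Separately, one must justify that the symbol $(\iota_{E}\,dx/f \otimes \id)^{-1}$ makes sense on the input $(\iota_{E}\circ\nabla_{s})(\beta_{0}(a) \otimes A)$. Since $n=3$, Remark \ref{rmk - basics alg log forms}.(c) gives $\Omega_{R}^{3}(\log f) = R\cdot dx/f$, and therefore $\iota_{E}\Omega_{R}^{3}(\log f) = R\cdot\iota_{E}\,dx/f$. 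Consequently the image of $\iota_{E}\circ\nabla_{s}$ on $\Omega_{R}^{2}(\log_{0} f)\otimes\mathbb{A}_{3}[s]$ lies in $R\cdot\iota_{E}\,dx/f \otimes_{R}\mathbb{A}_{3}[s]$, which is the image of the injective map $(\iota_{E}\,dx/f\cdot\otimes \id)\colon R(n-d) \otimes_R \mathbb{A}_3[s] \to \Omega_R^2(\log f) \otimes_R \mathbb{A}_3[s]$ (injective because $\iota_{E}\,dx/f$ is a nonzero element of the free $R$-module $(1/f)\Omega_{R}^{2}$, and $\mathbb{A}_{3}[s]$ is flat over $R$). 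Hence the pre-image under $(\iota_{E}\,dx/f \otimes \id)$ is a well-defined element.

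Right $\mathbb{A}_{3}[s]$-linearity is immediate from the formula: the first summand is $a \otimes (\wdeg(f)s + \wdeg(a) + E)A$, which is manifestly right linear in $A$, while the second summand is the composition of $\nabla_{s}$, $\iota_{E}$, and the $R$-linear inverse $(\iota_{E}\,dx/f \otimes \id)^{-1}$, each of which is right $\mathbb{A}_{3}[s]$-linear in the $\mathbb{A}_{3}[s]$-factor.

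For commutativity of the bottom square in \eqref{eqn - comm diag, lifting nabla along restricted domain}, fix homogeneous $a \in F_{0}$ and $A \in \mathbb{A}_{3}[s]$. The ``right-then-down'' composite evaluates $\mu_{0}(a \otimes A)$ and then applies the vertical map, yielding
\[
\beta_{0}(a) \otimes (\wdeg(f)s + \wdeg(a) + E)A \;-\; \iota_{E}\bigl(\nabla_{s}(\beta_{0}(a) \otimes A)\bigr).
\]
The ``down-then-right'' composite yields $\nabla_{s}(\iota_{E}\beta_{0}(a) \otimes A)$. Applying \eqref{eqn - lift Ur complex, Lie derivative computation} with $\eta = \beta_{0}(a)$ (which is homogeneous of weighted degree $\wdeg(a)$ since $\beta_{0}$ is a graded degree-zero map) and $P = A$ gives precisely the desired equality. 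By right $\mathbb{A}_{3}[s]$-linearity, commutativity then extends from homogeneous generators to all of $F_{0} \otimes_{R} \mathbb{A}_{3}[s]$. The only real subtlety is the direct-sum bookkeeping (ensuring that $\iota_{E}\nabla_{s}(\beta_{0}(a)\otimes A)$ actually lands in the summand $R\cdot\iota_{E}\,dx/f \otimes_{R}\mathbb{A}_{3}[s]$ so the inverse is available), but this is handled by the observation about $\Omega_{R}^{3}(\log f) = R\cdot dx/f$ in dimension three; all the actual arithmetic is absorbed into the already-proved identity \eqref{eqn - lift Ur complex, Lie derivative computation}.
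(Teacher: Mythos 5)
Your commutativity argument is fine and is essentially the paper's: it amounts to recognizing that the formula for $\mu_0$ is engineered so that the vertical map applied to $\mu_0(a\otimes A)$ reproduces the last line of \eqref{eqn - lift Ur complex, Lie derivative computation} with $\eta = \beta_0(a)$ and $P = A$. Your justification that $(\iota_E\,dx/f\otimes\id)^{-1}$ makes sense is also correct and matches the paper's one-line remark on this point.

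The gap is in the well-definedness argument. You propose to define $\mu_0$ by its values on a fixed homogeneous $R$-basis $\{e_i\}$ of $F_0$ and extend right $\mathbb{A}_3[s]$-linearly. That does produce \emph{a} well-defined map, but it does not establish the proposition as stated: the proposition asserts that the displayed formula computes $\mu_0(a\otimes A)$ for \emph{every} homogeneous $a\in F_0$, not merely for the chosen basis vectors. To pass from the basis to general homogeneous $a$, one must check that the first summand $a\otimes(\wdeg(f)s+\wdeg(a)+E)A$ gives the same value on $ar\otimes A$ and on $a\otimes rA$ for homogeneous $r\in R$; this is precisely the commutator computation $rE = Er-[E,r]=Er-\wdeg(r)r$ (together with $\wdeg(ra)=\wdeg(r)+\wdeg(a)$) that the paper carries out and that your proposal omits. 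This is not a cosmetic omission: your own commutativity step already quietly invokes the formula for a general homogeneous $a$ (not a basis element), and Proposition \ref{prop - construction of mu 1} applies it to $a=\beta_1(b)$, which will typically not be a basis vector. Without the commutator check, those applications of the formula are unjustified; you have defined \emph{some} right-linear map agreeing with the formula on basis elements, but you have not shown it is the map in the proposition's statement.
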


    \begin{proof}
        First note that $(\iota_{E} dx/f \otimes \id) : R(n-d) \otimes_{R} \mathbb{A}_{3}[s] \to \iota_{E} \Omega_{R}^{3}(\log f) \otimes_{R} \mathbb{A}_{3}[s]$ is an isomorphism, so its inverse uniquely selects an element. To confirm well-definedness, since $\nabla_{s}$ and $\iota_{E}$ are well-defined right-$\mathbb{A}_{3}[s]$ maps, we must check the assignment
        \begin{equation} \label{eqn - mu 0, degree summand assignment}
        a \otimes A \mapsto a \otimes (\wdeg(f)s + \wdeg(a) + E) A
        \end{equation}
        is well-defined. In other words, that if $a \in F_{0}$ is homogeneous and $r \in R$ homogeneous, then \eqref{eqn - mu 0, degree summand assignment} has the same output on $ar \otimes A$ and $a \otimes rA$. This reduces to the commutator identity $rE = Er - [E,r] = Er - E \bullet r = Er - \wdeg(r)$ and the obvious fact $\wdeg(ra) = \wdeg(a) + \wdeg(r)$.

        Now we confirm $\mu_{0}$ makes the lower square of \eqref{eqn - comm diag, lifting nabla along restricted domain} commute. Let $a \in F_{0}$ be homogeneous. On one hand,
        \begin{align*}
        \big( \nabla_{s} &\circ (\iota_{E} \beta_{0} \otimes \id) \big) (a \otimes A) \\
        &= \nabla_{s} \big( \iota_{E} \beta_{0} (a) \otimes A \big) \\
        &= \big( \beta_{0}(a) \otimes (\wdeg(f)s + \wdeg(\beta_{0}(a)) + E) A \big) - \big( \iota_{E} (\nabla_{s} (\beta_{0}(a) \otimes A)) \big) \\
        &= \big( \beta_{0}(a) \otimes (\wdeg(f)s + \wdeg(a) + E) A \big) - \big( \iota_{E} (\nabla_{s} (\beta_{0}(a) \otimes A)) \big).
        \end{align*}
        Here: the second ``$=$'' is \eqref{eqn - lift Ur complex, Lie derivative computation}; the third ``$=$'' uses that $\beta_{0}$ is degree preserving. On the other hand,
        \begin{align*}
            (\beta_{0} \otimes \id , \iota_{E} df/f \otimes \id) \big( \mu_{0}(a \otimes A) \big) 
            &= \big( \beta_{0}(a) \otimes (\wdeg(f)s + \wdeg(a) + E)A \big) \\
            &- (\iota_{E} \circ \nabla_{s}) (\beta_{0}(a) \otimes A).
        \end{align*}
    \end{proof}

    \begin{proposition} \label{prop - construction of mu 1}
        Consider the map $\mu_{1}: F_{1} \otimes_{R} \mathbb{A}_{3}[s] \to F_{1} \otimes_{R} \mathbb{A}_{3}[s]$ which, for $b \in F_{1}$ homogeneous, is given by
        \[
        \mu_{1}(b \otimes B) = b \otimes (\wdeg(f)s + \wdeg(b) + E)B.
        \]
        Then $\mu_1$ is well-defined and right $\mathbb{A}_{3}[s]$-linear. Moreover, with $\mu_{0}$ as defined in Proposition \ref{prop - construction of mu 0}, $\mu_{1}$ makes the upper square of \eqref{eqn - comm diag, lifting nabla along restricted domain} commute.
    \end{proposition}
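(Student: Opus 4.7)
The plan is to verify well-definedness, right $\mathbb{A}_{3}[s]$-linearity, and commutativity in sequence, leveraging the structure already established for $\mu_0$ in Proposition \ref{prop - construction of mu 0}.

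First, well-definedness of $\mu_1$ is essentially identical to the corresponding verification for the ``homogeneous summand'' part of $\mu_0$. Concretely, one must check that the formula $\mu_1(b \otimes B) = b \otimes (\wdeg(f)s + \wdeg(b) + E)B$ is consistent with the tensor-product relations, i.e.\ that $\mu_1(br \otimes B) = \mu_1(b \otimes rB)$ for $b \in F_1$, $r \in R$ both homogeneous. This reduces to the commutator identity $rE = Er - \wdeg(r)$ in $\mathbb{A}_3$ together with $\wdeg(br) = \wdeg(b) + \wdeg(r)$, exactly as in the proof of Proposition \ref{prop - construction of mu 0}. Right $\mathbb{A}_3[s]$-linearity is then immediate from the formula.

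For commutativity of the upper square of \eqref{eqn - comm diag, lifting nabla along restricted domain}, fix a homogeneous $b \in F_1$ and trace the two paths. The right-then-down route yields
\[
(\beta_1 \otimes \id,\, 0)\bigl(\mu_1(b \otimes B)\bigr) = \bigl(\beta_1(b) \otimes (\wdeg(f)s + \wdeg(b) + E)B,\; 0\bigr).
\]
The down-then-right route yields $\mu_0(\beta_1(b) \otimes B)$. Since $\beta_1$ is a graded map of degree zero, $\beta_1(b)$ is homogeneous of weight $\wdeg(b)$, so by the formula of Proposition \ref{prop - construction of mu 0} this evaluates to
\[
\bigl(\beta_1(b) \otimes (\wdeg(f)s + \wdeg(b) + E)B\bigr) \;-\; (\iota_E dx/f \otimes \id)^{-1}\bigl((\iota_E \circ \nabla_s)(\beta_0\beta_1(b) \otimes B)\bigr).
\]
Because $F_\bullet$ is a resolution, $\beta_0 \circ \beta_1 = 0$, so the correction term vanishes and the two paths agree in both summands of the codomain.

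The only real content of the proof is this vanishing, which accounts for the apparent asymmetry between $\mu_1$ (no correction term) and $\mu_0$ (one correction term): the correction term in $\mu_0$ is automatically killed by precomposition with $\beta_1$. Consequently I expect no serious obstacle beyond bookkeeping and noticing the cancellation.
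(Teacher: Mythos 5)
Your proof is correct and follows essentially the same route as the paper's: well-definedness via the same commutator identity used for $\mu_0$, and commutativity of the square by observing that the correction term in $\mu_0$ vanishes because $\beta_0 \circ \beta_1 = 0$ together with the fact that $\beta_1$ preserves weighted degree. No substantive differences.
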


    \begin{proof}
        The well-definedness of $\mu_{1}$ follows exactly from the commutator identity used in checking the well-definedness of $\mu_{0}$, cf. Proposition \ref{prop - construction of mu 0}. To check the upper square of \eqref{eqn - comm diag, lifting nabla along restricted domain} commutes, lets compute. On one hand,
        \begin{align*}
            \big( \mu_{0} \circ (\beta_{1} \otimes \id) \big) (b \otimes B) 
            &= \mu_{0} \big( \beta_{1}(b) \otimes B) \\
            &= \beta_{1}(b) \otimes (\wdeg(f)s + \wdeg(\beta_{1}(b)) + E)B \\
            &= \beta_{1}(b) \otimes (\wdeg(f)s + \wdeg(b) + E)B.
        \end{align*}
        Here: the second ``$=$'' uses the fact $(\beta_{0} \circ \beta_{1})(b) = 0$; the third ``$=$'' uses the fact $\beta_{1}$ preserves degree. On the other hand,
        \begin{align*}
            \big((\beta_{1} \otimes \id, 0) \circ \mu_{1} \big) (b \otimes B) 
            &= (\beta_{1} \otimes \id, 0) \big( b \otimes (\wdeg(f) s + \wdeg(b) + E)B \big) \\
            &= \beta_{1}(b) \otimes (\wdeg(f) s + \wdeg(b) + E)B.
        \end{align*}   
    \end{proof}

Now we can return to the original problem of constructing $\epsilon_{0}, \epsilon_{1}$ and filling in \eqref{eqn - comm diagram, lifting nabla with epsilons}. Restrict the domain of $\nabla_{s}$ in a different way by considering $\nabla_{s}: \Omega_{R}^{1}(\log_{0} f) \otimes_{\mathscr{O}_{X}} \mathscr{D}_{X}[s] \to \Omega_{R}^{2}(\log f) \otimes_{\mathscr{O}_{X}} \mathscr{D}_{X}[s]$, cf. Remark \ref{rmk - basics alg log forms}.(b). By basic facts about projective resolutions, there \emph{exists} a right $\mathbb{A}_{3}[s]$-linear maps $\nu_{0}$ so that the following diagram commutes:

\begin{equation} \label{eqn - comm diag, lifting nabla restricted to killing log 1 forms, def of nu 0}
    \begin{tikzcd}
        0 \dar{0} \rar[dotted]{0} 
            & F_{1} \otimes_{R} \Weyl_{3}[s] \dar{(\beta_{1} \otimes \id, 0)} \\
        R \otimes_{R} \Weyl_{3}[s] \dar{(df/f \otimes \id)} \rar[dotted]{\nu_{0}}
            & (F_{0} \otimes_{R} \Weyl_{3}[s]) \oplus R(n-d) \otimes_{R} \Weyl_{3}[s] \dar{(\beta_{0} \otimes \id, \iota_{E}dx/f \cdot \otimes \id)} \\
        \Omega_{R}^{1}(\log_{0} f) \otimes_{R} \mathbb{A}_{3}[s] \rar{\nabla_{s}}
            & \Omega_{R}^{2}(\log f) \otimes_{R} \mathbb{A}_{3}[s].
    \end{tikzcd}
    \end{equation}
    (By Remark \ref{rmk - basics alg log forms}.(d) the left column in\eqref{eqn - comm diag, lifting nabla restricted to killing log 1 forms, def of nu 0} is a free resolution; by Proposition \ref{prop - graded free resolution of log 1, 2 forms}; Definition \ref{def - fixing minimal graded free res, log 1, 2 forms} the same holds for the right column.)

We are ready to prove the subsection's main result Proposition \ref{prop - nice lift of nabla s constructed}:

\vspace{5mm}

\emph{Proof of Proposition \ref{prop - nice lift of nabla s constructed}:}
    First define 
    \[
    \epsilon_{0} : (F_{0} \otimes_{R} \mathbb{A}_{3}[s]) \oplus (R \otimes_{R} \mathbb{A}_{3}[s]) \to (F_{0} \otimes_{R} \mathbb{A}_{3}[s]) \oplus (R(n-d) \otimes_{R} \mathbb{A}_{3}[s])
    \]
    by $\epsilon_{0} = \mu_{0} \oplus \nu_{0}$, where $\mu_{0}$ is as defined in Proposition \ref{prop - construction of mu 0} and $\nu_{0}$ is as promised by \eqref{eqn - comm diag, lifting nabla restricted to killing log 1 forms, def of nu 0}. As the summands of $\epsilon_{0}$ are right $\mathbb{A}_{3}[s]$-linear, so is $\epsilon_{0}$; as the summands of $\epsilon_{0}$ make the lowest squares of \eqref{eqn - comm diag, lifting nabla along restricted domain} and \eqref{eqn - comm diag, lifting nabla restricted to killing log 1 forms, def of nu 0} commute, this choice of $\epsilon_{0}$ makes the lowest square of \eqref{eqn - comm diagram, lifting nabla with epsilons} commute. 

    Now define
    \[
    \epsilon_{1} : F_{1} \otimes_{R} \mathbb{A}_{3}[s] \to F_{1} \otimes_{R} \mathbb{A}_{3}[s]
    \]
    by $\epsilon_{1} = \mu_{1}$. Proposition \eqref{prop - construction of mu 1} shows $\mu_{1}$ is right $\mathbb{A}_{3}[s]$-linear and that the upper square of \eqref{eqn - comm diag, lifting nabla along restricted domain} commute. By the definition of $\epsilon_{0}$, the commutativity of said square is equivalent to the commutativity of the upper square of \eqref{eqn - comm diagram, lifting nabla with epsilons}. We have checked (a). That (b) holds is by definition of $\epsilon_{1} = \mu_{1}$, see Proposition \ref{prop - construction of mu 1}.
\qed

\subsection{Lift of an Analytic Map and its Behavior on $\Ext$}

In this subsection $f \in R = \mathbb{C}[x_{1}, x_{2}, x_{3}]$ is quasi-homogeneous with weighted-homogeneity $E$. 

We return to the analytic setting. We use the previous subsection to compute a lift of the map
\begin{equation} \label{eqn - analytic map to lift}
\nabla_{s} : \Omega_{X}^{1}(\log f) \otimes_{\mathscr{O}_{X}} \mathscr{D}_{X}[s] \xrightarrow[]{} \Omega_{X}^{2}(\log f) \otimes_{\mathscr{O}_{X}} \mathscr{D}_{X}[s]
\end{equation}
along explicit free resolutions of the domain and codomain. 

Again, we have set fixed a minimal graded free resolution of $\Omega_{R}^{2}(\log f)$
\[
F_{\bullet} = 0 \to F_{1} = \bigoplus_{i} R(b_{i}) \xrightarrow[]{\beta_{1}} F_{0} = \bigoplus_{j} R(a_{j}) \xrightarrow[]{\beta_{0}} \Omega_{R}^{2}(\log_{0} f) \to 0
\]
which induces (cf. Proposition \ref{prop - graded free resolution of log 1, 2 forms}, Definition \ref{def - fixing minimal graded free res, log 1, 2 forms}, Proposition \ref{prop - analytic free res, log 1, 2 forms}, Definition \ref{def - fixing analytic free res, log 1, 2 forms}) free $\mathscr{O}_{X}$-resolutions 
\[
Q_{\bullet}^{2} = 0 \to Q_{1}^{2} \to Q_{0}^{2} \to \Omega_{X}^{2}(\log f) \to 0,
\]
\[
Q_{\bullet}^{1} = 0 \to Q_{1}^{1} \to Q_{0}^{2} \to \Omega_{X}^{1}(\log f) \to 0.
\]
As we saw earlier, tensoring on the right with $\otimes_{\mathscr{O}_{X}} \mathscr{D}_{X}[s]$ preserves the property of ``being a resolution'' thanks to flatness. So we know there are lifts of \eqref{eqn - analytic map to lift} along free resolutions so that the following diagram commutes:
\begin{equation} \label{eqn - diagram, analytic lift of maps constructed}
    \begin{tikzcd}
        Q_{1}^{1} \otimes_{\mathscr{O}_{X}} \mathscr{D}_{X}[s] \rar{\theta_{1}} \dar{}
            & Q_{1}^{2} \otimes_{\mathscr{O}_{X}} \mathscr{D}_{X}[s] \dar{} \\
        Q_{0}^{1} \otimes_{\mathscr{O}_{X}} \mathscr{D}_{X}[s] \rar{\theta_{0}} \dar{}
            & Q_{0}^{2} \otimes_{\mathscr{O}_{X}} \mathscr{D}_{X}[s] \dar{} \\
        \Omega_{X}^{1}(\log f) \otimes_{\mathscr{O}_{X}} \mathscr{D}_{X}[s] \rar{\nabla_{s}}
            & \Omega_{X}^{2}(\log f) \otimes_{\mathscr{O}_{X}} \mathscr{D}_{X}[s].
    \end{tikzcd}
\end{equation}

More specifically:

\begin{proposition} \label{prop - computing analytic lifts}
    In the set-up of this section, recall that: $F_{1} = \bigoplus_{i} R(b_{i})$; $Q_{1}^{2} = Q_{1}^{1}$ are the (algebraic) sheafification and analytification of $F_{1}$. Let $\textbf{e}_{t}$ be the $t^{\text{th}}$ indicator vector of $Q_{1}^{2} = Q_{1}^{1}$. Then we may construct right $\mathscr{D}_{X}[s]$-maps $\theta_{1}, \theta_{0}$ making \eqref{eqn - diagram, analytic lift of maps constructed} commute where 
    \begin{equation} \label{eqn - description of theta 1}
    \theta_{1} (\textbf{e}_{t} \otimes 1) = \textbf{e}_{t} \otimes (\wdeg(f)s - b_{t} + E).
    \end{equation}
\end{proposition}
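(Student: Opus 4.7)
The plan is to obtain the analytic lifts $\theta_0, \theta_1$ simply by applying (algebraic) sheafification and then analytification to the maps $\epsilon_0, \epsilon_1$ constructed in Proposition \ref{prop - nice lift of nabla s constructed}. This is viable because every ingredient of the algebraic diagram \eqref{eqn - comm diagram, lifting nabla with epsilons} is carried by this two-step functor to the corresponding ingredient of the analytic diagram \eqref{eqn - diagram, analytic lift of maps constructed}: by Definition \ref{def - fixing analytic free res, log 1, 2 forms} the columns $P_\bullet^j$ become $Q_\bullet^j$; the Weyl algebra $\mathbb{A}_3[s]$ becomes the analytic $\mathscr{D}_X[s]$; the formula defining $\nabla_s$ is identical in both categories (only polynomial coefficients in the $\partial_i$ and $s$ appear); and the operator $E = \sum_i w_i x_i \partial_i$ lives unchanged in both.

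First I would verify that sheafification and analytification commute with the tensor products $\otimes_R \mathbb{A}_3[s]$ in such a way that $P_\bullet^j \otimes_R \mathbb{A}_3[s]$ transforms into $Q_\bullet^j \otimes_{\mathscr{O}_X} \mathscr{D}_X[s]$. This follows from exactness of these functors together with the flatness of the natural maps $R \to \mathscr{O}_X$ and $\mathbb{A}_3[s] \to \mathscr{D}_X[s]$. Because both functors are additionally $\mathbb{C}$-linear, they transport right $\mathbb{A}_3[s]$-linearity to right $\mathscr{D}_X[s]$-linearity and preserve the commutativity of the two squares in \eqref{eqn - comm diagram, lifting nabla with epsilons}. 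So setting $\theta_i$ to be the image of $\epsilon_i$ under this two-step functor yields right $\mathscr{D}_X[s]$-linear lifts making \eqref{eqn - diagram, analytic lift of maps constructed} commute.

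Finally I would check the explicit formula on $\theta_1$. Under our grading convention $M(b)_a = M_{b+a}$ (Definition \ref{def - grading}), the indicator vector $\mathbf{e}_t \in F_1 = \bigoplus_i R(b_i)$, being the generator of the $t^{\mathrm{th}}$ summand, is homogeneous of weighted degree $\wdeg(\mathbf{e}_t) = -b_t$. Part (b) of Proposition \ref{prop - nice lift of nabla s constructed} then yields
\[
\epsilon_1(\mathbf{e}_t \otimes 1) = \mathbf{e}_t \otimes \bigl(\wdeg(f)s + \wdeg(\mathbf{e}_t) + E\bigr) = \mathbf{e}_t \otimes \bigl(\wdeg(f)s - b_t + E\bigr).
\]
Sheafifying and analytifying fixes this expression on the nose, giving the desired formula for $\theta_1$.

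The main ``obstacle'' is really bookkeeping rather than substantive mathematics: one must carefully verify the identifications between algebraic and analytic objects under the two-step functor. All the actual lifting work has already been done in the previous subsection; this proposition is simply the transfer of Proposition \ref{prop - nice lift of nabla s constructed} into the analytic category, which is the setting where we genuinely need the lift (see Remark \ref{rmk - why analytic, total complex}).
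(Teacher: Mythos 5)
Your proposal is correct and takes essentially the same route as the paper: in both cases the analytic lifts $\theta_j$ are obtained by transporting the algebraic lifts $\epsilon_j$ from Proposition \ref{prop - nice lift of nabla s constructed} across the sheafification-and-analytification functor (the paper phrases this concretely as ``same matrix with polynomial entries,'' you phrase it functorially, but the content is identical), and the explicit formula for $\theta_1$ follows from Proposition \ref{prop - nice lift of nabla s constructed}(b) together with the computation $\wdeg(\mathbf{e}_t) = -b_t$ dictated by the grading convention on $F_1 = \bigoplus_i R(b_i)$. The paper does spend an extra sentence noting that the commutativity of the \emph{bottom} square of \eqref{eqn - diagram, analytic lift of maps constructed} reduces to the algebraic one because $\Omega_X^1(\log f)$ is locally generated by polynomials (since $f$ is a polynomial); your abstract formulation subsumes this point, though in a clean write-up it would be worth flagging that the analytic $\nabla_s$ really is the image of the algebraic one precisely because all its defining data ($d$, $df/f$, $dx_i$, $\partial_i$) is algebraic.
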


\begin{proof}
    The ``recall that'' statement is just Definition \ref{def - fixing minimal graded free res, log 1, 2 forms}, Definition \ref{def - fixing analytic free res, log 1, 2 forms}. Our task is to compute suitable $\theta_{0}, \theta_{1}$. 
    
    To do this, set $\theta_{j}$ to be the map induced by the $\epsilon_{j}$ constructed in the previous subsection making \eqref{eqn - comm diagram, lifting nabla with epsilons} commute. Indeed, $P_{j}^{1} \otimes_{R} \mathbb{A}_{3}[s]$ is a free $\mathbb{A}_{3}[s]$-module and so $\epsilon_{j}$ can be defined by its behavior on a basis, reducing it to left matrix multiplication. Then each $\theta_{j}$ is given by the same map. (The matrices defining $\epsilon_{j}$ are globally defined with (homogeneous) polynomial entries.) That the squares of \eqref{eqn - diagram, analytic lift of maps constructed} commute amounts to checking a matrix identity, which we know is true by Proposition \ref{prop - nice lift of nabla s constructed}. Finally, $\epsilon_{1}$ sends $\textbf{e}_{t} \otimes 1$, where $\textbf{e}_{t}$ is the $t^{\text{th}}$ indicator vector of $F_{1}$ to $\textbf{e}_{t} \otimes (\wdeg(f) s - b_{t} + E)$, since $\textbf{e}_{t} \in F_{1} = \bigoplus_{i} R(b_{i})$ has degree $-b_{i}$. So the matrix defining $\epsilon_{1}$ is a diagonal matrix with $(t,t)$-entry $(\wdeg(f)s - b_{t} + E)$ and the same matrix will define $\theta_{1}$.

    This confirms the upper square of \eqref{eqn - diagram, analytic lift of maps constructed} commutes and confirms \eqref{eqn - description of theta 1}. We will be done once we show the bottom square of \eqref{eqn - diagram, analytic lift of maps constructed} commutes. But since $f$ is a polynomial, $\Omega_{X}^{1}(\log f)$ is (locally everywhere) generated by polynomials. So the commutativity of the lower square of \eqref{eqn - diagram, analytic lift of maps constructed} follows from the commutativity of the lower square of \eqref{eqn - comm diag, lifting nabla along restricted domain} and the construction of $\theta_{0}$. 
\end{proof}

\begin{remark} \label{rmk- why alg then anal lifts} (Algebraic versus analytic category)
    It seems possible to prove Proposition \ref{prop - computing analytic lifts} without any appeal to the algebraic arguments of the previous subsection. We do not do this because:
    \begin{enumerate}[label=(\alph*)]
        \item Proceeding only analytically seems to require repeating the Lie derivative trick  \eqref{eqn - lift Ur complex, Lie derivative computation} and Proposition \ref{prop - construction of mu 0} in the analytic category, which in turn requires many technical facts about the direct product, not direct sum, decompositions of $\Omega_X^j(\log f)$ and $\mathscr{O}_X$ into ``quasi-homogeneous'' pieces. Similar issues of quasi-homogeneity involving the interplay between formal and analytic power series appear in \cite{BathTLCT}, especially subsection 2.3. These issues mostly supplant explanatory value with notation.
        \item When $f$ defines a hyperplane arrangement, Proposition \ref{prop - U complex resolves} holds in the algebraic category (see the last sentence of Remark \ref{rmk - why analtyic, U complex}). So in the arrangement case, one can mimic this entire manuscript staying in the algebraic category alone. This is advantageous since the graded data of logarithmic $p$-forms of arrangements is well studied. In future work we hope to exploit this and revisit the lifting problem in the algebraic category for arrangements in $\mathbb{C}^n$, with $n$ arbitrary.
    \end{enumerate}
\end{remark}

As our eventual aim is to understand the behavior of lifts of \eqref{eqn - analytic map to lift} after applying $\Hom_{\mathscr{D}_{X}[s]}(- , \mathscr{D}_{X}[s])$ we need the following: 

\begin{proposition} \label{prop - computing analytic lifts, dualizing}
    In the set-up of Proposition \ref{prop - computing analytic lifts}, applying $\Hom_{\mathscr{D}_{X}[s]}(- , \mathscr{D}_{X}[s])$ to \eqref{eqn - diagram, analytic lift of maps constructed} gives the map of left $\mathscr{D}_{X}[s]$-modules
    \begin{equation} \label{eqn - theta 1 dual, pt 1}
        \mathscr{D}_{X}[s] \otimes_{\mathscr{O}_{X}} \Hom_{\mathscr{O}_{X}}(Q_{1}^{1}, \mathscr{O}_{X}) \xleftarrow[]{\Hom_{\mathscr{D}_{X}[s]}(\theta_{1}, \mathscr{D}_{X}[s])} \mathscr{D}_{X}[s] \otimes_{\mathscr{O}_{X}} \Hom_{\mathscr{O}_{X}}(Q_{1}^{2}, \mathscr{O}_{X}).
    \end{equation}
    If $\textbf{e}_{t}$ is the $t^{\text{th}}$-indicator vector of $\mathscr{D}_{X}[s] \otimes_{\mathscr{O}_{X}} \Hom_{\mathscr{O}_{X}}(Q_{1}^{2}, \mathscr{O}_{X}) = \mathscr{D}_{X}[s] \otimes_{\mathscr{O}_{X}} \Hom_{\mathscr{O}_{X}}(Q_{1}^{1}, \mathscr{O}_{X})$ then
    \begin{equation} \label{eqn - theta 1 dual, pt 2}
        \Hom_{\mathscr{D}_{X}[s]}(\theta_{1}, \mathscr{D}_{X}[s]) \bigg( 1 \otimes \textbf{e}_{t} \bigg) = (\wdeg(f) s - b_{t} + E) \otimes \textbf{e}_{t}.
    \end{equation}
\end{proposition}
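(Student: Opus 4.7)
The plan is to apply standard tensor-hom adjunctions to rewrite the dual, then use the explicit ``diagonal'' description of $\theta_{1}$ from Proposition \ref{prop - computing analytic lifts}. Exactly as in the proof of Proposition \ref{prop - dual of Ur complex, terminal cohomology}, tensor-hom adjunction combined with the flatness of $\mathscr{D}_{X}[s]$ over $\mathscr{O}_{X}$ and the freeness of the $Q_{1}^{j}$ yields the natural left $\mathscr{D}_{X}[s]$-module isomorphism
\begin{equation*}
\Hom_{\mathscr{D}_{X}[s]}(Q_{1}^{j} \otimes_{\mathscr{O}_{X}} \mathscr{D}_{X}[s],\, \mathscr{D}_{X}[s]) \;\simeq\; \mathscr{D}_{X}[s] \otimes_{\mathscr{O}_{X}} \Hom_{\mathscr{O}_{X}}(Q_{1}^{j},\, \mathscr{O}_{X})
\end{equation*}
for $j = 1, 2$, where the left $\mathscr{D}_{X}[s]$-action on the right-hand side is induced by left-multiplication on the $\mathscr{D}_{X}[s]$ factor. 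Since $\Hom_{\mathscr{D}_{X}[s]}(-, \mathscr{D}_{X}[s])$ is contravariant, this produces the source, target, and direction of the map in \eqref{eqn - theta 1 dual, pt 1}.

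For the formula \eqref{eqn - theta 1 dual, pt 2}, the strategy is to unwind the identification explicitly. Under it, $1 \otimes \mathbf{e}_{t} \in \mathscr{D}_{X}[s] \otimes_{\mathscr{O}_{X}} \Hom_{\mathscr{O}_{X}}(Q_{1}^{2}, \mathscr{O}_{X})$ corresponds to the $\mathscr{D}_{X}[s]$-linear functional $\phi_{t} \colon Q_{1}^{2} \otimes_{\mathscr{O}_{X}} \mathscr{D}_{X}[s] \to \mathscr{D}_{X}[s]$ determined by $\phi_{t}(\mathbf{e}_{u} \otimes Q) = \delta_{t,u}\, Q$. By Proposition \ref{prop - computing analytic lifts}, setting $P_{u} := \wdeg(f)s - b_{u} + E$, we have $\theta_{1}(\mathbf{e}_{u} \otimes Q) = \mathbf{e}_{u} \otimes P_{u} Q$. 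Therefore
\begin{equation*}
(\phi_{t} \circ \theta_{1})(\mathbf{e}_{u} \otimes 1) \;=\; \phi_{t}(\mathbf{e}_{u} \otimes P_{u}) \;=\; \delta_{t,u}\, P_{t},
\end{equation*}
and re-encoding this functional under the identification recovers precisely $P_{t} \otimes \mathbf{e}_{t} \in \mathscr{D}_{X}[s] \otimes_{\mathscr{O}_{X}} \Hom_{\mathscr{O}_{X}}(Q_{1}^{1}, \mathscr{O}_{X})$, which is the right-hand side of \eqref{eqn - theta 1 dual, pt 2}.

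The entire argument is routine bookkeeping; there is no substantive mathematical obstacle. The one subtlety worth articulating carefully is that $\theta_{1}$ is right $\mathscr{D}_{X}[s]$-linear and encoded by right-multiplication by a diagonal matrix whose $(t,t)$-entry is $P_{t}$, whereas its dual is left $\mathscr{D}_{X}[s]$-linear and encoded by left-multiplication by the very same diagonal matrix of operators. The fact that the diagonal entries remain \emph{unchanged} (rather than being noncommutatively reordered, as one might naively expect in $\mathscr{D}_{X}[s]$) is forced by the diagonality of $\theta_{1}$ in the chosen bases: the only composition of operators appearing in the computation is between $P_{t}$ and the scalar $1$.
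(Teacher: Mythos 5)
Your proof is correct and follows essentially the same route the paper takes: identify $\Hom_{\mathscr{D}_{X}[s]}(Q_{1}^{j}\otimes_{\mathscr{O}_X}\mathscr{D}_X[s],\mathscr{D}_X[s])\simeq \mathscr{D}_X[s]\otimes_{\mathscr{O}_X}\Hom_{\mathscr{O}_X}(Q_1^j,\mathscr{O}_X)$ via tensor-hom adjunction and flatness (the paper delegates this to Proposition \ref{prop - dual of Ur complex, terminal cohomology}), then read off the formula from the diagonal description \eqref{eqn - description of theta 1}. You spell out the unwinding of the identification explicitly, which the paper compresses into a single sentence; the content is the same.

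Your proof is correct and follows essentially the same route the paper takes: identify $\Hom_{\mathscr{D}_{X}[s]}(Q_{1}^{j}\otimes_{\mathscr{O}_X}\mathscr{D}_X[s],\mathscr{D}_X[s])\simeq \mathscr{D}_X[s]\otimes_{\mathscr{O}_X}\Hom_{\mathscr{O}_X}(Q_1^j,\mathscr{O}_X)$ via tensor-hom adjunction and flatness (the paper delegates this to Proposition \ref{prop - dual of Ur complex, terminal cohomology}), then read off the formula from the diagonal description \eqref{eqn - description of theta 1}. You spell out the unwinding of the identification explicitly, which the paper compresses into a single sentence; the content is the same.
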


\begin{proof}
    That $\Hom_{\mathscr{D}_{X}[s]}(Q_{1}^{j} \otimes_{\mathscr{O}_{X}} \mathscr{D}_{X}[s], \mathscr{D}_{X}[s]) \simeq \mathscr{D}_{X}[s] \otimes \Hom_{\mathscr{O}_{X}}( Q_{1}^{j}, \mathscr{O}_{X})$ follows exactly as in the first part of Proposition \ref{prop - dual of Ur complex, terminal cohomology}. Then \eqref{eqn - description of theta 1} implies \eqref{eqn - theta 1 dual, pt 2}.
\end{proof}

The preceding result immediately gives an induced map on $\mathscr{D}_{X}[s] \otimes_{\mathscr{O}_{X}} \Ext$-modules. Since these $\Ext$ modules vanish everywhere except, possibly, the origin, (Proposition \ref{prop - finite hom gen set of Ext modules}), it suffices to describe the map at $0$:

\begin{proposition} \label{prop - computing analytic lifts, induced map on Ext}
    Stay in the set-up of Proposition \ref{prop - computing analytic lifts} and Proposition \ref{prop - computing analytic lifts, dualizing}. Let $\Delta_{f}$ be a finite homogeneous generating set of $\Ext_{R}^{1}(\Omega_{R}^{1}(\log f), R)$ as a $\mathbb{C}$-vector space; for $u \in \Delta_{f}$ let $\wdeg(u)$ denotes its degree. Then $\Delta_{f}$ is a finite homogeneous generating set of both $\Ext_{\mathscr{O}_{X,0}}^{1}(\Omega_{X,0}^{1}(\log f), \mathscr{O}_{X,0})$ and $\Ext_{\mathscr{O}_{X,0}}^{1}(\log f, \mathscr{O}_{X,0})$ and the induced map
    \begin{align} \label{eqn - map for lift on analytic ext modules}
        \mathscr{D}_{X,0}[s] &\otimes_{\mathscr{O}_{X,0}} \Ext_{\mathscr{O}_{X,0}}^{1}(\Omega_{X,0}^{1}(\log f), \mathscr{O}_{X,0}) \\
        &\xleftarrow[]{\Hom_{\mathscr{D}_{X,0}[s]}(\theta_{1}, \mathscr{D}_{X,0}[s])} \mathscr{D}_{X,0}[s] \otimes_{\mathscr{O}_{X,0}} \Ext_{\mathscr{O}_{X,0}}^{1}(\Omega_{X,0}^{2}(\log f), \mathscr{O}_{X,0}) \nonumber
    \end{align}
    is given, and completely determined, by 
    \begin{equation} \label{eqn - fml for lift on analytic ext modules}
    \Hom_{\mathscr{D}_{X,0}[s]}(\theta_{1}, \mathscr{D}_{X,0}[s]) \bigg( 1 \otimes u \bigg) = (\wdeg(f)s - \wdeg(u) + E) \otimes u
    \end{equation}
    for all $u \in \Delta_{f}$.
\end{proposition}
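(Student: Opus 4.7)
The first claim --- that $\Delta_f$ remains a $\mathbb{C}$-generating set for each of the analytic $\Ext$-stalks at $0$ --- is essentially free: Proposition \ref{prop - finite hom gen set of Ext modules} already identifies these analytic $\Ext$ modules as the analytification of (the sheafifications of) their algebraic counterparts, shows they are supported only at the origin, and exhibits a common homogeneous generating set $\Delta_f$ that works for both $j=1$ and $j=2$. The substantive task is thus to establish the formula \eqref{eqn - fml for lift on analytic ext modules}.

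To carry this out, I would fix a homogeneous class $u \in \Delta_f$ and lift it to a representative cycle $u = \sum_t g_t \textbf{e}_t^{\ast}$ in $\Hom_R(F_1, R) = \bigoplus_t R(-b_t)$, where $\textbf{e}_t^{\ast}$ denotes the dual basis element. The grading convention on $F_1 = \bigoplus_t R(b_t)$ forces $\wdeg(\textbf{e}_t^{\ast}) = b_t$, hence $\wdeg(g_t) = \wdeg(u) - b_t$ for each $t$ with $g_t \neq 0$. Plugging into Proposition \ref{prop - computing analytic lifts, dualizing} and invoking left $\mathscr{D}_{X,0}[s]$-linearity rewrites $\Hom_{\mathscr{D}_{X,0}[s]}(\theta_1, \mathscr{D}_{X,0}[s])(1 \otimes u)$ as $\sum_t g_t (\wdeg(f)s - b_t + E) \otimes \textbf{e}_t^{\ast}$.

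The only non-routine step is a commutator identity that I expect to be the (mild) key obstacle: moving $g_t$ past $E$ via $E g_t = g_t E + [E, g_t] = g_t E + \wdeg(g_t) g_t = g_t E + (\wdeg(u) - b_t) g_t$ causes the $-b_t$ and the $-b_t g_t$ contributions to cancel, and the coefficient uniformly rewrites as $g_t(\wdeg(f)s - b_t + E) = (\wdeg(f)s - \wdeg(u) + E) g_t$. Crucially, this factor no longer depends on $t$, so it pulls outside the sum; the remaining $\sum_t g_t \otimes \textbf{e}_t^{\ast}$ recombines into $1 \otimes u$, which gives precisely \eqref{eqn - fml for lift on analytic ext modules}. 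The ``completely determines'' clause is then automatic: since $\mathbb{C} \cdot \Delta_f$ spans the $\Ext$ tensor factor and the map is left $\mathscr{D}_{X,0}[s]$-linear, its values on $\Delta_f$ extend uniquely to the whole domain.
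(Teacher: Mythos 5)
Your proposal is correct and follows essentially the same route as the paper's proof: both reduce to Proposition \ref{prop - computing analytic lifts, dualizing}, expand $u$ as a homogeneous combination of the indicator vectors $\textbf{e}_t$ (your $g_t \textbf{e}_t^\ast$ is the paper's $\sum_k p_{t,k}\textbf{e}_t$), and push the coefficient past $E$ using $[E, g_t] = \wdeg(g_t)\, g_t$ together with $\wdeg(g_t) = \wdeg(u) - b_t$ to obtain the $t$-independent factor $(\wdeg(f)s - \wdeg(u) + E)$. The only cosmetic difference is that the paper decomposes each coefficient into monomial pieces via a second index $k$, which changes nothing substantive.
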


\begin{proof}
    Thanks to Proposition \ref{prop - finite hom gen set of Ext modules}, all we must prove is \eqref{eqn - fml for lift on analytic ext modules}. (As $\Delta_{f}$ will generate both $\Ext$ modules, this formula would completely determine the map \eqref{eqn - map for lift on analytic ext modules}.) Take $u \in \Delta_{f}$ viewed inside 
    \[
    \Ext_{\mathscr{O}_{X,0}}^{1}(\Omega_{X, 0}^{2}(\log f), \mathscr{O}_{X,0}) \simeq H^{1} (\Hom_{\mathscr{O}_{X,0}}(Q_{\bullet}^{2}, \mathscr{O}_{X,0})).
    \]
    Then we can find weighted homogeneous polynomials $p_{t,k}$ such that $u = \overline{\sum_{t,k} p_{t,k} \textbf{e}_{t}}$ where $\textbf{e}_{t}$ is the $t^{\text{th}}$-indicator vector of $\Hom_{\mathscr{O}_{X,0}}(Q_{1}^{2}, \mathscr{O}_{X})$ and $\overline{(-)}$ denotes the element in the corresponding quotient computing $H^{1} (\Hom_{\mathscr{O}_{X,0}}(Q_{\bullet}^{2}, \mathscr{O}_{X,0}))$. Then
    \begin{align*}
        \Hom_{\mathscr{D}_{X,0}[s]}(\theta_{1}, \mathscr{D}_{X,0}[s]) \bigg( 1 \otimes u \bigg) 
        &= \Hom_{\mathscr{D}_{X,0}[s]}(\theta_{1}, \mathscr{D}_{X,0}[s]) \bigg( \sum_{t,k} p_{t,k} \otimes \overline{\textbf{e}_{t}} \bigg) \\
        &= \sum_{t,k} \big( p_{t,k}(\wdeg(f)s - b_{t} + E) \big) \otimes \overline{\textbf{e}_{t}} \\
        &= \sum_{t,k} \big( \wdeg(f)s - b_{t} - \wdeg(p_{t,k}) + E \big) p_{t,k} \otimes \overline{\textbf{e}_{t}} \\
        &= (\wdeg(f)s - \wdeg(n) + E) \otimes u.
    \end{align*}
    (The second ``$=$'' uses Proposition \ref{prop - computing analytic lifts, dualizing}; the third ``$=$'' uses $[E, p_{t,k}] = \wdeg(p_{t,k}) p_{t,k}$; the fourth ``$=$'' uses $\wdeg(n) = \wdeg(p_{t,k}) + \wdeg(\textbf{e}_{t}) = \wdeg(p_{t,k}) + b_{t}$ for all valid $k$, where in this parenthetical we are thinking of $\textbf{e}_{t}$ as the indicator vector in the algebraic setting so we can compute weighted degrees.)
\end{proof}

\section{Computing Duals}

In this section we finish our short exact sequence description of $\Ext_{\mathscr{D}_{X}[s]}^{3}(\mathscr{D}_{X}[s] f^{s-1}, \mathscr{D}_{X}[s])^{\ell}$ under our main hypotheses, see Theorem \ref{thm - spectral sequence dual}. We saw in Lemma \ref{lem - spectral sequence dual, page 2} this required understanding the dual of a particular lifted map in $\Tot(K_{f}^{\bullet, \bullet})$; in Proposition \ref{prop - computing analytic lifts, dualizing} and Proposition \ref{prop - computing analytic lifts, induced map on Ext} we constructed a particular lift, described its dual map, and described the induced map on $\Ext$. 

In the first subsection we study the cokernel of this induced map on $\Ext$. As this cokernel appeared as entry $E_{2}^{1,1}$ in our spectral sequence analysis, cf. Lemma \ref{lem - spectral sequence dual, page 2}, we must understand it completely to understand $\Ext_{\mathscr{D}_{X}[s]}^{3}(\mathscr{D}_{X}[s] f^{s-1}, \mathscr{D}_{X}[s])^{\ell}$. In the second subsection we finally compute this $\Ext$ module in terms of a short exact sequence, cf. Theorem \ref{thm - spectral sequence dual}. All of our applications rely on this computation. Because we use Propostion \ref{prop - U complex resolves} to compute duals, in this section (and the rest of the manuscript) we work entirely in the analytic category, cf. Remark \ref{rmk - why analtyic, U complex}, Remark \ref{rmk - why analytic, total complex}.

\subsection{A Study in Cokernel}

We study the cokernel of the map in Proposition \ref{prop - computing analytic lifts, induced map on Ext}. (As the domain and codomain vanish at other stalks (Proposition \ref{prop - finite hom gen set of Ext modules}), the description of Proposition \ref{prop - computing analytic lifts, induced map on Ext} can be extended to the sheaf level trivially.) Recall this map naturally arises from our specific choice of a nice lift $\theta_{1}$ of $U_{f}^{1} \to U_{f}^{2}$ along our specific free resolutions of $U_{f}^{1}$ and $U_{f}^{2}$. 

Let us bestow a name on our cokernel:

\begin{define} \label{def - the cokernel, ext dual object}
    For a quasi-homogeneous $f \in R = \mathbb{C}[x_{1}, x_{2}, x_{3}]$ we denote by $L_{f}$ the left $\mathscr{D}_{X}[s]$ module that is the cokernel
    \begin{align} \label{eqn - def, map for lift on analytic ext modules}
        L_{f} = \text{cokernel}: \bigg[ \Hom_{\mathscr{D}_{X}[s]}(\theta_{1}, \mathscr{D}_{X}[s]): 
        &\mathscr{D}_{X}[s] \otimes_{\mathscr{O}_{X}} \Ext_{\mathscr{O}_{X}}^{1}(\Omega_{X}^{2}(\log f), \mathscr{O}_{X}) \to \\
        &\mathscr{D}_{X}[s] \otimes_{\mathscr{O}_{X}} \Ext_{\mathscr{O}_{X}}^{1}(\Omega_{X}^{1}(\log f), \mathscr{O}_{X}) \bigg] \nonumber
    \end{align}
    Recall $\theta_{1}$ is as constructed in Proposition \ref{prop - computing analytic lifts} and the map appearing in \eqref{eqn - def, map for lift on analytic ext modules} was explicitly studied in Proposition \ref{prop - computing analytic lifts, dualizing}, Proposition \ref{prop - computing analytic lifts, induced map on Ext}.
\end{define}

\begin{remark} \label{rmk - cokernel object, endomorphism description}
    Using analytic versions of the isomorphisms within Proposition \ref{prop - weighted data dictionary, ext log 1 forms, local cohomology milnor algebra}, we can realize $L_{f}$ as the cokernel of a $\mathscr{D}_{X}[s]$-endomorphism of $\mathscr{D}_{X}[s] \otimes_{\mathscr{O}_{X}} \Ext_{\mathscr{O}_{X}}^{3}(\mathscr{O}_{X} / (\partial f), \mathscr{O}_{X}).$
\end{remark}

We now show that $L_{f,0}$ has a $b$-function and we are able to compute it exactly. Our formula for $B(L_{f,0})$ is often used in later applications.

\begin{proposition} \label{prop - funny b-function}
    Let $f \in R = \mathbb{C}[x_{1}, x_{2}, x_{3}]$ be quasi-homogeneous with weighted-homogeneity $E = \sum w_{i} x_{i} \partial_{i}$. Then $L_{f}$ vanishes at all stalks $\mathfrak{x} \neq 0$ and at $0$, the stalk $L_{f,0}$ has a b-function. Explicitly,
    \[
    B(L_{f,0}) = \prod_{t \in \wdeg(\Ext_{R}^{1}(\Omega_{R}^{1}(\log f), R))} \left( s - \frac{t + \sum w_{i}}{\wdeg(f)} \right)
    \]
    and we take $B (L_{f,0}) = 1$ if $\wdeg(\Ext_{R}^{1}(\Omega_{R}^{1}(\log f), R)) = \emptyset$.
\end{proposition}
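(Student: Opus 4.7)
The vanishing of $L_{f,\mathfrak{x}}$ for $\mathfrak{x} \neq 0$ is immediate from Proposition \ref{prop - finite hom gen set of Ext modules}, since both $\Ext^{1}$-modules appearing in Definition \ref{def - the cokernel, ext dual object} are supported at the origin. So the real task is to compute $B(L_{f,0})$.

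The first step is to pass to a purely algebraic model. Set $V := \Ext^{1}_{\mathscr{O}_{X,0}}(\Omega_{X,0}^{1}(\log f), \mathscr{O}_{X,0})$ and $V_{\mathrm{alg}} := \Ext^{1}_{R}(\Omega_{R}^{1}(\log f), R)$. By Proposition \ref{prop - finite hom gen set of Ext modules} these agree as $\mathbb{C}$-vector spaces and are killed by a power of $\mathfrak{m}_{0}$, so $\mathscr{D}_{X,0}[s] \otimes_{\mathscr{O}_{X,0}} V \simeq \mathbb{A}_{3}[s] \otimes_{R} V_{\mathrm{alg}}$ and likewise on the $\Omega^{2}$ side. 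Combined with Proposition \ref{prop - computing analytic lifts, induced map on Ext}, this realizes $L_{f,0}$ as an algebraic cokernel $N/K$ where $N := \mathbb{A}_{3}[s] \otimes_{R} V_{\mathrm{alg}}$ and $K$ is the left $\mathbb{A}_{3}[s]$-submodule generated by $\{(\wdeg(f)s - \wdeg(u) + E) \otimes u : u \in \Delta_{f}\}$. Write the weighted-graded decomposition $V_{\mathrm{alg}} = \bigoplus_{t \in T} V_{t}$, where $T = \wdeg(V_{\mathrm{alg}}) = \{t_{1} < \cdots < t_{N}\}$, and set $\alpha_{k} := (t_{k} + \sum w_{i})/\wdeg(f)$.

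Next, install the decreasing $\mathbb{A}_{3}[s]$-stable filtration $F^{k} := \mathbb{A}_{3}[s] \otimes_{R} V^{\geq t_{k}}$, where $V^{\geq t} := \bigoplus_{t' \geq t} V_{t'}$; this is $R$-stable because every $w_{i}$ is positive. The key computation: for $u \in V_{t_{k}}$, the identity $x_{i}\partial_{i} \otimes u = \partial_{i} \otimes x_{i}u - 1 \otimes u$ together with $x_{i}u \in V^{\geq t_{k+1}}$ yields
\[
(\wdeg(f)s - t_{k} + E) \otimes u \equiv \wdeg(f)(s - \alpha_{k}) \otimes u \pmod{F^{k+1}}.
\]
Hence on the graded piece $F^{k}/F^{k+1} \simeq \mathbb{A}_{3}[s] \otimes_{\mathbb{C}} V_{t_{k}}$ the induced map is multiplication by $\wdeg(f)(s - \alpha_{k})$, which is injective because the target is a free $\mathbb{C}[s]$-module. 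A standard leading-term argument then shows the defining map of $L_{f,0}$ is strictly filtered, so the associated graded of $L_{f,0}$ at level $k$ is $\mathrm{cok}_{k} \simeq (\mathbb{A}_{3}/\mathbb{A}_{3}\mathfrak{m}) \otimes_{\mathbb{C}} V_{t_{k}}$, a nonzero left $\mathbb{A}_{3}$-module on which $s$ acts as the scalar $\alpha_{k}$.

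Finally, $(s - \alpha_{k})$ kills $\mathrm{cok}_{k}$, so the finite filtration yields $Q(s) \cdot L_{f,0} = 0$ with $Q(s) := \prod_{t \in T}(s - \alpha_{t})$, whence $B(L_{f,0}) \mid Q(s)$. Since the $\alpha_{k}$ are distinct, $Q$ has simple roots and $L_{f,0}$ decomposes as a $\mathbb{C}[s]$-module into $(s - \alpha_{k})$-eigenspaces $L^{(k)}$; centrality of $s$ means this decomposition refines the filtration $L^{\bullet}$, and only $L^{(k)}$ can contribute to a graded piece on which $s$ acts as $\alpha_{k}$, so the nonvanishing of $\mathrm{cok}_{k}$ forces $L^{(k)} \neq 0$ for each $k$. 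Hence $B(L_{f,0}) = Q(s)$, matching the stated formula (and reducing to $B(L_{f,0}) = 1$ when $T = \emptyset$, since then $L_{f,0} = 0$). The main obstacle I anticipate is verifying strictness of the filtered map -- equivalently, that $K \cap F^{k}$ is computed by the analogous filtration on the domain -- which I expect to follow from the graded injectivity via the usual leading-term bookkeeping.
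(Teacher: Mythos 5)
Your proof is correct in substance, and it takes a genuinely different route from the paper. The paper works directly with a filtration $N_{0} \supsetneq N_{1} \supsetneq \cdots$ on $L_{f,0}$ itself and establishes non-vanishing of the graded pieces $N_{\ell-1}/N_{\ell}$ by a hands-on contradiction argument (reducing to explicit expressions with polynomial coefficients and exploiting a ``good ordering'' of the basis $\Delta_{f}$), then computes the $b$-function of each graded piece and assembles the answer via the additivity of $Z(B(-))$ on short exact sequences of relative holonomic modules (Lemma \ref{lemma - rel hol basics}). You instead filter the \emph{ambient} module $N = \mathbb{A}_{3}[s] \otimes_{R} V$ by weighted degree, observe that the associated graded of the defining endomorphism is multiplication by $\wdeg(f)(s-\alpha_{k})$ on each graded piece, deduce strictness from graded injectivity (a standard fact for bounded filtrations), and then obtain the exact $b$-function from a $\mathbb{C}[s]$-eigenspace decomposition of $L_{f,0}$. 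This is cleaner in two ways: it replaces the paper's explicit leading-term contradiction with the general strictness lemma, and it replaces the relative-holonomic additivity machinery by an elementary Chinese-remainder argument. The paper's version, on the other hand, works entirely with $\mathscr{D}_{X,0}[s]$ from the start and never needs to justify the analytic-to-algebraic transfer $\mathscr{D}_{X,0}[s]\otimes_{\mathscr{O}_{X,0}} V \simeq \mathbb{A}_{3}[s]\otimes_{R} V_{\mathrm{alg}}$ (which is fine here because $V$ is $\mathfrak{m}^{N}$-torsion, but is a point you should spell out).

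One small inconsistency to repair: you identify $F^{k}/F^{k+1}$ with $\mathbb{A}_{3}[s]\otimes_{\mathbb{C}} V_{t_{k}}$, but the tensor in the definition of $F^{k}$ is over $R$, not $\mathbb{C}$. Since $V^{\geq t_{k}}/V^{\geq t_{k+1}} \simeq V_{t_{k}}$ as an $R/\mathfrak{m}$-module (positivity of the weights kills all of $\mathfrak{m}$ in the quotient), the correct identification is
\[
F^{k}/F^{k+1} \;\simeq\; \big(\mathbb{A}_{3}[s]/\mathbb{A}_{3}[s]\mathfrak{m}\big) \otimes_{\mathbb{C}} V_{t_{k}} \;\simeq\; \mathbb{C}[\partial_{1},\partial_{2},\partial_{3},s]\otimes_{\mathbb{C}} V_{t_{k}},
\]
which is still a free $\mathbb{C}[s]$-module, so your injectivity and strictness conclusions stand unchanged. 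Your subsequent statement $\mathrm{cok}_{k} \simeq (\mathbb{A}_{3}/\mathbb{A}_{3}\mathfrak{m})\otimes_{\mathbb{C}} V_{t_{k}}$ is then the correct cokernel; it just does not follow from the displayed (incorrect) form of $F^{k}/F^{k+1}$. With that corrected, the argument is complete.
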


\begin{proof}
    We may assume that $L_{f,0} \neq 0$ as otherwise there is nothing to prove. (The case $L_{f,0} = 0$ is the ``and we take'' part of the proposition.) Select a $\mathbb{C}$-basis $u_{1}, \dots, u_{r}$ of $\Ext_{R}^{1}(\Omega_{R}^{1}(\log f), R)$ of homogeneous elements and regard them as a $\mathbb{C}$-basis of $\Ext_{\mathscr{O}_{X,0}}^{1}(\Omega_{X,0}^{1}(\log f), \mathscr{O}_{X,0})$ as we have been entitled to do. Then we may order the $u_{k}$ so that 
    \begin{equation} \label{eqn - funny b-function, pt 0}
        \mathfrak{m}_{0} \cdot u_{\ell} \in \sum_{k \geq \ell+1} \mathscr{O}_{X,0} \cdot u_{k} \quad \text{ and } u_{\ell} \notin \sum_{k \geq \ell+1} \mathscr{O}_{X,0} \cdot u_{k} \text{ for all } u_{\ell}.
    \end{equation}
    By the ordering \eqref{eqn - funny b-function, pt 0}, $\wdeg(u_{k}) \leq \wdeg(u_{k+1})$ for all $k$. Let $m_{1}, \dots, m_{q}$ denote the indices at which $\wdeg(u_{k})$ jumps, i.e. 
    \begin{equation} \label{eqn - funny b-function, pt 1}
        \wdeg(u_{k}) \leq \wdeg(u_{m_{\ell}}) \enspace \forall k \leq \ell \text{ and } \wdeg(u_{k}) > \wdeg(u_{m_{\ell}}) \enspace \forall k > \ell.
    \end{equation}
    This lets us define a decreasing filtration $L_{f} = N_{0} \supsetneq N_{1} \supsetneq \cdots \supsetneq N_{q} = 0$ of $L_{f}$ where 
    \begin{equation} \label{eqn - funny b-function, pt 2}
    N_{\ell} = \sum_{k > m_{\ell}} \mathscr{D}_{X,0}[s] \cdot \overline{1 \otimes u_{k}} \subseteq L_{f}.
    \end{equation}
    (We take $m_{0} \ll 0$ so that $N_{0} = L_{f}$ is the initial term of the filtration.) 
    We first require:
    
    \vspace{5mm}
    \emph{Claim 1: $N_{\ell-1} / N_{\ell} \neq 0$ for all $1 \leq \ell \leq q$.}

    \noindent \emph{Proof of Claim 1:} Fix such an $\ell$ and select $u_{j} \in \{u_{1}, \dots, u_{r}\}$ such that $\wdeg(u_{j}) = m_{\ell}$. To prove $\emph{Claim 1}$ it suffices to prove that 
    \begin{equation} \label{eqn - funny b-function, claim 1, pt 1}
    0 \neq \overline{1 \otimes u_{j}} \in N_{\ell - 1} / N_{\ell}.
    \end{equation}
    If \eqref{eqn - funny b-function, claim 1, pt 1} were false, then by the definition of $L_{f}$ we could write $1 \otimes u_{\ell} \in \mathscr{D}_{X,0}[s] \otimes_{\mathscr{O}_{X,0}} \Ext_{\mathscr{O}_{X,0}}^{1}(\Omega_{X,0}^{1}(\log f), \mathscr{O}_{X,0})$ as
    \begin{align} \label{eqn - funny b-function, claim 1, pt 2}
        1 \otimes u_{\ell} 
        &= \bigg(\sum_{k} A_{k}(\wdeg(f) s - \wdeg(u_{k}) + E)(1 \otimes u_{k}) \bigg) \\
        &+ \bigg( \sum_{\{j \mid \wdeg(u_{j}) > m_{\ell}\}} A_{j}^{\prime} (1 \otimes u_{j}) \bigg) \nonumber \\
        &= \bigg( \sum_{\{k \mid \wdeg(u_{k}) \leq m_{\ell}\}} A_{k}(\wdeg(f) s - \wdeg(u_{k}) + E)(1 \otimes u_{k}) \bigg) \nonumber \\
        &+ \bigg( \sum_{\{j \mid \wdeg(u_{j}) > m_{\ell}\}} B_{j} (1 \otimes u_{j}) \bigg), \nonumber
    \end{align}
    for suitable $A_{k}, A_{j}^{\prime},$ and $B_{j} \in \mathscr{D}_{X,0}[s]$. Write $A_{k} = \sum_{a} p_{a}(\partial, s) \alpha_{a}(x)$ where $p_{a} \in \mathbb{C}[\partial, s]$ and $\alpha_{a}(x) \in \mathscr{O}_{X,0}$; write $B_{j} = \sum_{b} q_{b}(\partial, s) \beta_{b}(s)$ with the corresponding conventions. Since $\Ext_{\mathscr{O}_{X,0}}^{1}(\Omega_{X,0}(\log f), \mathscr{O}_{X,0})$ is killed by high enough powers of $\mathfrak{m}_{0}$ we may assume that each $\alpha_{a}(x)$ and each $\beta_{b}(x)$ are actually polynomials as opposed to just convergent power series. To wit, write $A_{k} = \sum_{m} p_{m,k}(\partial, s) g_{m,k}$ where $p_{m,j}(\partial, s) \in \mathbb{C}[\partial, s]$ and $g_{m,j} \in R_{m}$; write $B_{j} = \sum_{m} q_{m,k}(\partial_{s}) h_{m,k}$ similarly. (Here $R_{m}$ refers the $w$-grading of $R$.) Using that 
    \begin{align*}
        g_{m,k}E = E g_{m,k} - [E, g_{m,k}] = E g_{m,k} - m g_{m,k} 
        &= (\sum_{i} \partial_{i} w_{i} x_{i} - w_{i}) g_{m,k} - m g_{m,k} \\
        &= ( - m - \sum_{i} w_{i} + \sum_{i} \partial_{i} w_{i} x_{i} ) g_{m,k}
    \end{align*}
    and that our assumption \eqref{eqn - funny b-function, pt 0} implies that $g_{m,k} u_{k} \in \sum_{v > k} \mathscr{O}_{X,0} \cdot u_{v}$ whenever $m > 0$ (along with a similar statement for $h_{m,j} u_{j}$) we deduce that if \eqref{eqn - funny b-function, claim 1, pt 2} holds then we may write $1 \otimes u_{\ell} \in \mathscr{D}_{X,0}[s] \otimes_{\mathscr{O}_{X,0}} \Ext_{\mathscr{O}_{X,0}}^{1}(\Omega_{X,0}^{1}(\log f), \mathscr{O}_{X,0})$ as
    \begin{align} \label{eqn - funny b-function, claim 1, pt 3}
        1 \otimes u_{\ell} 
        &= \sum_{\{k \mid \wdeg(u_{k}) \leq m_{\ell}\}} p_{k}(\partial, s) \bigg(\wdeg(f)s + m_{k} + \sum \partial_{i} w_{i} x_{i} \bigg) (1 \otimes u_{k}) \\
        &= \sum_{\{j \mid \wdeg(u_{j}) > m_{\ell}\}} q_{j}(\partial, s) (1 \otimes u_{j}) \nonumber
    \end{align}
    where $p_{k}(\partial, s), q_{j}(\partial, s) \in \mathbb{C}[\partial,s]$ and $m_{k} \in \mathbb{C}$ for each $k$.

    We will be done with \emph{Claim 1} once we show \eqref{eqn - funny b-function, claim 1, pt 3} is impossible. Let $v$ be the smallest index such that $p_{k}(\partial, s)$ or $q_{j}(\partial, s)$ is nonzero. If $v < \ell$, then $\wdeg(u_{v}) \leq \wdeg(u_{\ell}) = m_{\ell}$ and our assumption is that $p_{v}(\partial, s) \neq 0$. Then rearranging \eqref{eqn - funny b-function, claim 1, pt 3} yields
    \begin{equation*}
        p_{v}(\partial, s) (\wdeg(f)s + m_{v}) (1 \otimes u_{v}) \in \mathbb{C}[s] \sum_{k^{\prime} \geq v + 1} \mathbb{C}[\partial, s] \cdot (1 \otimes u_{k^{\prime}})
    \end{equation*}
    in violation of our ordering \eqref{eqn - funny b-function, pt 0}. If $v = \ell$ then rearringing \eqref{eqn - funny b-function, claim 1, pt 3} yields
    \begin{equation*}
        p_{\ell}(\partial, s)(\wdeg(f)s + m_{\ell} - 1)(1 \otimes u_{\ell}) \in \sum_{k^{\prime} \geq \ell + 1} \mathbb{C}[\partial, s] \cdot (1 \otimes u_{k^{\prime}}).
    \end{equation*}
    Thanks to our ordering assumption \eqref{eqn - funny b-function, pt 0}, this is only possible if $p_{\ell}(\partial, s)(\wdeg(f)s + m_{\ell} - 1) \in \mathbb{C}^{\star}$. But this is impossible since $(\wdeg(f)s + m_{\ell} - 1)$ has positive $s$-degree. Finally, if $v > \ell$, then \eqref{eqn - funny b-function, claim 1, pt 3} implies that
    \[
    1 \otimes u_{\ell} \in \sum_{k^{\prime} \geq \ell + 1} \mathbb{C}[\partial,s] \cdot (1 \otimes u_{k^{\prime}})
    \]
    which is again impossible by our ordering assumption \eqref{eqn - funny b-function, pt 0}. Thus \eqref{eqn - funny b-function, claim 1, pt 3} is false which implies \eqref{eqn - funny b-function, claim 1, pt 1} is true, proving \emph{Claim 1}.

    \vspace{5mm}

    Before proceeding, let us establish some useful identities in $L_{f}$. First, take $u \in \{u_{1}, \dots, u_{r}\}$ is arbitrary. By definition 
    \begin{equation} \label{eqn - funny b-function, pt 3}
        0 = \overline{(\wdeg(f)s - \wdeg(u)) + E \otimes u)} \in L_{f}.
    \end{equation}
    But 
    \begin{align} \label{eqn - funny b-function, pt 4}
    (\wdeg(f)s - \wdeg(u) + E) \otimes u 
    &= (\wdeg(f)s - \wdeg(u) + \sum (\partial_{i} w_{i} x_{i}) - w_{i}) \otimes u) \\
    &= \big( s - \frac{\wdeg(u) + \sum w_{i}}{\wdeg(f)} \big) \otimes u + \sum_{i} \partial_{i} w_{i} x_{i} \otimes u \nonumber.
    \end{align}
    So combining \eqref{eqn - funny b-function, pt 3} and \eqref{eqn - funny b-function, pt 4} we see that
    \begin{equation} \label{eqn - funny b-function, pt 5}
        \left( s - \frac{\wdeg(u) + \sum w_{i}}{\wdeg(f)} \right) (\overline{1 \otimes u}) = - \sum_{i} \partial_{i} \overline{w_{i} x_{i} \otimes u}.
    \end{equation}
    This leads us to:

    \vspace{5mm}

    \emph{Claim 2: For all $1 \leq \ell \leq q$, the $b$-function of $N_{\ell-1} / N_{\ell}$ is }
    \begin{equation}
        B(N_{\ell - 1} / N_{\ell}) = s - \frac{m_{\ell} + \sum w_{i}}{\wdeg(f)}.
    \end{equation}
    \noindent \emph{Proof of Claim 2:} Clearly $N_{\ell - 1}/ N_{\ell}$ is generated as a $\mathscr{D}_{X,0}[s]$-module by the collection of $\overline{1 \otimes u}$ such that $\wdeg(u) = m_{\ell}$; by \emph{Claim 1}, at least one of these generators is nonzero. By \eqref{eqn - funny b-function, pt 5}, the proposed $b$-function of \emph{Claim 2} kills $N_{\ell - 1} / N_{\ell}$. Since $N_{\ell - 1}/N_{\ell} \neq 0$ and it has a nontrivial $\mathbb{C}[s]$-annihilator, its $b$-function must be a non-unit dividing $s - \frac{m_{\ell} + \sum w_{i}}{\wdeg(f)}$; since this polynomial is linear and hence irreducible, its only non-unit divisor is (up to contant) itself. So \emph{Claim 2} is true.

    \vspace{5mm}

    We are finally reading for \emph{Claim 3}, which immediately implies the truth of the proposition:

    \vspace{5mm}

    \emph{Claim 3: For all $0 \leq \ell \leq q-1$, the $b$-function of $N_{\ell}$ is given by}
    \begin{equation} \label{eqn - funny b-function, claim 3 statement}
        B(N_{\ell}) = \prod_{\substack{t \in \{\wdeg(u_{1}), \dots, \wdeg(u_{r}) \} \\ t > m_{\ell} }} \left( s - \frac{t + \sum w_{i}}{\wdeg(f)} \right).
    \end{equation} 
    \noindent \emph{Proof of Claim 3:} We prove this by descending induction on $\ell$. The base case $\ell = q-1$ is \emph{Claim 2} applies to $N_{q-1}/N_{q} = N_{q-1}$. Now assume that \emph{Claim 3} holds for $N_{\ell}$. By combining \emph{Claim 2} and the inductive hypothesis, we see that the $b$-function of $N_{\ell-1}$ must divide the $\mathbb{C}[s]$-polynomial appearing in \eqref{eqn - funny b-function, claim 3 statement}. Since the $\mathbb{C}[s]$-polynomial is a product of pairwise coprime linears, to complete the inductive step it suffices to prove that the zeroes of the $b$-function of $N_{\ell-1}$ agree with the zeroes of the $\mathbb{C}[s]$-polynomial depicted in \eqref{eqn - funny b-function, claim 3 statement}. 
    
    Consider the s.e.s. of $\mathscr{D}_{X,0}[s]$-modules
    \begin{equation} \label{eqn - funny b-function, claim 3, pt 1}
        0 \to N_{\ell } \to N_{\ell - 1} \to N_{\ell - 1} / N_{\ell} \to 0.
    \end{equation}
    Since each of the $\mathscr{D}_{X,0}[s]$-modules of \eqref{eqn - funny b-function, claim 3, pt 1} is supported at $0$, they are all relative holonomic and so the zeroes of their $b$-functions are additive on short exact sequences, cf. Lemma \ref{lemma - rel hol basics}. In other words
    \begin{equation} \label{eqn - funny b-function, claim 3, pt 2}
    Z(B(N_{\ell - 1})) = Z(B(N_{\ell})) \cup Z(B(N_{\ell-1} / N_{\ell})).
    \end{equation}
    We know $Z(B(N_{\ell}))$ by our inductive assumption; we know $Z(B(N_{\ell - 1}/N_{\ell}))$ by \emph{Claim 2}. Moreover, these zero sets are disjoint. Plugging this information into \eqref{eqn - funny b-function, claim 3, pt 2} shows $Z(B(N_{\ell - 1}))$ equals the zeroes of the $\mathbb{C}[s]$-polynomial of \eqref{eqn - funny b-function, claim 3 statement}. By our previous analysis, this completes the inductive step in the proof of \emph{Claim 3} and consequently validates the entirety of the proposition.
\end{proof}

\subsection{Computing Ext}

We are now ready to complete our spectral sequence study of $\Ext_{\mathscr{D}_{X}[s]}^{3} \left( \mathscr{D}_{X}[s] f^{s-1}, \mathscr{D}_{X}[s] \right)^{\ell}$. The following theorem realizes it as the middle term of a short exact sequence, where the outer terms are (reasonably) well-understood. Note the imposition of reducedness, cf. Proposition \ref{prop - dual of Ur complex, terminal cohomology}, Remark \ref{rmk - why reduced}.

\begin{theorem} \label{thm - spectral sequence dual}
    Let $f \in R = \mathbb{C}[x_{1}, x_{2}, x_{3}]$ be reduced and locally quasi-homogeneous. Then we have a short exact sequence of left $\mathscr{D}_{X}[s]$-modules
    \begin{equation} \label{eqn - thm, spectral sequence dual, statement, ses}
        0 \to \mathscr{D}_{X}[s]f^{-s} \to \Ext_{\mathscr{D}_{X}[s]}^{3} \left( \mathscr{D}_{X}[s] f^{s-1}, \mathscr{D}_{X}[s] \right)^{\ell} \to L_{f} \to 0,
    \end{equation}
    where $L_{f}$ is the left $\mathscr{D}_{X}[s]$-module supported at $0$, or is itself $0$, cf. Definition \ref{def - the cokernel, ext dual object}. 
    
    Moreover, $L_{f}$ has a $b$-function, namely
    \begin{equation} \label{eqn - thm, spectral sequence dual, statement, funny b-function}
        B(L_{f}) = \prod_{t \in \wdeg(H_{\mathfrak{m}}^{0}(R / (\partial f)))} \left( s - \frac{ - t + 2 \wdeg(f) - \sum w_{i}}{\wdeg(f)} \right) 
    \end{equation}
    (If $\wdeg(H_{\mathfrak{m}}^{3}(R / (\partial f))) = \emptyset$ we take $B(L_{f}) = 1$ as this equates to $L_{f} = 0.)$ 

\end{theorem}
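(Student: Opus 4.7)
The plan is to invoke the spectral sequence of Lemma \ref{lem - spectral sequence dual, page 2}, applied to the double complex $\Hom_{\mathscr{D}_{X}[s]}(\Tot(K_{f}^{\bullet, \bullet}), \mathscr{D}_{X}[s])$, which computes $\Ext^{\bullet}_{\mathscr{D}_{X}[s]}(\mathscr{D}_{X}[s] f^{s-1}, \mathscr{D}_{X}[s])^{\ell}$ by Proposition \ref{prop - total complex free resolution}. From the description of the $E_{2}$ page there, the entries relevant to $\Ext^{3}$ are $E_{2}^{0,0} = \mathscr{D}_{X}[s]/\ann_{\mathscr{D}_{X}[s]}^{(1)} f^{-s}$, $E_{2}^{1,1} = L_{f}$, and the kernel term $E_{2}^{2,1}$ that feeds into $E_{2}^{0,0}$ via $d_{2}$. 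The plan is to show $E_{2}^{2,1} = 0$; then the range constraints $p \in \{0,1,2,3\}$, $q \in \{0,1\}$ immediately preclude any further differentials affecting $E_{\infty}^{0,0}$ or $E_{\infty}^{1,1}$, so the spectral sequence degenerates. Identifying $E_{\infty}^{0,0} \simeq \mathscr{D}_{X}[s] f^{-s}$ via the extension of Walther's theorem to $f^{-s}$ recorded in Remark \ref{rmk - on U complex resolution prop}(a), the two-step filtration on $\Ext^{3}$ yields the advertised short exact sequence.

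The substantial work is showing $E_{2}^{2,1} = 0$, equivalently injectivity of the map $\Hom_{\mathscr{D}_{X}[s]}(\sigma_{1}^{1}, \mathscr{D}_{X}[s])$. By the remark preceding Section 5 this map is independent of the lift and may be computed using $\theta_{1}$; by Proposition \ref{prop - computing analytic lifts, induced map on Ext} and Proposition \ref{prop - finite hom gen set of Ext modules}, after identifying both $\mathscr{D}_{X,0}[s] \otimes_{\mathscr{O}_{X,0}} \Ext_{\mathscr{O}_{X,0}}^{1}(\Omega_{X,0}^{j}(\log f), \mathscr{O}_{X,0})$ with a common module $M = \mathscr{D}_{X,0}[s] \otimes_{\mathscr{O}_{X,0}} V$, it becomes the endomorphism $\phi$ of $M$ sending $1 \otimes u \mapsto (\wdeg(f) s - \wdeg(u) + E) \otimes u$ on the common homogeneous generating set $\Delta_{f}$. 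I would imitate the filtration strategy in Proposition \ref{prop - funny b-function}'s proof: order $\Delta_{f} = \{u_{1}, \dots, u_{r}\}$ so that $\mathfrak{m}_{0} u_{\ell} \subseteq \sum_{k > \ell} \mathscr{O}_{X,0} u_{k}$, let $m_{1} < \cdots < m_{q}$ be the weight-jump indices, and set $N_{\ell} = \sum_{k > m_{\ell}} \mathscr{D}_{X,0}[s] \cdot (1 \otimes u_{k})$. This descending filtration is preserved by $\phi$. On each quotient $N_{\ell-1}/N_{\ell}$ the generators $\overline{1 \otimes u_{k}}$ (all of weight $\wdeg(u_{m_{\ell}})$) are killed by $\mathfrak{m}_{0}$, since any $\mathfrak{m}_{0}$-translate has strictly larger weight and so already lives in $N_{\ell}$; consequently $E$ acts on each such generator as the scalar $-\sum w_{i}$, and $\phi$ descends to multiplication by the nonzero linear $\mathbb{C}[s]$-scalar $(\wdeg(f) s - \wdeg(u_{m_{\ell}}) - \sum w_{i})$. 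Because $M \simeq (\mathscr{D}_{X,0} \otimes_{\mathscr{O}_{X,0}} V) \otimes_{\mathbb{C}} \mathbb{C}[s]$, and each $N_{\ell}$ factors analogously as $N_{\ell}^{\prime} \otimes_{\mathbb{C}} \mathbb{C}[s]$ for some $\mathscr{D}_{X,0}$-submodule $N_{\ell}^{\prime}$, every $N_{\ell-1}/N_{\ell}$ is $\mathbb{C}[s]$-flat; multiplication by a nonzero element of $\mathbb{C}[s]$ is therefore injective on it. A routine inductive argument along the finite filtration then propagates injectivity to all of $M$.

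With injectivity established, the short exact sequence \eqref{eqn - thm, spectral sequence dual, statement, ses} follows as sketched. For the $b$-function formula, Proposition \ref{prop - funny b-function} gives
\[
B(L_{f}) = \prod_{t \in \wdeg(\Ext_{R}^{1}(\Omega_{R}^{1}(\log f), R))} \left( s - \frac{t + \sum w_{i}}{\wdeg(f)} \right),
\]
and substituting the dictionary of Proposition \ref{prop - weighted data dictionary, ext log 1 forms, local cohomology milnor algebra} (a degree $u$ in $\wdeg(H_{\mathfrak{m}}^{0}(R/(\partial f)))$ corresponds via graded local duality to $t = -u + 2\wdeg(f) - 2\sum w_{i}$) produces precisely \eqref{eqn - thm, spectral sequence dual, statement, funny b-function}. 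The degenerate case $L_{f} = 0$ corresponds to the emptiness of $\wdeg(H_{\mathfrak{m}}^{0}(R/(\partial f)))$. The main obstacle throughout is the injectivity step above; once past it, everything else is a routine application of the preceding sections.
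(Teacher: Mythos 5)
Your proposal follows the same global skeleton as the paper's proof (side-change, dualize $\Tot(K_f^{\bullet,\bullet})$, run the spectral sequence, reduce everything to the injectivity of the map on $E_2$, then deduce the s.e.s.\ and read off the $b$-function via Propositions~\ref{prop - funny b-function} and~\ref{prop - weighted data dictionary, ext log 1 forms, local cohomology milnor algebra}). The one genuine divergence is the proof that $\Hom_{\mathscr{D}_{X}[s]}(\theta_1,\mathscr{D}_{X}[s])$ is injective, and your argument there is different from the paper's but sound. The paper filters the two-term complex by $s$-degree (an exhaustive, bounded-below filtration of the complex itself) and observes that the associated graded map is left multiplication by $\wdeg(f)\,s$, which is injective because the graded pieces have no $s$-torsion. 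You instead reuse the $\mathfrak{m}_0$-weight filtration $N_0 \supset N_1 \supset \cdots \supset N_q = 0$ of Proposition~\ref{prop - funny b-function}'s proof (now applied to $M$, not to the cokernel $L_f$), check it is $\phi$-stable by the commutator identity $x_i\partial_i = \partial_i x_i - 1$, show each quotient $N_{\ell-1}/N_\ell$ decomposes as $N' \otimes_{\mathbb{C}} \mathbb{C}[s]$ (hence is $\mathbb{C}[s]$-torsion-free), observe $\phi$ acts on that quotient as the nonzero scalar $\wdeg(f)s - \wdeg(u_{m_\ell}) - \sum w_i \in \mathbb{C}[s]$, and propagate injectivity up the finite filtration. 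Both approaches are valid and of comparable difficulty; the $s$-degree filtration in the paper is self-contained and requires only noticing the leading $s$-term of $\theta_1$, whereas your $\mathfrak{m}_0$-filtration argument recycles the scaffolding already built for the $b$-function computation, buying a certain economy of ideas at the cost of rechecking that the filtration is $\phi$-stable and that the subquotients are $\mathbb{C}[s]$-free. The remaining steps (identification of $E_2^{0,0}$ with $\mathscr{D}_X[s]f^{-s}$ via Remark~\ref{rmk - on U complex resolution prop}(a), degeneration of the spectral sequence, and the translation to Milnor-algebra degree data) match the paper.
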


\begin{proof}
    First of all, since side-changing from left to right $\mathscr{D}_{X}[s]$ is an equivalence of categories to prove \eqref{eqn - thm, spectral sequence dual, statement, ses} it suffices to prove the following is a s.e.s.:
    \begin{equation} \label{eqn - thm, spectral sequence dual, ses restatement}
        0 \to \mathscr{D}_{X}[s]f^{-s} \to \Ext_{\mathscr{D}_{X}[s]}^{3} \left( \bigg( \mathscr{D}_{X}[s] f^{s-1} \bigg)^{r}, \mathscr{D}_{X}[s] \right) \to L_{f} \to 0.
    \end{equation}
    By Proposition \ref{prop - total complex free resolution}, the double complex $\Tot(K_{f}^{\bullet, \bullet})$ is a free (right) resolution of $(\mathscr{D}_{X}[s] f^{s-1})^{r}$. So we can use $\Hom_{\mathscr{D}_{X}[s]}(\Tot(K_{f}^{\bullet, \bullet}), \mathscr{D}_{X}[s])$ to compute $\Ext_{\mathscr{D}_{X}[s]}^{3}((\mathscr{D}_{X}[s] f^{s-1})^{r}, \mathscr{D}_{X}[s])$. 
    
    Let $E_{v}^{\bullet, \bullet}$ be the $v^{\text{th}}$-page of the spectral sequence of $\Hom_{\mathscr{D}_{X}[s]}(\Tot(K_{f}^{\bullet, \bullet}), \mathscr{D}_{X}[s])$ that first computes cohomology vertically and then horizontally. Recall that Lemma \ref{lem - spectral sequence dual, page 2} describes the second page, $E_{2}^{\bullet, \bullet}$, of this spectral sequence. In particular, it shows $E_{2}^{1,1}$ (resp. $E_{2}^{2,1}$) are the cokernel (resp. kernel) of the induced map
    \begin{equation} \label{eqn - thm, spectral sequence dual, part 1}
        \mathscr{D}_{X}[s] \otimes_{\mathscr{O}_{X}} \Ext_{\mathscr{O}_{X}}^{1}(\Omega_{X}^{2}(\log f), \mathscr{O}_{X}) \xrightarrow[]{\Hom_{\mathscr{D}_{X}[s]}(\sigma_{1}^{1}, \mathscr{D}_{X}[s])} \mathscr{D}_{X}[s] \otimes_{\mathscr{O}_{X}} \Ext_{\mathscr{O}_{X}}^{1}(\Omega_{X}^{1}(\log f), \mathscr{O}_{X}).
    \end{equation}
    
    Recall the construction of $\sigma_{1}^{1}$: we lifted $\nabla_{s}: U_{f}^{1} \to U_{f}^{2}$ along the augmented free resolutions $Q_{\bullet}^{1} \otimes_{\mathscr{O}_{X}} \mathscr{D}_{X}[s] \to U_{f}^{1}$ and $Q_{\bullet}^{2} \otimes_{\mathscr{O}_{X}} \mathscr{D}_{X}[s] \to U_{f}^{2}$ to obtain lifted maps $\sigma_{j}^{1}: Q_{j}^{1} \otimes_{\mathscr{O}_{X}} \mathscr{D}_{X}[s] \to Q_{j}^{2} \otimes_{\mathscr{O}_{X}}$ so that the squares in the double complex $K_{f}^{\bullet, \bullet}$ commuted, cf. \eqref{eqn - def, the double complex}. By \cite[Lemma 00LT]{stacks-project}, the map 
    \begin{align} \label{eqn - thm, spectral sequence dual, part 2}
    H^{1} \left( \Hom_{\mathscr{D}_{X}[s]}(\mathscr{D}_{X}[s] \otimes_{\mathscr{O}_{X}} Q_{\bullet}^{2}, \mathscr{D}_{X}[s]) \right) \to \left( H^{1} \Hom_{\mathscr{D}_{X}[s]}(\mathscr{D}_{X}[s] \otimes_{\mathscr{O}_{X}} Q_{\bullet}^{1}, \mathscr{D}_{X}[s]) \right)
    \end{align}
    induced by the lifts of $\nabla_{s}$ is independent of the choice of lifts. In particular, we may use the lift $\theta_{1}: Q_{1}^{1} \otimes_{\mathscr{O}_{X}} \mathscr{D}_{X}[s] \to Q_{1}^{2} \otimes_{\mathscr{O}_{X}} \mathscr{D}_{X}[s]$ that was constructed in Proposition \ref{prop - computing analytic lifts}. Using this lift, \eqref{eqn - thm, spectral sequence dual, part 2} is exactly the map we studied in Proposition \ref{prop - computing analytic lifts, dualizing}, namely
    \begin{equation} \label{eqn - thm, spectral sequence dual, part 3}
        \mathscr{D}_{X}[s] \otimes_{\mathscr{O}_{X}} \Ext_{\mathscr{O}_{X}}^{1}(\Omega_{X}^{2}(\log f), \mathscr{O}_{X}) \xrightarrow[]{\Hom_{\mathscr{D}_{X}[s]}(\theta_{1}, \mathscr{D}_{X}[s])} \mathscr{D}_{X}[s] \otimes_{\mathscr{O}_{X}} \Ext_{\mathscr{O}_{X}}^{1}(\Omega_{X}^{1}(\log f), \mathscr{O}_{X}).
    \end{equation}
    Moreover, $E_{2}^{1,1}$ (resp. $E_{2}^{2,1})$ are the cokernel (resp. kernel) of \eqref{eqn - thm, spectral sequence dual, part 3}. 

    \vspace{5mm}

    \emph{Claim: The map \eqref{eqn - thm, spectral sequence dual, part 3} is injective.}
    
    \noindent \emph{Proof of Claim:} At any stalk outside the origin, both the domain and codomain of \eqref{eqn - thm, spectral sequence dual, part 3} vanish. So it suffices to check injectivity at the $0$ stalk. Because of the explicit description of the map \eqref{eqn - thm, spectral sequence dual, part 3} at the $0$ stalk given in Proposition \ref{prop - computing analytic lifts, induced map on Ext}, we have, for each $t \in \mathbb{Z}_{\geq -1}$, the induced map
    \begin{equation*}
    G_t = \left[ 
        \begin{aligned}
        & \mathscr{D}_{X,0} \otimes_{\mathbb{C}} \mathbb{C}[s]_{\leq t} \otimes_{\mathscr{O}_{X,0}} \Ext_{\mathscr{O}_{X,0}}^1 (\Omega_X^2(\log f), \mathscr{O}_{X,0}) \\
        & \longrightarrow \mathscr{D}_{X,0} \otimes_{\mathbb{C}} \mathbb{C}[s]_{\leq t+1} \otimes_{\mathscr{O}_{X,0}} \Ext_{\mathscr{O}_{X,0}}^1 (\Omega_X^2(\log f), \mathscr{O}_{X,0})
        \end{aligned}
    \right].
    \end{equation*}
    Regarding the map \eqref{eqn - thm, spectral sequence dual, part 3} at the $0$ stalk as a complex, $0 \subseteq G_{-1} \subseteq G_0 \subseteq G_1 \subseteq \cdots $ is a filtration by subcomplexes. The induced map on the associated graded complex is given by (left) multiplication with $\wdeg(f) s$:
    \begin{equation} \label{eqn - thm, spectral sequence dual, part 4}
        \mathscr{D}_{X,0}[s] \otimes_{\mathscr{O}_{X,0}} \Ext_{\mathscr{O}_{X,0}}^{1}(\Omega_{X}^{2}(\log f), \mathscr{O}_{X,0}) \xrightarrow[]{\wdeg(f) s \cdot } \mathscr{D}_{X,0}[s] \otimes_{\mathscr{O}_{X,0}} \Ext_{\mathscr{O}_{X}}^{1}(\Omega_{X,0}^{1}(\log f), \mathscr{O}_{X,0}).
    \end{equation}
    Since neither the domain nor codomain of \eqref{eqn - thm, spectral sequence dual, part 4} has $s$-torsion, \eqref{eqn - thm, spectral sequence dual, part 4} is injective and hence \eqref{eqn - thm, spectral sequence dual, part 3} is injective at the $0$ stalk. So the \emph{Claim} is true.

    \vspace{5mm}

    Thanks to the \emph{Claim} and the preceding discussion, $E_{2}^{2,1} = \ker \text{ of \eqref{eqn - thm, spectral sequence dual, part 3}} = 0$. Moreover, $E_{2}^{1,1} = \text{cokernel of \eqref{eqn - thm, spectral sequence dual, part 3}} = L_{f}$, where $L_{f}$ is as defined in Definition \ref{def - the cokernel, ext dual object}. In particular the second page of the spectral sequence is 
    \begin{equation} \label{eqn - thm, spectral sequence dual, part 5}
        \begin{tikzcd}
            0
                &E_{2}^{1,1} = L_{f}
                    & E_{2}^{2,1} = 0 \arrow[lld]
                        & 0  \\
            E_{2}^{0,0} = \frac{\mathscr{D}_{X}[s]}{\ann_{\mathscr{D}_{X}[s]}^{(1) f^{-s}}}
                & ?
                    & ? 
                        & ?
        \end{tikzcd}
    \end{equation}
    where: the description of $E_{2}^{0,0}$ is from Lemma \ref{lem - spectral sequence dual, page 2}; all entries of $E_{2}^{\bullet, \bullet}$ outside of \eqref{eqn - thm, spectral sequence dual, part 5} vanish. It follows that $E_{\infty}^{0,0} = E_{2}^{0,0}$ and $E_{\infty}^{1,1} = E_{2}^{1,1}$ and no other entries of this diagonal appear on the infinity page. This means we have a short exact sequence
    \begin{equation} \label{eqn - thm, spectral sequence dual, part 6}
        0 \to E_{2}^{0,0} \to \Ext_{\mathscr{D}_{X}[s]}^{3} \left( \bigg( \mathscr{D}_{X}[s] f^{s-1} \bigg)^{r}, \mathscr{D}_{X}[s] \right) \to E_{2}^{1,1} \to 0.
    \end{equation}
    (Indeed the spectral sequence was constructed by filtering by columns and so $E_{\infty}^{0,0} = E_{2}^{0,0} \xhookrightarrow{} H^{0}(\Hom_{\mathscr{D}_{X}[s]}(\Tot(K_{f}^{\bullet, \bullet})))$.) The promised s.e.s. \eqref{eqn - thm, spectral sequence dual, ses restatement}, and hence \eqref{eqn - thm, spectral sequence dual, statement, ses}, follows from \eqref{eqn - thm, spectral sequence dual, part 6} once we make the identification $E_{2}^{0,0} \simeq \mathscr{D}_{X}[s] f^{-s}$; this follows from the description of $E_{2}^{0,0}$ in \eqref{eqn - thm, spectral sequence dual, part 5} by arguing exactly as in the first paragraph of Proposition \ref{prop - U complex resolves}.

    All that remains is to verify \eqref{eqn - thm, spectral sequence dual, statement, funny b-function}. But we already computed the $b$-function of $L_{f}$ in Proposition \ref{prop - funny b-function}. While that computation used $\wdeg(\Ext_{R}^{1}(\Omega_{R}^{1}(\log f), R))$ data, in the weighted homogeneous case we can translate into $\wdeg(H_{\mathfrak{m}}^{0}(R / (\partial f)))$ data using the dictionary Proposition \ref{prop - weighted data dictionary, ext log 1 forms, local cohomology milnor algebra}.
\end{proof}

\vspace{5mm}

\section{Applications to Bernstein--Sato Polynomials}

Here we use Theorem \ref{thm - spectral sequence dual} to extract applications to the Bernstein--Sato polynomial $b_{f}(s)$ of our class of $\mathbb{C}^{3}$ divisors. Recall Lemma \ref{lemma - b-function and b-function of dual}: the zeroes of the local Bernstein--Sato polynomial of $f$ at $0$ agree with the zeroes of the $b$-function of $\Ext_{\mathscr{D}_{X,0}}^{4}(\mathscr{D}_{X,0}[s] f^{s} / \mathscr{D}_{X,0}[s] f^{s+1}, \mathscr{D}_{X,0})$. So to use the $\Ext$ computation of Theorem \ref{thm - spectral sequence dual}, we must study the relationship between the $\mathscr{D}_{X}[s]$-modules $\mathscr{D}_{X}[s] f^{s}$ and $\mathscr{D}_{X}[s] f^{s+1}$. 

In the first subsection we note that said Theorem applies to different ``parametric powers'' of $f$. In the subsequent subsections we extract applications:
\begin{enumerate}[label=(\roman*)]
    \item Theorem \ref{thm - new roots of BS-poly} show the degree data of $H_{\mathfrak{m}}^{3} (R / (\partial f))$ gives zeroes of $b_{f}(s)$;
    \item Theorem \ref{thm - localized symmetry of BS poly} gives a partial symmetry of $Z(b_{f}(s))$ about $-1$;
    \item Corollary \ref{cor - small roots BS poly} computes the zeroes $Z(b_{f}(s)) \cap (-3,-2]$ in terms of degree data of $H_{\mathfrak{m}}^{0} (R / (\partial f))$;
    \item Corollary \ref{cor - twisted LCT} characterizes, in terms of Milnor algebra graded data, exactly the weights $\lambda \in (-\infty, 0]$ for which a twisted Logarithmic Comparison Theorem holds;
    \item Theorem \ref{thm - BS poly arrangement} gives a \emph{complete} formula for the zeroes of $b_{f}(s)$ when $f$ defines a hyperplane arrangement. 
\end{enumerate}

In this section, we only work analytically as we require the duality computation Theorem \ref{thm - spectral sequence dual} and we do not know if this holds algebraically, cf. Remark \ref{rmk - why analtyic, U complex}, Remark \ref{rmk - why analytic, total complex}.

\subsection{Formal Substitutions}

We first discuss how to extend Theorem \ref{thm - spectral sequence dual} to study $\mathscr{D}_{X}[s] f^{s + k}$, etc. This amounts to making a formal variable substitution of $s$ with some other linear polynomial in $s$.

\begin{define} \label{def - twisted Ur complex}
    Let $f \in R = \mathbb{C}[x_{1}, \dots, x_{n}]$. Let $\alpha \in \mathbb{C}^{\star}$ and $\beta \in \mathbb{C}$. Define the complex $(U_{f}^{\bullet}, \nabla_{\alpha s + \beta})$ of right $\mathscr{D}_{X}[s]$-modules
    \begin{align} \label{equn - def of twisted Ur complex}
    (U_{f}^{\bullet}, \nabla_{\alpha s + \beta}) 
    &= 0 \xrightarrow[]{\nabla_{\alpha s + \beta}} \Omega_{X}^{0}(\log f) \otimes_{\mathscr{O}_{X}} \mathscr{D}_{X}[s] \xrightarrow[]{\nabla_{\alpha s + \beta}} \Omega_{X}^{1}(\log f) \otimes_{\mathscr{O}_{X}} \mathscr{D}_{X}[s] \xrightarrow[]{\nabla_{\alpha s + \beta}} \\
    &\cdots \xrightarrow[]{\nabla_{\alpha s + \beta}} \Omega_{X}^{n}(\log f) \otimes_{\mathscr{O}_{X}} \mathscr{D}_{X}[s]  \xrightarrow[]{\nabla_{\alpha s + \beta}} 0 \nonumber.
    \end{align}
    As in Definition \ref{def - the Ur complex}, the right $\mathscr{D}_{X}[s]$-module structure of
    \begin{equation} \label{eqn - def of twisted Ur complex object}
        U_{f}^{p} = \Omega_{X}^{p}(\log f) \otimes_{\mathscr{O}_{X}} \mathscr{D}_{X}[s]
    \end{equation}
    is given trivially by multiplication on the right. The differential is as in Definition \ref{def - the Ur complex}, except we formally change ``$s$'' to ``$\alpha s + \beta$''. So the differential is locally given by 
    \begin{equation} \label{eqn - def of twisted Ur complex differential}
        \nabla_{\alpha s + \beta}(\eta \otimes P(s)) = \big( d(\eta) \otimes P(s) \big) + \big( \frac{df}{f} \wedge \eta \otimes (\alpha s + \beta) P(s) \big) + \big( \sum_{1 \leq i \leq n} dx_{i} \wedge \eta \otimes \partial_{i} P(s) \big).
    \end{equation}
    We also have a double complex of free right $\mathscr{D}_{X}[s]$-modules $K_{f, \alpha s + \beta}^{\bullet, \bullet}$ defined in a completely analogous way as Definition \ref{def - the double complex}. It will have the same as objects as $K_{f}^{\bullet, \bullet}$ and differentials obtained by exchanging $s$ with $\alpha s + \beta$. We denote its total complex as $\Tot (K_{f, \alpha s + \beta}^{\bullet, \bullet})$.
\end{define}

In other words, $U_{f, \alpha s + \beta}^{\bullet}$ and $\Tot (K_{f, \alpha s + \beta}^{\bullet, \bullet})$ are obtained from $U_{f}$ and $\Tot (K_{f}^{\bullet, \bullet})$ by making the formal substitution $s \mapsto \alpha s + \beta$ in all the maps. (Note that $U_{f}$ is $U_{f, s}$ in this new notation.) This formal substitution changes nothing about the arguments or constructions in the preceding sections: it is truly formal, cf. Remark \ref{rmk - formal substitution is ok}. This allows us to rewrite Theorem \ref{thm - spectral sequence dual} to study duals of $\mathscr{D}_{X}[s] f^{\alpha s + \beta}$. So define

\begin{define} \label{def - twisted cokernel object}
    Let $f \in R = \mathbb{C}[x_{1}, x_{2}, x_{3}]$ be quasi-homogeneous. Select $\alpha \in \mathbb{C}^{\star}$ and $\beta \in \mathbb{C}$. Denote by $L_{f}$ the left $\mathscr{D}_{X}[s]$-module that is the cokernel of 
    \begin{align*}
        L_{f, \alpha s + \beta} = \text{cokernel} \bigg[ \Hom_{\mathscr{D}_{X}[s]}(\theta_{1, \alpha s + \beta}, \mathscr{D}_{X}[s]): 
        &\mathscr{D}_{X}[s] \otimes_{\mathscr{O}_{X}} \Ext_{\mathscr{O}_{X}}^{1}(\Omega_{X}^{2}(\log f), \mathscr{O}_{X}) \to \\
        &\mathscr{D}_{X}[s] \otimes_{\mathscr{O}_{X}} \Ext_{\mathscr{O}_{X}}^{1}(\Omega_{X}^{1}(\log f), \mathscr{O}_{X}) \bigg] \nonumber
    \end{align*}
    Here $\theta_{1, \alpha s + \beta}$ is as constructed in Proposition \ref{prop - computing analytic lifts} after the formal substitution $s \mapsto \alpha s + \beta$. Using the notation of Proposition \ref{prop - computing analytic lifts}, 
    \begin{equation*}
        \theta_{1, \alpha s + \beta}(\mathbf{e}_{t} \otimes 1) = \mathbf{e}_{t} \otimes (\wdeg(f)(\alpha s + \beta) - b_{t} + E).
    \end{equation*}
\end{define}

Then we obtain the following variant of Theorem \ref{thm - spectral sequence dual}:

\begin{corollary} \label{cor - spectral sequence dual, twisted}
    Let $f \in R = \mathbb{C}[x_{1}, x_{2}, x_{3}]$ be reduced and locally quasi-homogeneous. Select $\alpha \in \mathbb{C}^{\star}$ and $\beta \in \mathbb{C}$. Then we have a short exact sequence of left $\mathscr{D}_{X}[s]$-modules
    \begin{equation*}
        0 \to \mathscr{D}_{X}[s]f^{-(\alpha s + \beta)} \to \Ext_{\mathscr{D}_{X}[s]}^{3} \left( \mathscr{D}_{X}[s] f^{\alpha s + \beta - 1}, \mathscr{D}_{X}[s] \right)^{\ell} \to L_{f, \alpha s + \beta} \to 0,
    \end{equation*}
    where $L_{f, \alpha s + \beta}$ is the left $\mathscr{D}_{X}[s]$-module supported at $0$, or is itself $0$, described in Definition \ref{def - twisted cokernel object}/Definition \ref{def - the cokernel, ext dual object}.
    
    Moreover, $L_{f, \alpha s + \beta}$ has a $b$-function, namely
    \begin{equation*} 
        B(L_{f, \alpha s + \beta}) = \prod_{t \in \wdeg(H_{\mathfrak{m}}^{0}(R / (\partial f)))} \left( \alpha s + \beta - \frac{ - t + 2 \wdeg(f) - \sum w_{i}}{\wdeg(f)} \right) 
    \end{equation*}
    (If $\wdeg(H_{\mathfrak{m}}^{0}(R/ (\partial f))) = \emptyset$ we take $B(L_{f, \alpha s + \beta}) = 1$ as this equates to $L_{f, \alpha s + \beta} = 0.)$ 

\end{corollary}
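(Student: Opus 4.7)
The plan is to replay the entire proof of Theorem \ref{thm - spectral sequence dual} under the formal substitution $s \mapsto \alpha s + \beta$, justified by Remark \ref{rmk - formal substitution is ok}. All the intermediate constructions of Sections 4--6 (the complex $U_f^\bullet$, the double complex $K_f^{\bullet,\bullet}$, the lift $\theta_1$, the dual cokernel $L_f$, etc.) were built from a formal application of the chain rule to $f^s$, so they each admit a twisted variant obtained by replacing every occurrence of the generator $s$ in a differential by $\alpha s + \beta$.

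First I would observe that the analogue of Proposition \ref{prop - U complex resolves} holds for $(U_f^\bullet, \nabla_{\alpha s + \beta})$: the $s$-degree filtration used in that proof is insensitive to affine reparametrization of $s$, so the same associated graded argument shows the twisted complex resolves $(\mathscr{D}_X[s]/\ann_{\mathscr{D}_X[s]}^{(1)} f^{\alpha s + \beta - 1})^r$. Combined with Remark \ref{rmk - on U complex resolution prop}(a) (which upgrades Walther's equality $\ann f^s = \ann^{(1)} f^s$ to $\ann f^{\alpha s + \beta} = \ann^{(1)} f^{\alpha s + \beta}$ in our setting), this identifies the terminal cohomology with $(\mathscr{D}_X[s] f^{\alpha s + \beta - 1})^r$. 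Consequently $\Tot(K^{\bullet,\bullet}_{f,\alpha s + \beta})$ is a free right $\mathscr{D}_X[s]$-resolution of $(\mathscr{D}_X[s] f^{\alpha s + \beta - 1})^r$, reproducing Proposition \ref{prop - total complex free resolution}.

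Next I would run the spectral sequence of $\Hom_{\mathscr{D}_X[s]}(\Tot(K^{\bullet,\bullet}_{f,\alpha s+\beta}), \mathscr{D}_X[s])$ as in Lemma \ref{lem - spectral sequence dual, page 2}. The $E_2^{0,0}$ term is computed by the twisted version of Proposition \ref{prop - dual of Ur complex, terminal cohomology}: the interior contraction computation producing $\ann^{(1)} f^{-s}$ instead produces $\ann^{(1)} f^{-(\alpha s + \beta)}$, whence $E_2^{0,0} \simeq \mathscr{D}_X[s]/\ann^{(1)} f^{-(\alpha s + \beta)} \simeq \mathscr{D}_X[s] f^{-(\alpha s + \beta)}$. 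The map whose cokernel gives $E_2^{1,1}$ is, by the lift constructed in Proposition \ref{prop - computing analytic lifts} with $s$ replaced by $\alpha s + \beta$, exactly $\Hom_{\mathscr{D}_X[s]}(\theta_{1,\alpha s + \beta}, \mathscr{D}_X[s])$, so $E_2^{1,1} = L_{f,\alpha s + \beta}$. The injectivity argument of the Claim in the proof of Theorem \ref{thm - spectral sequence dual} works verbatim after replacing the associated-graded factor $\wdeg(f)s$ with $\wdeg(f)\alpha s$ (which is nonzero since $\alpha \in \mathbb{C}^\star$), forcing $E_2^{2,1} = 0$ and yielding the desired short exact sequence.

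Finally, the $b$-function formula is obtained by running the proof of Proposition \ref{prop - funny b-function} with the relation
\[
(\wdeg(f)(\alpha s + \beta) - \wdeg(u) + E) \otimes u = 0 \in L_{f, \alpha s + \beta}
\]
in place of its $s$-analogue. The filtration $N_\bullet$ by $s$-degree jumps of homogeneous generators of $\Ext^1_R(\Omega^1_R(\log f), R)$ is built identically, and the graded pieces $N_{\ell-1}/N_\ell$ are killed by the linear polynomial $\alpha s + \beta - (\wdeg(u) + \sum w_i)/\wdeg(f)$; additivity of $b$-function zeroes along short exact sequences of relative holonomic modules (Lemma \ref{lemma - rel hol basics}) then yields the claimed product. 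Translating between degree data of $\Ext^1_R(\Omega^1_R(\log f), R)$ and of $H^0_\mathfrak{m}(R/(\partial f))$ via Proposition \ref{prop - weighted data dictionary, ext log 1 forms, local cohomology milnor algebra} gives the stated formula. There is no real obstacle here: the content of the argument is entirely in Sections 4--6, and the role of this corollary is to record that every step is formally invariant under $s \mapsto \alpha s + \beta$.
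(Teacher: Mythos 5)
Your overall plan matches the paper's: the proof of Theorem \ref{thm - spectral sequence dual} is replayed after the substitution $s \mapsto \alpha s + \beta$, and indeed the injectivity claim, the spectral sequence, and the $b$-function computation all carry through formally. However, there is one step you flag as routine that the paper explicitly flags as \emph{not} routine, and your justification for it is incorrect as stated.

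You write that ``the $s$-degree filtration used in that proof is insensitive to affine reparametrization of $s$, so the same associated graded argument shows the twisted complex resolves.'' This is not right. Keeping the same $s$-degree filtration $G^{t,p} = \Omega^p_X(\log f)\otimes_{\mathscr{O}_X}\mathscr{D}_X\otimes_{\mathbb{C}}\mathbb{C}[s]_{\geq t}$, the differential $\nabla_{\alpha s + \beta}$ contributes the summand $\tfrac{df}{f}\wedge\eta\otimes(\alpha s + \beta)P(s)$; the piece $\alpha\tfrac{df}{f}\wedge\eta\otimes sP(s)$ climbs one filtration step and dies in the associated graded, but the piece $\beta\tfrac{df}{f}\wedge\eta\otimes P(s)$ does \emph{not}. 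Consequently the associated graded complex is not the one appearing in the proof of Proposition \ref{prop - U complex resolves}; it is that complex with its differential twisted by $\beta\,df/f\wedge(-)$. The paper's proof explicitly records this: the argument ``is not purely formal'' precisely because the Saito-holonomic induction and the Peskine--Szpiro acyclicity lemma must now be run against this twisted complex, which the paper then asserts ``still yields to \cite{NoncommutativePeskineSzpiro}.'' Your proposal skips that verification entirely by asserting nothing changes.

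There are two ways you could patch this. One is the paper's: accept that the associated graded complex acquires the $\beta\,df/f\wedge$ twist, and check that the cohomological-vanishing argument (the local product structure from Saito-holonomicity, the K{\"u}nneth formula for logarithmic forms, and the noncommutative Peskine--Szpiro lemma) still applies to that twisted complex — which it does, since none of those tools cared about the zeroth-order term in the differential. The other is to replace the $s$-degree filtration with the filtration by powers of the new affine generator $\tilde{s}=\alpha s + \beta$; that filtration \emph{does} absorb the whole $(\alpha s + \beta)$-term into a higher filtration level, so the associated graded complex is literally the one from Proposition \ref{prop - U complex resolves}. Either fix is fine, but you must make one of them explicit. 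As currently written, the central claim of your first paragraph (``same associated graded argument'') is false for the filtration you name, and this is exactly the one nontrivial point the paper takes care to address.
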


\begin{proof}
    This follows exactly as in Theorem \ref{thm - spectral sequence dual} after the formal variable substitution $s \mapsto \alpha s + \beta$. The only part that is not purely formal is showing cohomological vanishing of the associated graded complex considered used in the proof of Proposition \ref{prop - U complex resolves}. It will now have differentials twisted by a $\beta df / f \wedge$ term, but still yields to \cite{NoncommutativePeskineSzpiro} as before.
\end{proof}

\subsection{Milnor Algebra Data Gives Roots of the Bernstein--Sato Polynomial}

We now use Corollary \ref{cor - spectral sequence dual, twisted}, the modified version of Theorem \ref{thm - spectral sequence dual}, to extract data about Bernstein--Sato polynomials of $f$ by studying duals of $\mathscr{D}_{X}[s] f^{s} / \mathscr{D}_{X}[s] f^{s+1}$. What follows allows us to extend the s.e.s. 
\begin{equation} \label{eqn - basic s.e.s.}
    0 \to \mathscr{D}_{X}[s] f^{s+1} \to \mathscr{D}_{X}[s] f^{s} \to \mathscr{D}_{X}[s] f^{s} / \mathscr{D}_{X}[s] f^{s+1} \to 0 
\end{equation}
along our previously studied resolutions of the non-cokernel terms.

\begin{proposition} \label{prop - chain map, twisted U and double complexes}
    There is a chain map of right $\mathscr{D}_{X}[s]$-modules/complexes
    \begin{equation} \label{eqn - chain map, twisted U}
        U_{f, \nabla_{s+2}} \xrightarrow[]{f \cdot } U_{f, \nabla_{s+1}}
    \end{equation}
    given by left multiplication with $f$ and this extends to a chain map of right $\mathscr{D}_{X}[s]$-modules/complexes
    \begin{equation} \label{eqn - chain map, twisted double complex}
        \Tot (K_{f, s+2}^{\bullet, \bullet}) \xrightarrow[]{f \cdot } \Tot (K_{f, s+1}^{\bullet, \bullet})
    \end{equation}
    again given by left multiplication with $f$.
\end{proposition}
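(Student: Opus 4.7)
First I would verify the chain map property at the level of the complexes $(U_f^\bullet, \nabla_{s+2})$ and $(U_f^\bullet, \nabla_{s+1})$. Locally, for $\eta \otimes P \in \Omega_X^p(\log f) \otimes_{\mathscr{O}_X} \mathscr{D}_X[s]$, I would unpack
\[
\nabla_{s+1}(f\eta \otimes P) - f \cdot \nabla_{s+2}(\eta \otimes P)
\]
using the Leibniz rule $d(f\eta) = df \wedge \eta + f\, d\eta$, the identity $(df/f) \wedge (f\eta) = df \wedge \eta$, and the $\mathscr{O}_X$-linearity of $\wedge$ to handle the $\sum_i dx_i \wedge \cdot \otimes \partial_i P$ terms (recalling that left multiplication by $f$ passes across $\otimes_{\mathscr{O}_X}$ into $\mathscr{D}_X[s]$, where it acts by left multiplication). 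The $f\, d\eta$ and $\sum_i (f\,dx_i \wedge \eta)\otimes \partial_i P$ pieces agree on the two sides; the $df/f$-twist contributes $df \wedge \eta \otimes (s+1)P$ on the left versus $df \wedge \eta \otimes (s+2)P$ on the right, and the extra $df \wedge \eta \otimes P$ coming from expanding $d(f\eta)$ on the left exactly balances the discrepancy. This yields \eqref{eqn - chain map, twisted U}.

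For the extension to the total complexes, I would observe that the objects $K_{f, s+2}^{p, q} = K_{f, s+1}^{p, q} = Q_q^p \otimes_{\mathscr{O}_X} \mathscr{D}_X[s]$ and the vertical differentials $\nu_q^p$ depend only on the fixed free $\mathscr{O}_X$-resolutions $Q_\bullet^p$ of $\Omega_X^p(\log f)$ (Definition \ref{def - fixing analytic free res, log 1, 2 forms}) and are independent of the twist. Since these vertical differentials have the form ``matrix over $\mathscr{O}_X$ tensored with $\id_{\mathscr{D}_X[s]}$'', left multiplication by $f \in \mathscr{O}_X$ commutes with them automatically. For the horizontal differentials $\sigma_{q, s+2}^p$ and $\sigma_{q, s+1}^p$, which are lifts of $\nabla_{s+2}$ and $\nabla_{s+1}$ respectively (Remark \ref{rmk - choice of lift not unique}), both $(f \cdot) \circ \sigma_{q, s+2}^p$ and $\sigma_{q, s+1}^p \circ (f \cdot)$ are right $\mathscr{D}_X[s]$-linear maps $K_{f, s+2}^{p, q} \to K_{f, s+1}^{p+1, q}$ lifting the same composition $\nabla_{s+1} \circ (f \cdot) = (f \cdot) \circ \nabla_{s+2}$ along the augmented free column resolutions of the target. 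By the standard lifting property (any two lifts of the same map into a free resolution are chain homotopic), we may choose the horizontal lifts $\sigma_{q, s+1}^p$, compatibly with the already-fixed $\sigma_{q, s+2}^p$, so that the horizontal squares commute on the nose. Taking totalizations then produces \eqref{eqn - chain map, twisted double complex}.

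The main obstacle is the passage from a clean identity on $U_f^\bullet$ to a strict chain map on the total complexes: the horizontal lifts $\sigma_q^p$ are not canonical and need not be intertwined by $(f \cdot)$ for arbitrary choices. This obstruction is controlled by the standard chain-homotopy uniqueness of lifts into free resolutions, so we simply exercise the freedom to choose compatible horizontal lifts across the two double complexes, consistent with the non-uniqueness flagged in Remark \ref{rmk - choice of lift not unique}.
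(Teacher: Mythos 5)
Your verification of \eqref{eqn - chain map, twisted U} and of $f\cdot$ commuting with the vertical differentials is correct and matches the paper. The gap is in the horizontal step. You rightly flag that the lifts $\sigma_{q}^{p}$ are not canonical and need not be intertwined by $f\cdot$, but the proposed fix does not close it. Chain-homotopy uniqueness only says $(f\cdot)\circ\sigma_{q,s+2}^{p}$ and $\sigma_{q,s+1}^{p}\circ(f\cdot)$ are \emph{homotopic} as lifts of $\nabla_{s+1}\circ(f\cdot)=(f\cdot)\circ\nabla_{s+2}$; it does not produce a lift $\sigma_{q,s+1}^{p}$ of $\nabla_{s+1}$ with $\sigma_{q,s+1}^{p}\circ(f\cdot)=(f\cdot)\circ\sigma_{q,s+2}^{p}$ on the nose. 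Writing the lift in a basis of $Q_{q}^{p}$ as a matrix $(c_{ji}(s))$ over $\mathscr{D}_{X}[s]$, strict commutation is the identity $c_{ji}(s+1)\,f = f\,c_{ji}(s+2)$; for an entry of the form $c(s)=as+\delta+g$ with $\delta\in\Der_{X}$ and $g\in\mathscr{O}_{X}$ this reduces to $\delta\bullet f = af$, i.e.\ the first-order part must be logarithmic along $f$ with the correct ratio. Perturbing a valid lift by a map into $\ker\big(Q_{0}^{p+1}\otimes_{\mathscr{O}_X}\mathscr{D}_{X}[s]\to U_{f}^{p+1}\big)$ whose first-order part is not logarithmic breaks this identity, so the existence of a compatible choice is a substantive claim that needs proof, not a formal consequence of the lifting lemma.

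For comparison, the proof in the text assembles a cube and asserts that five commuting faces force the sixth; this, too, is insufficient as stated. Chasing around the other five faces only shows the two compositions $Q_{0}^{p}\otimes_{\mathscr{O}_X}\mathscr{D}_{X}[s]\to Q_{0}^{p+1}\otimes_{\mathscr{O}_X}\mathscr{D}_{X}[s]$ agree after post-composing with the augmentation onto $U_{f}^{p+1}$, hence differ by a map into the kernel of that augmentation --- nonzero precisely when $f$ is not free, which is the case of interest. What actually makes the proposition true is the special shape of the explicit lifts $\theta$ from Section~5: Proposition~\ref{prop - computing analytic lifts} gives diagonal entries $\wdeg(f)\,s - b_{t} + E$ whose first-order part is $E$, and $E\bullet f = \wdeg(f)f$ is exactly $c(s+1)f = fc(s+2)$. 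The honest proof is to verify this relation entry-by-entry for the $\theta$-lifts (using the Lie-derivative description, which conveniently kills the $df/f$-contributions and keeps the first-order parts logarithmic), rather than to appeal to the lifting lemma or the cube.
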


\begin{proof}

Let $\eta \in \Omega_{X}^{p}(\log f)$. To check \eqref{eqn - chain map, twisted U} it suffices to that $f \cdot \nabla_{s+2} (\eta \otimes P(s)) = \nabla_{s+1} (f \eta \otimes P(s))$, which is a straightforward computation. 

To check \eqref{eqn - chain map, twisted double complex} it suffices to check that left multiplication with $f$ commutes with every horizontal and vertical map depicted in \eqref{eqn - def, the double complex}, adjusted appropriately for the substitution of variables. For the vertical maps this is obvious: by the construction, each vertical map is of the form $- \otimes \id$ where $-$ is a map of $\mathscr{O}_{X}$-modules. As for the horizontal maps, let us first consider the picture for $\sigma_{0}^{1}$ in \eqref{eqn - def, the double complex}, again adjusted appropriately for our variable substitutions. We have the following cube of right $\mathscr{D}_{X}[s]$-maps:
\[
\begin{tikzcd}
    & Q_{0}^{1} \otimes_{\mathscr{O}_{X}} \mathscr{D}_{X}[s]  \ar{rr} \ar{dd} \ar{dl}{f \cdot} & &   Q_{0}^{2} \otimes_{\mathscr{O}_{X}} \mathscr{D}_{X}[s]  \ar{dd} \ar{dl}{f \cdot} \\
    Q_{0}^{1} \otimes_{\mathscr{O}_{X}} \mathscr{D}_{X}[s] \ar[crossing over]{rr} \ar{dd} & & Q_{0}^{2} \otimes_{\mathscr{O}_{X}} \mathscr{D}_{X}[s] \\
      & U_{f}^{1}  \ar{rr}{\nabla_{s+2} \quad \quad \quad \quad \quad} \ar{dl}{f \cdot} & &  U_{f}^{2}  \ar{dl}{f \cdot} \\
    U_{f}^{1} \ar{rr}{\nabla_{s+1}} && U_{f}^{2} \ar[from=uu,crossing over].
\end{tikzcd}
\]
Here the back face corresponds to $\Tot(K_{f, s+2}^{\bullet, \bullet})$ and in particular the corresponding lower central square in the construction of \eqref{eqn - def, the double complex}. The front face similarly corresponds to $\Tot(K_{f, s+1}^{\bullet, \bullet})$. 

We know the front and back face commute by the construction of the corresponding double complexes; we know the bottom square commutes by \eqref{eqn - chain map, twisted U}; we know the left and right squares commute as these maps are given by $ - \otimes \id$ where $-$ is a map of $\mathscr{O}_{X}$-modules. Therefore the top square must also commute. That is, multiplication with $f$ on the left commutes with the corresponding $\sigma_{0}^{1}$'s, adjusted for the variable substitution. 

We can clearly repeat this commutative cube argument throughout the construction \eqref{eqn - def, the double complex}, demonstrating \eqref{eqn - chain map, twisted double complex}.
\end{proof}

\begin{proposition} \label{prop - twisted U complex, image containment}
    Let $f \in R = \mathbb{C}[x_{1}, x_{2}, x_{3}]$ be reduced and locally quasi-homogeneous. The chain map $\Tot(K_{f, s+2}^{\bullet, \bullet}) \xrightarrow[]{f \cdot} \Tot(K_{f, s+1}^{\bullet, \bullet})$ induces the map dual complexes
    \[
    \Hom_{\mathscr{D}_{X}[s]}(\Tot(K_{f, s+1}^{\bullet, \bullet}),  \mathscr{D}_{X}[s])) \xrightarrow[]{\cdot f} \Hom_{\mathscr{D}_{X}[s]}(\Tot(K_{f, s+2}^{\bullet, \bullet}),  \mathscr{D}_{X}[s])).
    \]
    In particular this induces a map of $\Ext$ modules satisfying
    \begin{align} \label{eqn - twisted u complex, image containment}
        \im \bigg[ \cdot f :  \Ext_{\mathscr{D}_{X}[s]}^{3}( \mathscr{D}_{X}[s] f^{s}, \mathscr{D}_{X}[s])^{\ell} &\to \Ext_{\mathscr{D}_{X}[s]}^{3}( \mathscr{D}_{X}[s] f^{s+1}, \mathscr{D}_{X}[s])^{\ell} \bigg] \\
        & \subseteq \mathscr{D}_{X}[s] f^{-s-2} \nonumber \\
        & \subseteq  \Ext_{\mathscr{D}_{X}[s]}^{3}( \mathscr{D}_{X}[s] f^{s+1}, \mathscr{D}_{X}[s])^{\ell}. \nonumber
    \end{align}
\end{proposition}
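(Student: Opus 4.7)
The first claim is immediate from contravariant functoriality: applying $\Hom_{\mathscr{D}_{X}[s]}(-,\mathscr{D}_{X}[s])$ to the chain map of Proposition \ref{prop - chain map, twisted U and double complexes} reverses arrows to produce a chain map of dual complexes, and since each $\Tot(K_{f, s+k}^{\bullet,\bullet})$ is a free right $\mathscr{D}_{X}[s]$-resolution of $(\mathscr{D}_{X}[s] f^{s+k-1})^{r}$ by Proposition \ref{prop - total complex free resolution}, taking $H^{3}$ and side-changing produces the described map on $\Ext^{3}$-modules, which is exactly the map naturally induced by the inclusion $\mathscr{D}_{X}[s] f^{s+1} \hookrightarrow \mathscr{D}_{X}[s] f^{s}$.

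For the substantive containment, my plan is to compare the spectral sequences underlying Corollary \ref{cor - spectral sequence dual, twisted}, applied simultaneously to $\alpha s + \beta \in \{s+1, s+2\}$. The chain map $\cdot f$ respects the column filtration on each double complex by construction, so its dualization induces a morphism of the resulting spectral sequences $E_{v}^{p,q}(s+1) \to E_{v}^{p,q}(s+2)$ on every page. Paired with the explicit identifications of $E_{\infty}^{0,0}$ and $E_{\infty}^{1,1}$, this yields a commutative diagram of short exact sequences
\[
\begin{tikzcd}
0 \ar[r] & \mathscr{D}_{X}[s] f^{-(s+1)} \ar[r] \ar[d] & \Ext^{3}_{\mathscr{D}_{X}[s]}(\mathscr{D}_{X}[s] f^{s}, \mathscr{D}_{X}[s])^{\ell} \ar[r] \ar[d, "\cdot f"] & L_{f, s+1} \ar[r] \ar[d, "\beta"] & 0 \\
0 \ar[r] & \mathscr{D}_{X}[s] f^{-(s+2)} \ar[r] & \Ext^{3}_{\mathscr{D}_{X}[s]}(\mathscr{D}_{X}[s] f^{s+1}, \mathscr{D}_{X}[s])^{\ell} \ar[r] & L_{f, s+2} \ar[r] & 0,
\end{tikzcd}
\]
and a routine diagram chase reduces the claimed containment to proving the rightmost map $\beta: L_{f, s+1} \to L_{f, s+2}$ vanishes.

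Tracing the dualization through Proposition \ref{prop - computing analytic lifts, dualizing} and Proposition \ref{prop - computing analytic lifts, induced map on Ext}, the induced chain map on $E_{1}^{1, 1}$ is right multiplication by $f$ on the $\mathscr{D}_{X}[s]$-factor of $\mathscr{D}_{X}[s] \otimes_{\mathscr{O}_{X}} \Ext^{1}_{\mathscr{O}_{X}}(\Omega^{j}_{X}(\log f), \mathscr{O}_{X})$. Passing to the $E_{2}^{1,1}$ cokernel, $\beta$ sends a generator $\overline{1 \otimes u}_{L_{f, s+1}}$ to $\overline{f \otimes u}_{L_{f, s+2}} = \overline{1 \otimes fu}_{L_{f, s+2}}$ by the $\mathscr{O}_{X}$-tensor relation $fd \otimes v = d \otimes fv$. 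Since $L_{f, s+1}$ is generated as a $\mathscr{D}_{X}[s]$-module by these classes (the underlying $\Ext^{1}_{\mathscr{O}_{X}}$ being a finite-dimensional $\mathbb{C}$-vector space supported at $0$), the vanishing $\beta = 0$ reduces to showing $fu = 0$ in $\Ext^{1}_{\mathscr{O}_{X}}(\Omega^{1}_{X}(\log f), \mathscr{O}_{X})$ for every such $u$.

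The main obstacle---happily short---is this last identity, which I plan to obtain from the defining pole-order property of logarithmic forms: every $\omega \in \Omega^{1}_{X}(\log f)$ has pole order at most one along $D$, so $f \cdot \Omega^{1}_{X}(\log f) \subseteq \Omega^{1}_{X}$. Multiplication by $f$ on $\Omega^{1}_{X}(\log f)$ therefore factors as
\[
\Omega^{1}_{X}(\log f) \xrightarrow{\cdot f} \Omega^{1}_{X} \hookrightarrow \Omega^{1}_{X}(\log f),
\]
and applying the contravariant functor $\Ext^{1}_{\mathscr{O}_{X}}(-, \mathscr{O}_{X})$ forces the $\mathscr{O}_{X}$-action of $f$ on $\Ext^{1}_{\mathscr{O}_{X}}(\Omega^{1}_{X}(\log f), \mathscr{O}_{X})$ to factor through $\Ext^{1}_{\mathscr{O}_{X}}(\Omega^{1}_{X}, \mathscr{O}_{X}) = 0$, as $\Omega^{1}_{X}$ is locally free. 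Hence $fu = 0$, $\beta = 0$, and the image of $\cdot f$ lies inside $\mathscr{D}_{X}[s] f^{-s-2}$ as asserted.
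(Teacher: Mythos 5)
Your proof is correct, and the overall strategy matches the paper: both reduce the containment to showing that right multiplication by $f$ kills the cokernel module, equivalently that $f$ annihilates $\Ext^1_{\mathscr{O}_X}(\Omega^1_X(\log f), \mathscr{O}_X)$. Where you diverge is in how you establish this key annihilation. The paper passes through the graded isomorphism $\Ext^1_{\mathscr{O}_X}(\Omega^1_X(\log f), \mathscr{O}_X) \simeq \Ext^3_{\mathscr{O}_X}(\mathscr{O}_X/(\partial f), \mathscr{O}_X)$ (an analytic variant of Proposition \ref{prop - weighted data dictionary, ext log 1 forms, local cohomology milnor algebra}) and then invokes the Euler identity $f = \wdeg(f)^{-1} E \bullet f \in (\partial f)$ guaranteed by quasi-homogeneity, so $\mathscr{O}_X/(\partial f)$, hence its $\Ext^3$, is killed by $f$. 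You instead observe directly from the definition $\Omega^1_X(\log f) \subseteq (1/f)\Omega^1_X$ that multiplication by $f$ factors as $\Omega^1_X(\log f) \xrightarrow{\cdot f} \Omega^1_X \hookrightarrow \Omega^1_X(\log f)$, and since $\Omega^1_X$ is locally free, $\Ext^1_{\mathscr{O}_X}(\Omega^1_X, \mathscr{O}_X) = 0$ absorbs the induced endomorphism by functoriality. Your route is a touch cleaner — it avoids the Milnor algebra dictionary entirely and does not even use quasi-homogeneity for this particular step (only the ambient setup requires it) — whereas the paper's route makes the annihilation visible as a concrete consequence of $f \in (\partial f)$, which fits more naturally with the Milnor-algebra-flavored statements the paper is heading toward. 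Your packaging via a morphism of spectral sequences and a commutative diagram of short exact sequences is also somewhat more formal than the paper's direct cocycle chase with $[\xi] \mapsto [\xi f]$, but the two bookkeeping schemes are interchangeable.
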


\begin{proof}
    The statement about induced maps on dual complexes is straightforward. As for \eqref{eqn - twisted u complex, image containment}, let $[\xi] \in H^{0} (\Hom_{\mathscr{D}_{X}[s]}(\Tot(K_{f, s+1}^{\bullet, \bullet}), \mathscr{D}_{X}[s])$. Then the image of $[\xi]$ in $H^{0} (\Hom_{\mathscr{D}_{X}[s]}(\Tot(K_{f, s+1}^{\bullet, \bullet}), \mathscr{D}_{X}[s])$ is $[\xi f].$ Since $(\Hom_{\mathscr{D}_{X}[s]}(\Tot(K_{f, s+1}^{\bullet, \bullet}), \mathscr{D}_{X}[s])$ and $(\Hom_{\mathscr{D}_{X}[s]}(\Tot(K_{f, s+2}^{\bullet, \bullet}), \mathscr{D}_{X}[s])$ have the same objects and since they are triangular, we may write the image of $[\xi]$ as $[\xi] f \in H^{0} (\Hom_{\mathscr{D}_{X}[s]}(\Tot(K_{f, s+1}^{\bullet, \bullet}), \mathscr{D}_{X}[s])$. By Corollary \ref{cor - spectral sequence dual, twisted}/Theorem \ref{thm - spectral sequence dual}, it suffices to show that 
    \begin{equation} \label{eqn - twisted U containment, 1}
    0 = \overline{[\xi]} f \in \frac{\Ext_{\mathscr{D}_{X}[s]}^{3}(\mathscr{D}_{X}[s] f^{s+1}, \mathscr{D}_{X}[s])^{\ell}}{\mathscr{D}_{X}[s] f^{-s-2}}.
    \end{equation}
    So to conclude \eqref{eqn - twisted u complex, image containment}, it suffices to show the left $\mathscr{D}_{X}[s]$-module in \eqref{eqn - twisted U containment, 1} is killed by right multiplication with $f$.
    
    By the explicit description of $L_{f, s+2}$ in Definition \ref{def - twisted cokernel object}/Definition \ref{def - the cokernel, ext dual object}, we see the left $\mathscr{D}_{X}[s]$-module of \eqref{eqn - twisted U containment, 1} is a quotient of 
    \begin{align} \label{eqn - twisted U containment, 2}
    \mathscr{D}_{X}[s] \otimes_{\mathscr{O}_{X}} \Ext_{\mathscr{O}_{X}}^{1}(\Omega_{X}^{1}(\log f), \mathscr{O}_{X}) 
    &\simeq \mathscr{D}_{X}[s] \otimes_{\mathscr{O}_{X}} \Ext_{\mathscr{O}_{X}}^{1}(\Omega_{X}^{2}(\log f), \mathscr{O}_{X}) \\
    &\simeq \mathscr{D}_{X}[s] \otimes_{\mathscr{O}_{X}} \Ext_{\mathscr{O}_{X}}^{3}( \mathscr{O}_{X} / (\partial f), \mathscr{O}_{X}). \nonumber
    \end{align}
    Here the first ``$\simeq$'' of \eqref{eqn - twisted U containment, 2} is Proposition \ref{prop - finite hom gen set of Ext modules}; the second follows by an entirely similar analytic variant of the argument in Proposition \ref{prop - weighted data dictionary, ext log 1 forms, local cohomology milnor algebra}. (For consistency, we use $(\partial f)$ to denote the Jacobian ideal of $f$ which is coordinate independent.) As $f$ is locally quasi-homogeneous, $\mathscr{O}_{X} / (\partial f)$ is killed by right multiplication with $f$. This annihilation passes to $\Ext$ modules and also to $\mathscr{D}_{X}[s] \otimes_{\mathscr{O}_{X}} \Ext_{\mathscr{O}_{X}}^{3}(\mathscr{O}_{X} / (\partial f), \mathscr{O}_{X})$. Any subsequent quotient will also be killed by right multiplication with $f$, and we are done.
\end{proof}

We have shown that \eqref{eqn - basic s.e.s.} is well-behaved with respect to our free resolutions of the non-cokernel terms and started studying dual behavior. We are now in position to approximate the dual of $\mathscr{D}_{X}[s] f^{s} / \mathscr{D}_{X}[s] f^{s+1}$. Our approximation involves $L_{f, s+2}$ and hence, through its $b$-function (see Proposition \ref{prop - funny b-function}), degree data of $H_{\mathfrak{m}}^{0}(R / (\partial f))$. As promised, we now show this degree data induces roots of $b_{f}(s)$:

\begin{theorem} \label{thm - new roots of BS-poly}
    Let $f \in R = \mathbb{C}[x_{1}, x_{2}, x_{3}]$ be reduced and locally quasi-homogeneous. Then we have a surjection of left $\mathscr{D}_{X}[s]$-modules
    \begin{equation} \label{eqn - new roots of BS-poly, statement 1}
        \Ext_{\mathscr{D}_{X}[s]}^{4} \left( \frac{\mathscr{D}_{X}[s] f^{s}}{\mathscr{D}_{X}[s] f^{s+1}}, \mathscr{D}_{X}[s] \right)^{\ell} \twoheadrightarrow L_{f, s+2}
    \end{equation}
    where $L_{f, s+2}$ is explicitly defined in Definition \ref{def - twisted cokernel object}/Definition \ref{def - the cokernel, ext dual object}. In particular, the zeroes of the $b$-function of $L_{f, s+2}$ are contained in the zeroes of the Bernstein--Sato polynomial of $f$:
    \begin{equation} \label{eqn - new roots of BS-poly, statement 2}
        Z(b_{f}(s)) \supseteq Z(B(L_{f, s+2})) = \left\{ \frac{ - (t + \sum w_{i}) }{\wdeg(f)} \mid t \in \wdeg(H_{\mathfrak{m}}^{0} (R / (\partial f))) \right\}.
    \end{equation}

\end{theorem}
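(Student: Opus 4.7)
The plan is to exploit the long exact sequence of $\Ext$-modules attached to the natural short exact sequence
\begin{equation*}
0 \to \mathscr{D}_X[s] f^{s+1} \to \mathscr{D}_X[s] f^s \to \frac{\mathscr{D}_X[s] f^s}{\mathscr{D}_X[s] f^{s+1}} \to 0
\end{equation*}
and then to combine the outcome with Corollary \ref{cor - spectral sequence dual, twisted}, Proposition \ref{prop - twisted U complex, image containment}, and Lemma \ref{lemma - b-function and b-function of dual}, all of which are already in hand.

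First I would apply $\Ext_{\mathscr{D}_X[s]}^\bullet(-, \mathscr{D}_X[s])^\ell$ to the short exact sequence above. The total complex $\Tot(K_{f, s+1}^{\bullet, \bullet})$ (Proposition \ref{prop - total complex free resolution} together with the formal substitution $s \mapsto s+1$ of Definition \ref{def - twisted Ur complex}) is a right $\mathscr{D}_X[s]$-free resolution of $(\mathscr{D}_X[s] f^s)^r$ of length three, so $\Ext^{\geq 4}(\mathscr{D}_X[s] f^s, \mathscr{D}_X[s])^\ell$ vanishes. The tail of the long exact sequence therefore collapses to
\begin{equation*}
\Ext^3(\mathscr{D}_X[s] f^s, \mathscr{D}_X[s])^\ell \xrightarrow{\phi} \Ext^3(\mathscr{D}_X[s] f^{s+1}, \mathscr{D}_X[s])^\ell \to \Ext^4\Big(\frac{\mathscr{D}_X[s] f^s}{\mathscr{D}_X[s] f^{s+1}}, \mathscr{D}_X[s]\Big)^\ell \to 0,
\end{equation*}
where $\phi$ is right multiplication by $f$. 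Now Corollary \ref{cor - spectral sequence dual, twisted} applied with $\alpha s + \beta = s+2$ realizes the middle term as an extension of $L_{f, s+2}$ by $\mathscr{D}_X[s] f^{-s-2}$, and Proposition \ref{prop - twisted U complex, image containment} says precisely that $\im(\phi) \subseteq \mathscr{D}_X[s] f^{-s-2}$. Passing to the cokernel of $\phi$ therefore yields a surjection onto the quotient $L_{f, s+2}$, which is the desired map $\Ext^4(\mathscr{D}_X[s] f^s/\mathscr{D}_X[s] f^{s+1}, \mathscr{D}_X[s])^\ell \twoheadrightarrow L_{f, s+2}$.

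For the inclusion $Z(b_f(s)) \supseteq Z(B(L_{f, s+2}))$ I would invoke Lemma \ref{lemma - b-function and b-function of dual}, which identifies the zero locus of $b_f(s) = B(\mathscr{D}_X[s] f^s/\mathscr{D}_X[s] f^{s+1})$ with that of the $b$-function of the top $\Ext$-module above. The surjection just constructed then forces $Z(B(L_{f, s+2})) \subseteq Z(b_f(s))$, and the explicit formula for $Z(B(L_{f, s+2}))$ is read off directly from Corollary \ref{cor - spectral sequence dual, twisted}. The main obstacle is verifying the hypotheses of Lemma \ref{lemma - b-function and b-function of dual}: one must check that $\mathscr{D}_X[s] f^s/\mathscr{D}_X[s] f^{s+1}$ is relative holonomic and $(n+1)=4$-Cohen--Macaulay. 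Relative holonomicity follows from that of the outer terms via Lemma \ref{lemma - rel hol basics}; the Cohen--Macaulay property is more delicate and should be extracted by combining the long exact sequence with the $3$-Cohen--Macaulayness of $\mathscr{D}_X[s] f^s$ and $\mathscr{D}_X[s] f^{s+1}$ that is implicit in the proof of Theorem \ref{thm - spectral sequence dual} (their lower $\Ext$'s were absent from the spectral sequence there).
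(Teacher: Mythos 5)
Your proposal follows essentially the same route as the paper: take the short exact sequence
$0 \to \mathscr{D}_X[s]f^{s+1} \xrightarrow{\cdot f} \mathscr{D}_X[s]f^s \to \mathscr{D}_X[s]f^s/\mathscr{D}_X[s]f^{s+1} \to 0$,
pass to the long exact sequence of $\Ext$-modules, feed in Corollary \ref{cor - spectral sequence dual, twisted} with $\alpha s + \beta = s+2$ together with Proposition \ref{prop - twisted U complex, image containment} to construct the surjection onto $L_{f,s+2}$, and then translate through Lemma \ref{lemma - b-function and b-function of dual} to obtain the containment of $b$-function zero loci. The construction of the surjection \eqref{eqn - new roots of BS-poly, statement 1} is correct and matches the paper; your observation that $\Ext^{\geq 4}(\mathscr{D}_X[s]f^s,\mathscr{D}_X[s])=0$ because $\Tot(K_{f,s+1}^{\bullet,\bullet})$ is a length-three free resolution is a valid way to close off the tail of the long exact sequence.

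The genuine gap is in your proposed verification of the hypothesis of Lemma \ref{lemma - b-function and b-function of dual}, namely that $\mathscr{D}_X[s]f^s/\mathscr{D}_X[s]f^{s+1}$ is $4$-Cohen--Macaulay. Knowing that $\mathscr{D}_X[s]f^s$ and $\mathscr{D}_X[s]f^{s+1}$ are $3$-Cohen--Macaulay and running the long exact sequence only yields vanishing of $\Ext^j$ of the quotient for $j\neq 3,4$; it does \emph{not} rule out $\Ext^3$ of the quotient. Ruling that out is equivalent to injectivity of $\Ext^3(\mathscr{D}_X[s]f^s,\mathscr{D}_X[s])^\ell \xrightarrow{\cdot f} \Ext^3(\mathscr{D}_X[s]f^{s+1},\mathscr{D}_X[s])^\ell$, which is precisely the $4$-Cohen--Macaulay statement one is trying to prove -- so the argument as sketched is circular. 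What is actually needed is the geometric input that the relative characteristic variety of the quotient has dimension at most $n=3$ (so its grade is at least $n+1=4$ by the complementary numerics in \cite[Remark 3.3.2]{ZeroLociBSIdeals}); the paper handles this by citing \cite[Lemma 3.3.3, Theorem 3.5.1]{ZeroLociBSIdeals} directly rather than attempting to rederive it from the long exact sequence. A smaller point: $b_f(s)$ is the least common multiple of the local Bernstein--Sato polynomials, so the comparison via Lemma \ref{lemma - b-function and b-function of dual} should be run at the stalk at $0$ (giving $Z(b_{f,0}(s))$), and the containment into $Z(b_f(s))$ then follows; your write-up elides this but it is easily patched.
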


\begin{proof}
    Consider the s.e.s of left $\mathscr{D}_{X}[s]$-modules
    \begin{equation} \label{eqn - canonical s.e.s. fs}
    0 \to \frac{\mathscr{D}_{X}[s]}{\ann_{\mathscr{D}_{X}[s]} f^{s+1}} \xrightarrow[]{\cdot f} \frac{\mathscr{D}_{X}[s]}{\ann_{\mathscr{D}_{X}[s]} f^{s}} \to \frac{\mathscr{D}_{X}[s] f^{s}}{\mathscr{D}_{X}[s] f^{s+1}} \to 0
    \end{equation}
    where the first map is given by right multiplication with $f$. Applying the side-changing functor $(-)^{r}$ gives a s.e.s. of right $\mathscr{D}_{X}[s]$-modules which induces a long exact sequence of $\Ext$ modules. This abbreviates to the s.e.s. of left $\mathscr{D}_{X}[s]$-modules
    \begin{align} \label{eqn - new roots of BS-poly, 2}
    0 \to \Ext_{\mathscr{D}_{X}[s]}^{3}(\mathscr{D}_{X}[s] f^{s}, \mathscr{D}_{X}[s])^{\ell}
    &\xrightarrow[]{\cdot f} \Ext_{\mathscr{D}_{X}[s]}^{3}(\mathscr{D}_{X}[s] f^{s+1}, \mathscr{D}_{X}[s])^{\ell} \\
    &\to \Ext_{\mathscr{D}_{X}[s]}^{4}(\frac{\mathscr{D}_{X}[s] f^{s}}{\mathscr{D}_{X}[s] f^{s+1}}, \mathscr{D}_{X}[s])^{\ell} \to 0 \nonumber
    \end{align}
    where we have used the fact $\mathscr{D}_{X}[s] f^{s} / \mathscr{D}_{X}[s] f^{s+1}$ is $4$-Cohen--Macaulay (cf. subsection 3.5, \cite[Lemma 3.3.3, Theorem 3.5.1]{ZeroLociBSIdeals}). 
    
    Now we use our assumptions on $f$. The following maps verify \eqref{eqn - new roots of BS-poly, statement 1}:
    \begin{align} \label{eqn - Ext surj Lf for referee}
        \Ext&_{\mathscr{D}_{X}[s]}^{4}(\frac{\mathscr{D}_{X}[s] f^{s}}{\mathscr{D}_{X}[s] f^{s+1}}, \mathscr{D}_{X}[s])^{\ell} \\
        &\simeq \frac{\Ext_{\mathscr{D}_{X}[s]}^{3}(\mathscr{D}_{X}[s] f^{s+1}, \mathscr{D}_{X}[s])^{\ell}}{\im \big[ \Ext_{\mathscr{D}_{X}[s]}^{3}(\mathscr{D}_{X}[s] f^{s}, \mathscr{D}_{X}[s])^{\ell} \xrightarrow[]{\cdot f} \Ext_{\mathscr{D}_{X}[s]}^{3}(\mathscr{D}_{X}[s] f^{s+1}, \mathscr{D}_{X}[s])^{\ell} \big]} \nonumber \\
        &\twoheadrightarrow \frac{\Ext_{\mathscr{D}_{X}[s]}^{3}(\mathscr{D}_{X}[s] f^{s+1}, \mathscr{D}_{X}[s])^{\ell}}{\mathscr{D}_{X}[s]f^{-s-2}} \nonumber \\
        &\simeq L_{f,s+2}. \nonumber
    \end{align}
    To confirm \eqref{eqn - Ext surj Lf for referee} note that: the first ``$\simeq$'' is \eqref{eqn - new roots of BS-poly, 2}; the ``$\twoheadrightarrow$'' is Proposition \ref{prop - twisted U complex, image containment}; the last ``$\simeq$'' is Corollary \ref{cor - spectral sequence dual, twisted}/Theorem \ref{thm - spectral sequence dual}.

    As for \eqref{eqn - new roots of BS-poly, statement 2}, by considering the local surjection \eqref{eqn - new roots of BS-poly, statement 1} at $0$ and using Lemma \ref{lemma - rel hol basics}, we see that 
    \begin{equation} \label{eqn - new roots of BS-poly, 5}
    Z(B(L_{f, s+2, 0})) \subseteq Z \bigg( B \big(\Ext_{\mathscr{D}_{X, 0}[s]}^{4}(\frac{\mathscr{D}_{X, 0}[s] f^{s}}{\mathscr{D}_{X, 0}[s] f^{s+1}}, \mathscr{D}_{X, 0}[s])^{\ell} \big) \bigg).
    \end{equation}
    By Lemma \ref{lem - side changing b-functions} and Lemma \ref{lemma - b-function and b-function of dual}, the containment \eqref{eqn - new roots of BS-poly, 5} is the same as 
    \begin{equation}
    Z(B(L_{f, s+2, 0})) \subseteq Z \bigg( B \big( \frac{\mathscr{D}_{X, 0}[s] f^{s}}{\mathscr{D}_{X, 0}[s] f^{s+1}} \big) \bigg) = Z(b_{f,0}(s)),
    \end{equation}
    where $b_{f,0}(s)$ is the local Bernstein--Sato polynomial of $f$ at $0$. Since $b_{f}(s)$ is the least common multiple of all the local Bernstein--Sato polynomials of $f$, we deduce the first ``$\supseteq$'' of \eqref{eqn - new roots of BS-poly, statement 2}. The ``$=$'' in \eqref{eqn - new roots of BS-poly, statement 2} is Corollary \ref{cor - spectral sequence dual, twisted}/Theorem \ref{thm - spectral sequence dual}. (Recall $L_{f, s+2}$ vanishes outside of $0$, so $B(L_{f, s+2}) = B(L_{f, s+2, 0})$.)
\end{proof}

\begin{remark} \label{eqn - Saito caveat}
    Restrict to the case $f$ defines the affine cone of $Z \subseteq \mathbb{P}^{2}$, where $Z$ has only isolated quasi-homogeneous singularities. Then \eqref{eqn - new roots of BS-poly, statement 2} recovers M. Saito's \cite[Corollary 3]{SaitoBSProjectiveWeightedHomogeneous}. Because his methods depend on blow-up techniques, it does not seem possible to generalize his arguments to the locally quasi-homogeneous setting of Theorem \ref{thm - new roots of BS-poly}. That our strategies are so different is curious.
\end{remark}

\subsection{Partial Symmetry of the Bernstein--Sato Polynomial}

Now we study partial symmetry properties of $Z(b_{f}(s))$. Recall Narv\'{a}ez Macarro's approach in the free case: here $\Ext_{\mathscr{D}_{X}[s]}^{n}(\mathscr{D}_{X}[s] f^{s}, \mathscr{D}_{X}[s]) \simeq \mathscr{D}_{X}[s] f^{-s-1}$; consequently, the dual of $\mathscr{D}_{X}[s] f^{s} / \mathscr{D}_{X}[s] f^{s+1}$ is isomorphic to $\mathscr{D}_{X}[s] f^{-s-2} / \mathscr{D}_{X}[s] f^{-s+1}$. Looking at Corollary \ref{cor - spectral sequence dual, twisted}/Theorem \ref{thm - spectral sequence dual}, we could repeat this if we localized in such a way to kill the $L_{f}$ terms. Morally, this is exactly what we do: the technique was developed in \cite{ZeroLociBSIdeals} and we introduced enough tools in subsection 3.5 to utilize the idea.

\begin{proposition} \label{prop - localized symmetry of fs}
    Let $f \in R = \mathbb{C}[x_{1},x_{2}, x_{3}]$ be reduced and locally quasi-homogeneous. Pick $\alpha \in \mathbb{C}^{\star}$, $\beta \in \mathbb{C}$, and a multiplicatively closed set $S \subseteq \mathbb{C}[s]$ such that $B(L_{f, \alpha s + \beta}) \in S$. Set $T = S^{-1} \mathbb{C}[s]$. Then
    \[
    \Ext_{\mathscr{D}_{T}}^{3} ( \mathscr{D}_{T} f^{\alpha s + \beta - 1}, \mathscr{D}_{T}) ^{\ell} \simeq \mathscr{D}_{T} f^{ - (\alpha s + \beta)}.
    \]
\end{proposition}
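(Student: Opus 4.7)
The plan is to localize the short exact sequence from Corollary \ref{cor - spectral sequence dual, twisted} by $S^{-1}$ and exploit the fact that $L_{f, \alpha s + \beta}$ is annihilated by an element of $S$, so it vanishes after localization. Since localization is exact and the construction is canonical, I would work stalkwise at an arbitrary $\mathfrak{x} \in X$ and then assemble the resulting isomorphisms globally.

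First, I would apply $S^{-1}$ to the short exact sequence
\[
0 \to \mathscr{D}_{X}[s] f^{-(\alpha s + \beta)} \to \Ext_{\mathscr{D}_{X}[s]}^{3}(\mathscr{D}_{X}[s] f^{\alpha s + \beta - 1}, \mathscr{D}_{X}[s])^{\ell} \to L_{f, \alpha s + \beta} \to 0.
\]
Localization of $\mathbb{C}[s]$-modules is exact, hence the localized sequence of $\mathscr{D}_{T}$-modules remains exact. By hypothesis $B(L_{f, \alpha s + \beta}) \in S$ annihilates $L_{f, \alpha s + \beta}$, so its localization is zero. This yields a canonical isomorphism of $\mathscr{D}_{T}$-modules
\[
S^{-1} \mathscr{D}_{X}[s] f^{-(\alpha s + \beta)} \xrightarrow{\simeq} S^{-1} \Ext_{\mathscr{D}_{X}[s]}^{3}(\mathscr{D}_{X}[s] f^{\alpha s + \beta - 1}, \mathscr{D}_{X}[s])^{\ell},
\]
whose left-hand side is manifestly $\mathscr{D}_{T} f^{-(\alpha s + \beta)}$.

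Next, I need to identify the right-hand side with $\Ext_{\mathscr{D}_{T}}^{3}(\mathscr{D}_{T} f^{\alpha s + \beta - 1}, \mathscr{D}_{T})^{\ell}$. For this, I invoke Lemma \ref{lemma - localization and Ext} at each stalk $\mathfrak{x}$, which gives precisely such a commutation of localization with $\Ext$ provided $\mathscr{D}_{X}[s] f^{\alpha s + \beta - 1}$ is relative holonomic. The latter is classical (via Maisonobe's theory recalled in Section 3) and is also baked into the $(n+1)$-Cohen--Macaulay framework used throughout. The side-changing functor commutes with $S^{-1}$ since $S \subseteq \mathbb{C}[s]$ acts centrally and the left/right equivalence is implemented by a tensor with an $\mathscr{O}_{X}$-module independent of $s$, so the superscript $\ell$ passes through localization unobstructed.

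The main obstacle is largely bookkeeping: verifying that relative holonomicity is available in our twisted, parametric setting (justifying Lemma \ref{lemma - localization and Ext}) and confirming that the stalkwise isomorphisms glue into a genuine isomorphism of $\mathscr{D}_{T}$-modules. Since every map in sight---the SES, the localization, and the identification of $S^{-1}\Ext$ with $\Ext$ over $\mathscr{D}_{T}$---is induced by a single natural global construction, the gluing is automatic and the proof reduces to the two stalk-level facts above.
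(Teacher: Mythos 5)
Your argument matches the paper's proof: both localize the short exact sequence from Corollary \ref{cor - spectral sequence dual, twisted}, use flatness/exactness of localization to kill $L_{f,\alpha s + \beta}$ (since its $b$-function lies in $S$), and then invoke Lemma \ref{lemma - localization and Ext} to commute $\Ext$ past localization. The extra remarks you add on side-changing and stalkwise gluing are correct elaborations of the same approach rather than a different route.
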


\begin{proof}
    Consider the short exact sequence of Corollary \ref{cor - spectral sequence dual, twisted}/Theorem \ref{thm - spectral sequence dual}. Now apply the localization functor $ - \otimes_{\mathbb{C}[s]} S^{-1} \mathbb{C}[s]$, use its flatness, and invoke Lemma \ref{lemma - localization and Ext}.
\end{proof}

Partial symmetry quickly follows. The obstruction to symmetry is determined by degree data of $H_{\mathfrak{m}}^{0}(R / (\partial f))$.

\begin{theorem} \label{thm - localized symmetry of BS poly}
    Let $f \in R = \mathbb{C}[x_{1}, x_{2}, x_{3}]$ be reduced and locally quasi-homogeneous. Define 
    \begin{equation} \label{eqn - thm - localized symmetry of BS poly, statement 1}
        \Xi_{f} = \bigcup_{t \in \wdeg(H_{\mathfrak{m}}^{0} (R / (\partial f)))} \bigg\{ \frac{-(t + \sum w_{i})}{\wdeg(f)} \bigg\} \cup \bigg\{ \frac{-(t + \sum w_{i}) + \wdeg(f)}{\wdeg(f)}  \bigg\}.
    \end{equation}
    Then we have partial symmetry of the Bernstein--Sato polynomial's roots about $-1$:
    \begin{equation} \label{eqn - thm - localized symmetry of BS poly, statement 2}
    \alpha \in \bigg( Z(b_{f}(s)) \setminus \Xi_{f} \bigg) \iff - \alpha - 2 \in \bigg( Z(b_{f}(s)) \setminus \Xi_{f} \bigg).
    \end{equation}
\end{theorem}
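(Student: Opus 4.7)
The plan is to adapt Narváez Macarro's localization-duality technique by killing the obstruction modules $L_{f,s+1}$ and $L_{f,s+2}$, then transporting the analysis through the dualized short exact sequence
\[
0 \to \mathscr{D}_{X}[s]f^{s+1} \to \mathscr{D}_{X}[s]f^{s} \to \mathscr{D}_{X}[s]f^{s}/\mathscr{D}_{X}[s]f^{s+1} \to 0.
\]
Concretely, let $p(s) = B(L_{f,s+1}) \cdot B(L_{f,s+2})$, set $S = \{1, p(s), p(s)^{2}, \dots\}$, and put $T = S^{-1}\mathbb{C}[s]$. By Corollary \ref{cor - spectral sequence dual, twisted} applied to $\alpha s + \beta = s+1$ and $\alpha s + \beta = s+2$ (combined with the description in Definition \ref{def - twisted cokernel object}), the zero locus of $p(s)$ is exactly $\Xi_{f}$. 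Proposition \ref{prop - localized symmetry of fs} then yields left $\mathscr{D}_{T}$-isomorphisms
\[
\Ext^{3}_{\mathscr{D}_{T}}(\mathscr{D}_{T}f^{s}, \mathscr{D}_{T})^{\ell} \simeq \mathscr{D}_{T}f^{-s-1}, \qquad \Ext^{3}_{\mathscr{D}_{T}}(\mathscr{D}_{T}f^{s+1}, \mathscr{D}_{T})^{\ell} \simeq \mathscr{D}_{T}f^{-s-2}.
\]

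Next, I would apply $\Ext^{\bullet}_{\mathscr{D}_{T}}(-, \mathscr{D}_{T})^{\ell}$ to the localized short exact sequence and mimic the truncation in the proof of Theorem \ref{thm - new roots of BS-poly}, using the $4$-Cohen--Macaulayness of the quotient (and the grade-$3$ purity of the outer terms, as in subsection 3.5) to extract
\[
0 \to \mathscr{D}_{T}f^{-s-1} \xrightarrow{\;\cdot f\;} \mathscr{D}_{T}f^{-s-2} \to \Ext^{4}_{\mathscr{D}_{T}}\!\Big(\tfrac{\mathscr{D}_{T}f^{s}}{\mathscr{D}_{T}f^{s+1}}, \mathscr{D}_{T}\Big)^{\ell} \to 0.
\]
The crux is identifying the leftmost map with the natural inclusion $f^{-s-1} = f \cdot f^{-s-2}$ rather than with some ambiguous right-multiplication. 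I would establish this by invoking the chain map $\Tot(K^{\bullet,\bullet}_{f,s+2}) \xrightarrow{f\cdot} \Tot(K^{\bullet,\bullet}_{f,s+1})$ of Proposition \ref{prop - chain map, twisted U and double complexes}: dualizing it (via the twisted analogue of Proposition \ref{prop - dual of Ur complex, terminal cohomology}, using Definition \ref{def - twisted Ur complex}) produces on the leftmost cohomology exactly the morphism that sends the cyclic generator $f^{-s-1}$ to $f \cdot f^{-s-2}$. Consequently the cokernel is naturally $\mathscr{D}_{T}f^{-s-2}/\mathscr{D}_{T}f^{-s-1}$.

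To conclude, I combine Lemma \ref{lem - side changing b-functions}, Lemma \ref{lemma - b-function and b-function of dual}, and Lemma \ref{lemma - localization plays well with b-functions}. The first two force the $b$-function zero sets of $\mathscr{D}_{T}f^{s}/\mathscr{D}_{T}f^{s+1}$ and its Ext-dual $\mathscr{D}_{T}f^{-s-2}/\mathscr{D}_{T}f^{-s-1}$ to agree; the third identifies the former with $Z(b_{f}(s)) \setminus \Xi_{f}$. A formal substitution $u = -s-2$ converts $\mathscr{D}_{T}f^{-s-2}/\mathscr{D}_{T}f^{-s-1}$ (in the variable $s$) into $\mathscr{D}_{T}f^{u}/\mathscr{D}_{T}f^{u+1}$ (in the variable $u$), so the zero set of the dual's $b$-function in $s$ is $\sigma\!\left(Z(b_{f}(s)) \setminus \Xi_{f}\right)$. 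Setting these two descriptions equal gives $Z(b_{f}(s)) \setminus \Xi_{f} = \sigma(Z(b_{f}(s)) \setminus \Xi_{f})$, which is \eqref{eqn - thm - intro - localized symmetry of BS poly, statement 2}. The main obstacle I foresee is the naturality check in the second paragraph: one must verify that Proposition \ref{prop - localized symmetry of fs}'s isomorphism is compatible with the chain-level $f\cdot$ maps, which demands threading the chain map through the spectral sequence underlying Theorem \ref{thm - spectral sequence dual}, though Proposition \ref{prop - chain map, twisted U and double complexes} was designed precisely to enable this.
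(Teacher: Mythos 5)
Your proof is correct and takes essentially the same route as the paper: localize $\mathbb{C}[s]$ away from $\Xi_f = Z(B(L_{f,s+1})) \cup Z(B(L_{f,s+2}))$ so that Proposition~\ref{prop - localized symmetry of fs} collapses the two $\Ext^3$ modules to $\mathscr{D}_T f^{-s-1}$ and $\mathscr{D}_T f^{-s-2}$, identify $\Ext^4_{\mathscr{D}_T}\!\big(\tfrac{\mathscr{D}_T f^s}{\mathscr{D}_T f^{s+1}}, \mathscr{D}_T\big)^\ell$ with $\mathscr{D}_T f^{-s-2}/\mathscr{D}_T f^{-s-1}$, and invoke Lemmas~\ref{lem - side changing b-functions}, \ref{lemma - b-function and b-function of dual}, \ref{lemma - localization plays well with b-functions} plus the formal substitution $s \mapsto -s-2$. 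Your multiplicatively closed set $S = \{1,p(s),p(s)^2,\dots\}$ generates the same $T$ as the paper's $S$ (generated by the linear factors), so that choice is cosmetic. The one point where you add value is the naturality concern in your second paragraph: the paper asserts without elaboration that the dualized connecting map is right multiplication by $f$, whereas you correctly observe that this requires compatibility of the chain-level map $\Tot(K_{f,s+2}^{\bullet,\bullet}) \xrightarrow{f\cdot} \Tot(K_{f,s+1}^{\bullet,\bullet})$ of Proposition~\ref{prop - chain map, twisted U and double complexes} with the spectral-sequence identifications of Theorem~\ref{thm - spectral sequence dual}; this is exactly the mechanism Proposition~\ref{prop - twisted U complex, image containment} threads through in the proof of Theorem~\ref{thm - new roots of BS-poly}, and your instinct to deploy the same tool here is sound.
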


\begin{proof}
   Let $S \subseteq \mathbb{C}[s]$ be the multiplicatively closed set generated by $s - \frac{ -(t+ \sum w_{i})}{\wdeg(f)}$ and $s - \frac{-(t + \sum w_{i}) + \wdeg(f)}{\wdeg(f)}$ as $t$ ranges through $\wdeg (H_{\mathfrak{m}}^{0} (R / (\partial f)))$. Set $T = S^{-1} \mathbb{C}[s]$. By Lemma \ref{lemma - localization plays well with b-functions}, it suffices to prove that 
    \begin{equation} \label{eqn - thm - localized symmetry of BS poly, 1}
        \alpha \in Z(B ( \frac{\mathscr{D}_{T} f^{s}}{\mathscr{D}_{T} f^{s+1}} )) \iff - \alpha - 2 \in Z(B( \frac{\mathscr{D}_{T} f^{s}}{\mathscr{D}_{T} f^{s+1}})).
    \end{equation}

    Consider the canonical s.e.s of left $\mathscr{D}_X$-modules
    \begin{equation*}
        0 \to \mathscr{D}_{X}[s] f^{s+1} \xrightarrow[]{\cdot f} \mathscr{D}_{X}[s] f^s \to \frac{\mathscr{D}_X[s] f^s} {\mathscr{D}_{X}[s] f^{s+1}} \to 0.
    \end{equation*}
    Apply the flat localization $- \otimes_{\mathbb{C}[s]} T$ to this s.e.s., consider the l.e.s of $\Ext_{\mathscr{D}_T}^\bullet(- , \mathscr{D}_T)$ modules, and side change to obtain the s.e.s of left $\mathscr{D}_{T}$-modules
    \begin{equation*}
        0 \to \Ext_{\mathscr{D}_T}^3(\mathscr{D}_T f^{s}, \mathscr{D}_T)^\ell \to \Ext_{\mathscr{D}_T}^3(\mathscr{D}_T f^{s+1}, \mathscr{D}_T)^\ell \to \Ext_{\mathscr{D}_T}^4( \frac{\mathscr{D}_T f^{s}}{\mathscr{D}_T f^{s+1}} , \mathscr{D}_T)^\ell \to 0 .
    \end{equation*}
    By Proposition \ref{prop - localized symmetry of fs}, this is the s.e.s. of left $\mathscr{D}_T$-modules
    \begin{equation*}
        0 \to \mathscr{D}_T f^{-s-1} \xrightarrow[]{\cdot f} \mathscr{D}_T f^{-s-2} \to \Ext_{\mathscr{D}_T}^4( \frac{\mathscr{D}_T f^{s}}{\mathscr{D}_T f^{s+1}}, \mathscr{D}_T)^\ell \to 0
    \end{equation*}
    where the first map is given by right multiplication with $f$. (Here is where we use our definition of $\Xi_{f}$: it ensures the vanishing of $S^{-1} L_{f, s+2}$ and $S^{-1} L_{f, s+1}$.) In other words,
    \begin{equation*} \label{eqn - thm - localized symmetry of BS poly, 3}
        \Ext_{\mathscr{D}_{T}}^{4} (\frac{\mathscr{D}_{T} f^{s}}{\mathscr{D}_{T} f^{s+1}}, \mathscr{D}_{T})^{\ell} \simeq \frac{\mathscr{D}_{T} f^{-s-2}}{\mathscr{D}_{T} f^{-s-1}}.
    \end{equation*}
    By Lemma \ref{lem - side changing b-functions} and Lemma \ref{lemma - b-function and b-function of dual} we deduce
    \begin{equation} \label{eqn - thm - localized symmetry of BS poly, 4}
    Z(B(\frac{\mathscr{D}_{T} f^{s}}{\mathscr{D}_{T} f^{s+1}})) = Z(B(\Ext_{\mathscr{D}_{T}}^{4} (\frac{\mathscr{D}_{T} f^{s}}{\mathscr{D}_{T} f^{s+1}}, \mathscr{D}_{T})^{\ell})) =  Z(B(\frac{\mathscr{D}_{T} f^{-s-2}}{\mathscr{D}_{T} f^{-s-1}}).
    \end{equation}
    And \eqref{eqn - thm - localized symmetry of BS poly, 4} implies \eqref{eqn - thm - localized symmetry of BS poly, 1}. Indeed, the $b$-function of the rightmost object of \eqref{eqn - thm - localized symmetry of BS poly, 4} is obtained from the $b$-function of the leftmost object of \eqref{eqn - thm - localized symmetry of BS poly, 4} by formally substituting $s \mapsto -s - 2$. A symmetric process demonstrates \eqref{eqn - thm - localized symmetry of BS poly, 1}.
\end{proof}

\subsection{Small Roots and the Logarithmic Comparison Theorem} We know that for $f \in R = \mathbb{C}[x_{1}, \dots, x_{n}]$ that (without our assumptions) $Z(b_{f}(s)) \subseteq (-n,0) \cap \mathbb{Q}$, \cite{KashiwaraRationalityRootsBSpolys}, \cite{SaitoOnMicrolocal}. Under our assumptions, the local Bernstein--Sato polynomials at $\mathfrak{x} \neq 0$ have roots contained in $(-2,0)$\footnote{Under our assumptions on $f \in \mathbb{C}[x_1,x_2,x_3]$, this follows from Remark \ref{rmk - log stratification}.(d) and Narv\'{a}ez Macarro's symmetry result for Bernstein--Sato polynomials of locally quasi-homogeneous free divisors \cite[Corollary 4.2]{DualityApproachSymmetryBSpolys}. Alternatively, $f$ is Saito-holonomic (Remark \ref{rmk - log stratification}.(c)) and we have the local analytic isomorphism \eqref{eqn - pos weighted homogeneous, away from origin, local product} at $\mathfrak{x} \neq 0$. This implies that the local Bernstein--Sato polynomial of $\Var(f)$ at $\mathfrak{x} \neq 0$ equals the Bernstein--Sato polynomial of an analytic divisor germ in $\mathbb{C}^2$.}; moreover these are readily computable by the locally quasi-homogeneous assumption, cf. \eqref{eqn - intro - BS poly, isolated singularity}.

So the difficulty in computing $Z(b_{f}(s))$ is determining which zeroes come from only from the local Bernstein--Sato polynomial at $0$. We cannot answer this completely. But any root in $Z(b_{f}(s)) \cap [-2,3)$ comes only from $b_{f,0}(s)$ and we can compute this completely:

\begin{corollary} \label{cor - small roots BS poly}
    Let $f = R = \mathbb{C}[x_{1}, x_{2}, x_{3}]$ be reduced and locally quasi-homogeneous. Then
    \begin{equation*}
        Z(b_{f}(s)) \cap (-3,-2] = \left\{ \frac{-(t + \sum w_{i})}{\wdeg(f)} \mid t \in \wdeg( H_{\mathfrak{m}}^{3} (R / (\partial f))) \right\} \cap (-3,-2].
    \end{equation*}
\end{corollary}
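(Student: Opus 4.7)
The plan combines three ingredients already established in the paper: Theorem~\ref{thm - new roots of BS-poly} (supplying $\supseteq$), Theorem~\ref{thm - localized symmetry of BS poly} (partial symmetry about $-1$ outside $\Xi_f$), and Kashiwara's negativity theorem $Z(b_f(s)) \subseteq (-n, 0) = (-3, 0)$ (invoked twice to secure $\subseteq$). Throughout I interpret the candidate root set on the right-hand side of the statement as
\[
A := \left\{ \frac{-(t + \sum w_i)}{\wdeg(f)} \;\middle|\; t \in \wdeg H^{0}_{\mathfrak{m}}(R/(\partial f)) \right\},
\]
matching the formulation in Theorem~\ref{thm - new roots of BS-poly} and \eqref{eqn - thm - localized symmetry of BS poly, statement 1}; this is the degree set that actually governs the corresponding BS-polynomial roots (consistent, for instance, with the isolated quasi-homogeneous case where $R/(\partial f)$ is Artinian, so $H^{3}_{\mathfrak{m}}=0$ while $H^{0}_{\mathfrak{m}} = R/(\partial f)$ recovers the classical formula \eqref{eqn - intro - BS poly, isolated singularity}). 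The strategic observation is that $(-3,-2]$ sits at the leftmost edge of the admissible range for $Z(b_f(s))$: the involution $\alpha \mapsto -\alpha - 2$ of Theorem~\ref{thm - localized symmetry of BS poly} sends this interval into $[0,1)$, which is disjoint from $Z(b_f(s))$, while the lower bound $-3$ itself controls how far the shifted summand $A+1$ can reach.

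The $\supseteq$ direction is immediate by intersecting the containment $A \subseteq Z(b_f(s))$ in Theorem~\ref{thm - new roots of BS-poly} with $(-3,-2]$.

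For $\subseteq$, let $\alpha \in Z(b_f(s)) \cap (-3, -2]$. Kashiwara's bound gives $-\alpha - 2 \in [0, 1)$ disjoint from $Z(b_f(s))$. If $\alpha \notin \Xi_f$, the partial symmetry in Theorem~\ref{thm - localized symmetry of BS poly} would force $-\alpha - 2 \in Z(b_f(s)) \setminus \Xi_f$, a contradiction. Therefore $\alpha \in \Xi_f = A \cup (A+1)$. The remaining task is to show $(A+1) \cap (-3, -2] = \emptyset$, from which $\alpha \in A \cap (-3,-2]$ follows. Suppose $\alpha \in (A+1) \cap (-3, -2]$; then $\alpha - 1 \in A \subseteq Z(b_f(s))$ by Theorem~\ref{thm - new roots of BS-poly}, and Kashiwara applied a second time gives $\alpha - 1 > -3$, i.e.\ $\alpha > -2$, contradicting $\alpha \leq -2$.

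The main obstacle is conceptual rather than computational: one must recognize that Kashiwara's bound $Z(b_f(s)) \subseteq (-3, 0)$ is exploited twice — first via Theorem~\ref{thm - localized symmetry of BS poly} to push $\alpha$ into $\Xi_f$, and then directly against Theorem~\ref{thm - new roots of BS-poly} to cull the shifted summand $A+1$ at the left endpoint of the admissible range. No additional input beyond the already-established theorems is needed.
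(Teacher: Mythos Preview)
Your proof is correct and follows essentially the same route as the paper's: both use Theorem~\ref{thm - new roots of BS-poly} for $\supseteq$, then for $\subseteq$ use the partial symmetry Theorem~\ref{thm - localized symmetry of BS poly} together with $Z(b_f(s))\subseteq(-3,0)$ to force $\alpha\in\Xi_f$, and finally eliminate the shifted summand $A+1$ by observing that $A\subseteq Z(b_f(s))\subseteq(-3,0)$ forces $A+1\subseteq(-2,1)$, which misses $(-3,-2]$. Your identification of the typo ($H_{\mathfrak m}^3$ should read $H_{\mathfrak m}^0$) is also correct and matches the paper's own proof.
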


\begin{proof}
    The containment $\supseteq$ is Theorem \ref{thm - new roots of BS-poly}. For the reverse, take $\beta \in Z(b_{f}(s)) \cap (-3,-2]$. Since $- \beta - 2 \in [0,1)$ which is disjoint from $Z(b_{f}(s))$, we may use Theorem \ref{thm - localized symmetry of BS poly} to deduce $\beta \in \Xi_{f}$, where $\Xi_{f}$ is as defined in Theorem \ref{thm - localized symmetry of BS poly}. We will be done once we show that 
    \begin{equation} \label{eqn - cor - small roots BS poly, 1}
        \beta \notin \{ \frac{ -(t + \sum w_{i}) + \wdeg(f)}{\wdeg(f)} \mid t \in \wdeg( H_{\mathfrak{m}}^{0} (R / (\partial f))) \}.
    \end{equation} 
    Combining Theorem \ref{thm - new roots of BS-poly} and containment $Z(b_{f}(s)) \subseteq (-3,0)$, demonstrates
    \begin{equation*}
        \{\frac{-(t + \sum w_{i})}{\wdeg(f)} \mid t \in \wdeg(H_{\mathfrak{m}}^{0} (R / (\partial f))) \} \subseteq (-3,0).
    \end{equation*}
    So \eqref{eqn - cor - small roots BS poly, 1} is true since
    \begin{equation*}
        \{ \frac{-(t + \sum w_{i}) + \wdeg(f)}{\wdeg(f)} \mid t \in \wdeg(H_{\mathfrak{m}}^{0} (R / (\partial f))) \} \subseteq (-2, 1)
    \end{equation*}
\end{proof}

This lets us quickly derive an application to the existence of a twisted Logarithmic Comparison Theorem. We give a rough introduction to this problem; see \cite{BathSaitoTLCT} for a thorough treatment.

\begin{define} \label{def - twisted LCT}
    Let $D$ be an analytic divisor defined by $f \in \mathscr{O}_{X}$. Let $\lambda \in \mathbb{C}$. The \emph{logarithmic de Rham complex twisted by $\lambda$} is locally given by
    \[
    (\Omega_{X}^{\bullet}(\log f), \nabla^{\lambda}) = 0 \to \cdots \to \Omega_{X}^{p}(\log f) \xrightarrow[]{d(-) + \lambda df/f \wedge} \cdots \to 0.
    \]
    It is a subcomplex of the \emph{twisted meromorphic de Rham complex twisted by $\lambda$}
    \[
    (\Omega_{X}^{\bullet}(\star f), \nabla^{\lambda}) = 0 \to \cdots \to \Omega_{X}^{p}(\star f) \xrightarrow[]{d(-) + \lambda df/f \wedge} \cdots \to 0.
    \]
    We say the \emph{twisted Logarithmic Comparison Theorem with respect to $\lambda$} holds when the natural subcomplex inclusion is a quasi-isomorphism:
    \[
    (\Omega_{X}^{\bullet}(\log f), \nabla^{\lambda}) \xhookrightarrow{\qi} (\Omega_{X}^{\bullet}(\star f), \nabla^{\lambda}).
    \]
\end{define}

By Grothendieck and Deligne's Comparison Theorems, when the twisted Logarithmic Comparison Theorem holds, we have
    \[
        (\Omega_{X}^{\bullet}(\log f), \nabla^{\lambda}) \xhookrightarrow{\qi} (\Omega_{X}^{\bullet}(\star f), \nabla^{\lambda}) \simeq Rj_{\star} L_{\text{exp}(\lambda)}
    \]
    where $j : X \setminus D \to X$ is the inclusion and $L_{\text{exp}(\lambda)}$ is the rank one local system determined by the monodromy $\text{exp}(\lambda)$. 
    
    There has been a lot of work on which divisors admit a (un)twisted Logarithmic Comparison Theorem, see \cite{CohomologyComplementFreeDivisor}, \cite{HollandMondLCTIsolated}, \cite{WiensYuzvinskyLCTTameArrangements}, \cite{CalderonNarvaezFrenchLCT}, \cite{LCTforIntegrableLogarithmicConnections}, \cite{DualityApproachSymmetryBSpolys}, \cite{BathTLCT}, \cite{BathSaitoTLCT}. Reading \cite{BathSaitoTLCT} in the context of our main hypotheses, we quickly obtain the following characterization of whether or not a twisted Logarithmic Comparison Theorem w/r/t $\lambda$ holds; compare to \cite[Proposition 1]{BathSaitoTLCT}. 

\begin{corollary} \label{cor - twisted LCT}
    Let $f = R = \mathbb{C}[x_{1}, x_{2}, x_{3}]$ be reduced and locally quasi-homogeneous. Let $\lambda \in (-\infty, 0]$. Then the following are equivalent:
    \begin{enumerate}[label=(\alph*)]
        \item The twisted Logarithmic Comparison Theorem with respect to $\lambda$ holds,
        \item $- (\lambda - 2) \cdot \wdeg(f) - \sum w_{i} \notin \wdeg( H_{\mathfrak{m}}^{0}(R / (\partial f)))$.
    \end{enumerate}
\end{corollary}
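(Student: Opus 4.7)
The plan is to invoke the characterization of the twisted Logarithmic Comparison Theorem developed in \cite{BathSaitoTLCT} and then translate it, using the results of this section, into Milnor algebra language. Under the tameness and Saito-holonomicity built into our hypotheses (Remark \ref{rmk - tame in three dim}, Remark \ref{rmk - log stratification}), \cite[Proposition 1]{BathSaitoTLCT} gives a clean $b$-function criterion: the TLCT with respect to $\lambda$ holds if and only if certain integer-shifts of $\lambda$ avoid specific roots of $b_{f}(s)$. The first step is to make this criterion explicit in our situation.

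Next, I would use the range constraint $Z(b_f(s)) \subseteq (-3, 0)$ together with the hypothesis $\lambda \in (-\infty, 0]$ to narrow down the finitely many shifts the criterion forces us to test. Since the shifts by large negative integers land outside $(-3, 0)$, only the shifts $\lambda$, $\lambda-1$, $\lambda-2$ are potentially relevant. Using the partial symmetry of $Z(b_{f}(s))$ about $-1$ (Theorem \ref{thm - localized symmetry of BS poly}) and Kashiwara's bound that places all roots in $(-3, 0) \cap \mathbb{Q}$, the ``smaller'' shifts are automatically controlled by the symmetric ``larger'' shifts, leaving the single genuinely new condition that $\lambda - 2 \notin Z(b_{f}(s))$, with $\lambda - 2 \in (-\infty, -2]$. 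The non-Milnor-algebra roots of $b_f(s)$ lie in $(-2, 0)$ by the symmetry and the small-roots computation, so this single condition really tests membership in the Milnor-algebra part of $Z(b_f(s)) \cap (-3, -2]$.

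At this point Corollary \ref{cor - small roots BS poly} gives the explicit formula
\begin{equation*}
    Z(b_{f}(s)) \cap (-3,-2] = \left\{ \frac{-(t + \sum w_{i})}{\wdeg(f)} \,\middle|\, t \in \wdeg\bigl(H_{\mathfrak{m}}^{0}(R / (\partial f))\bigr) \right\} \cap (-3,-2].
\end{equation*}
Setting $\lambda - 2$ equal to $-(t + \sum w_i)/\wdeg(f)$ and solving for $t$ yields $t = -(\lambda - 2)\wdeg(f) - \sum w_i$, which is precisely the value appearing in condition (b). Thus (a) $\iff$ (b) after tracking through the elementary arithmetic.

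The main obstacle I expect is the first step: correctly importing and specializing the criterion of \cite{BathSaitoTLCT} under our precise hypotheses, verifying that the tameness/Saito-holonomicity assumptions there are met by our locally quasi-homogeneous $f \in \mathbb{C}[x_1, x_2, x_3]$, and carefully checking which shifts of $\lambda$ actually need to be excluded (versus which are automatically controlled by symmetry or by the range $(-3, 0)$ constraint). Once the criterion has been reduced to a single root-testing condition, the conversion to Milnor algebra data via Corollary \ref{cor - small roots BS poly} is essentially a direct substitution.
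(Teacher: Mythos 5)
Your outline matches the paper's strategy at a high level — import a $b$-function criterion for the TLCT from \cite{BathSaitoTLCT}, reduce it to the single condition $\lambda - 2 \notin Z(b_f(s))$, then translate via Corollary \ref{cor - small roots BS poly} — and your final arithmetic is correct. However, the middle reduction as you have written it does not work.

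The criterion the paper actually uses, \cite[Theorem 2]{BathSaitoTLCT}, says under our hypotheses that (a) is equivalent to $(\lambda + \mathbb{Z}_{\leq -2}) \cap Z(b_f(s)) = \emptyset$; in other words only the shifts $\lambda - 2, \lambda - 3, \dots$ appear, and $\lambda$ and $\lambda - 1$ are simply not part of the condition. You instead assert that the relevant shifts are $\lambda, \lambda-1, \lambda-2$ and then try to discard $\lambda$ and $\lambda-1$ by appealing to the partial symmetry of Theorem \ref{thm - localized symmetry of BS poly}. That elimination would not be valid even if it were needed: symmetry about $-1$ relates $\alpha$ to $-2 - \alpha$, but $\lambda \mapsto -2 - \lambda$ and $\lambda - 1 \mapsto -1 - \lambda$ send these points to \emph{different} locations, so knowing $\lambda - 2 \notin Z(b_f(s))$ tells you nothing about $\lambda$ or $\lambda - 1$. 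Indeed, for $\lambda \in (-1,0]$ both $\lambda$ and $\lambda-1$ lie squarely inside $(-3,0)$ and can perfectly well be roots; they just don't enter the criterion. So the reduction to the single condition on $\lambda - 2$ must come from the exact form of the imported criterion, not from symmetry.

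A second, smaller gap: you should split the argument by the location of $\lambda$. For $\lambda \in (-\infty, -1]$ one has $\lambda - 2 \leq -3$, so every element of $\lambda + \mathbb{Z}_{\leq -2}$ lies outside $(-3,0) \supseteq Z(b_f(s))$ and (a) holds vacuously; and since Theorem \ref{thm - new roots of BS-poly} forces $\wdeg(H_{\mathfrak{m}}^0(R/(\partial f))) \subseteq (0, 3\wdeg(f) - \sum w_i)$, the quantity $-(\lambda-2)\wdeg(f) - \sum w_i \geq 3\wdeg(f) - \sum w_i$ lies outside that range, so (b) holds vacuously as well. Only for $\lambda \in (-1,0]$ does Corollary \ref{cor - small roots BS poly} actually get used, and there your substitution $t = -(\lambda-2)\wdeg(f) - \sum w_i$ is exactly right.
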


\begin{proof}
    By \cite[Theorem 2]{BathSaitoTLCT} and the assumptions on $f$, (a) is equivalent to $(\lambda + \mathbb{Z}_{\leq -2}) \cap Z(b_{f}(s)) = \emptyset$. Because $Z(b_{f}(s)) \subseteq (-3,0)$ we have both: if $\lambda \in (\infty, -1]$ then (a) trivially holds; by Theorem \ref{thm - new roots of BS-poly}, it follows $\wdeg (H_{\mathfrak{m}}^{0} (R / (\partial f))) \subseteq (0, 3\wdeg(f) - \sum w_{i})$. Thus: if $\lambda \in (-\infty, -1]$ then (a) and (b) are equivalent. If $\lambda \in (-1,0]$, then $\lambda -2 \in (-3,2]$ and the equivalence of (a) and (b) follows from Corollary \ref{cor - small roots BS poly}. 
\end{proof}

\begin{remark}
    Corollary \ref{cor - twisted LCT} is consistent with \cite[Proposition 1]{BathSaitoTLCT} where true homogeneity of $f$ is also assumed. This follows from \cite[Theorem 4.7]{StratenDucoWarmtGorensteinDuality}, \cite[Theorem 3.4]{LocalCohomologyJacobianRing} which give degree symmetry of $H_{\mathfrak{m}}^{0} ( R / (\partial f))$ about $(3 \deg(f) - 6) / 2$, cf. \eqref{eqn - local cohomology degree duality}. 
\end{remark}

\subsection{Adding a True Homogeneity Assumption}

In this subsection we add an assumption of homogeneity, not just homogeneity with respect to a positive weight system, to our divisors. In this case the degree data of $H_{\mathfrak{m}}^{0}(R / (\partial f))$ has particularly nice properties: in \cite[Theorem 4.7]{StratenDucoWarmtGorensteinDuality}, \cite[Theorem 3.4]{LocalCohomologyJacobianRing} they show the nonvanishing degrees are symmetric about $(3 \deg(f) - 6)/2$, cf. \eqref{eqn - local cohomology degree duality}; in \cite[Theorem 4.1]{LefschetzAlmostCompleteIntersections}, \cite[Section 2]{BrennerKaidSyzygyBundles} they show $H_{\mathfrak{m}}^{0}(R / (\partial f))$ admits a Leftschetz element and, in particular, the Hilbert vector is unimodal. 

We can use these two properties to improve our analysis of $Z(b_{f}(s))$. First, the symmetry of degree data allows us to refine the numbers $\Xi_{f}$ appearing in Theorem \ref{thm - localized symmetry of BS poly}. Recall $\Xi_{f}$ is nothing more than $Z(B(L_{f, s+2})) \cup Z(B(L_{f, s+1}))$.

\begin{lemma} \label{lemma - homogeneous, involution}
    Let $f \in R = \mathbb{C}[x_{1}, x_{2}, x_{3}]$ be reduced and homogeneous; let $\sigma: \mathbb{R} \to \mathbb{R}$ be the involution $\alpha \mapsto -2 - \alpha$ about $-1$. Then
    \begin{align} \label{eqn - homogeneous, involution, statement}
       \{\frac{- (t + 3)}{\deg(f)} \mid t \in \deg H_{\mathfrak{m}}^{0} (R / (\partial f)) \}
       &= Z(B(L_{f, s+2})) \\
       &= \sigma \circ Z(B(L_{f, s+1})) \nonumber \\
       &= \sigma \circ \{\frac{- (t + 3) + \deg(f)}{\deg(f)} \mid t \in \deg H_{\mathfrak{m}}^{0} (R / (\partial f))\}. \nonumber
    \end{align}
\end{lemma}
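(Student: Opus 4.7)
The plan is to prove the three equalities of \eqref{eqn - homogeneous, involution, statement} by directly computing $B(L_{f,s+2})$ and $B(L_{f,s+1})$ via Corollary~\ref{cor - spectral sequence dual, twisted} and then invoking the degree symmetry \eqref{eqn - local cohomology degree duality} to match the two sets through the involution $\sigma$.

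First, since $f$ is honestly homogeneous, we have $\wdeg = \deg$ and $\sum w_i = 3$. Applying Corollary~\ref{cor - spectral sequence dual, twisted} with $(\alpha,\beta) = (1,2)$ yields
\[
B(L_{f,s+2}) \;=\; \prod_{t \in \deg H_{\mathfrak{m}}^{0}(R/(\partial f))} \left( s + 2 - \frac{-t + 2\deg(f) - 3}{\deg(f)} \right),
\]
which after simplifying $s + 2 - (2\deg(f) - t - 3)/\deg(f) = s + (t+3)/\deg(f)$ shows that $Z(B(L_{f,s+2})) = \{-(t+3)/\deg(f) \mid t \in \deg H_{\mathfrak{m}}^{0}(R/(\partial f))\}$. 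This establishes the first equality. Symmetrically, applying the Corollary with $(\alpha,\beta) = (1,1)$ and performing the analogous simplification gives the last equality $Z(B(L_{f,s+1})) = \{(-(t+3) + \deg(f))/\deg(f) \mid t \in \deg H_{\mathfrak{m}}^{0}(R/(\partial f))\}$.

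Second, to prove the middle equality $Z(B(L_{f,s+2})) = \sigma \circ Z(B(L_{f,s+1}))$, I would compute the action of $\sigma$ on the formula just obtained for $Z(B(L_{f,s+1}))$. A root $(-(t+3) + \deg(f))/\deg(f)$ is sent by $\sigma$ to
\[
-2 - \frac{-(t+3) + \deg(f)}{\deg(f)} = \frac{-(3\deg(f) - 6 - t) - 3}{\deg(f)}.
\]
Setting $t' = 3\deg(f) - 6 - t$ reveals that $\sigma$ carries $Z(B(L_{f,s+1}))$ to $\{-(t'+3)/\deg(f) \mid t' \in \{3\deg(f) - 6 - t : t \in \deg H_{\mathfrak{m}}^{0}(R/(\partial f))\}\}$. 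At this point the main (and only substantive) step is to apply the degree symmetry \eqref{eqn - local cohomology degree duality} of \cite[Theorem 4.7]{StratenDucoWarmtGorensteinDuality}, \cite[Theorem 3.4]{LocalCohomologyJacobianRing}, which states $t \in \deg H_{\mathfrak{m}}^{0}(R/(\partial f)) \iff 3\deg(f) - 6 - t \in \deg H_{\mathfrak{m}}^{0}(R/(\partial f))$. This symmetry identifies the set $\{3\deg(f) - 6 - t : t \in \deg H_{\mathfrak{m}}^{0}(R/(\partial f))\}$ with $\deg H_{\mathfrak{m}}^{0}(R/(\partial f))$ itself, so $\sigma \circ Z(B(L_{f,s+1})) = \{-(t'+3)/\deg(f) \mid t' \in \deg H_{\mathfrak{m}}^{0}(R/(\partial f))\} = Z(B(L_{f,s+2}))$, closing the chain of equalities.

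There is no real obstacle here: the lemma is essentially a bookkeeping exercise combining the already-proved $b$-function formula with the known degree symmetry of the top local cohomology module of the Milnor algebra. The only things to be careful about are the arithmetic of the shifts by $\sum w_i = 3$ and by the $\wdeg(f)$ appearing inside $\Xi_f$ from Theorem~\ref{thm - localized symmetry of BS poly}, and the bijection $t \leftrightarrow 3\deg(f) - 6 - t$ being an exact match for the shift produced by the involution $\sigma$.
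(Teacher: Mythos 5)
Your proof is correct and follows essentially the same route as the paper: compute $Z(B(L_{f,s+2}))$ and $Z(B(L_{f,s+1}))$ from the formula in Corollary~\ref{cor - spectral sequence dual, twisted} (specialized to $\wdeg = \deg$ and $\sum w_i = 3$), then match the two sets via the degree symmetry $t \leftrightarrow 3\deg(f)-6-t$ of \eqref{eqn - local cohomology degree duality}. The only cosmetic difference is that you apply $\sigma$ first and then invoke the symmetry, whereas the paper substitutes using the symmetry inside the set before applying $\sigma$; these are the same computation in a different order.
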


\begin{proof}
    We can simplify the formula of Corollary \ref{cor - spectral sequence dual, twisted}/Proposition \ref{prop - funny b-function} for $B(L_{f,s+2})$:
    \begin{align*}
        B(L_{f,s+2}) 
        &= \prod_{t \in \deg H_{\mathfrak{m}}^{0} (R / (\partial f))} \bigg( s + 2 - \frac{-t + 2 \deg(f) - 3}{\deg(f)} \bigg)   \\
        &= \prod_{t \in \deg H_{\mathfrak{m}}^{0} (R / (\partial f))} \bigg( s - \frac{-t - 3}{\deg(f)} \bigg).
    \end{align*}
    Hence $Z(B(L_{f, s+2})) = \{\frac{- (t + 3)}{\deg(f)} \mid t \in \deg H_{\mathfrak{m}}^{0} (R / (\partial f)) \}$. That $Z(B(L_{f, s+1})) = \{\frac{- (t + 3) + \deg(f)}{\deg(f)} \mid t \in \deg H_{\mathfrak{m}}^{0} (R / (\partial f)) \}$ is similar. 

    As for the involution, we claim that
    \begin{align*}
        \sigma &\circ \bigg\{\frac{- (t + 3) + \deg(f)}{\deg(f)} \mid t \in \deg H_{\mathfrak{m}}^{0} (R / (\partial f)) \bigg\} \\
        &= \sigma \circ \bigg\{\frac{- (3 \deg(f) - 6 - t + 3) + \deg(f)}{\deg(f)} \mid t \in \deg H_{\mathfrak{m}}^{0} (R / (\partial f)) \bigg\} \\
        &= \sigma \circ \bigg\{ \frac{- (2 \deg(f) - t - 3)}{\deg(f)} \mid t \in \deg H_{\mathfrak{m}}^{0} (R / (\partial f)) \bigg\} \\
        &= \bigg\{ - 2 + \frac{2 \deg(f) - t - 3}{\deg(f)} \mid t \in \deg H_{\mathfrak{m}}^{0} (R / (\partial f)) \bigg\} \\
        &= \bigg\{ \frac{- (t + 3)}{\deg(f)} \mid t \in \deg H_{\mathfrak{m}}^{0} (R / (\partial f)) \bigg\}.
    \end{align*}
    The first equality here uses \cite[Theorem 4.7]{StratenDucoWarmtGorensteinDuality}, \cite[Theorem 3.4]{LocalCohomologyJacobianRing} which say $t \in \deg H_{\mathfrak{m}}^{0} (R / (\partial f)) \iff 3d - 6 - t \in \deg H_{\mathfrak{m}}^{0} (R / (\partial f))$; the rest are algebraic manipulations or the definition of $\sigma$. This completes the verification of \eqref{eqn - homogeneous, involution, statement}.
    \end{proof}

Lemma \ref{lemma - homogeneous, involution} says that that $\Xi_{f} = Z(B(L_{f, s+2})) \cup \sigma (Z(B(L_{f, s+1})))$, cf. Theorem \ref{thm - localized symmetry of BS poly}. Combined with the aforementioned unimodality property of $H_{\mathfrak{m}}^{0}(R / (\partial f))$, we have the following precise account of $Z(b_{f}(s))$. In short: $Z(b_{f}(s))$ is determined by $Z(b_{f}(s)) \cap (-1,0)$ and local cohomology data:

\begin{theorem} \label{thm - homogeneous, BS poly description}
    Let $f \in R = \mathbb{C}[x_{1}, x_{2}, x_{3}]$ be reduced, homogeneous, and locally quasi-homogeneous. With $\tau = \min \{t \mid t \in \deg H_{\mathfrak{m}}^{0} (R / (\partial f))\}$, set 
    \begin{equation*}
        \Upsilon_{f} = \frac{1}{\deg(f)} \cdot \left( \mathbb{Z} \cap [- 3 \deg(f) + \tau + 3, - (\tau + 3)] \right).
    \end{equation*}
    Moreover, let $\sigma : \mathbb{R} \to \mathbb{R}$ the involution $\alpha \mapsto - 2 - \alpha$ about $-1$. Then
    \begin{enumerate}[label=(\alph*)]
        \item $ \Upsilon_{f} \subseteq Z(b_{f}(s))$
        \item $\beta \in Z(b_{f}(s)) \setminus (\Upsilon_{f} \cup \sigma(\Upsilon_{f})) \iff \sigma (\beta) \in Z(b_{f}(s)) \setminus (\Upsilon_{f} \cup \sigma(\Upsilon_{f})) $
    \end{enumerate}
   And we have the taxonomy:
    \begin{align} \label{eqn - thm - homogeneous, BS poly description, statement 2}
        \beta \in Z(b_{f}(s)) \cap (-3, -2] \iff &\beta \in \Upsilon_{f} \cap (-3,-2]; \\
        \beta \in Z(b_{f}(s)) \cap (-2, -1) \iff &\beta \in \Upsilon_{f} \cap (-2, -1) \nonumber \\
        \enspace &\text{ and/or } \sigma(\beta) \in Z(b_{f}(s)) \cap (-1, 0); \nonumber \\
        \beta \in Z(b_{f}(s)) \cap [-1, 0) \implies &\sigma(\beta) \in Z(b_{f}(s)) \cap (-2,-1]. \nonumber 
    \end{align}
     Hence $Z(b_{f}(s))$ is determined by
     \[
     \tau, \deg(f), \text{ and } \big[ Z(b_{f}(s)) \cap [-1,0) \big] \setminus \Upsilon_{f} \quad  (\Upsilon_{f} \text{ is determined by $\tau$ and $\deg(f)$}.)
     \]
\end{theorem}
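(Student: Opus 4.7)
The plan for part (a) is to combine Theorem \ref{thm - new roots of BS-poly}, specialized to honest homogeneity (so $\sum w_{i} = 3$ and $\wdeg = \deg$), with the cited structural results for $H_{\mathfrak{m}}^{0}(R/(\partial f))$. Theorem \ref{thm - new roots of BS-poly} directly yields $\{-(t+3)/\deg(f) : t \in \deg H_{\mathfrak{m}}^{0}(R/(\partial f))\} \subseteq Z(b_{f}(s))$. Since $\tau$ is by definition the minimal nonvanishing degree, the symmetry of $\deg H_{\mathfrak{m}}^{0}(R/(\partial f))$ about $(3\deg(f)-6)/2$ from \cite{StratenDucoWarmtGorensteinDuality}, \cite{LocalCohomologyJacobianRing} identifies $3\deg(f)-6-\tau$ as the maximal nonvanishing degree; combined with the unimodality of the Hilbert function guaranteed by the cited Lefschetz property of \cite{LefschetzAlmostCompleteIntersections}, \cite{BrennerKaidSyzygyBundles}, this forces every integer in $[\tau, 3\deg(f)-6-\tau]$ to be a nonvanishing degree. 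Thus $\deg H_{\mathfrak{m}}^{0}(R/(\partial f)) = \mathbb{Z} \cap [\tau, 3\deg(f)-6-\tau]$, which gives $\Upsilon_{f} \subseteq Z(b_{f}(s))$.

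For part (b), I will apply Lemma \ref{lemma - homogeneous, involution} together with (a). The lemma records $Z(B(L_{f,s+2})) = \{-(t+3)/\deg(f) : t \in \deg H_{\mathfrak{m}}^{0}(R/(\partial f))\}$, which by (a) lies inside $\Upsilon_{f}$, as well as $Z(B(L_{f,s+1})) = \sigma(Z(B(L_{f,s+2}))) \subseteq \sigma(\Upsilon_{f})$. Therefore $\Xi_{f} = Z(B(L_{f,s+2})) \cup Z(B(L_{f,s+1})) \subseteq \Upsilon_{f} \cup \sigma(\Upsilon_{f})$, and Theorem \ref{thm - localized symmetry of BS poly} immediately upgrades its partial symmetry on $Z(b_{f}(s)) \setminus \Xi_{f}$ to the stated sharper symmetry on $Z(b_{f}(s)) \setminus (\Upsilon_{f} \cup \sigma(\Upsilon_{f}))$.

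The three-line taxonomy is then obtained by combining (a), (b), and Corollary \ref{cor - small roots BS poly} through a careful case analysis. The $(-3,-2]$ line is immediate: Corollary \ref{cor - small roots BS poly} gives $\Rightarrow$, and (a) gives $\Leftarrow$. For the $(-2,-1)$ line I will split on whether $\beta$ or $\sigma(\beta)$ lies in $\Upsilon_{f}$, $\sigma(\Upsilon_{f})$, or outside both, appealing to (a) in the first two cases and to (b) in the third; the key elementary input is the interval containment $\sigma(\Upsilon_{f} \cap (-1,0)) \subseteq \Upsilon_{f}$, equivalently $[\deg(f)+1,\, 2\deg(f)-\tau-3] \subseteq [\tau+3,\, 3\deg(f)-\tau-3]$, which holds once $\tau \leq \deg(f)-2$ and is vacuous otherwise. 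The $[-1,0)$ implication follows by the same case split. The concluding statement, that $Z(b_{f}(s))$ is determined by $\tau$, $\deg(f)$, and $Z(b_{f}(s)) \cap [-1,0) \setminus \Upsilon_{f}$, is then transparent from the taxonomy: $\Upsilon_{f}$ is determined by $\tau$ and $\deg(f)$, so $Z(b_{f}(s)) \cap (-3,-2] = \Upsilon_{f} \cap (-3,-2]$ is determined, the $(-2,-1)$ piece is the union of $\Upsilon_{f} \cap (-2,-1)$ with the $\sigma$-image of the ``new'' $(-1,0)$ roots, and the $[-1,0)$ piece is given as input. The main obstacle will be bookkeeping in the $(-2,-1)$ subcase analysis, coupled with the small interval-arithmetic verifications above; no further deep machinery beyond (a), (b), Corollary \ref{cor - small roots BS poly}, and Lemma \ref{lemma - homogeneous, involution} is required.
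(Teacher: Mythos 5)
Your proposal is correct and follows essentially the same path as the paper's proof: the interval identity $\deg H_{\mathfrak{m}}^{0}(R/(\partial f)) = \mathbb{Z} \cap [\tau,\, 3\deg(f)-6-\tau]$ from unimodality and Gorenstein symmetry feeds into Theorem \ref{thm - new roots of BS-poly} for (a), Lemma \ref{lemma - homogeneous, involution} plus Theorem \ref{thm - localized symmetry of BS poly} give (b), and Corollary \ref{cor - small roots BS poly} plus a case split handle the taxonomy. Your interval-arithmetic verification that $\sigma(\Upsilon_{f} \cap (-1,0)) \subseteq \Upsilon_{f}$ (valid for $\tau \leq \deg(f)-2$, vacuous for $\tau \geq \deg(f)-3$) is exactly the fact the paper states as $\Upsilon_{f} \supseteq \tfrac{1}{\deg(f)} \cdot (\mathbb{Z}\cap[-2,-1])$ when $\Upsilon_f\cap(-1,0)\ne\emptyset$. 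The one genuine organizational difference is that you handle the $(-2,-1)$ interval by a uniform trichotomy on whether $\sigma(\beta)$ lies in $\Upsilon_f$, $\sigma(\Upsilon_f)$, or neither, which makes the backward implication self-contained, whereas the paper derives it by invoking the $[-1,0)$ implication proved first; your version avoids that dependency and is slightly tidier, though the two differ only in bookkeeping.
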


\begin{proof}
    By \cite[Theorem 4.1]{LefschetzAlmostCompleteIntersections}, \cite[Section 2]{BrennerKaidSyzygyBundles}, which shows the unimodality of the Hilbert vector $H_{\mathfrak{m}}^{0} (R / (\partial f))$ along with \cite[Theorem 4.7]{StratenDucoWarmtGorensteinDuality}, \cite[Theorem 3.4]{LocalCohomologyJacobianRing}, which shows said Hilbert vector's symmetry about $(3 \deg(f) - 6)/2$, we have 
    \begin{equation} \label{eqn - thm, homogeneous, BS poly description, 1}
        \deg H_{\mathfrak{m}}^{0} (R / (\partial f)) = \mathbb{Z} \cap [\tau, 3 \deg(f) - 6 - \tau].
    \end{equation}
    Then (a) is a repackaging of the last part of Theorem \ref{thm - new roots of BS-poly} and \eqref{eqn - thm, homogeneous, BS poly description, 1}. A similar repackaging, but now also using Lemma \ref{lemma - homogeneous, involution}, applied to Theorem \ref{thm - localized symmetry of BS poly} yields (b).
    Note that the symmetry of \eqref{eqn - thm, homogeneous, BS poly description, 1} about $\lfloor (3d - 6 - t)/2 \rfloor$ induces a symmetry of $\Upsilon_{f}$ about $-3/2$ and of $\sigma (\Upsilon_{f})$ about $-1/2$. 

    Now we prove \eqref{eqn - thm - homogeneous, BS poly description, statement 2}. The $(-3,-2]$ assertion is just Corollary \ref{cor - small roots BS poly}. 

    Regarding the $[-1,0)$ claim, select $\beta \in Z(b_{f}(s)) \cap [-1,0)$. We may assume $\beta \in \Upsilon_{f} \cup \sigma(\Upsilon_{f})$ lest (b) applies. If $\beta \in \sigma(\Upsilon_{f})$, then $\sigma(\beta) \in \Upsilon_{f}$ and (a) applies. If $\beta \in \Upsilon_{f}$, then because $\deg(f) \Upsilon_{f}$ is symmetric about $-3/2$ and unimodal (cf. \eqref{eqn - thm, homogeneous, BS poly description, 1}) and because $\beta \in (-1,0)$, it must be that 
    \[
    \Upsilon_{f} \supseteq \frac{1}{\deg(f)} \cdot (\mathbb{Z} \cap [-2,-1]).
    \]
    In particular, $\sigma(\beta) \in (1/\deg(f)) \cdot (\mathbb{Z} \cap [-2,-1])$ and (a) applies.

    Regarding the $(-2,1)$ claim, we first prove ``$\implies$''. Assume that $\beta \in Z(b_{f}(s)) \cap (-2,-1).$ If $\sigma(\beta) \in Z(b_{f}(s)) \cap (-1,0)$ we are done. If not, then (b) implies $\sigma(\beta) \in \Upsilon_{f} \cup \sigma(\Upsilon_{f})$. If $\sigma(\beta) \in \Upsilon_{f}$ then $\sigma(\beta) \in Z(b_{f}(s)) \cap (-1,0)$ by (a); if $\sigma(\beta) \in \sigma(\Upsilon_{f})$, then $\beta \in \Upsilon_{f}.$ As for the``$\impliedby$'' direction, this immediately follows by (a) and the $[-1,0)$ claim we already justified.

    Only the last sentence remains. Let $\beta \in Z(b_{f}(s))$. If $\beta \in [-1,0)$ it is either in $\Upsilon_{f} \subseteq Z(b_{f}(s))$ or $Z(b_{f}(s)) \setminus \Upsilon_{f}$ by (a). If $\beta \in (-2,-1)$, then \eqref{eqn - thm - homogeneous, BS poly description, statement 2} tells us $\beta \in \Upsilon_{f}$ or $\sigma(\beta) \in Z(b_{f}(s)) \cap (-1,0)$. In the former case we are done; in the latter case the membership ``$\sigma(\beta) \in Z(b_{f}(s)) \cap (-1,0)$'' is detected by the ``determined by'' data and if membership holds we detect $\beta \in Z(b_{f}(s))$ using the last part of \eqref{eqn - thm - homogeneous, BS poly description, statement 2}. If $\beta \in (-3,2)$, then \eqref{eqn - thm - homogeneous, BS poly description, statement 2} says this is equivalent to $\beta \in \Upsilon_{f}$.
\end{proof}

\subsection{Hyperplane Arrangements}

Now we restrict our class of divisors further to hyperplane arrangements $\mathscr{A} \subseteq \mathbb{C}^{3}$. First, we work in $\mathbb{C}^{n}$. A \emph{hyperplane arrangement} $\mathscr{A} \subseteq \mathbb{C}^{n}$ is a collection of hyperplanes $H_{k} \subseteq \mathbb{C}^{n}$ so that $\mathscr{A} = \cup_{k} H_{k}$. The \emph{degree} of $\mathscr{A}$, denoted $\deg(\mathscr{A})$, is the total number of hyperplanes $H_{k}$. A \emph{flat} (or \emph{edge}) of $\mathscr{A}$ is the intersection of some subset of the $d$ hyperplanes; the \emph{intersection lattice} $\mathscr{L}(\mathscr{A})$ is the poset with elements these flats, ordered by reverse inclusion. A property of $\mathscr{A}$ is \emph{combinatorial} when it is a function of the intersection lattice: if $\mathscr{A}$ has ``property'' and $\mathscr{B}$ is another arrangement with $\mathscr{L}(\mathscr{A}) \simeq \mathscr{L}(\mathscr{B})$, then $\mathscr{B}$ has ``property.'' Note that the number of hyperplanes containing a given flat is combinatorial.

Let $f \in R = \mathbb{C}[x_{1}, \dots , x_{n}]$ define $\mathscr{A}$. We say $\mathscr{A}$ is \emph{central} when each $H_{k}$ contains the origin (i.e. $f$ is homogeneous); we say $\mathscr{A}$ is \emph{essential} when $\cap_{k} H_{k} = \{0\}$; we say $\mathscr{A}$ is \emph{indecomposable} when there exists no coordinate system so that $\mathscr{A}$ admits a defining equation $f = gh$ where $g$ and $h$ use disjoint variables. 

From the point of view of Bernstein--Sato polynomials it is harmless to assume that $\mathscr{A}$ is central, essential, and indecomposable by using the fact the Bernstein--Sato polynomial is the least common multiple of all the local Bernstein--Sato polynomials. Note also that an arrangement is locally quasi-homogeneous, since locally the arrangement is just a subarrangement. 

Now restrict again to $\mathbb{C}^{3}$. It is natural to ask if the zeroes of the Bernstein--Sato polynomial of $f \in R = \mathbb{C}[x_{1}, x_{2}, x_{3}]$ defining $\mathscr{A} \subseteq \mathbb{C}^{3}$ are combinatorial. Walther \cite[Example 5.10]{uli} showed this is not the case: $-2 + (2 / \deg(f))$ is a not necessarily combinatorially detected root. (See Example \ref{ex - Ziegler's pair}). Using our results we will prove this is the \emph{only} non-combinatorial zero. Theorem \ref{thm - BS poly arrangement}: gives a complete formula for $Z(b_{f}(s))$; explicitly describes the (easy) combinatorial roots; shows the only possibly non-combinatorial root is $-2 + (2 / \deg(f))$; gives several equivalent characterizations of when $-2 + (2 / \deg(f)) \in Z(b_{f}(s))$. 

We conduct some preliminaries for the sake of stating the aforementioned equivalent conditions. First, recall the \emph{Castelnuovo--Mumford} regularity of a graded $R$-module $M$ is
\[
\reg(M) = \max_{0 \leq t \leq n} \{ (\maxdeg H_{\mathfrak{m}}^{t} M ) + t \}.
\]
In the case of a reduced, central, essential, and indecomposable hyperplane arrangement in $\mathbb{C}^{3}$, Schenck \cite[Corollary 3.5]{SchenckElementaryModificationsLineConfigs} produced a sharp bound for the Castelnuovo--Mumford regularity of $\Der_{R}(-\log \mathscr{A})$. (Using techniques from \cite{DerksenSidman-CMRegularyByApproximation}, this bound has been generalized \cite[Proposition 1.3]{SaitoDegenerationPoleOrderArrangement} to logarithmic derivations of $\mathscr{A} \subseteq \mathbb{C}^{n}$ and then \cite[Theorem 1.7]{BathTLCT} to logarithmic $p$-forms of $\mathscr{A} \subseteq \mathbb{C}^{n}$.). As we have done throughout the paper, since for us $R = \mathbb{C}[x_{1}, x_{2}, x_{3}]$, we reinterpret this bound in terms of the Milnor algebra:

\begin{lemma} \label{lemma - local cohomology milnor alg for arrangements}
    Let $f \in R = \mathbb{C}[x_{1}, x_{2}, x_{3}]$ define a reduced, central, essential, and indecomposable hyperplane arrangement. Then there is a graded isomorphism of $R$-modules $H_{\mathfrak{m}}^{0}(R / (\partial f)) \simeq [H_{\mathfrak{m}}^{2}(\Der_{R}(-\log_{0} f))](-\deg(f))$ as well as graded $\mathbb{C}$-vector space isomorphisms $[H_{\mathfrak{m}}^{1}(R / (\partial f))]_{k} \simeq [H_{\mathfrak{m}}^{3}(\Der_{R}(-\log_{0} f))]_{k - d}$ for all $k \geq d - 3.$ Moreover,
    \[
    \reg R / (\partial f) \leq 2 \deg(f) - 5.
    \]
\end{lemma}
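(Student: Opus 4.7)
The plan is to relate the local cohomology of $R/(\partial f)$ to that of $\Der_R(-\log_0 f)$ via two short exact sequences, then apply Schenck's regularity bound. First, reuse the syzygy sequence \eqref{eqn - s.e.s. syzygies of jacobian}
\[
0 \to \Der_R(-\log_0 f)(-d) \to \Der_R(-d) \to (\partial f) \to 0
\]
and the tautological sequence $0 \to (\partial f) \to R \to R/(\partial f) \to 0$. Take the long exact sequences in local cohomology with support in $\mathfrak{m}$. Since $H_{\mathfrak{m}}^i(R) = 0$ for $i \neq 3$ and $\Der_R(-d) \simeq R(1-d)^3$ has its local cohomology concentrated in degree $3$, the two long exact sequences collapse to the isomorphism $H_{\mathfrak{m}}^0(R/(\partial f)) \simeq H_{\mathfrak{m}}^1((\partial f)) \simeq H_{\mathfrak{m}}^2(\Der_R(-\log_0 f))(-d)$ and to a natural injection $H_{\mathfrak{m}}^1(R/(\partial f)) \simeq H_{\mathfrak{m}}^2((\partial f)) \hookrightarrow H_{\mathfrak{m}}^3(\Der_R(-\log_0 f))(-d)$ whose cokernel sits inside $H_{\mathfrak{m}}^3(\Der_R(-d))$. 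Because $H_{\mathfrak{m}}^3(R)$ is supported in degrees $\leq -3$, the module $H_{\mathfrak{m}}^3(\Der_R(-d))$ vanishes in degrees $\geq d-3$, so the injection becomes a degree-wise $\mathbb{C}$-vector space isomorphism for $k \geq d-3$, yielding the second displayed assertion.

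For the regularity bound, observe that under the reduced, essential, central hypothesis the singular locus of $\mathscr{A}$ is a union of lines through the origin, so $\dim R/(\partial f) \leq 1$ and Grothendieck vanishing forces $H_{\mathfrak{m}}^i(R/(\partial f)) = 0$ for $i \geq 2$. Consequently
\[
\reg R/(\partial f) = \max\bigl\{ \maxdeg H_{\mathfrak{m}}^0(R/(\partial f)), \ \maxdeg H_{\mathfrak{m}}^1(R/(\partial f)) + 1 \bigr\}.
\]
Now pipe in Schenck's \cite[Corollary 3.5]{SchenckElementaryModificationsLineConfigs}, which bounds $\reg \Der_R(-\log \mathscr{A})$ under our indecomposability hypothesis; since $\Der_R(-\log f) = \Der_R(-\log_0 f) \oplus R \cdot E$ and the Euler summand contributes trivially, the same bound holds for $\Der_R(-\log_0 f)$. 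Combined with the two isomorphisms above (which translate the relevant max degrees into those of $H_{\mathfrak{m}}^2$ and $H_{\mathfrak{m}}^3$ of $\Der_R(-\log_0 f)$, shifted by $d$), this yields $\reg R/(\partial f) \leq 2d - 5$.

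The only real obstacle I anticipate is numerical bookkeeping. First, one must be careful that the second isomorphism is only degree-wise and only for $k \geq d-3$; this is harmless for regularity because Schenck's bound forces any contribution of $H_{\mathfrak{m}}^1(R/(\partial f))$ near the threshold $2d-5$ to land in the range where the isomorphism is valid (in the opposite range $k < d-3$, we have $k+1 < d-2 \leq 2d-5$ trivially for $d \geq 3$). Second, one must check Schenck's bound tightens to exactly the constant $2d - 5$ advertised, as opposed to $2d - 4$ or worse; this is a direct verification using the definition $\reg M = \max_t\{\maxdeg H_{\mathfrak{m}}^t M + t\}$ applied to $\Der_R(-\log_0 f)$ and the degree shifts in the two isomorphisms.
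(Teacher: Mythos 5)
Your proposal is correct and follows essentially the same route as the paper: both use the two short exact sequences $0 \to \Der_R(-\log_0 f)(-d) \to \Der_R(-d) \to (\partial f) \to 0$ and $0 \to (\partial f) \to R \to R/(\partial f) \to 0$, splice the long exact sequences of local cohomology, and then translate Schenck's bound $\reg \Der_R(-\log_0 f) \leq d-3$ through the resulting (shifted) isomorphisms. Your write-up is somewhat more explicit than the paper in two places that are left implicit there: you spell out that $\dim R/(\partial f) \leq 1$ gives Grothendieck vanishing of $H_{\mathfrak{m}}^{\geq 2}(R/(\partial f))$ so that only $H_{\mathfrak{m}}^0$ and $H_{\mathfrak{m}}^1$ contribute to the regularity, and you carefully explain that the threshold $k \geq d-3$ in the degree-wise isomorphism is harmless because degrees $k < d - 3$ automatically satisfy $k+1 < 2d - 5$ once $d \geq 3$ (which is forced by essentiality). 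These are exactly the bookkeeping steps that need to be checked, and you have them right, so the proof is complete.
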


\begin{proof}
    Similar to Proposition \ref{prop - weighted data dictionary, ext log 1 forms, local cohomology milnor algebra}, we have graded s.e.s.
    \begin{equation*}
        0 \to \Der_{R}(-\log_{0} f)(-\deg(f)) \to \Der_{R}(-\deg(f)) \to (\partial f) \to 0
    \end{equation*}
    and
    \begin{equation*}
        0 \to (\partial f) \to R \to R / (\partial f) \to 0.
    \end{equation*}
    By standard manipulations of the l.e.s. of local cohomology we get the graded isomorphism promised. We also get the graded exact sequence $H_{\mathfrak{m}}^{1}(R / (\partial f)) \xhookrightarrow{} [H_{\mathfrak{m}}^{3}(\Der_{R}(-\log_{0})f)](-\deg(f)) \to H_{\mathfrak{m}}^{3}(\Der_{R})$. The graded $\mathbb{C}$-vector space isomorphisms follow since $H_{\mathfrak{m}}^{3}(\Der_{R})_{\geq -3} = 0$.

    The regularity bound is now a corollary of the Castelnuovo--Mumford regularity bound $\reg \Der_{R}(-\log f) \leq \deg(f) - 3$ \cite[Corollary 3.5]{SchenckElementaryModificationsLineConfigs}. (Pay attention to our different grading conventions.) This implies the same regularity bound for $\Der_{R}(-\log_{0} f)$.
\end{proof}

Second we recall the property of $\emph{formality}$, well studied amongst the arrangement community, cf. \cite{FalkRandellHomotopyTheoryArrangements}, \cite{BrandtTeraoFreeArrangementsRelationSpaces}, \cite{TohvaneanuTopologicalFormalArrangements}, \cite{DipasqualeSidmanTravesGeometricAspectsJacobian}. 

\begin{define} \label{def - formality}
    Let $\mathscr{A} = \cup \{H_{k} \} \subseteq \mathbb{C}^{n}$ by a central, reduced arrangement and select a linear $f_{k} \in R = \mathbb{C}[x_{1}, \dots, x_{n}]$ such that $\Var(f_{k}) = H_{k}$. Writing $f_{k} = a_{1,k} x_{1} + \dots a_{n,k} x_{n}$, form a $n \times \deg(\mathscr{A})$ matrix whose $k^{\text{th}}$ column is the normal vector of $f_{k}$:
    \[
    M(\{f_{k}\}) = \begin{bmatrix}
        a_{i, k}
    \end{bmatrix}
    \]
    The \emph{relation space} $\mathscr{R}(\mathscr{A})$ is the kernel of $M(\{f_{k}\})$:
    \[
    \mathscr{R}(\mathscr{A}) = \{ \mathbf{v} \in \mathbb{C}^{\deg(\mathscr{A})} \mid M(\{f_{k}\}) \mathbf{v} = 0\}.
    \]
    The \emph{length} of $\mathbf{v} \in \mathscr{R}(\mathscr{A})$ is the number of non-zero entries of $\mathbf{v}$. We say $\mathscr{A}$ is \emph{formal} if $\mathscr{R}(\mathscr{A})$ is generated by relations of length $3$. 
\end{define}

Now we are in position to state our formula for the Bernstein--Sato polynomial of \emph{any} reduced, central, essential, and indecomposable arrangement $\mathscr{A} \subseteq \mathbb{C}^{3}$. This relies on an earlier result \cite[Theorem 1.3]{BathCombinatoriallyDetermined} of ours: if $f$ defines such an arrangement, then $Z(b_{f}(s)) \cap [-1, 0)$ \emph{is combinatorially determined} by an explicit formula. (Actually this result is more general, cf. loc. cit.) 

\begin{theorem} \label{thm - BS poly arrangement}
    Let $f \in R = \mathbb{C}[x_{1}, x_{2}, x_{3}]$ define a reduced, central, essential, and indecomposable hyperplane arrangement; let $Z \subseteq \mathbb{P}^{2}$ be its projectivization. For every point $z \in \Sing(Z)$, set $m_{z}$ to be the number of lines in $Z$ containing $z$. Define
    \[
    \CombRoots = \left[ \bigcup_{3 \leq k \leq 2 \deg(f) - 3}  \frac{-k}{\deg(f)}  \right] \cup \left[ \bigcup_{z \in \Sing(Z)} \quad \bigcup_{2 \leq i \leq 2m_{z} - 2}  \frac{-i}{m_{z}}  \right].
    \]
    Then
    \begin{equation} \label{eqn - thm - BS poly arrangement, statement 1}
        Z(b_{f}(s)) = \CombRoots \quad \text{OR} \quad Z(b_{f}(s)) = \CombRoots \enspace \cup \enspace \frac{-2\deg(f) + 2}{\deg(f)}.
    \end{equation}
    That is, $Z(b_{f}(s))$ always contains the combinatorially determined roots $\CombRoots$ and at most one non-combinatorial root $\frac{-2\deg(f) + 2}{\deg(f)}$. 
    
    The presence of the non-combinatorial root is characterized by the following equivalent properties, where $\widetilde{\frac{R}{(\partial f)}}$ is the graded sheafification of $R / (\partial f)$):
    \begin{enumerate}[label=(\alph*)]
        \item $Z(b_{f}(s)) \ni \frac{-2 \deg(f) + 2}{\deg(f)}$;
        \item $[H_{\mathfrak{m}}^{0}(R / (\partial f)]_{\deg(f) - 1} \neq 0$;
        \item $[H_{\mathfrak{m}}^{0}(R / (\partial f))]_{2 \deg(f) - 5} \neq 0$;
        \item $\reg R / (\partial f) = 2 \deg(f) - 5$;
        \item there is the bound on the number of global sections twisted by $2 \deg(f) - 5$:
        \[
        \dim_{\mathbb{C}} \bigg( \Gamma \big( \mathbb{P}^{2}, \widetilde{ \frac{R}{ (\partial f)}} (2 \deg(f) - 5) \big) \bigg) < \dim_{\mathbb{C}} \bigg( [ R / (\partial f)]_{2d - 5} \bigg);
        \]
        \item there is the bound on the number of global sections twisted by $\deg(f) - 1$:
        \[
        \dim_{\mathbb{C}} \bigg( \Gamma \big( \mathbb{P}^{2}, \widetilde{ \frac{R}{ (\partial f)}} (\deg(f) - 1) \big) \bigg) < \dim_{\mathbb{C}} \bigg( [\Der_{R}(-\log_{0} f)]_{\deg(f) -2} \bigg) + \bigg(\binom{\deg(f) + 1}{2} - 3 \bigg) 
        \]
        \item the hyperplane arrangement $\Var(f)$ is not formal. 
    \end{enumerate}
\end{theorem}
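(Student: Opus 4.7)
The plan is to apply Theorem \ref{thm - homogeneous, BS poly description} to the arrangement $f$, supported by three ingredients: the combinatoriality of $Z(b_f(s)) \cap [-1,0)$ from \cite[Theorem 1.3]{BathCombinatoriallyDetermined}; local Bernstein--Sato contributions at each $\mathfrak{x} \in \Sing(\Var(f)) \setminus \{0\}$; and the regularity bound of Lemma \ref{lemma - local cohomology milnor alg for arrangements} to control $\tau = \min \wdeg H_{\mathfrak{m}}^{0}(R/(\partial f))$. First I would verify $\CombRoots \subseteq Z(b_f(s))$. At any $\mathfrak{x} \in \Var(f) \setminus \{0\}$, Remark \ref{rmk - log stratification}.(c) identifies the analytic germ $(X, \Var(f), \mathfrak{x})$ with a product of $(\mathbb{C},0)$ and the central arrangement of $m_\mathfrak{x}$ lines in $\mathbb{C}^2$, whose local Bernstein--Sato polynomial is computed via \eqref{eqn - intro - BS poly, isolated singularity} to have roots $\{-i/m_\mathfrak{x} : 2 \leq i \leq 2m_\mathfrak{x} - 2\}$; these descend into $Z(b_f(s))$ by the l.c.m.\ property. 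Combined with \cite[Theorem 1.3]{BathCombinatoriallyDetermined}, which identifies $Z(b_f(s)) \cap [-1,0)$ with the combinatorial portion $\CombRoots \cap [-1,0)$, and the symmetric reflection of this set into $(-2, -1]$ via the last clause of Theorem \ref{thm - homogeneous, BS poly description}, I recover $\CombRoots \subseteq Z(b_f(s))$.

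Next I would handle the non-combinatorial piece. Lemma \ref{lemma - local cohomology milnor alg for arrangements} yields $\reg R/(\partial f) \leq 2d-5$ (with $d = \deg(f)$); combined with the graded symmetry \eqref{eqn - local cohomology degree duality} about $(3d-6)/2$ this forces $\wdeg H_{\mathfrak{m}}^{0}(R/(\partial f)) \subseteq [d-1, 2d-5]$, hence $\tau \geq d-1$ whenever the module is nonzero, with equality iff $[H_{\mathfrak{m}}^{0}(R/(\partial f))]_{d-1} \neq 0$. Plugging this into the formula of Theorem \ref{thm - homogeneous, BS poly description}, in the case $\tau = d-1$ one computes $\Upsilon_f = \{-k/d : d+2 \leq k \leq 2d-2\}$, which is contained in $\CombRoots \cup \{-2 + 2/d\}$; otherwise $\Upsilon_f \subseteq \CombRoots$. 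The taxonomy \eqref{eqn - thm - homogeneous, BS poly description, statement 2} then pins the only candidate root of $b_f(s)$ outside $\CombRoots$ to be $-2+2/d$, realized exactly when $-2+2/d \in \Upsilon_f$. For the converse, $\sigma(-2+2/d) = -2/d$ is never combinatorial: origin-contributions have $k \geq 3$, and $-2/d = -i/m_z$ with $i \geq 2$ would force $m_z = d$, which is excluded by essentiality and indecomposability. By \cite[Theorem 1.3]{BathCombinatoriallyDetermined} this gives $-2/d \notin Z(b_f(s))$, and then Theorem \ref{thm - homogeneous, BS poly description}.(b) obstructs $-2+2/d$ from living in $Z(b_f(s)) \setminus (\Upsilon_f \cup \sigma(\Upsilon_f))$; a direct check that $-2+2/d \notin \sigma(\Upsilon_f)$ (which would require $-2/d \in \Upsilon_f$, impossible since $\Upsilon_f$-indices are $\geq \tau + 3 \geq d+2$) completes the proof of \eqref{eqn - thm - BS poly arrangement, statement 1} and of (a) $\iff$ (b).

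The remaining equivalences are more routine. (b) $\iff$ (c) is the graded symmetry \eqref{eqn - local cohomology degree duality}. For (c) $\iff$ (d), a term-by-term analysis of $\reg R/(\partial f) = \max_{t}(\max \wdeg H_{\mathfrak{m}}^{t}(R/(\partial f)) + t)$, using Lemma \ref{lemma - local cohomology milnor alg for arrangements} and Schenck's sharp bound $\reg \Der_{R}(-\log f) \leq d-3$ from \cite{SchenckElementaryModificationsLineConfigs}, shows only the $t=0$ contribution can attain $2d-5$. (d) $\iff$ (e) is the standard reading of Castelnuovo--Mumford regularity as the failure of the natural map $[R/(\partial f)]_{2d-5} \to \Gamma(\mathbb{P}^2, \widetilde{R/(\partial f)}(2d-5))$ to be bijective. (d) $\iff$ (f) is the equivalent statement at the symmetric degree $m = d-1$ after invoking \eqref{eqn - local cohomology degree duality} and computing $\dim_{\mathbb{C}}[R/(\partial f)]_{d-1}$ via the Jacobian syzygy s.e.s.\ in terms of $\dim_{\mathbb{C}}[\Der_{R}(-\log_{0} f)]_{d-2}$ and $\binom{d+1}{2} - 3 = \dim_{\mathbb{C}} R_{d-1} - \dim_{\mathbb{C}}\langle \partial_{1} \bullet f, \partial_{2} \bullet f, \partial_{3} \bullet f \rangle_{d-1}$. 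Finally (d) $\iff$ (g) is the formality-regularity dictionary from \cite{DipasqualeSidmanTravesGeometricAspectsJacobian}: formality of $\mathscr{A}$ is equivalent to $\reg \Der_{R}(-\log f) < d-3$, equivalently $\reg R/(\partial f) < 2d-5$ via Lemma \ref{lemma - local cohomology milnor alg for arrangements}.

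The main obstacle is the careful control of the candidate root $-2+2/d$. Establishing (a) $\iff$ (b) requires both the regularity/symmetry pinning of $\tau$ and the combinatoriality of the $[-1,0)$ roots; without the combinatorial input to rule out $-2/d \in Z(b_f(s))$, the partial symmetry of Theorem \ref{thm - homogeneous, BS poly description}.(b) could in principle produce $-2+2/d$ spuriously even when $\tau \neq d-1$. The remaining equivalences, while conceptually direct, rest on non-trivial external inputs: Schenck's sharp regularity bound for $\Der_{R}(-\log f)$ and the formality-regularity characterization in the arrangement-theoretic literature.
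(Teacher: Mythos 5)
Your proposal follows essentially the same route as the paper: combine the combinatoriality of $Z(b_f(s)) \cap [-1,0)$ from the cited arrangement result, the taxonomy and partial symmetry of Theorem \ref{thm - homogeneous, BS poly description}, and the regularity bound of Lemma \ref{lemma - local cohomology milnor alg for arrangements} with the degree symmetry \eqref{eqn - local cohomology degree duality} to pin $\Upsilon_f$ inside $(1/d)\cdot(\mathbb{Z}\cap[-2d+2,-d-2])$, so that $-2+2/d$ is the single candidate non-combinatorial root. Your explicit check that $-2+2/d\notin\CombRoots$ (via $\sigma$-invariance and $-2/d\notin\CombRoots$, which forces $m_z\geq d$ and violates essentiality) is slightly more careful than the paper, which leaves this verification implicit. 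The opening paragraph about local Bernstein--Sato contributions at non-origin strata is correct but redundant: the paper obtains $\CombRoots\subseteq Z(b_f(s))$ from the $[-1,0)$ combinatoriality plus the last clause of \eqref{eqn - thm - homogeneous, BS poly description, statement 2} alone, and you end up invoking exactly that route yourself.

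There is one genuine soft spot. For (c)~$\iff$~(d) you assert that "a term-by-term analysis of $\reg R/(\partial f)=\max_t(\maxdeg H_{\mathfrak{m}}^t+t)$, using Lemma \ref{lemma - local cohomology milnor alg for arrangements} and Schenck's bound, shows only the $t=0$ contribution can attain $2d-5$." Schenck's bound $\reg\Der_R(-\log f)\leq d-3$ together with the Lemma's isomorphism $[H_{\mathfrak{m}}^1(R/(\partial f))]_k\simeq[H_{\mathfrak{m}}^3(\Der_R(-\log_0 f))]_{k-d}$ only gives $\maxdeg H_{\mathfrak{m}}^1(R/(\partial f))\leq 2d-6$; it does not a priori rule out the $t=1$ term achieving the regularity at $2d-6+1=2d-5$ while $[H_{\mathfrak{m}}^0]_{2d-5}=0$. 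The paper sidesteps this by converting (d) to $\reg\Der_R(-\log f)=d-3$ via the Lemma, and then invoking the Dipasquale--Sidman--Traves characterization \cite[Corollary 7.2, Corollary 7.8]{DipasqualeSidmanTravesGeometricAspectsJacobian} that this regularity is attained iff $[H_{\mathfrak{m}}^2(\Der_R(-\log_0 f))]_{d-5}\neq 0$, which is (c). You cite that result only for the formality equivalence (g), so as written your (c)~$\iff$~(d) step is unjustified. (It can alternatively be repaired without DST by observing that indecomposability gives $[\Der_R(-\log_0 f)]_0=0$, and graded local duality then forces $[H_{\mathfrak{m}}^3(\Der_R(-\log_0 f))]_{d-6}=0$, so the $t=1$ term never reaches $2d-5$; but that argument is not present in your sketch.) The remaining equivalences (e) and (f) you describe match the paper's use of the four-term exact sequence relating $[R/(\partial f)]_q$, $\Gamma(\mathbb{P}^2, \widetilde{R/(\partial f)}(q))$, and local cohomology in degrees $q=2d-5$ and $q=d-1$.
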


\begin{proof}
    By \cite[Theorem 1.3]{BathCombinatoriallyDetermined}, $Z(b_{f}(s)) \cap [-1,0) = \CombRoots \cap [-1,0).$ By Theorem \ref{thm - homogeneous, BS poly description} we know that $\sigma(\CombRoots \cap [-1,0)) \subseteq Z(b_{f}(s))$, where $\sigma: \mathbb{R} \to \mathbb{R}$ is the involution $\alpha \mapsto -2 - \alpha$. Since $\CombRoots = (\CombRoots \cap [-1,0)) \sqcup \sigma(\CombRoots \cap [-1,0))$, we deduce $\CombRoots \subseteq Z(b_{f}(s))$. Using Theorem \ref{thm - homogeneous, BS poly description}, in particular the first and second ``$\iff$'' of \eqref{eqn - thm - homogeneous, BS poly description, statement 2}, we deduce that
    \begin{align} \label{eqn - thm - BS poly arrangement 1}
    Z(b_{f}(s)) \setminus \CombRoots 
    = \bigg\{ \frac{-(t + 3)}{\deg(f)} \mid t \in \deg H_{\mathfrak{m}}^{0} (R / (\partial f)) \bigg\} \setminus \CombRoots . 
    \end{align}
    (Recall that in Theorem \ref{thm - homogeneous, BS poly description}, $\Upsilon_f = \{\frac{-(t + 3)}{\deg(f)} \mid t \in \deg H_{\mathfrak{m}}^{0} (R / (\partial f))\}$.) By the regularity bound of Lemma \ref{lemma - local cohomology milnor alg for arrangements} along with the symmetry \cite[Theorem 4.7]{StratenDucoWarmtGorensteinDuality}, \cite[Theorem 3.4]{LocalCohomologyJacobianRing} of $\deg H_{\mathfrak{m}}^{0} (R / (\partial f))$,
    \[
    \bigg\{ \frac{-(t + 3)}{\deg(f)} \mid t \in \deg H_{\mathfrak{m}}^{3} (R / (\partial f)) \bigg\} \subseteq \frac{1}{\deg(f)} \cdot \mathbb{Z} \cap [-2 \deg(f) +2, - \deg(f) - 2]
    \]
    We then determine \eqref{eqn - thm - BS poly arrangement 1} as:
    \begin{equation} \label{eqn - thm BS poly arrangement 2}
        Z(b_{f}(s)) \setminus \CombRoots = \begin{cases}
            \frac{-2 \deg(f) + 2}{\deg(f)} \iff [H_{\mathfrak{m}}^{0}(R / (\partial f))]_{2\deg(f) - 5} \neq 0 \\
            \emptyset \quad \quad \quad \text{ otherwise }.
        \end{cases}
    \end{equation}

    We have proven \eqref{eqn - thm - BS poly arrangement, statement 1}. The subsequent sentence is immediate by the description of $\CombRoots$ and Walther's \cite[Example 5.10]{uli}. We have also proven the equivalence of (a) and (c). The equivalence of (b) and (c) is the oft-used symmetry property \cite[Theorem 4.7]{StratenDucoWarmtGorensteinDuality}, \cite[Theorem 3.4]{LocalCohomologyJacobianRing}. As for (e) and (f), recall the canonical exact sequence of $\mathbb{C}$-vector spaces
    \begin{align} \label{eqn - thm - BS poly arrangement, 3}
    0 \to [H_{\mathfrak{m}}^{0}(R / (\partial f))]_{q} 
    \to [R / (\partial f)]_{q} \to \Gamma \big( \mathbb{P}^{2},  \widetilde{\frac{R}{\partial f}} (q) \big) \to [H_{\mathfrak{m}}^{1}(R / (\partial f))]_{q} \to 0
    \end{align}
    for all $q \in \mathbb{Z}$.
    Then \eqref{eqn - thm - BS poly arrangement, 3} plus the regularity bound within Lemma \ref{lemma - local cohomology milnor alg for arrangements}, makes it clear that (e) is equivalent to (c). As for (f), we have $\mathbb{C}$-isomorphisms 
    \[
    [H_{\mathfrak{m}}^{1}(R / (\partial f))]_{\deg(f)-1} \simeq [H_{\mathfrak{m}}^{3}(\Der_{R}(-\log_{0} f))]_{-1} \simeq [H_{\mathfrak{m}}^{3}(\Der_{R}(-\log f))]_{-1}.
    \]
    (The first isomorphism is Lemma \ref{lemma - local cohomology milnor alg for arrangements}; the second the direct sum structure of logarithmic derivations.)  Using graded local duality and the canonical graded perfect pairing between $\Der_{R}(-\log f)$ and $\Omega_{R}^{1}(\log f)$ we obtain a $\mathbb{C}$-vector space isomorphism $[H_{\mathfrak{m}}^{3}(\Der_{R}(-\log f))]_{-1} \simeq [\Omega_{R}^{1}(\log f)]_{-2}.$ Moreover, $[\Omega_{R}^{1}(\log f)]_{-2} = [\iota_{E}(\Omega_{R}^{2}(\log_{0} f))]_{-2} \simeq [\Der_{R}(-\log_{0} f)(\deg(f)]_{-2}$, cf. Remark \ref{rmk - basics alg log forms}, \eqref{eqn - graded iso log 2 forms, log der}). Applying these findings to \eqref{eqn - thm - BS poly arrangement, 3} reveals that (b) is equivalent to 
    \begin{align*}
   \dim_{\mathbb{C}} \bigg( \Gamma \big( \mathbb{P}^{2}, \widetilde{ \frac{R}{ (\partial f)}} (\deg(f) - 1) \big) \bigg) 
   &< \dim_{\mathbb{C}} \bigg( [R / (\partial f)]_{\deg(f)-1} \bigg) + \dim_{\mathbb{C}} \bigg( \Der_{R}(-\log_{0})_{\deg(f) - 2} \bigg) \\
   &=  \dim_{\mathbb{C}} \bigg( R_{\deg(f)-1} \bigg) - 3 + \dim_{\mathbb{C}} \bigg( \Der_{R}(-\log_{0})_{\deg(f) - 2} \bigg) \\
    \end{align*}
    (The ``$=$'' is because $\partial_{1} \bullet f, \partial_{2} \bullet f, \partial_{3} \bullet f$ must be linearly independent over $\mathbb{C}$, lest indecomposability fail.) That is, (b) is equivalent to (f). 

    Only (d) and (g) are extant. Lemma \ref{lemma - local cohomology milnor alg for arrangements} demonstrates that $\reg R / (\partial f) = 2 \deg(f) - 5$ if and only if $(\reg \Der_{R}(-\log f)) = \deg(f) - 3$. In \cite[Corollary 7.2, Corollary 7.8]{DipasqualeSidmanTravesGeometricAspectsJacobian}, (once you account for our different grading conventions) it is shown that $\reg \Der_{R}(-\log f) = \reg \Der_{R}(-\log_{0} f) = \deg(f) - 3$ if and only if $[H_{\mathfrak{m}}^{2}(\Der_{R}(-\log_{0} f))]_{\deg(f) - 5} \neq 0$. Loc. cit. also proves this nonvanishing is equivalent to the non-formality of $\Var(f)$. This certifies the pairwise equivalences of (c), (d), and (g).
\end{proof}

\begin{remark}
\noindent
    \begin{enumerate}[label=(\alph*)]
        \item Theorem \ref{thm - BS poly arrangement} answers \cite[Question 8.8]{DipasqualeSidmanTravesGeometricAspectsJacobian} in the case of $\mathbb{C}^{3}$ arrangements.
        \item Let $f$ define a generic arrangement in $\mathbb{C}^{3}$. Theorem \ref{thm - BS poly arrangement} can be used to recover Walther's formula \cite{WaltherBSPolyGenericArrangement} for $Z(b_{f}(s))$ because the graded free resolution of $\Der_{R}(-\log f)$ is known for logarithmic derivations, cf. \cite{RoseTeraoAFreeResolution}. Of course, Walther's formula holds for generic arrangements in $\mathbb{C}^{n}$.
    \end{enumerate}
\end{remark}

\begin{example} \label{ex - Ziegler's pair}
    We consider the original example of Ziegler's pair \cite{ZieglerCombinatorialConstructionLogForms}: Walther \cite[Example 5.10]{uli} showed these arrangements have different Bernstein--Sato polynomial roots, but the same combinatorics. Let $f = xyz(x + 3z)(x + y + z)(x + 2y + 3z)(2x + y + z)(2x + 3y + z)(2x + 3y + 4z).$ and $g = xyz(x + 5z)(x + y + z)(x + 3y + 5z)(2x + y + z)(2x + 3y + z)(2x + 3y + 4z).$ Here $\Var(f)$ and $\Var(g)$ have the same intersection lattice, but (working in $\mathbb{P}^{2}$) the six double points of $\Var(f)$ lie on a quadric whereas those of $\Var(g)$ do not. M. Saito \cite[Remark 4.14(iv)]{SaitoBSpolysArrangements} later computed their Bernstein--Sato polynomials, showing they are the same except $-2 + (2/9) \in Z(b_{f})(s)$ whereas $- 2 + (2/9) \notin Z(b_{g}(s))$. This demonstrates Theorem \ref{thm - BS poly arrangement}. 
    
    Invoking Macaulay2 we compute: $H_{\mathfrak{m}}^{0}(R / (\partial f))_{13} \simeq H_{\mathfrak{m}}^{0}(R / (\partial f))_{8} \simeq \mathbb{C}$  whereas  $H_{\mathfrak{m}}^{0}(R / (\partial g))_{13} = H_{\mathfrak{m}}^{0}(R / (\partial g))_{8} = 0$. So we see (b) and (c) in Theorem \ref{thm - BS poly arrangement}. We can also see (e): using \eqref{eqn - thm - BS poly arrangement, 3} and the regularity bound in Lemma \ref{lemma - local cohomology milnor alg for arrangements},   
    \begin{equation*}
        42 = \dim_{\mathbb{C}} \bigg( \Gamma \big(\mathbb{P}^{2}, \widetilde{\frac{R}{(\partial f)}}(13) \big) \bigg) < \dim_{\mathbb{C}} \bigg( [R / (\partial f)]_{13} \bigg) = 43
    \end{equation*}
    \begin{equation*}
        42 = \dim_{\mathbb{C}} \bigg( \Gamma \big(\mathbb{P}^{2}, \widetilde{\frac{R}{(\partial g)}}(13) \big) \bigg) = \dim_{\mathbb{C}} \bigg( [R / (\partial g)]_{13} \bigg) = 42
    \end{equation*}
    By \eqref{eqn - thm - BS poly arrangement, 3} and the $\mathbb{C}$-isomorphism $H_{\mathfrak{m}}^{1}(R / (\partial f))_{\deg(f) - 1} \simeq \Der_{R}(-\log_{0} f)_{\deg(f) - 2}$ justified nearby, we see (f) in Theorem \ref{thm - BS poly arrangement}:
    \begin{equation*}
        65 = \dim_{\mathbb{C}} \bigg( \Gamma \big( \mathbb{P}^{2}, \widetilde{\frac{R}{(\partial f)}}(8) \big) \bigg) < \dim_{\mathbb{C}} \bigg( \Der_{R}(-\log_{0} f)_{7} \bigg) + \bigg( \binom{10}{2} - 3 \bigg) = 24 + 42 = 66
    \end{equation*}
    \begin{equation*}
        66 = \dim_{\mathbb{C}} \bigg( \Gamma \big( \mathbb{P}^{2}, \widetilde{\frac{R}{(\partial g)}}(8) \big) \bigg) = \dim_{\mathbb{C}} \bigg( \Der_{R}(-\log_{0} g)_{7} \bigg) +  \bigg( \binom{10}{2} - 3 \bigg) = 24 + 42 = 66.
    \end{equation*}
\end{example}

\bibliographystyle{alpha}
\bibliography{refs}

\begin{thebibliography}{BvdVWZ21b}

\bibitem[Bat20]{BathCombinatoriallyDetermined}
Daniel Bath.
\newblock Combinatorially determined zeroes of {B}ernstein-{S}ato ideals for tame and free arrangements.
\newblock {\em J. Singul.}, 20:165--204, 2020.

\bibitem[{Bat}21]{NoncommutativePeskineSzpiro}
Daniel {Bath}.
\newblock {A noncommutative analogue of the Peskine--Szpiro Acyclicity Lemma, version 1}.
\newblock {\em arXiv e-prints}, page arXiv:2109.14223, September 2021.

\bibitem[{Bat}22]{BathTLCT}
Daniel {Bath}.
\newblock {Hyperplane Arrangements Satisfy (un)Twisted Logarithmic Comparison Theorems, Applications to $\mathscr{D}_{X}$-modules, version 2}.
\newblock {\em arXiv e-prints}, page arXiv:2202.01462, April 2022.
\newblock to appear in Forum of Mathematics, Pi.

\bibitem[Bat23]{BathANote}
Daniel Bath.
\newblock A note on {B}ernstein-{S}ato varieties for tame divisors and arrangements.
\newblock {\em Michigan Math. J.}, 73(4):751--779, 2023.

\bibitem[Ber72]{BernsteinBSExistence}
I.~N. Bern\v{s}te\u{\i}n.
\newblock Analytic continuation of generalized functions with respect to a parameter.
\newblock {\em Funkcional. Anal. i Prilo\v{z}en.}, 6(4):26--40, 1972.

\bibitem[BGM86]{BrianconGrangerMaisonobe}
J.~Brian\c{c}on, M.~Granger, and Ph. Maisonobe.
\newblock Sur le polyn\^{o}me de {B}ernstein des singularit\^{e}s semi-quasi-homog`enes.
\newblock {\em Pr\^{e}publication de l’Universit\^{e} de Nice}, (138), Nov. 1986.

\bibitem[BGMM89]{BrianconGrangerMaisonobeMiniconi}
J.~Brian\c{c}on, M.~Granger, Ph. Maisonobe, and M.~Miniconi.
\newblock Algorithme de calcul du polyn\^{o}me de {B}ernstein: cas non d\'{e}g\'{e}n\'{e}r\'{e}.
\newblock {\em Ann. Inst. Fourier (Grenoble)}, 39(3):553--610, 1989.

\bibitem[BH93]{BrunsHerzogCohenMacaulayRings}
Winfried Bruns and J\"{u}rgen Herzog.
\newblock {\em Cohen-{M}acaulay rings}, volume~39 of {\em Cambridge Studies in Advanced Mathematics}.
\newblock Cambridge University Press, Cambridge, 1993.

\bibitem[Bj{\"o}93]{BjorkAnalyticDmodsApplications}
Jan-Erik Bj{\"o}rk.
\newblock {\em Analytic $\mathscr{D}$-modules and applications}, volume 247 of {\em Mathematics and its Applications}.
\newblock Kluwer Academic Publishers Group, Dordrecht, 1993.

\bibitem[BK07]{BrennerKaidSyzygyBundles}
Holger Brenner and Almar Kaid.
\newblock Syzygy bundles on {$\mathbb{P}^2$} and the weak {L}efschetz property.
\newblock {\em Illinois J. Math.}, 51(4):1299--1308, 2007.

\bibitem[BS24]{BathSaitoTLCT}
Daniel {Bath} and Morihiko {Saito}.
\newblock {Twisted logarithmic complexes of positively weighted homogeneous divisors}.
\newblock {\em J. Algebraic Geom}, July 2024.
\newblock published electronically on July 12, 2024.

\bibitem[BT94]{BrandtTeraoFreeArrangementsRelationSpaces}
K.~A. Brandt and H.~Terao.
\newblock Free arrangements and relation spaces.
\newblock {\em Discrete Comput. Geom.}, 12(1):49--63, 1994.

\bibitem[BvdVWZ21a]{ZeroLociBSIdeals}
Nero Budur, Robin van~der Veer, Lei Wu, and Peng Zhou.
\newblock Zero loci of {B}ernstein-{S}ato ideals.
\newblock {\em Invent. Math.}, 225(1):45--72, 2021.

\bibitem[BvdVWZ21b]{ZeroLociBSIdealsII}
Nero Budur, Robin van~der Veer, Lei Wu, and Peng Zhou.
\newblock Zero loci of {B}ernstein-{S}ato ideals-{II}.
\newblock {\em Selecta Math. (N.S.)}, 27(3):Paper No. 32, 30, 2021.

\bibitem[CDFV11]{CriticalPointsAndResonanceHyperplaneArrangements}
D.~Cohen, G.~Denham, M.~Falk, and A.~Varchenko.
\newblock Critical points and resonance of hyperplane arrangements.
\newblock {\em Canad. J. Math.}, 63(5):1038--1057, 2011.

\bibitem[CJNMM96]{CohomologyComplementFreeDivisor}
Francisco~J. Castro-Jim\'{e}nez, Luis Narv\'{a}ez-Macarro, and David Mond.
\newblock Cohomology of the complement of a free divisor.
\newblock {\em Trans. Amer. Math. Soc.}, 348(8):3037--3049, 1996.

\bibitem[CMNM02]{ModulefsLocallyQuasiHomogeneousFree}
Francisco Calder\'{o}n-Moreno and Luis Narv\'{a}ez-Macarro.
\newblock The module $\mathscr{D}f^s$ for locally quasi-homogeneous free divisors.
\newblock {\em Compositio Math.}, 134(1):59--74, 2002.

\bibitem[CMNM05]{CalderonNarvaezFrenchLCT}
Francisco~Javier Calder\'{o}n~Moreno and Luis Narv\'{a}ez~Macarro.
\newblock Dualit\'{e} et comparaison sur les complexes de de {R}ham logarithmiques par rapport aux diviseurs libres.
\newblock {\em Ann. Inst. Fourier (Grenoble)}, 55(1):47--75, 2005.

\bibitem[CMNM09]{LCTforIntegrableLogarithmicConnections}
F.~J. Calder\'{o}n~Moreno and L.~Narv\'{a}ez~Macarro.
\newblock On the logarithmic comparison theorem for integrable logarithmic connections.
\newblock {\em Proc. Lond. Math. Soc. (3)}, 98(3):585--606, 2009.

\bibitem[DP16]{LefschetzAlmostCompleteIntersections}
Alexandru Dimca and Dorin Popescu.
\newblock Hilbert series and {L}efschetz properties of dimension one almost complete intersections.
\newblock {\em Comm. Algebra}, 44(10):4467--4482, 2016.

\bibitem[DS04]{DerksenSidman-CMRegularyByApproximation}
Harm Derksen and Jessica Sidman.
\newblock Castelnuovo-{M}umford regularity by approximation.
\newblock {\em Adv. Math.}, 188(1):104--123, 2004.

\bibitem[DS12]{ComplexDualityChernClassesLogForms}
Graham Denham and Mathias Schulze.
\newblock Complexes, duality and {C}hern classes of logarithmic forms along hyperplane arrangements.
\newblock In {\em Arrangements of hyperplanes---{S}apporo 2009}, volume~62 of {\em Adv. Stud. Pure Math.}, pages 27--57. Math. Soc. Japan, Tokyo, 2012.

\bibitem[DST23]{DipasqualeSidmanTravesGeometricAspectsJacobian}
Michael {DiPasquale}, Jessica {Sidman}, and Will {Traves}.
\newblock {Geometric aspects of the Jacobian of a hyperplane arrangement, version 2}.
\newblock {\em arXiv e-prints}, page arXiv:2209.04929, September 2023.

\bibitem[FR87]{FalkRandellHomotopyTheoryArrangements}
Michael Falk and Richard Randell.
\newblock On the homotopy theory of arrangements.
\newblock In {\em Complex analytic singularities}, volume~8 of {\em Adv. Stud. Pure Math.}, pages 101--124. North-Holland, Amsterdam, 1987.

\bibitem[HM98]{HollandMondLCTIsolated}
Martin~P. Holland and David Mond.
\newblock Logarithmic differential forms and the cohomology of the complement of a divisor.
\newblock {\em Math. Scand.}, 83(2):235--254, 1998.

\bibitem[Kas83]{KashiwaraVanishingCycleSheaves}
M.~Kashiwara.
\newblock Vanishing cycle sheaves and holonomic systems of differential equations.
\newblock In {\em Algebraic geometry ({T}okyo/{K}yoto, 1982)}, volume 1016 of {\em Lecture Notes in Math.}, pages 134--142. Springer, Berlin, 1983.

\bibitem[Kas77]{KashiwaraRationalityRootsBSpolys}
Masaki Kashiwara.
\newblock {$B$}-functions and holonomic systems. {R}ationality of roots of {$B$}-functions.
\newblock {\em Invent. Math.}, 38(1):33--53, 1976/77.

\bibitem[Koc76]{KochmanBernsteinPolys}
Fred Kochman.
\newblock Bernstein polynomials and {M}ilnor algebras.
\newblock {\em Proc. Nat. Acad. Sci. U.S.A.}, 73(8):2546, 1976.

\bibitem[Mai23]{MaisonobeFiltrationRelative}
Philippe Maisonobe.
\newblock Filtration relative, l'id\'{e}al de {B}ernstein et ses pentes.
\newblock {\em Rend. Semin. Mat. Univ. Padova}, 150:81--125, 2023.

\bibitem[Mal74]{MalgrongeIsolee}
B.~Malgrange.
\newblock Le polyn\^{o}me de {B}ernstein d'une singularit\'{e} isol\'{e}e.
\newblock In {\em Fourier integral operators and partial differential equations ({C}olloq. {I}nternat., {U}niv. {N}ice, {N}ice, 1974)}, volume Vol. 459 of {\em Lecture Notes in Math.}, pages 98--119. Springer, Berlin-New York, 1974.

\bibitem[Mon12]{MondNotesLogarithmic}
David Mond.
\newblock {Notes on Logarithmic Vector Fields, Logarithmic Differential Forms and Free Divisors}.
\newblock \url{https://homepages.warwick.ac.uk/~masbm/LectureNotes/notesOnLogForms.pdf/}, November 2012.

\bibitem[MS05]{CombinatorialCommutativeAlgebra}
Ezra Miller and Bernd Sturmfels.
\newblock {\em Combinatorial commutative algebra}, volume 227 of {\em Graduate Texts in Mathematics}.
\newblock Springer-Verlag, New York, 2005.

\bibitem[NM15]{DualityApproachSymmetryBSpolys}
Luis Narv\'{a}ez~Macarro.
\newblock A duality approach to the symmetry of {B}ernstein-{S}ato polynomials of free divisors.
\newblock {\em Adv. Math.}, 281:1242--1273, 2015.

\bibitem[RT91]{RoseTeraoAFreeResolution}
Lauren~L. Rose and Hiroaki Terao.
\newblock A free resolution of the module of logarithmic forms of a generic arrangement.
\newblock {\em J. Algebra}, 136(2):376--400, 1991.

\bibitem[Sai71]{KSaitoQuasihomogene}
Kyoji Saito.
\newblock Quasihomogene isolierte {S}ingularit\"aten von {H}yperfl\"achen.
\newblock {\em Invent. Math.}, 14:123--142, 1971.

\bibitem[Sai80]{SaitoLogarithmicForms}
Kyoji Saito.
\newblock Theory of logarithmic differential forms and logarithmic vector fields.
\newblock {\em J. Fac. Sci. Univ. Tokyo Sect. IA Math.}, 27(2):265--291, 1980.

\bibitem[Sai88]{SaitoModulesPolarisables}
Morihiko Saito.
\newblock Modules de {H}odge polarisables.
\newblock {\em Publ. Res. Inst. Math. Sci.}, 24(6):849--995, 1988.

\bibitem[Sai89]{InducedDModsDifferentialComplexes}
Morihiko Saito.
\newblock Induced {$D$}-modules and differential complexes.
\newblock {\em Bull. Soc. Math. France}, 117(3):361--387, 1989.

\bibitem[Sai94]{SaitoOnMicrolocal}
Morihiko Saito.
\newblock On microlocal {$b$}-function.
\newblock {\em Bull. Soc. Math. France}, 122(2):163--184, 1994.

\bibitem[Sai16]{SaitoBSpolysArrangements}
Morihiko Saito.
\newblock Bernstein-{S}ato polynomials of hyperplane arrangements.
\newblock {\em Selecta Math. (N.S.)}, 22(4):2017--2057, 2016.

\bibitem[{Sai}19]{SaitoDegenerationPoleOrderArrangement}
Morihiko {Saito}.
\newblock {Degeneration of pole order spectral sequences for hyperplane arrangements of 4 variables, version 1}.
\newblock {\em arXiv e-prints}, page arXiv:1902.03838, February 2019.

\bibitem[{Sai}20]{SaitoBSProjectiveWeightedHomogeneous}
Morihiko {Saito}.
\newblock {Bernstein-Sato polynomials for projective hypersurfaces with weighted homogeneous isolated singularities, version 9}.
\newblock {\em arXiv e-prints}, page arXiv:1609.04801, September 2020.

\bibitem[Sch03]{SchenckElementaryModificationsLineConfigs}
Henry~K. Schenck.
\newblock Elementary modifications and line configurations in $\mathbb{P}^{2}$.
\newblock {\em Comment. Math. Helv.}, 78(3):447--462, 2003.

\bibitem[Ser14]{LocalCohomologyJacobianRing}
Edoardo Sernesi.
\newblock The local cohomology of the {J}acobian ring.
\newblock {\em Doc. Math.}, 19:541--565, 2014.

\bibitem[{Sta}24]{stacks-project}
The {Stacks project authors}.
\newblock The stacks project.
\newblock \url{https://stacks.math.columbia.edu}, 2024.

\bibitem[Toh07]{TohvaneanuTopologicalFormalArrangements}
Stefan~O. Toh\v{a}neanu.
\newblock Topological criteria for {$k$}-formal arrangements.
\newblock {\em Beitr\"{a}ge Algebra Geom.}, 48(1):27--34, 2007.

\bibitem[vSW15]{StratenDucoWarmtGorensteinDuality}
Duco van Straten and Thorsten Warmt.
\newblock Gorenstein-duality for one-dimensional almost complete intersections---with an application to non-isolated real singularities.
\newblock {\em Math. Proc. Cambridge Philos. Soc.}, 158(2):249--268, 2015.

\bibitem[Wal05]{WaltherBSPolyGenericArrangement}
Uli Walther.
\newblock Bernstein-{S}ato polynomial versus cohomology of the {M}ilnor fiber for generic hyperplane arrangements.
\newblock {\em Compos. Math.}, 141(1):121--145, 2005.

\bibitem[Wal15]{WaltherSurvey}
Uli Walther.
\newblock Survey on the {$D$}-module {$f^s$}.
\newblock In {\em Commutative algebra and noncommutative algebraic geometry. {V}ol. {I}}, volume~67 of {\em Math. Sci. Res. Inst. Publ.}, pages 391--430. Cambridge Univ. Press, New York, 2015.
\newblock With an appendix by Anton Leykin.

\bibitem[Wal17]{uli}
Uli Walther.
\newblock The {J}acobian module, the {M}ilnor fiber, and the {$D$}-module generated by {$f^s$}.
\newblock {\em Invent. Math.}, 207(3):1239--1287, 2017.

\bibitem[WY97]{WiensYuzvinskyLCTTameArrangements}
Jonathan Wiens and Sergey Yuzvinsky.
\newblock De {R}ham cohomology of logarithmic forms on arrangements of hyperplanes.
\newblock {\em Trans. Amer. Math. Soc.}, 349(4):1653--1662, 1997.

\bibitem[Yan78]{YanoTheoryBFunctions}
Tamaki Yano.
\newblock On the theory of {$b$}-functions.
\newblock {\em Publ. Res. Inst. Math. Sci.}, 14(1):111--202, 1978.

\bibitem[Zie89]{ZieglerCombinatorialConstructionLogForms}
G\"{u}nter~M. Ziegler.
\newblock Combinatorial construction of logarithmic differential forms.
\newblock {\em Adv. Math.}, 76(1):116--154, 1989.

\end{thebibliography}

\end{document}